\long\def\comment#1{}
\providecommand{\tabularnewline}{\\}
\long\def\comment#1{}
\newtheorem{theorem}{Theorem}
\newtheorem{algorithm}{Algorithm}[section]
\newtheorem{corollary}{Corollary}
\newtheorem{lemma}{Lemma}
\newtheorem{assumption}{Assumption}
\newtheorem*{condition*}{\conditionnumber}
\providecommand{\conditionnumber}{}
\theoremstyle{definition}
\newtheorem{definition}{Definition}
\newtheorem{remark}{Comment}[section]
\numberwithin{remark}{section}
\newtheorem{example}{Example}
\newcommand{\be}{\begin{eqnarray}}
\newcommand{\ee}{\end{eqnarray}}
\newcommand{\ba}{\begin{array}}
\newcommand{\ea}{\end{array}}
\newcommand{\bs}{\begin{align}\begin{split}\nonumber}
\newcommand{\bsnumber}{\begin{align}\begin{split}}
\newcommand{\es}{\end{split}\end{align}}
\renewcommand{\[}{\left[}
\renewcommand{\]}{\right]}
\renewcommand{\hat}{\widehat}
\newcommand{\cc}{\mathbf{c}}
\newcommand{\F}{\mathcal{F}}
\newcommand{\Gn}{\mathbb{G}_n}
\newcommand{\Pn}{\mathbb{P}_n}
\newcommand{\Ep}{{\mathrm{E}}}
\newcommand{\En}{{\mathbb{E}_n}}
\newcommand{\UU}{\mathcal{U}}
\newcommand{\pp}{\tilde p}
\newcommand{\dn}{d_{\UU}}
\renewcommand{\Pr}{{\mathrm{P}}}
\newcommand{\barf}{\overline{f}}
\newcommand{\diam}{{\text{diam}}}
\def\RR{ {\mathbb{R}}}
\def\supp{{\rm support}}
\newcommand{\floor}[1]{\left\lfloor #1 \right\rfloor}
\newcommand{\ceil}[1]{\left\lceil #1 \right\rceil}
\newcommand{\semin}[1]{\phi_{{\rm min}}(#1)}
\newcommand{\semax}[1]{\phi_{{\rm max}}(#1)}
\newcommand{\semaxtilde}[1]{\tilde{\phi}_{{\rm max}}(#1)}
\renewcommand{\hat}{\widehat}
\renewcommand{\leq}{\leqslant}
\renewcommand{\geq}{\geqslant}
\newcommand{\diag}{{\rm diag}}
\newcommand{\underf}{{\underline{f}}}
\newcommand{\G}{{G}}
\newcommand{\mT}{{\mathcal{H}}}
\newcommand{\mF}{{\mathcal{F}}}
\newcommand{\mG}{{\mathcal{G}}}
\newcommand{\mP}{{\mathcal{P}}}
\newcommand{\mU}{{\mathcal{U}}}
\newcommand{\un}{{u_n}}
\begin{document}

 \title[Quantile Graphical Models]
{Quantile Graphical Models: Prediction and Conditional Independence with Applications to Systemic Risk}\thanks{\smaller We would like to thank Don Andrews, Debopam Bhattacharya, Peter Bickel, Marianne Bitler, Peter B{\"u}hlmann, Colin Cameron, Karim Chalak, Xu Cheng, Valentina Corradi, Alan Crawford, Francis Diebold, Peng Ding, Mirko Draca, Iv{\'a}n Fern{\'a}ndez-Val, Bulat Gafarov, Jean-Jacques Forneron, Kenji Fukumizu, Dalia Ghanem, Bryan Graham, Jiaying Gu, Ran Gu, Han Hong, Jungbin Hwang, Cheng Hsiao, Hidehiko Ichimura, Michael Jansson, Oscar Jorda, Chihwa Kao, Kengo Kato, Shakeeb Khan, Roger Koenker, Brad Larsen, Chenlei Leng, Arthur Lewbel, Michael Leung, Song Liu, Francesca Molinari, Whitney Newey, Dong Hwan Oh,  David Pacini, Andrew Patton, Aureo de Paula, Victor de la Pena, Elisabeth Perlman, Stephen Portnoy, Demian Pouzo, James Powell, Geert Ridder, Joe Romano, Stephen Ross, Shu Shen, Senay Sokullu, Sami Stouli, Aleksey Tetenov, Takuya Ura, Tiemen Woutersen, Zhijie Xiao, Wenkai Xu, Chaoran Yu, and Yanos Zylberberg for comments and discussions. We would also like to thank the seminar and workshop participants from Aarhus University, Boston College, 2015 Warwick Summer Workshop, 11th World Congress of the Econometric Society, 2017 UCL Workshop on the Theory of Big Data, 2017 California Econometrics Conference, 2018 International Symposium on Financial Engineering and
Risk Management, 2018 Shanghai Econometrics Workshop, 2018 York Econometrics Symposium, Bank of England Modelling with Big Data and Machine Learning, London School of Economics, University of Bristol, University of Connecticut, Humboldt University Berlin, the Institute of Statistical Mathematics, USC, UC Berkeley, UC Davis, Stanford University, Warwick Statistics Department and University of Tokyo.}

\author[Belloni]{Alexandre Belloni$^*$}\thanks{$^*$Duke University, e-mail:abn5@duke.edu.}
\author[Chen]{Mingli Chen$^\ddag$}\thanks{$^\ddag$University of Warwick, e-mail:m.chen.3@warwick.ac.uk}
\author[Chernozhukov]{Victor Chernozhukov$^\dag$}\thanks{$^\dag$Massachusetts Institute of Technology, e-mail:vchern@mit.edu}
\date{First version: November 2012,  this version October 28, 2019.}

\begin{abstract}\small
We propose two types of Quantile Graphical Models (QGMs) --- Conditional Independence Quantile Graphical Models (CIQGMs) and Prediction Quantile Graphical Models (PQGMs). CIQGMs characterize the conditional independence of distributions by evaluating the distributional dependence structure at each quantile index. As such, CIQGMs can be used for validation of the graph structure in the causal graphical models (\cite{pearl2009causality, robins1986new, heckman2015causal}). One main advantage of these models is that we can apply them to large collections of variables driven by non-Gaussian and non-separable shocks. PQGMs characterize the statistical dependencies through the graphs of the best linear predictors under asymmetric loss functions. PQGMs make weaker assumptions than CIQGMs as they allow for misspecification. Because of QGMs’ ability to handle large collections of variables and focus on specific parts of the distributions, we could apply them to quantify tail interdependence. The resulting tail risk network can be used for measuring systemic risk contributions that help make inroads in understanding international financial contagion and dependence structures of returns under downside market movements.

We develop estimation and inference methods for QGMs focusing on the high-dimensional case, where the number of variables in the graph is large compared to the number of observations. For CIQGMs, these methods and results include valid simultaneous choices of penalty functions, uniform rates of convergence, and confidence regions that are simultaneously valid. We also derive analogous results for PQGMs, which include new results for penalized quantile regressions in high-dimensional settings to handle misspecification, many controls, and a continuum of additional conditioning events.

\bigskip
\noindent {\it Key Words}: High-dimensional graphs, conditional independence, prediction, inference, nonlinear
correlation, tail risk network, systemic risk, downside movement
\end{abstract}

\maketitle

\section{Introduction}
Co-movements, dependence and influence between variables are fundamental in economics and finance for decision and policy making as well as prediction. To this end, we propose Quantile Graphical Models (QGMs) as a modeling framework and consider their usefulness in three main applications. First, empirical auction models often rely on independent private values or affiliated private values, detecting collusion in these auctions is a form of testing conditional independence. Examples of studying entry, market power, or collusion can be found in \cite{bajari2003deciding, porter2005detecting, harrington2008detecting}. Second, QGMs can be used to identify and measure systemic tail risk. The recent bank and sovereign crisis in the US and Europe have also boosted the interest in the important role of network spill-over effects in contagion and shaping systemic risk (\cite{Acemoglu2010, Acemoglu2013, elliott2014financial, hansen2014challenges}). Many measures of systemic risk focus on spill-overs fit naturally in our QGM setting (\cite{Adrian2011, Andersen2013, hautsch2014forecasting, hautsch2015financial, hardle2016tenet}). We apply these insights to re-evaluate international financial contagion in volatilities (\cite{Claessens2001}).\footnote{Works on economic and financial networks include \cite{Billio2010, bonaldi2015empirical, kastl2017recent}. We refer to \cite{de2017econometrics} for an excellent review on the econometrics literature on networks.} Third, QGMs can measure dependence between stock returns for hedging strategies. In financial management settings, risk quantification is crucial, and advanced hedging decisions are typically focused on the tail of the distribution of stock returns rather than the mean. Moreover, such strategies aiming to reduce risk are critical precisely during the market downside movement. Empirical evidence (\cite{Ang2006,Ang2002,Patton2004}) points to the non-Gaussianity of the distribution of stock returns, especially during market downturns. Therefore, it is also instructive to understand how dependence (and policy impact) would change as the downside movement of the market becomes more extreme. The proposed QGMs are flexible enough to cover all these cases.

QGMs can be viewed as part of graphical models which have been successively applied to estimate and visualize relationship  (\cite{maathuis2018handbook,heckman2015causal}). Graphical models are widely used in machine learning, statistical learning, and social science to model the statistical dependence among the components a $d$-dimensional random vector $X_{V}$, in the form of a graph or network $G=(V, E)$. Here $V$ is the node set contains the labels of the components and $E$ is the edge set represents \textit{unknown} statistical relationships that need to be estimated, thus poses novel problems of statistical inference. In the case of Gaussian Graphical Models (GGMs), which assuming $X_{V}$ are jointly Gaussian distributed, the conditional independence structure is completely characterized by the support of the inverse of the covariance matrix of $X_{V}$. Notably in this case, the same graph will characterize conditional independence and the best linear prediction. However, in \textit{non-Gaussian} settings, not only it is harder to characterize conditional independence, there are no reasons for the same graph to characterize both conditional independence and best linear prediction.

QGMs provide an alternative route to learn conditional independence and prediction under asymmetric loss functions which is appealing in non-Gaussian settings. As in non-Gaussian cases these notions do not coincide and there are needs for different estimation approaches. We propose two different QGMs to handle different types of applications. First, we propose Conditional Independence Quantile Graphical Models (CIQGMs) to characterize the conditional independence of distributions through evaluating the distributional dependence structure at each quantile index. Second, we propose Prediction Quantile Graphical Models (PQGMs) in which predictive relationship is the main focus. Note, QGMs also enable us to focus on \textit{specific parts of the distributions} of variables, which play an important role in applications like financial contagion and measuring systemic risk contributions where extreme events are the main interests for practitioners.

CIQGMs can be used for validation of the graph structure in the causal graphical models (\cite{pearl2009causality, robins1986new, heckman2015causal}).  Conditional independence has a long history in statistical models with consequences towards parameter identification, causal inference, prediction sufficiency, and many others, see \cite{dawid1979conditional}. CIQGMs aim to characterize conditional independence via the conditional quantile functions. In such models, we consider a flexible specification that can approximate well the conditional quantile functions (up to a vanishing approximation error). In turn, this allows detecting which variables have a strong or near zero impact on others which can then be used to provide guidance on conditional independence.
 
PQGMs focus on the prediction of a variable based on linear combinations of other variables (a reduced form relation) under asymmetric losses. An important motivation for proposing PQGMs is to allow for misspecification as the conditional quantile function is typically non-linear in non-Gaussian settings. The linear specification is widely used in practice despite possible misspecification which motivates an analysis for accomodating these issues. We characterize the uniform prediction properties under a family of asymmetric loss functions, which this family enables practitioners to investigate different parts of the tail distribution. Other papers investigated the impact of misspecification on quantile functions are \cite{abadie2014inference, angrist2006quantile, knight2008asymptotics, lee2009efficiency}. Our analysis also contributes to the high-dimensional quantile regression by allowing non-vanishing misspecification.

Broadly speaking, QGMs enhance our understanding of statistical dependence among $X_V$. For example, for each quantile index $\tau$, they provide visualization of the dependence via graphs whose edges represent conditional (quantile) relationships. Given that for each specific quantile index $\tau$ we will obtain one such graph, we could have a \textit{graph process} indexed by $\tau \in (0,1)$. The structure represented by a $\tau$-quantile graph represents a local relation and can be valuable in cases where tail interdependence might be of special interest.\footnote{This is similar to the contrast between quantile regression and linear regression, where the latter provides information only on the conditional mean, while the former can provide a more complete description of the distribution of the outcome.} The graph process induced by QGMs has several important features. First, a $\tau$-quantile graph enables different values of edge strength in different directions. This is important because for undirected networks, the distinction is unclear. Second, QGMs can capture the tail interdependence through estimating at a high or low quantile index. \footnote{Examples of high or low quantile index can be $\tau = 0.95$ or $\tau = 0.05$ respectively. The analysis extends to the case of $s^3\log^5p = o(n \tau (1-\tau))$. The case of $n \tau = C$, even in the fixed dimension case, leads to a substantially different analysis and limiting distributions, as shown in \cite{chernozhukov2005extremal}. Here $s$ is the sparsity parameter, $p$ is the dimension of conditional variables, $n$ is the sample size, $C$ is a constant.} Third, QGMs can capture the asymmetric dependence structure at different
quantiles, which can be particularly useful in empirical applications (e.g., stock market dependence, exchange rate dependence). By considering
all the quantiles at once we can characterize conditional independence structure for a set of variables that are not jointly Gaussian.
 
We also provide and study the estimation procedures that allow us to learn QGMs from the observed data. Our techniques are geared for covering high-dimensional settings where the size of the model is potentially larger than the sample size. These techniques are based on $\ell_{1}$-penalized quantile regression and Neyman orthogonal equations. For CIQGMs, under
mild regularity conditions, we provide rates of convergence and edge properties of the estimated graph that hold \textit{uniformly} over a large class of data generating processes. We provide \textit{simultaneously valid} confidence regions (post-selection) for the coefficients of the CIQGM that are uniformly valid, despite of possible model selection mistakes. Based on proper thresholding, recovery of CIQGMs patterns is possible when coefficients are well separated from zero which parallel the results for graph recovery in the Gaussian case.\footnote{Similar to graph recovery in the Gaussian
case such exact recovery is subject to the lack of uniformity validity critiques of
Leeb and P\"otscher \cite{leeb2008can}.} For PQGMs, we provide an estimator that achieves an adaptive rate of convergence, which might differ under different conditioning events. Therefore we contribute to the recent active literature on simultaneous valid confidence regions post-model selection,  \cite{BCCH2012sparse,BCH2014inference,BCFH2013program,Farrell:JMP,CanerKock:HDCI,CHS:PnP} \cite{GRD2014,zhang2014confidence,belloni2015uniform,BCK2013robustQR,jankova2015confidence,ning2017general,wang2016inference};
in particular, the penalty choices and theoretical results are uniformly valid and adaptive to the relevant conditioning events.

Although we build upon the quantile regression literature (\citep{Koenker2005,BCK2013robustQR}), we derive new results for penalized quantile regression in high dimensional settings that are uniformly valid, robust to small coefficients (e.g. allowing for model selection mistakes), allow possibly non-vanishing misspecification, many controls and a continuum of additional conditioning events. These results contribute to a growing literature that relies on quantile based models to characterize the data generating process. \cite{zheng2015globally} considers a globally adaptive quantile regression model, establishes oracle properties, and improved rates of convergence for the high-dimensional case. Screening procedures based on moment conditions motivated by the quantile models have been proposed and analyzed in \cite{he2013quantile, wu2015conditional} in the high-dimensional case. \cite{joe1997multivariate} considers tail dependence defined via conditional probabilities in a low dimensional setting.   

Finally, we view QGMs as complementary to a large body of works on GGMs (\cite{Dempster1972,Lauritzen1996,Cox1996,Edwards2000,Drton2004,Drton2007,Drton2008, meinshausen2006high, Yuan2007,Banerjee2008,Friedman2008, Yuan2010,Cai2011,Liu2012a,Sun2012,Liu2012b,chiong2017estimation}). Our work is also complementary to other works trying to relax the joint Gaussian assumption. \cite{Liu2009,Liu2012,Xue2012} work with so-called nonparanormal models or semiparametric Gaussian copula models, i.e., the variables follow a joint Gaussian distribution after monotone transformations. \cite{Ravikumar2011, peng2009partial} work with Sub-Gaussian data, which restricted fatness of the tails. \cite{ravikumar2010high, Xue2012a, loh2012structure} work with discrete-valued random variable, and few types of exponential families. \cite{yang2015graphical} provides results for M-estimators for a subclass of exponential family graphical models. QGMs allow for different sets of distributions.

The rest of the paper is organized as follows. Section \ref{sec:example} provides main motivating examples. Section \ref{sec:QGMs} lays out the foundation of the conceptual framework of QGMs. Section \ref{sec:Estimator-and-Consistency} contains estimators for QGMs while Section \ref{Sec:MainTheory} contains the theoretical guarantees of the estimators. Section \ref{sec:Empirical} provides an empirical application of QGMs to measure systemic risk contribution. Finally, the appendix contains proofs, simulations, and implementation details of the estimators.

{\bf Notation}. For an integer $k$, we let $[k]:=\{1,\ldots,k\}$ denote the set of integers from $1$ to $k$. For a random variable $X$ we denote by $\mathcal{X}$ its support. We use the notation $a\vee b=\max\{a,b\}$ and $a\wedge b=\min\{a,b\}$. We use $\|v\|_{p}$ to denote the $p$-norm of a vector $v$. In particular, the $\ell_2$-norm is denoted by $\Vert \cdot \Vert$; the $\ell_{0}$-``norm'' $\Vert\cdot\Vert_{0}$ denotes the number of non-zero components. Given a vector $\delta\in\mathbb{R}^{p}$, and a set of indices $T\subset\{1,...,d\}$, we denote by $\delta_{T}$ the vector in which $\delta_{Tj}=\delta_{j}$ if $j\in T$, $\delta_{Tj}=0$ if $j\notin T$. We use $\mathbb{E}_{n}$ to abbreviate the notation $n^{-1}\sum_{i=1}^n$; for example, $\mathbb{E}_{n}[f]:=\mathbb{E}_{n}[f(\omega_i)]:=n^{-1}\sum_{i=1}^nf(\omega_i)$.

\section{Motivating Examples} \label{sec:example}

\subsection{Screening for Collusion} One application of CIQGM is in the empirical auction literature on the examination of entry or bidding prices patterns to detect coordinating groups. As shown in \cite{bajari2003deciding}, firms’ bids, after controlling for all information about costs, are \textit{jointly independent} under the competitive model and lack of independence is taken as evidence consistent with collusion. Although, collusion is only one alternative explanation, testing conditional independence can be viewed as a screening device to determine whether further investigation is warranted.

Mathematically, denote $Y_{a,t}$ as the amount bid by firm $a$ on project $t$, and $Z_{a,t}$ as covariates observed in dataset. We define $X_{a,t}$ to be residuals after projecting out $Z_{a,t}$. \cite{bajari2003deciding} test whether $X_{a}$ is independent of $X_{b}$, using Fisher's Z-transformation of the coefficient of correlation between $X_{a}$ and $X_{b}$. For two Gaussian random variables, this is equivalent to test pairwise independence. In our terminology, this corresponds to an edge $(a,b)$ not being contained in the graph if and only if
\begin{equation}
X_{a}\perp X_{b}.
\end{equation}
Pairwise independence, however, needs not imply \textit{joint independence}. CIQGM can also handle joint independence in the \textit{non-Gaussian} setting. This is because CIQGM works with the following case
\begin{equation}
X_{a}\perp X_{b} \ \ \vert \ \ X_{V\backslash\{a,b\}},
\end{equation} namely an edge $(a,b)$ is not
contained in the graph if and only if $X_{b}$ and $X_{a}$ are independent conditional on all remaining
variables $X_{V\backslash\{a,b\}}=\{X_{k};k\in V\backslash\{a,b\}\}$, via the equivalence between conditional probabilities and conditional quantiles (details can be found in Section \ref{sec:Quantile-Graphical-Independence}).

\subsection{Systemic Tail Risk} \label{sub: Network-CoVaR}
Measuring systemic risk taking into account tail risk network spillover effects is another application of QGMs, e.g. our framework complements to the systemic risk measure CoVaR \cite{Adrian2011} which ignore tail risk dependencies induced by the underlying financial network structure. Our framework also allows for large scale networks.

Traditional tail risk measures, such as Value of Risk (VaR), focus on the loss of an individual institution. CoVaR attempts to measure the VaR of the whole financial system or a particular financial institution
by conditioning on another being in distress. Formally, \cite{Adrian2011} define institution $b$'s CoVaR at level $\tau$ conditional
on a particular outcome of institution $a$, as the value of $CoVaR_{\tau}^{b\vert a}$
that solves
\begin{equation}
\Pr(X_{b}\leq CoVaR_{\tau}^{b\vert a}\vert\mathbb{C}(X_{a}))  =  \tau,
\end{equation}
for some event $ \mathbb{C}(X_{a}) $ based on $X_{a}$. A special case is $\mathbb{C}(X_{a})=\{ X_a = VaR_{\tau}^{a}\}$ which, as interpreted by \cite{Adrian2011}, means with probability $\tau$ institution $b$ is in trouble given that institution $a$ is in trouble.
 
QGMs can work with the case
\begin{equation}
\mathrm{P}(X_{b}\leq CoVaR_{\tau}^{b\vert a,V\backslash\{a,b\}}\vert\mathbb{C}(X_{a}, X_{V\backslash\{a,b\}}))=\tau,
\end{equation}
with the main difference here is the conditioning events (or variables), i.e. from  $\mathbb{C}(X_{a})$ to $\mathbb{C}(X_{a}, X_{V\backslash\{a,b\}})$ with the latter could be high dimensional. Hence, our QGMs take into account risk spillovers from other institutions driving the CoVaR. The identified risk spillovers between all financial institutions constitute a financial tail risk network which, as shown later, can be used for measuring institutions' systemic tail risk contributions. In summary, QGMs can take into account the system-wide network spillover effects via incorporating tail network spillover effects into risk measuring, thus relate systemic risk to tail spillover effects from individual institutions to the whole system.

Another related definitions of tail risk, $\Delta CoVaR$, is defined as the change in the VaR of the whole financial system conditional on a institution being under distress relative to its median state. In terms of estimation, replacing covariate $X_{a}$ by the difference between its $\tau$-th quantile (denoted as $VaR_{\tau}^{a}$), and its median (denoted as $VaR_{50\%}^{a}$), yields
\begin{equation}
\Delta CoVaR_{\tau}^{b\vert a}=\hat{\beta}_a^{b}(\tau)(VaR_{\tau}^{a}-VaR_{50\%}^{a}).
\end{equation}
where $\hat{\beta}_a^{b}(\tau)$ comes from pairwise quantile regression of $X_b$ on $X_a$, and
\begin{equation} \label{Eq:deltaInf}
\Delta CoVaR_{\tau}^{b\vert a,V\backslash\{a,b\}}={\check{\beta}}_{a}^{b}(\tau)(VaR_{\tau}^{a}-VaR_{50\%}^{a}),
\end{equation}
where ${\check{\beta}}^{b}(\tau)$
is estimated via Algorithm \ref{Alg:1} or \ref{Alg:2}, inference procedures are based on Corollary \ref{theorem: general bs}.

After learning a tail risk network, we can use our new network-cooperated $\Delta CoVaR$ to measure the systemic risk contribution of each institution. The systemic risk contribution of institution $a$ can be measured by it "to" or "from" degree, see \cite{Andersen2013}; or by other network centrality measures, see \cite{jackson2010social, Kolaczyk2009, newman2010networks}.
To-degrees measure contributions of individual institutions to the overall risk
of systemic network events, for institution $a$ is defined as $\delta_{a}^{to} = \sum_{k}\Delta CoVaR_{\tau}^{k\vert a,V\backslash\{a,k\}}$. From-degrees measure exposure of individual institutions to systemic shocks from
the network, for institution $a$ it is defined as $\delta_{a}^{from} = \sum_{k}\Delta CoVaR_{\tau}^{a\vert k,V\backslash\{a,k\}}$. The net contribution of institution $a$ is defined as \textit{net}-$\Delta CoVaR^{a}=\mbox{\ensuremath{\delta}}_{a}^{to}-\delta_{a}^{from}$.
 
In Section \ref{sec:Empirical}, we revisit the analysis of international financial contagion through the volatility spillovers perspective. We visualize the tail risk interdependence via PQGMs as they allow for heteroskedasticity and asymmetric responses, can be used to model nonlinear tail interdependence, and to visualize potential asymmetric changes in conditional correlations. The estimated contagion network taking into account global interconnectedness is important for Eurozone financial regulators who want to identify globally systemically important EU countries, or for global financial portofolio diversification. Our the systemic tail risk analysis tools mentioned in previous paragraphs, can help with achieve those goals.

\subsection{Stock Returns Under Market Downside Movements}\label{Ex:Downside} Hedging decisions rely on the dependence of various stocks returns. Moreover, hedging is even more relevant during market downside movements, which motivates us to understand interdependence conditional on those events. Stock returns are in general non-Gaussian in those settings, as shown in the empirical finance literature, e.g. \cite{Ang2002, Longin2001,Patton2006}. We can parameterize the downside movements by using a random variable $W$, which could be the market index, and conditional on the event $\Omega_\varpi = \{W \leq \varpi\}$. This allows us to define a $\varpi$-conditional-CIQGM as $G^I(\tau,\varpi)=(V,E^I(\tau,\varpi))$ and a $\varpi$-conditional-PQGM as $G^P(\tau,\varpi)=(V,E^P(\tau,\varpi))$, for each $\varpi \in \mathcal{W}$. We might be interest in a fixed $\varpi$ or on a family of values $\varpi\in(-\bar \varpi, 0]$. The latter induces $\mathcal{W} = \{\Omega_\varpi=\{W\leq \varpi\} : \varpi\in(-\bar \varpi, 0] \}$.

Figure \ref{Figure:StockNew} provides an example using $ \mathcal{W}$-Conditional QGM with
\sloppy$W \text{= \{Market Index Returns\}}$, $\varpi$ as the $\tau_m$-th quantile of the
market index returns, and $\tau_m = \{0.15, 0.5, 0.75, 0.9\}$. We obtain daily stock returns from CRSP and use S\&P 500 as the market index. The full sample consists of 2769 observations for 86 stocks from Jan 2, 2003 to December 31, 2013. The total number of stocks is 86 due to data availability. We define market movement as when the market index returns
are below a pre-specified level (e.g. $\tau_m$-th quantile), hence conditioning on a particular $\varpi$ corresponds to consider the subsample based on whether the corresponding date's market return is less equal to the $\tau_m$-th quantile of the
market index returns. The results show higher interdependence under market downside moments, pose different hedging decisions.

\begin{figure}
\begin{minipage}{\textwidth}
\centering
\begin{subfigure}[b]{0.485\textwidth}
\centering
    {{\small     $\tau_m = 0.15$}}
        	\vskip -2.2cm
    \includegraphics[height=1.5\textwidth]{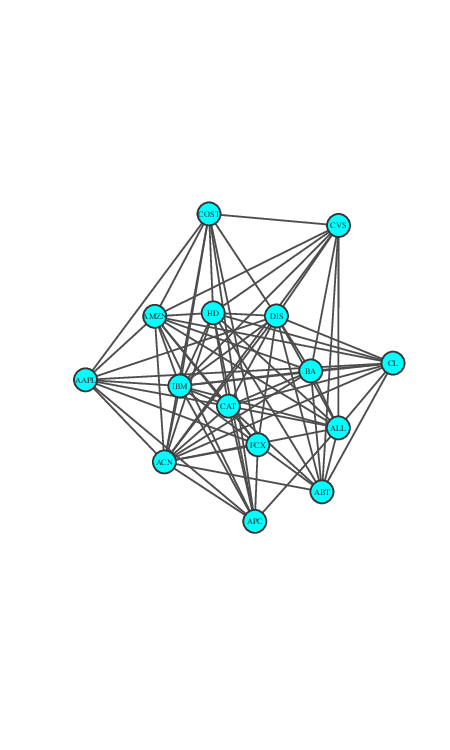}
    %\caption[Median]
%\label{fig:mean and std of net14}
\end{subfigure}
\hfill
 \begin{subfigure}[b]{0.485\textwidth}
 \centering
  	{{\small  $\tau_m = 0.5$}}
          	\vskip -2.2cm
	\includegraphics[height=1.5\textwidth]{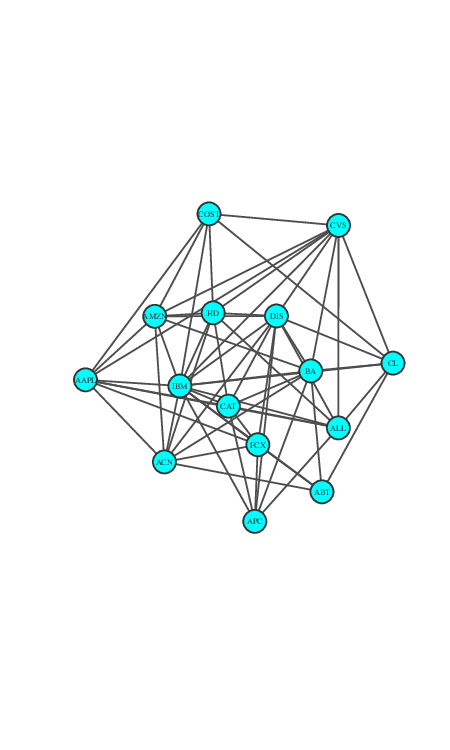}
       % \caption[]%
      %  \label{fig:mean and std of net24}
\end{subfigure}
 \vskip -2.2cm
 \begin{subfigure}[b]{0.485\textwidth}
 \centering
    	{{\small  $\tau_m = 0.75$}}
        	\vskip -2.2cm
       \includegraphics[height=1.5\textwidth]{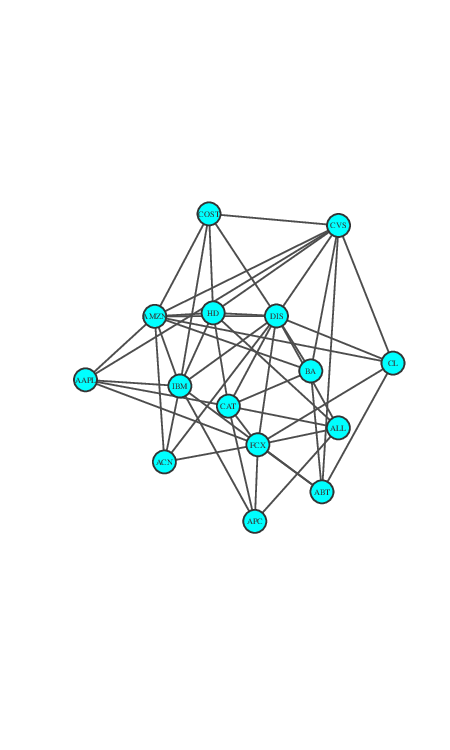}
       % \caption[]%
       % \label{fig:mean and std of net34}
\end{subfigure}
\quad
\begin{subfigure}[b]{0.485\textwidth}
\centering
	{{\small  $\tau_m = 0.9$}}
	            	\vskip -2.2cm
	\includegraphics[height=1.5\textwidth]{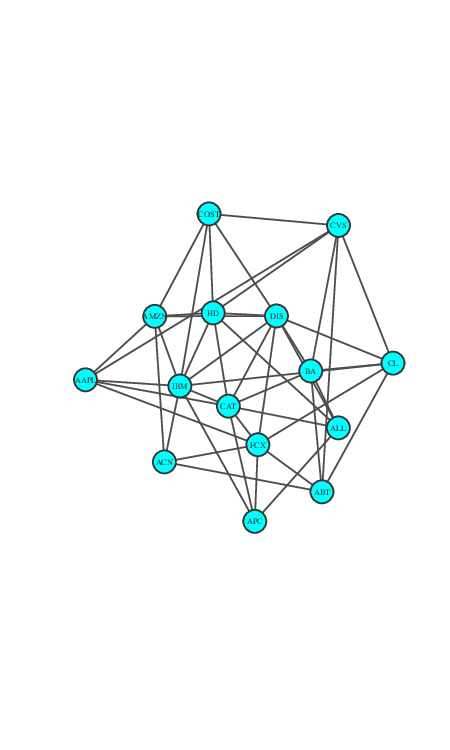}
        %\caption[]%
        %  \label{fig:mean and std of net44}
\end{subfigure}
\vskip -2.2cm
     \caption[Stock Returns Interdependence under Different Market Conditions]
    {\small Stock Returns Interdependence under Different Market Conditions. Note: Presence of edges between nodes indicate that these two nodes (or stocks) are conditionally dependent.}
       \label{Figure:StockNew}
\end{minipage}
\end{figure}

\section{Quantile Graphical Models}\label{sec:QGMs}

In this section we describe quantile graphical models associated with a $d$-dimensional random vector $X_V$ where the set $V=[d]=\{1,\ldots,d\}$ denotes the labels of the components. These models aim to provide a description of the dependence between the random variables in $X_V$. In particular, these models induce graphs that allow for visualizing dependence structures. Nonetheless, because of the non-Gaussianity, we consider two fundamentally distinct models (one geared towards conditional independence and one geared towards prediction).

\subsection{Conditional Independence Quantile Graphical Models}\label{sec:Quantile-Graphical-Independence}

Conditional independence graphs have been used to provide visualization and insight on the dependence structure between random variables. Each node of the graph is associated with a component of $X_V$. We denote the conditional independence graph as $G^I=(V,E^I)$ where $G^I$ is an undirected graph with vertex set $V$ and edge set $E^I$ which is represented by an adjacency matrix ($E^I_{a,b}=1$ if the edge $(a,b)\in G^I$, and $E^I_{a,b}=0$ otherwise). An edge $(a,b)$ is not
contained in the graph if and only if
\begin{equation}\label{Def:CondInd}
X_{a}\perp X_{b} \ \ \vert \ \ X_{V\backslash\{a,b\}},
\end{equation}namely  $X_{b}$ and $X_{a}$ are independent conditional on all remaining
variables $X_{V\backslash\{a,b\}}=\{X_{k};k\in V\backslash\{a,b\}\}$.

\begin{remark}[Conditional Independence Under Gaussianity] In the case that $X_V$ is jointly Gaussian distributed,
 $X_V\sim N(0,{\Sigma})$ with $\Sigma$ as
the covariance matrix of $X_V$, the conditional independence
structure between two components is determined by the inverse of the covariance
matrix, i.e. the precision matrix $\Theta=\Sigma^{-1}$. It follows that the non-zero elements in the precision matrix corresponds
to the non-zero coefficients of the associated (high dimensional) mean
regression. The family of Gaussian distributions with this property
is known as a Gauss-Markov random field with respect to the graph
$G$. This observation has motivated a large literature \cite{Lauritzen1996} and interesting extensions that allow for transformations of Gaussian variables \cite{Liu2009,Liu2012}.  \end{remark}

In order to achieve a tractable concept for non-Gaussian settings, we use that
(\ref{Def:CondInd})
occurs if and only if \begin{equation}\label{Def:Find}F_{X_a}(\cdot\vert X_{V\backslash\{a\}})=F_{X_a}(\cdot\vert X_{V\backslash\{a,b\}}) \ \ \mbox{for all} \ \ X_{V\backslash\{a\}} \in \mathcal{X}_{V\backslash\{a\}}.\end{equation} In turn, by the equivalence between conditional probabilities and conditional quantiles to characterize a random variable, we have that (\ref{Def:CondInd})
occurs if and only if
\begin{equation}\label{Def:Qind}
\begin{array}{c}Q_{X_{a}}(\tau\vert X_{V\backslash\{a\}})  =  Q_{X_{a}}(\tau\vert X_{V\backslash\{a,b\}}) \ \ \ \mbox{for\ all} \  \tau\in(0,1), \ \ \mbox{and} \ \ X_{V\backslash\{a\}} \in \mathcal{X}_{V\backslash\{a\}}.%\ \ \ \mbox{and} \ \
%Q_{X_{b}}(\tau\vert\mathbf{X}_{V\backslash\{b\}})  =  Q_{X_{b}}(\tau\vert\mathbf{X}_{V\backslash\{a,b\}})\
\end{array}
\end{equation}

For a quantile index $\tau \in (0,1)$, the $\tau$-quantile conditional independence graph is
a directed graph $G^I(\tau)=(V,E^I(\tau))$ with vertex set $V$ and edge set
$E^I(\tau)$. An edge $(a,b)$ is not contained in the edge set $E^I(\tau)$ if and only if
\begin{equation}\label{Def:Qindtau}
\begin{array}{c}Q_{X_{a}}(\tau\vert X_{V\backslash\{a\}})  = Q_{X_{a}}(\tau\vert X_{V\backslash\{a,b\}}) \ \ \mbox{for all} \ \ X_{V\backslash\{a\}} \in \mathcal{X}_{V\backslash\{a\}}.%\ \ \ \mbox{and} \ \
%Q_{X_{b}}(\tau\vert\mathbf{X}_{V\backslash\{b\}})  =  Q_{X_{b}}(\tau\vert\mathbf{X}_{V\backslash\{a,b\}})\
\end{array}
\end{equation}

By the equivalence between (\ref{Def:Find}) and (\ref{Def:Qind}), the union of $\tau$-quantile graphs over $\tau\in(0,1)$ represents
the conditional independence structure of $X$, namely $E^I=\cup_{\tau \in (0,1)} E^I(\tau)$. We also consider a relaxation of (\ref{Def:CondInd}). For a set of quantile indices $\mathcal{T}\subset (0,1)$, we say that \begin{equation}\label{Def:TauInd}X_{a}\perp_{\mathcal{T}} X_{b} \ \ \vert \ \ X_{V\backslash\{a,b\}},\end{equation}
$X_{a}$ and $X_{b}$ are $\mathcal{T}$-conditionally independent given $X_{V\backslash\{a,b\}}$, if (\ref{Def:Qindtau}) holds for all $\tau \in \mathcal{T}$. Thus, we have that (\ref{Def:CondInd}) implies (\ref{Def:TauInd}).%\footnote{In our analysis we will allow $\mathcal{T}$ to change with $n$ so that it approaches $(0,1)$ asymptotically.}
We define the $\mathcal{T}$-quantile graph as $G^I(\mathcal{T})=(V,E^I(\mathcal{T}))$ where
$$ E^I(\mathcal{T})= \cup_{\tau \in \mathcal{T}}E^I(\tau).$$
Although the conditional independence concept relates to all quantile indices, the quantile characterization described above also lends itself to quantile specific impacts which can be of independent interest.\footnote{For example, we might be interested in some extreme events which typically correspond to crises in financial systems.}

\subsection{Prediction Quantile Graphical Models}\label{sec:Quantile-Graphical-Prediction}

Prediction Quantile Graphical Models (PQGMs) are motivated by prediction accuracy under an asymmetric loss function (instead of conditional independence as in Section \ref{sec:Quantile-Graphical-Independence}). More precisely, for each $a\in V$, we are interested in predicting $X_a$ based on linear combinations of the remaining variables, $X_{V\backslash\{a\}}$, where accuracy is measured with respect to an asymmetric loss function. Formally, PQGMs measure accuracy  as
 \begin{equation}\label{Def:Asym} \mathcal{L}_a (\tau \mid V\backslash\{a\}) = \min_\beta \Ep[ \rho_\tau( X_a-X_{-a}'\beta)]\end{equation}
where $X_{-a}=(1,X_{V\backslash\{a\}}')'$, and the asymmetric loss function $\rho_{\tau}(t)=(\ensuremath{\tau-1\{t\leq0\}})t$ is the check function used in \cite{KoenkerBassett1978}.

Importantly, PQGMs are concerned with the best linear predictor under the asymmetric loss function $\rho_\tau$ which is a specification that is widely used in practice. This is a fundamental distinction with respect to CIQGMs discussed in Section \ref{sec:Quantile-Graphical-Independence} where the specification of the conditional quantile was approximately a linear function of transformations of $X_{V\backslash \{a\}}$.\footnote{In Section \ref{sec:Quantile-Graphical-Independence} the vector $Z^a$ in equation (\ref{Def:CondQ}) collects the functions of the vector $X_{V\backslash \{a\}}$.} Indeed, we note that under suitable conditions the linear predictor that solves the minimization problem in (\ref{Def:Asym}) approximates the conditional quantile regression as shown in \cite{BCF2011}. (In fact, the conditional quantile function would be linear if $X_V$ was jointly Gaussian distributed.) However, PQGMs do not assume that the conditional quantile function of $X_a$ is well approximated by a linear function and instead it focuses on the best linear predictor.

We define that $X_b$ is predictively uninformative for $X_a$ given $X_{V\backslash\{a,b\}}$ if
$$ \mathcal{L}_a(\tau \mid V\backslash\{a\}) =  \mathcal{L}_a(\tau \mid V\backslash\{a,b\}) \ \ \ \mbox{for all} \ \ \tau \in (0,1),$$
i.e., considering a linear function of $X_b$ will not improve our performance of predicting $X_a$  with respect to the asymmetric loss function $\rho_\tau$.

Again we can visualize the predictive relationship through a graph process indexed by $\tau \in (0,1)$. That is, for each $\tau \in (0,1)$ we have a directed graph $G^P(\tau)=(V,E^P(\tau))$, where an edge $(a,b)\in G^P(\tau)$ only if $X_b$ is predictively informative for $X_a$ given $X_{V\backslash\{a,b\}}$ at the quantile $\tau$. Finally, it is also convenient to define the PQGM associated with a subset $\mathcal{T}\subset(0,1)$ as $G^P(\mathcal{T})=(V,E^P(\mathcal{T}))$ where $$E^P(\mathcal{T})=\cup_{\tau \in \mathcal{T}}E^P(\tau).$$

\subsection{$\mathcal{W}$-Conditional Quantile Graphical Models}\label{Sec:CondQGM}

In what follows, we discuss an extension of the QGMs discussed in Sections \ref{sec:Quantile-Graphical-Independence} and \ref{sec:Quantile-Graphical-Prediction} to allows for conditioning on a (possible infinity) family of  events $\varpi \in \mathcal{W}$.\footnote{With a slight abuse of notation, we let $\varpi$ to denote the event and also the index of such event. For example, we write $\Pr(\varpi)$ as a shorthand for $\Pr( W  \in \Omega_\varpi)$.} Such extension is motivated by several applications in which the interdependence between the random variables in $X_V$ maybe substantially impacted by additional observable events (e.g. downside movements of the market). This general framework allows different forms of conditioning. The main implication of this extension is that QGMs are now graph processes indexed by $\tau \in \mathcal{T} \subset (0,1)$ and $\varpi \in \mathcal{W}$.

We define $X_a$ and $X_b$ are
$(\mathcal{T},\varpi)$-conditionally independent,
 \begin{equation}\label{Def:CondTauInd}X_{a}\perp_{\mathcal{T}} X_{b} \ \ \vert \ \ X_{V\backslash\{a,b\}}, \varpi\end{equation}
if for all $\tau \in \mathcal{T}$ we have
\begin{equation}\label{Def:CondQind}
\begin{array}{c}Q_{X_{a}}(\tau\vert X_{V\backslash\{a\}},\varpi)  =  Q_{X_{a}}(\tau\vert X_{V\backslash\{a,b\}},\varpi).\end{array}
\end{equation}
The conditional independence edge set associated with $(\tau,\varpi)$ is defined analogously as before. We denote them by $E^I(\tau,\varpi)$ and $E^I(\mathcal{T},\varpi) = \cup_{\tau \in \mathcal{T}} E^I(\tau,\varpi)$ for each $\varpi \in \mathcal{W}$.

The extension of PQGMs proceeds by defining the accuracy under the asymmetric loss function conditionally on $\varpi$. More precisely, we define
 \begin{equation}\label{Def:CondAsym} \mathcal{L}_a (\tau \vert V\backslash\{a\}, \varpi) = \min_\beta \Ep[ \rho_\tau( X_a-X_{-a}'\beta)\mid \varpi ].\end{equation}
The prediction edge set associated with $(\tau,\varpi)$ is also defined analogously as before. We denote them by $E^P(\tau,\varpi)$ and $E^P(\mathcal{T},\varpi) = \cup_{\tau \in \mathcal{T}} E^P(\tau,\varpi)$, for each $\varpi \in \mathcal{W}$.

\section{Estimators for High-Dimensional Quantile Graphical Models}\label{sec:Estimator-and-Consistency}

In this section, we propose and discuss estimators for QGMs introduced in Section \ref{sec:QGMs}. Throughout it is assumed that we observe a $d$-dimensional i.i.d. random vector $X_V$, namely $\{X_{iV} : i=1,\ldots,n\}$. Based on the data observed, unless additional assumptions are imposed we cannot estimate the quantities of interest for all $\tau \in (0,1)$. Instead, in what follows we will consider a (compact) set of quantile index $\mathcal{T}\subset (0,1)$. The estimators are intended to handle high dimensional models and a continuum of conditioning events in $\mathcal{W}$.  

\subsection{Estimators for CIQGMs}
We discuss the specification and propose an estimator for CIQGMs. Although in general it is potentially hard to correctly specify coherent models, the following are simple examples.

\begin{example}[Multivariate Gaussian Distribution]\label{ExGauss}
Consider the Gaussian case, $X_V \sim N(\mu,\Sigma)$. It follows that for each $a\in V$, the conditional distribution $X_a\mid X_{V\backslash \{a\}}$ satisfies $$X_a\mid X_{V\backslash \{a\}} \sim N\left(\mu_a - \sum_{j\in V\backslash\{a\}} \frac{(\Sigma^{-1})_{aj}}{(\Sigma^{-1})_{aa}}(X_j-\mu_j), \frac{1}{(\Sigma^{-1})_{aa}}\right).$$
Therefore the conditional quantile function of $X_a$ is linear in $X_{V\backslash \{a\}}$ and is given by $$Q_{X_a}(\tau\vert X_{V\backslash \{a\}}) = \frac{\Phi^{-1}(\tau)}{(\Sigma^{-1})_{aa}^{1/2}}+\mu_a -\sum_{j\in V\backslash\{a\}} \frac{(\Sigma^{-1})_{aj}}{(\Sigma^{-1})_{aa}}(X_j-\mu_j).$$
\end{example}

\begin{example}[Multivariate $t$-Distribution]\label{ExT}
Consider the multivariate $t$ distribution case, $X_{V}\sim t_{p}(\mu,\Sigma,v)$,
with location $\mu$, scale matrix $\Sigma$, and degrees of freedom
$v$, as in \cite{ding2016conditional}. It follows that for each
$a\in V$, the conditional distribution $X_{a}\vert X_{V\backslash\{a\}}$
satisfies
$$
X_{a}\mid X_{V\backslash\{a\}}\sim t_{p-1}\left(\mu_{a}-\sum_{j\in V\backslash\{a\}}\frac{(\Sigma^{-1})_{aj}}{(\Sigma^{-1})_{aa}}(X_{j}-\mu_{j}),\frac{v+d_{1}}{v+1}\frac{1}{(\Sigma^{-1})_{aa}},v+1\right).
$$
Therefore the conditional quantile function of $X_{a}$ is given by
$$Q_{X_a}(\tau\vert X_{V\backslash \{a\}})=\sqrt{\frac{v+d_{1}}{v+1}}\frac{F_{t_{v+1}}^{-1}(\tau)}{(\Sigma^{-1})_{aa}^{1/2}}+\mu_{a}-\sum_{j\in V\backslash\{a\}}\frac{(\Sigma^{-1})_{aj}}{(\Sigma^{-1})_{aa}}(X_{j}-\mu_{j}),$$
here $d_{1}=(X_{V\backslash\{a\}}-\mu_{V\backslash\{a\}})^{T}\Sigma_{aa}^{-1}(X_{V\backslash\{a\}}-\mu_{V\backslash\{a\}})$.
\end{example}

\begin{example}[Multiplicative Error Model]\label{ExMultiplicative}
Consider $d=2$ so that $V=\{1,2\}$. Assume that $X_2$ and $\varepsilon$ are independent positive random variables. Assume further that they relate to $X_1$ as $$X_1 = \alpha + \varepsilon X_2.$$ In this case, we have that the conditional quantile functions are linear and given by $$Q_{X_1}(\tau\vert X_2) = \alpha + F^{-1}_\varepsilon(\tau) X_2 \ \ \ \mbox{and}\ \ \  Q_{X_2}(\tau\vert X_1) = (X_1 -\alpha)/F^{-1}_\varepsilon(1-\tau).$$
 \end{example}

\begin{example}[Additive Error Model]\label{ExAdditive}
Consider $d=2$ so that $V=\{1,2\}$. Let $X_2 \sim U(0,1)$ and $\varepsilon\sim U(0,1)$ be independent random variables. Also define the random variable $X_1$ as $$X_1 = \alpha + \beta X_2 + \varepsilon.$$ It follows that $Q_{X_1}(\tau\vert X_2) = \alpha + \beta X_2+\tau$. However, if $\beta = 0$, we have $ Q_{X_2}(\tau\vert X_1)=\tau$, and for $\beta > 0$, direct calculations yield that
$$ Q_{X_2}(\tau\vert X_1)= \left\{\begin{array}{l}
\frac{\tau}{\beta}(X_1-\alpha), \ \ \mbox{if } \ \ X_1 \leq \alpha + \beta\\
\tau + (1-\tau)(X_1-\alpha-\beta),\ \ \mbox{if } \ \ X_1  \geq \alpha + \beta\end{array}\right. $$ where we note that $X_1 \in [ \alpha, 1+\alpha+\beta]$. \end{example}

\begin{example}[Mixture of Gaussians]\label{ExMixGauss}
Similar to the prior example, consider the case $X_V \mid \varpi \sim N(\mu_\varpi,\Sigma_\varpi)$ for each $\varpi \in \mathcal{W}$. It follows that for $a\in V$, the conditional distribution satisfies $$X_a\mid X_{V\backslash \{a\}}, \varpi \sim N\left(\mu_{\varpi a} - \sum_{j\in V\backslash\{a\}} \frac{(\Sigma^{-1})_{\varpi aj}}{(\Sigma^{-1})_{\varpi aa}}(X_j-\mu_{\varpi j}), \frac{1}{(\Sigma^{-1})_{\varpi aa}}\right).$$
Again the conditional quantile function of $X_a$ is linear in $X_{V\backslash \{a\}}$ and is given by $$Q_{X_a}(\tau\vert X_{V\backslash \{a\}},\varpi) = \frac{\Phi^{-1}(\tau)}{(\Sigma^{-1})_{\varpi aa}^{1/2}}+\mu_{\varpi a} -\sum_{j\in V\backslash\{a\}} \frac{(\Sigma^{-1})_{\varpi aj}}{(\Sigma^{-1})_{\varpi aa}}(X_j-\mu_{\varpi j}).$$
\end{example}

\begin{example}[Monotone Transformations]\label{ExMonotoneGauss}
Consider the Gaussian case, for each $a\in V$, $X_a = h_a(Y_a)$  and $Y_V \sim N(\mu,\Sigma)$. It follows that for each $a\in V$, the conditional quantile function satisfies $$Q_{X_a}(\tau\vert X_{V\backslash \{a\}}) = h_a\left(\frac{\Phi^{-1}(\tau)}{(\Sigma^{-1})_{aa}^{1/2}}+\mu_a -\sum_{j\in V\backslash\{a\}} \frac{(\Sigma^{-1})_{aj}}{(\Sigma^{-1})_{aa}}(h_j^{-1}(X_j)-\mu_j)\right).$$
In particular, if $(h_a:a\in V)$ are monotone polynomials, the expression above is a sum of monomials with fractional and integer exponents.
\end{example}

Although a linear specification is correct for Examples \ref{ExGauss} and \ref{ExMultiplicative}, Example \ref{ExT} and \ref{ExAdditive} illustrate that we need to consider a more general transformation of the covariates $X_V$ in the specification for each conditional quantile function. Nonetheless, specifications with additional non-linear terms can approximate non-drastic departures from normality.

We will consider a conditional quantile representation for each $a\in V$. It is based on transformations  of the original covariates $X_{V\backslash\{a\}}$ that create a $p$-dimensional random vector $Z^a=Z^a(X_{V\backslash\{a\}})$ such that
\begin{equation}\label{Def:CondQ}
Q_{X_{a}}(\tau\vert X_{V\backslash\{a\}})=Z^a\beta_{a\tau}+r_{a\tau}, \ \ \beta_{a\tau}\in\mathbb{R}^{p}, \ \ \mbox{for all} \ \tau\in\mathcal{T},
\end{equation} where $r_{a\tau}$ denotes a small approximation error. For $b\in V\backslash\{a\}$ we let $I_a(b):= \{ j : Z^a_{j} \ \mbox{depends on} \ X_b\}$. That is, $I_a(b)$ contains the components of $Z^a$ that are functions of $X_b$. Under correct specification, if $X_a$ and $X_b$ are conditionally independent, we have $\beta_{a\tau j} = 0$ for all $j\in I_a(b)$, $\tau \in (0,1)$.

This allows us to connect the conditional independence quantile graph estimation problem with model selection with quantile regression. Indeed, the representation (\ref{Def:CondQ}) has been used in several quantile regression models, see \cite{Koenker2005}. Under mild conditions this model allows us to identify the process $(\beta_{a\tau})_{\tau \in \mathcal{T}}$ as the solution of the following moment equation%
\begin{equation}\label{Def:QRreg}
\Ep[(\tau - 1\{X_a \leq Z^a\beta_{a\tau} + r_{a\tau}\})Z^a] = 0. \\
%\beta^{b}(\tau) & \in & argmin\ E[\rho_{\tau}(X_{b}-\beta'\mathbf{X}_{V\backslash\{b\}})]
\end{equation}
In order to allow for a flexible specification, so that the approximation errors are negligible, it is attractive to consider a high-dimensional $Z^a$ where its dimension $p$ is possibly larger than the sample size $n$. In turn, having a large number of technical controls creates an estimation challenge if the number of coefficients $p$ is not negligible with respect to the sample size $n$. In such a high dimensional setting, a widely applicable condition that makes estimation possible is
 approximate sparsity \cite{fan2011sparse,BCCH2012sparse,BCH2014inference}. Formally we require
\begin{equation}\label{Def:Sparsity}
\max_{a\in V}\sup_{\tau \in \mathcal{T}}\|\beta_{a\tau}\|_0 \leq s,  \ \ \ \max_{a\in V}\sup_{\tau \in \mathcal{T}} \{ \Ep[r^2_{a\tau}] \}^{1/2} \lesssim \sqrt{s/n}, \ \ \ \mbox{and} \ \  \max_{a\in V}\sup_{\tau \in \mathcal{T}} |\Ep[ f_{a\tau}r_{a\tau}Z^a ]| = o(n^{-1/2}),
\end{equation} where the sparsity parameter $s$ of the model is allowed to grow (at a slower rate) as $n$ grows, and  $f_{a\tau}=f_{X_a\mid X_{V\backslash\{a\}}}(Q_{X_a}(\tau\vert X_{V\backslash\{a\}})\vert X_{V\backslash\{a\}})$ denotes the conditional density function evaluated at the corresponding conditional quantile value. This sparsity also has implications on the maximum degree of the associated quantile graph.

Algorithm \ref{Alg:1} below contains our proposal to estimate $\beta_{a\tau}$, $a\in V$, $\tau \in \mathcal{T}$. It is based on three procedures in order to overcome high-dimensionality. In the first step, we apply a (post-)$\ell_1$-penalized quantile regression. The second step applies (post-)Lasso where the data is weighted by the conditional density function at the conditional quantile.\footnote{We note that an estimate for $f_{a\tau}$ is available from $\ell_1$-penalized quantile regression estimators for $\tau+h$ and $\tau-h$ where $h$ is a bandwidth parameter, see \cite{Koenker2005,BCK2013robustQR} and Comment \ref{rem:f}.} Finally, the third step relies on constructing (orthogonal) score function that provides immunity to (unavoidable) model selection mistakes.

There are several parameters that need to be specified for Algorithm \ref{Alg:1}.  The penalty parameter $\lambda_{V\mathcal{T}}$ is chosen to be larger than the $\ell_\infty$-norm of the (rescaled) score at the true quantile function. The work in \cite{BC-SparseQR} exploits the fact that this quantity is pivotal in their setting. Here, additional correlation structure would have an impact and the distribution is pivotal only for each $a\in V$. The penalty is based on the maximum of the quantiles of the following random variables (each with pivotal distribution), for $a\in V$
 \begin{equation}\label{DefWpenalty} \Lambda_{a\mathcal{T}} = \sup_{\tau \in \mathcal{T}} \max_{j\in[p]} \frac{| \En[ (1\{U \leq \tau\} -\tau)Z^a_j] |}{\sqrt{\tau(1-\tau)}\hat\sigma_{aj}^Z} \end{equation}
where $\{U_i: i=1,\ldots,n\}$ are i.i.d. uniform $(0,1)$ random variables, and $\hat\sigma_{aj}^Z=\{\En[(Z_j^a)^2]\}^{1/2}$ for $j\in[p]$. The penalty parameter $\lambda_{V\mathcal{T}}$ is defined as
$$\lambda_{V\mathcal{T}} := \max_{a\in V} \Lambda_{a\mathcal{T}}(1-\xi/|V| \mid Z^a ), $$
that is, the maximum of the $1-\xi/|V|$ conditional quantile of $\Lambda_{a\mathcal{T}}$ given in (\ref{DefWpenalty}). Regarding the penalty term for the weighted Lasso in Step 2, we recommend a (theoretically valid) iterative choice. We refer to Appendix \ref{Appendix:Implementation} for the implementation details of the algorithm. We denote $\|\beta\|_{1,\hat\sigma^Z}:=\sum_j \hat\sigma_{aj}^Z|\beta_j|$ the standardized version of the $\ell_1$-norm.

\begin{algorithm}\label{Alg:1}{\rm (CIQGM Estimator.)} For each $a \in V$, $\tau \in \mathcal{T}$, and $j\in [p]$ \\
\enspace \emph{Step 1}. Compute $\hat\beta_{a\tau}$ from $\|\cdot\|_{1,\hat\sigma^Z}$-penalized $\tau$-quantile regression of $X_a$ on  $Z^a$ with penalty $\lambda_{V\mathcal{T}}\sqrt{\tau(1-\tau)}$.\\
\indent\indent\indent Compute $\widetilde\beta_{a\tau}$ from $\tau$-quantile regression of $X_a$ on  $\{Z^a_k : | \hat\beta_{a\tau k} | \geq \lambda_{V\mathcal{T}}\sqrt{\tau(1-\tau)} / \hat\sigma_{ak}^Z\}$.\\
\enspace \emph{Step 2}. Compute $\widetilde\gamma_{a\tau}^j$ from the post-Lasso estimator of $f_{a\tau} Z_{j}^a$ on $f_{a\tau} Z_{-j}^a$.\\
\enspace \emph{Step 3}. Construct the score function $\hat \psi_{ i}(\alpha)= (\tau - 1\{X_{ia}\leq Z^a_{ij}\alpha + Z_{i, -j}^a\widetilde\beta_{a\tau, -j}\})   f_{ia\tau}(Z_{ij}^a-Z_{i,-j}^a\widetilde\gamma_{a\tau}^j)$ and for \\
\indent\indent\indent $L_{a\tau j}(\alpha) = |\En[\hat \psi_i(\alpha)]|^2/\En[\hat \psi_i^2(\alpha)]$, set $\check \beta_{a\tau j} \in \arg\min_{\alpha \in \mathcal{A}_{a\tau j}} L_{a\tau j}(\alpha)$.
%\indent\indent\indent\indent Report $\check \theta_{uj}$ for $u\in \UU$ and $j\in[\pp]$.
\end{algorithm}

Algorithm \ref{Alg:1} above has been studied in \cite{BCK2013robustQR} where it is applied to a single triple $(a,\tau, j)$, and we have used the following parameter space for $\alpha$, $\mathcal{A}_{a\tau j} = \{ \alpha \in \RR : |\alpha - \widetilde \beta_{a\tau j}| \leq 10 / \{\hat \sigma_{aj}^Z \log n \} \}$. Under similar conditions, results that hold uniformly over $(a,\tau, j) \in V\times \mathcal{T}\times [p]$ are achievable (as shown in the next sections) building upon the tools developed in \cite{BC-SparseQR} and \cite{chernozhukov2012gaussian}. Algorithm \ref{Alg:1} is tailored to achieve good rates of convergence in the $\ell_\infty$-norm. In particular, under standard regularity conditions, with probability approaching to 1 we have
$$ \sup_{\tau \in \mathcal{T}} \| \beta_{a\tau} - \check\beta_{a\tau} \|_\infty \lesssim \sqrt{\frac{\log (p|V|n)}{n}}.$$
In order to create an estimate of $E^I(\tau)=\{ (a,b) \in V\times V: \max_{j\in I_a(b)}|\beta_{a\tau j}|>0\}$, we define $$\hat E^I(\tau) =  \left\{ (a,b)\in V\times V : \  \max_{j\in I_a(b)}\frac{|\check\beta_{a\tau j}|}{\mbox{se}(\check\beta_{a\tau j})} > \overline{\rm cv}\right\}$$ where $\mbox{se}(\check\beta_{a\tau j})=\{\tau(1-\tau)\En[\widetilde v^2_{ia\tau j}]^{-1}\}^{1/2}$ with $\tilde{v}_{ia\tau j} = \hat{f}_{ia\tau}\{ Z^a_{ij} - Z^a_{i,-j}\tilde{\gamma}^j_{a\tau}\}$, is an estimate of the standard deviation of the estimator, and the critical value $\overline{\rm cv}$ is set to account for the uniformity over $a\in V$, $\tau \in \mathcal{T}$, and $j\in[p]$. We discuss in the following sections a data driven procedure based on multiplier bootstrap that is theoretically valid in this high dimensional setting.

\begin{remark}[Stepdown Procedure for $\overline{\rm cv}$] Setting a critical value $\overline{\rm cv}$ that accounts for the multiple hypotheses being tested plays an important role to estimate the graph $\hat E^I(\tau)$. Further improvements can be obtained by considering the stepdown procedure of \cite{romano2005exact} for multiple hypothesis testing that was studied for the high-dimensional case in \cite{chernozhukov2013gaussian}. The procedure iteratively creates a suitable sequence of decreasing critical values. In each step only null hypotheses that were not rejected are considered to determine the critical value. Thus, as long as any hypothesis is rejected at a step, the critical value decreases and we continue to the next iteration. The procedure stops when no hypothesis in the current active set is rejected.   \end{remark}

\begin{remark}[Estimation of Conditional Density Function]\label{rem:f}
The algorithm above requires the conditional density function $f_{a\tau}$ which typically needs to be estimated in practice. It turns out that estimation of conditional quantiles yields a natural estimator for the conditional density function as
$$ f_{a\tau} = \frac{1}{\partial Q_{X_a}(\tau\vert Z^a)/\partial \tau}.$$
Therefore, based on  $\ell_1$-penalized quantile regression estimates at the $\tau+h_n$ and $\tau-h_n$ quantile, where $h = h_n \to 0$ denotes a bandwidth parameter, we have\begin{equation}\label{Eq:hatf} \hat f_{a\tau} = \frac{2h}{\hat  Q_{X_a}(\tau+h\vert Z^a)-\hat  Q_{X_a}(\tau-h\vert Z^a)}\end{equation}
as an estimator of $f_{a\tau}$. Under smoothness conditions, it has a bias of order $h^2$. See \cite{BCK2013robustQR} and the references therein for additional comments and estimators.
 \end{remark}

\subsection{Estimators for PQGMs}

In this section we propose an estimator for PQGMs in which case we are interested in the prediction of $X_a$, $a\in V$, using a linear combination of $X_{V\backslash\{a\}}$ under the asymmetric loss discussed in (\ref{Def:Asym}). We will add an intercept as one of the variables for the sake of notation so that $X_{-a}=(1,X_{V\backslash\{a\}}')'$. Given the loss function $\rho_\tau$, the target $d$-dimensional vector of parameters  $\beta_{a\tau}$ is defined as (part of) the solution of the following optimization problem%
\begin{equation}\label{Def:LQRreg}
\beta_{a\tau} \in \arg\min_{\beta} \ \Ep[\rho_{\tau}(X_{a}-X_{-a}'\beta)].%\ and\ \\
%\beta^{b}(\tau) & \in & argmin\ E[\rho_{\tau}(X_{b}-\beta'\mathbf{X}_{V\backslash\{b\}})]
\end{equation}
As we are interested in the case that $d$ is large, the use of high-dimensional tools to achieve consistent estimators is needed. The estimation procedure we proposed is based on $\ell_1$-penalized quantile regression but additional issues need to be considered to cope with the (non-vanishing) difference between the best linear predictor and the conditional quantile function.  Again we consider models that satisfy an approximately sparse condition. Formally, we require the existence of sparse coefficients $\{\bar\beta_{a\tau}:a\in V, \tau \in \mathcal{T}\}$ such that
\begin{equation}\label{Def:LQSparsity}
\max_{a\in V}\sup_{\tau \in \mathcal{T}}\|\bar \beta_{a\tau}\|_0 \leq s \ \ \ \mbox{and} \ \ \ \max_{a\in V}\sup_{\tau \in \mathcal{T}} \{ \Ep[\{X_{-a}'(\beta_{a\tau}-\bar\beta_{a\tau})\}^2] \}^{1/2} \lesssim \sqrt{s/n},
\end{equation} where (again) the sparsity parameter $s$ of the model is allowed to grow as $n$ grows. The high-dimensionality prevents us from using (standard) quantile regression methods and regularization methods are needed to achieve good prediction properties.

A key issue is to set the penalty parameter properly so that it bounds from above
 \begin{equation}\label{DefScorePred}\max_{a\in V} \sup_{\tau \in \mathcal{T}} \max_{j\in [d]}|\En[(1\{X_{a}\leq X_{-a}'\beta_{a\tau}\}-\tau)X_{-a,j}]|.\end{equation}
However, it is important to note that we do not assume that the conditional quantile of $X_a$ is a linear function of $X_{-a}$. Under correct linear specification of the conditional quantile function, $\ell_1$-penalized quantile regression estimator has been studied in \cite{BC-SparseQR}. The case that the conditional quantile function differs from a linear specification by vanishing approximation errors has been considered in \cite{kato2011} and \cite{BCK2013robustQR}. The analysis proposed here aims to allow for non-vanishing misspecification of the quantile function relative to a linear specification while still guarantees good rates of convergence in the $\ell_2$-norm to the best linear specification. Thus the penalty parameter in the penalized quantile regression needs to account for such misspecification and is no longer pivotal as in \cite{BC-SparseQR}.

In order to handle this issue we propose a two step estimation procedure. In the first step, the penalty parameter $\lambda_0$ is conservative and is set via bounds constructed based on symmetrization arguments, similar in spirit to \cite{vdGeer2008,Volume2013}. This leads to $\lambda_0 = 2(1+1/16) \sqrt{2\log(8|V|^2/\xi)/n}$. Although this is conservative, under mild conditions this would lead to estimates that can be leverage to fine tune the penalty choice. The second step uses the preliminary estimator to bootstrap (\ref{DefScorePred}) based on the tools in \cite{chernozhukov2013gaussian} as follows. Specifically, for estimates $\hat\varepsilon_{ia\tau}$ of the ``noise" $\varepsilon_{ia\tau}=1\{X_{ia} \leq X_{i,-a}'\beta_{a\tau}\}-\tau$ for $i\in[n]$, for $a\in V$ define \begin{equation}\label{Def:NewLambda}\bar{\Lambda}_{a\mathcal{T}} := 1.1 \sup_{\tau \in \mathcal{T}} \max_{j\in [d]}\frac{| \En[ g_i\hat\varepsilon_{ia\tau}X_{i,-aj}]|}{ \{\En[\hat\varepsilon_{ia\tau}^2X_{i,-aj}^2]\}^{1/2}}\end{equation} where $(g_i)_{i=1}^n$ is a sequence of i.i.d. standard Gaussian random variables. The new penalty parameter $\bar\lambda_{V\mathcal{T}}$ is defined as
\begin{equation} \label{barlambda}
\bar{\lambda}_{V\mathcal{T}} := \max_{a\in V} \bar{\Lambda}_{a\mathcal{T}} (1-\xi |X_{-a})
\end{equation}
that is, the maximum of the $(1-\xi)$ conditional quantile of $\bar{\Lambda}_{a\mathcal{T}}$. The penalty choice above adapts to the unknown correlation structure across components and quantile indices. The following algorithm states the procedure where we denote weighted $\ell_1$-norms by $\|\beta\|_{1,\hat\sigma^X}:=\sum_j\hat{\sigma}_{aj}^X|\beta_j|$ with $\hat{\sigma}_{aj}^X=\{\En[X_{j}^2]\}^{1/2}$ the standardized version of the $\ell_1$-norm and $\|\beta\|_{1,\hat\varepsilon}:=\sum_j \hat{\sigma}_{a\tau j}^{\varepsilon X}|\beta_j|$ with $ \hat{\sigma}_{a\tau j}^{\varepsilon X} = \{\En[\hat\varepsilon_{a\tau}^2X_{-a,j}^2]\}^{1/2}$ a norm based on the estimated residuals.

\begin{algorithm}\label{Alg:2}{\rm (PQGM Estimator.)} For each $a \in V$,  and $\tau \in \mathcal{T}$\\
\enspace \emph{Step 1}. Compute $\hat\beta_{a\tau}$ from $\|\cdot\|_{1,\hat\sigma^X}$-penalized $\tau$-quantile regression of $X_a$ on  $X_{-a}$ with penalty $\lambda_0$.\\
\indent\indent\indent Compute $\widetilde\beta_{a\tau}$ from $\tau$-quantile regression of $X_a$ on  $\{X_k : | \hat\beta_{a\tau k} | \geq \lambda_{0} / \hat\sigma_{ak}^X \}$.\\
\enspace \emph{Step 2}. For $\hat\varepsilon_{ia\tau}=1\{X_{ia} \leq X_{i,-a}'\widetilde\beta_{a\tau}\}-\tau$ for $i\in[n]$, and $\xi=1/n$, compute $\bar \lambda_{V\mathcal{T}}$ via (\ref{barlambda}).\\
 \emph{Step 3}. Recompute $\hat\beta_{a\tau}$ from $\|\cdot\|_{1,\hat\varepsilon}$-penalized $\tau$-quantile regression of $X_a$ on  $X_{-a}$ with penalty $\bar \lambda_{V\mathcal{T}} $.\\
\indent\indent\indent Compute $\check\beta_{a\tau}$ from $\tau$-quantile regression of $X_a$ on  $\{X_k : | \hat\beta_{a\tau k} | \geq \bar \lambda_{V\mathcal{T}}  / \hat{\sigma}_{a\tau k}^{\varepsilon X} \}$.
%\indent\indent\indent\indent Report $\check \theta_{uj}$ for $u\in \UU$ and $j\in[\pp]$.
\end{algorithm}

Under regularity conditions stated in Section \ref{Sec:MainTheory}, with probability approaching 1, we have
$$ \max_{a\in V} \sup_{\tau \in \mathcal{T}} \| \beta_{a\tau} - \check\beta_{a\tau} \| \lesssim \sqrt{\frac{s\log (|V|n)}{n}}.$$
The estimate of the prediction quantile graph is given by the support of $(\check\beta_{a\tau})_{a\in V, \tau \in \mathcal{T}}$, namely $$\hat E^P(\tau) = \left\{ (a,b)\in V\times V : \  |\widehat\beta_{a\tau b}| > \bar \lambda_{V\mathcal{T}} /  \hat{\sigma}_{a\tau b}^{\varepsilon X} \right \}.$$ That is, it is induced by covariates selected by the $\ell_1$-penalized estimator. Those thresholded estimators not only have the same rates of convergence as of the original penalized estimators but also possess additional sparsity guarantees.

\subsection{Estimators for $\mathcal{W}$-Conditional Quantile Graphical Models}

In order to handle the additional conditioning events $\Omega_\varpi$, $\varpi\in\mathcal{W}$, we propose to modify Algorithms \ref{Alg:1} and \ref{Alg:2} based on kernel smoothing. To that extent, we assume the observed data is of the form $\{(X_{iV}, W_i) : i=1,\ldots, n\}$, where $W_i$ might be defined through additional variables. Furthermore, we assume for each conditioning event $\varpi\in \mathcal{W}$ we have access to a kernel function $K_\varpi$ that is applied to $W$, to represent the relevant observations associated with $\varpi$ (recall that we denote $\Pr( W \in \Omega_\varpi)$ as $\Pr(\varpi)$). We assume that $K_\varpi(W) = 1\{W \in \Omega_\varpi\}$.

\begin{example}[Stock Returns Under Market Downside Movements, continued] In Example \ref{Ex:Downside}, we have $W$ as the market return and the conditioning event as $\Omega_\varpi=\{W\leq \varpi\}$ which is parameterized by $\varpi\in\mathcal{W}$, a closed interval in $\RR$. We might be interest in a fixed $\varpi$ or on a family of values $\varpi\in(-\bar \varpi, 0]$. The latter induces $\mathcal{W} = \{\Omega_\varpi=\{W\leq \varpi\} : \varpi\in(-\bar \varpi, 0] \}$. The kernel function is simply $K_{\varpi}(t) = 1\{t\leq \varpi\}$.
\end{example}

This framework encompasses the previous framework by having $K_\varpi(W) = 1$ for all $W$. Moreover, it allows for a richer class of estimands which require estimators whose properties should hold uniformly over $\varpi \in \mathcal{W}$ as well. Next we propose estimators for this setting, i.e. we generalize the previous methods to account for the additional conditioning on $\varpi\in \mathcal{W}$. In what follows, with a slight abuse of notation we use $\varpi$ to denote not only the index but also the event $\Omega_\varpi$. For further notational convenience, we denote $u=(a,\tau,\varpi)\in \mathcal{U} := V\times\mathcal{T}\times\mathcal{W}$ so that the set $\mathcal{U}$ collects all the three relevant indices. With $\hat{\sigma}^{Z}_{a\varpi j} = \{\En[K_\varpi(W)(Z_j^a)^2]\}^{1/2}$, we define the following weighted $\ell_1$-norm $ \|\beta\|_{1,\varpi} = \sum_{j\in [p]} \hat{\sigma}^{Z}_{a\varpi j} |\beta_j|.$ This norm is $\varpi$ dependent and provides the proper adjustments as we condition on different events associated with different $\varpi$'s.

We first consider estimators of CIQGMs conditional on the events in $\mathcal{W}$. In this setting, the model is correctly specified up to small approximation errors. The definition of the penalty parameter will be based on the random variable
$$ \Lambda_{a\mathcal{T}\mathcal{W}} = \sup_{\tau \in \mathcal{T}, \varpi \in \mathcal{W}} \max_{ j\in[p]} \left|\frac{\En[K_\varpi(W)(1\{U \leq \tau\} - \tau) Z^a_j]}{\sqrt{\tau(1-\tau)} \hat{\sigma}^{Z}_{a\varpi j}}\right| $$
where $U_i$ are independent uniform $(0,1)$ random variables, and set the penalty $$\lambda_{V\mathcal{T}\mathcal{W}} = \max_{a\in V}\Lambda_{a\mathcal{T}\mathcal{W}}(1-\xi/\{|V|n^{1+2d_W}\}\vert Z^a, W),$$
that is, the maximum of the $(1-\xi/\{|V|n^{1+2d_W}\})$ conditional quantile of $\Lambda_{a\mathcal{T}\mathcal{W}} $. Algorithm \ref{Alg:1prime} provides the definition of the estimator. Here $\mathcal{A}_{uj} = \{ \alpha \in \RR :  |\alpha - \widetilde \beta_{uj}| \leq 10 / \{\hat{\sigma}^{Z}_{a\varpi j} \log n \} \}$, and denote $\lambda_u := \lambda_{V\mathcal{T}\mathcal{W}}\sqrt{\tau(1-\tau)}$.

\begin{algorithm}\label{Alg:1prime}{\rm ($\mathcal{W}$-Conditional CIQGM Estimator.)} For $(a,\tau,\varpi)\in V\times \mathcal{T}\times\mathcal{W}$ and $j\in [p]$\\
\enspace \emph{Step 1}. Compute $\hat\beta_{u}$ from  $\|\cdot\|_{1,\varpi}$-penalized $\tau$-quantile regression of $K_{\varpi}(W)(X_a;Z^a)$ with penalty $\lambda_u$.\\
\indent\indent\indent Compute $\widetilde{\beta}_{u}$ from $\tau$-quantile regression of $K_\varpi(W)(X_a; \{Z^a_k : | \hat\beta_{uk} | \geq   \lambda_u/ \hat{\sigma}^{Z}_{a\varpi j} \})$.\\
\enspace \emph{Step 2}. Compute $\widetilde\gamma_{u}^j$ from the post-Lasso estimator of $K_\varpi(W)f_{u}Z_{j}^a$ on $K_\varpi(W)f_{u}Z_{-j}^a$.\\
\enspace \emph{Step 3}. Construct the score function $\hat \psi_{i}(\alpha)= K_\varpi(W_i)(\tau - 1\{X_{ia}\leq Z^a_{ij}\alpha + Z_{i,-j}^a\widetilde\beta_{u, -j}\})   f_{iu}(Z_{ij}^a-Z_{i,-j}^a\widetilde\gamma_{u}^j)$  \\
\indent\indent\indent and for $L_{uj}(\alpha) = |\En[\hat \psi_i(\alpha)]|^2/\En[\hat \psi_i^2(\alpha)]$, set $\check \beta_{uj} \in \arg\min_{\alpha \in \mathcal{A}_{uj}} L_{uj}(\alpha)$ .
%\indent\indent\indent\indent Report $\check \theta_{uj}$ for $u\in \UU$ and $j\in[\pp]$.
\end{algorithm}

Next we consider estimators of PQGMs conditional on the events in $\mathcal{W}$. Similar to the previous case, for $a\in V$ define \begin{equation} \bar{\Lambda}_{a\mathcal{T}\mathcal{W}} := 1.1 \sup_{\tau \in \mathcal{T}, \varpi \in \mathcal{W}} \max_{j \in [d]}\frac{| \En[ K_\varpi(W) g\hat\varepsilon_{a\tau\varpi}X_{-a,j}]|}{\{ \En[K_\varpi(W)\hat\varepsilon_{a\tau\varpi}^2X_{-a,j}^2]\}^{1/2}}\end{equation}
where $(g_i)_{i=1}^n$ is a sequence of i.i.d. standard Gaussian random variables. The new penalty parameter $\bar\lambda_{V\mathcal{T}}$ is defined as
\begin{equation}\label{Def:NewLambda2} \bar{\lambda}_{V\mathcal{T}\mathcal{W}} := \max_{a\in V} \bar{\Lambda}_{a\mathcal{T}\mathcal{W}} (1-\xi |X_{-a})
\end{equation}
that is, the maximum of the $(1-\xi)$ conditional quantile of $\bar{\Lambda}_{a\mathcal{T}\mathcal{W}}$.
It will also be useful to define another weighted $\ell_1$-norm, $\|\beta\|_{1,\varpi\hat\varepsilon}:=  \sum_{j} \hat{\sigma}_{a\tau \varpi j}^{\varepsilon X}|\beta_j|$ with $ \hat{\sigma}_{a\tau \varpi j}^{\varepsilon X} = \{\En[K_\varpi(W)\widehat\varepsilon_{a\tau\varpi}^2X_{-a, j}^2]\}^{1/2}$. We also denote $\hat{\sigma}^{X}_{a\varpi j}=\{\En[K_\varpi(W)X_{-a,j}^2]\}^{1/2}$.  The penalty choice and weighted $\ell_1$-norm adapt to the unknown correlation structure across components and quantile indices. The following algorithm states the procedure, with $ \lambda_{0\mathcal{W}} = 2(1+1/16) \sqrt{2\log(8|V|^2\{ne/d_W\}^{2d_W}/\xi)/n}$.

\begin{algorithm}\label{Alg:2prime}{\rm ($\mathcal{W}$-Conditional PQGM Estimator.)} For $(a,\tau,\varpi) \in V\times \mathcal{T}\times\mathcal{W}$\\
\enspace \emph{Step 1}. Compute $\hat\beta_{u}$ from $\|\cdot\|_{1,\varpi}$-penalized $\tau$-quantile regression of $X_a$ on  $X_{-a}$ with penalty $\lambda_{0\mathcal{W}}$.\\
\indent\indent\indent Compute $\widetilde\beta_{u}$ from $\tau$-quantile regression of   $K_{\varpi}(W)(X_a; \{X_{-a, k}: |\hat\beta_{uk} | \geq \lambda_{0\mathcal{W}} /\hat{\sigma}^{X}_{a\varpi k} \})$.\\
\enspace \emph{Step 2}. For $\hat\varepsilon_{iu}=1\{X_{ia} \leq X_{i,-a}'\widetilde\beta_{u}\}-\tau$ for $i\in[n]$, and $\xi=1/n$, compute $\bar \lambda_{V\mathcal{T}\mathcal{W}}$ via (\ref{Def:NewLambda2}).\\
\emph{Step 3}. Recompute $\hat\beta_{u}$ from $\|\cdot\|_{1,\varpi\widehat\varepsilon}$-penalized $\tau$-quantile regression of $K_\varpi(W)(X_a; X_{-a})$ with penalty $\bar \lambda_{V\mathcal{T}\mathcal{W}}$. \\
\indent\indent\indent Compute $\check\beta_{u}$ from $\tau$-quantile regression of $K_\varpi(W)(X_a; \{X_{-a, k}: |\hat\beta_{uk} | \geq \bar \lambda_{V\mathcal{T}\mathcal{W}} /\hat{\sigma}_{uk}^{\varepsilon X}\})$.\\
\end{algorithm}

\begin{remark}[Computation of Penalty Parameter over $\mathcal{W}$] The penalty choices require one to maximize over $a\in V$, $\tau\in\mathcal{T}$ and $\varpi\in\mathcal{W}$. The set $V$ is discrete and does not pose a significant challenge. However both other sets are continuous and additional care is needed. In most applications we are concerned with the case that $\mathcal{W}$ is a low dimensional VC class of sets and it impacts the calculation only through indicator functions, which is precisely the case of $\mathcal{T}$. It follows that only a polynomial number (in $n$) of different values of $\tau$ and $\varpi$ would need to be considered. \footnote{
A class of sets is said to be a VC class, if the VC dimension is finite. In what follows we use that the VC dimension provides a way to control how much we can overfit the data and it will also lead to (theoretically valid) recommendations for the penalty parameters. For the formal definition of VC class, see \cite{vdV-W}).}

\end{remark}

\section{Main Theoretical Results}\label{Sec:MainTheory}

This section is devoted to theoretical guarantees associated with the proposed estimators. We will establish rates of convergence results for the proposed estimators as well as the (uniform) validity of confidence regions. These results build upon and contribute to an increasing literature on the estimation of many processes of interest with (high-dimensional) nuisance parameters.

Throughout, we will provide results for the estimators of the $\mathcal{W}$-conditional quantile graphical models as those can be generalized the other models by setting $K_\varpi(W) = 1$. Although some of the tools are similar, CIQGMs and PQGMs require different estimators and are subject to different assumptions. Thus, substantial different analyses are required.

\subsection{$\mathcal{W}$-Conditional CIQGM}
For $u=(a,\tau,\varpi)\in \mathcal{U}$, define the $\tau$-conditional quantile function of $X_a$ given $X_{V\backslash\{a\}}$ and $\varpi$ as
\begin{equation}\label{Def:ModelCI} Q_{X_{a}}(\tau\vert X_{V\backslash \{a\}}, \varpi ) = Z^a\beta_u + r_u,\end{equation}
where $Z^a$ is a $p$-dimensional vector of (known) transformations of $X_{V\backslash \{a\}}$, and $r_u$ is an approximation error. The event $\varpi \in \mathcal{W}$ will be used for further conditioning through the function $K_\varpi(W) = 1\{ W \in \varpi\}$.

We let $f_{X_a\mid X_{V\backslash \{a\}}, \varpi }(\cdot\vert X_{V\backslash \{a\}},\varpi)$  denote the conditional density function of $X_a$ given $X_{V\backslash \{a\}}$ and $\varpi\in\mathcal{W}$. We define $f_{u}:=f_{X_a\vert X_{V\backslash \{a\}}, \varpi }(Q_{X_{a}}(\tau\vert X_{V\backslash \{a\}}, \varpi )\vert  X_{V\backslash \{a\}}, \varpi )$ as the value of the conditional density function evaluated at the $\tau$-conditional quantile. In our analysis we will consider  for $u\in\mathcal{U}$
 \begin{equation}\label{Def:fuQuantileTrue}\underline f_{u} = \inf_{\|\delta\|=1} \frac{\Ep[f_u \{Z^a\delta\}^{2}\vert \varpi]}{\Ep[\{Z^a\delta\}^{2}\vert \varpi ]} \ \ \mbox{and} \ \ \underline{f}_\mathcal{U} =\min_{u\in\mathcal{U}} \underline f_u.\end{equation}
Moreover, for each $u\in\mathcal{U}$ and $j\in[p]$ we define
\begin{equation}\label{Def:Model12} \gamma_{u}^j = \arg\min_{\gamma} \Ep[ f_{u}^2K_\varpi(W)(Z^a_j - Z^a_{-j}\gamma)^2].  \end{equation}
This provides a weighted projection to construct the residuals
$$ v_{uj} = f_u(Z^a_j - Z^a_{-j}\gamma_{u}^j) $$
that satisfy $\Ep[ f_{u} Z^a_{-j} v_{uj}  \vert \varpi] =0$ for each $(u,j)\in \mathcal{U}\times[p]$.

The estimands of interest are $\beta_u \in \RR^p $, $u \in  \mathcal{U}$, and can be written as the solution of (a continuum of) moment equations. Letting $\beta_{uj}$ denote the $j$th component of $\beta_u$ so that $\beta_{uj} \in \RR$ solves
$$ \Ep[\psi_{uj}(X,W,\beta,\eta_{uj}) ] = 0, $$
where the function $\psi_{uj}$ is given by
$$ \psi_{uj}(X,W,\beta,\eta_{uj}) = K_\varpi(W)( \tau - 1\{ X_a \leq Z^a_j \beta + Z^a_{-j}\eta_{uj}^{(1)} + \eta_{uj}^{(3)} \})f_u(Z_j^a-Z^a_{-j}\eta_{uj}^{(2)}),$$
and the true value of the nuisance parameter is given by $\eta_{uj}=(\eta_{uj}^{(1)},\eta_{uj}^{(2)},\eta_{uj}^{(3)})$ with $\eta_{uj}^{(1)}=\beta_{u,-j}$, $\eta_{uj}^{(2)}=\gamma^j_u$, and $\eta_{uj}^{(3)}=r_u$. In what follows $c, C$ denote some fixed constant, $\delta_n$ and $\Delta_n$ denote sequences go to zero with $\delta_n = n^{-\mu}$ for some sufficiently small $\mu$. Denote $\mu_\mathcal{W}=\inf_{\varpi\in\mathcal{W}}\Pr(\varpi)$.

{\bf Condition CI.} \textit{Let $u=(a, \tau, \varpi) \in \mathcal{U} := V \times \mathcal{T}\times \mathcal{W}$ and $(X_i,W_i)_{i=1}^n$ denote a sequence of independent and identically distributed random vectors generated accordingly to models (\ref{Def:ModelCI}) and (\ref{Def:Model12}):}

\textit{(i) Suppose $\sup_{u\in\mathcal{U}, j\in[p]}\{\|\beta_u\|+\|\gamma^j_u\|\}\leq C$ and $\mathcal{T}$ is a fixed compact set:
 (a) there exists $s=s_n$ such that $\sup_{u\in\mathcal{U}, j\in[p]}\{\|\beta_u\|_0 + \|\bar \gamma^j_u\|_0\}\leq s$, $\sup_{u\in\mathcal{U}, j\in[p]}\|\bar \gamma^j_u-\gamma^j_u\|+s^{-1/2}\|\bar \gamma^{j}_u-\gamma^{j}_u\|_1\leq C \{n^{-1}s\log(|V| p n)\}^{1/2}$, where $\bar \gamma^j_u$ is approximately sparse;
 (b) the conditional distribution function of $X_a$ given $X_{V\backslash \{a\}}$ and $\varpi$ is absolutely continuous with continuously differentiable density $f_{X_a\vert  X_{V\backslash \{a\}},\varpi}(t\vert X_{V\backslash \{a\}},\varpi)$ bounded by $\bar f$ and its derivative bounded by $\bar f'$ uniformly over $u\in\mathcal{U}$; (c) $|f_u-f_{u'}|\leq L_f\|u-u'\|$, $\|\beta_u-\beta_{u'}\|\leq L_\beta \|u-u'\|^\kappa$ with $\kappa \in [1/2,1]$,  and $\Ep[ |K_\varpi(W)-K_{\varpi'}(W)|] \leq L_K\|\varpi-\varpi'\|$; (d) the VC dimension $d_W$ of the set $\mathcal{W}$ is fixed, $\{Q_{X_a}(\tau\vert X_{V\backslash \{a\}},\varpi) : (\tau,\varpi)\in \mathcal{T} \times \mathcal{W}\}$ is a VC-subgraph with VC-dimension $1+Cd_W$ for every $a\in V$; }

\textit{(ii) The following moment conditions hold uniformly over $u\in\UU$ and $j\in[p]$: $\Ep[|f_uv_{uj}Z^a_k|^2\vert \varpi]^{1/2}\leq C\underf_u$, $\min_{a\in V}\inf_{\|\delta\|=1}\Ep[\{(X_a,Z^a)\delta\}^2 \vert \varpi]\geq c$, $ \max_{a\in V}\sup_{\|\delta\|=1}\Ep[ \{(X_a,Z^a)\delta\}^4 \vert \varpi ]\leq C$, $\Ep[f_u^2 (Z^a\delta)^{2}\vert \varpi] \leq C\underf_u^2\Ep[(Z^a\delta)^{2}\vert \varpi ]$, $\max_{j,k}\frac{\Ep[|f_uv_{uj}Z^a_k|^3\vert \varpi]^{1/3}}{\Ep[|f_uv_{uj}Z^a_k|^2\vert \varpi]^{1/2}}\log^{1/2}(pn|V|) \leq \delta_n \{n \Pr(\varpi)\}^{1/6}$;}

\textit{(iii) Furthermore, for some fixed $q\geq 4\vee (1+2d_W)$, $\sup_{u\in\,\mathcal{U},\|\delta\|=1} \Ep[|(X_a,Z^a)\delta|^2r_u^2 \vert\varpi]\leq C \Ep[r_u^2 \vert \varpi] \leq Cs/n$, $\max_{u\in\mathcal{U}, j\in[p]}|\Ep[f_ur_uv_{uj}\vert \varpi]|\leq \delta_n n^{-1/2}$, $\Ep[ \max_{i\leq n} \sup_{u\in \mathcal{U}}|K_\varpi(W)r_{iu}|^q ] \leq C$, and with probability $1-\Delta_n$, uniformly over $u\in \UU, j\in[p]$: $\En[r_u^2v_{uj}^2\vert\varpi] + \En[r_u^2\vert\varpi]\lesssim n^{-1}s\log(p|V| n)$, $\En[K_\varpi(W)\{|r_u|+r_u^2\}(Z^a\delta)^2] \leq \delta_n \En[K_\varpi(W)f_u(Z^a\delta)^2]$;}

\textit{(iv) For a fixed $q \geq 4 \lor (1+d_W)$,  $\mathrm{diam}(\mathcal{W}) \leq n^{1/2q}$, $\Ep[ \max_{i\leq n}\|X_{iV}\|_\infty^q\vee \max_{a\in V}\|Z^a_i\|_\infty^q]^{1/q}/\mu_\mathcal{W}\leq M_n$, $\Ep[\max_{i\leq n} \sup_{u\in\UU, j\in[p]} |v_{iuj}|^q ]^{1/q}\leq L_n $, $ (L_f+L_K)^2M_n^2\log^2(p|V|n)\leq \delta_n n\mu_\mathcal{W}^3\underf_\mathcal{U}^6 $, $M_n^{4}\log(p|V|n) \log n\leq \delta_n^2 n \mu_\mathcal{W}^2\underf_\mathcal{U}^2$, $s^2\log^2(p|V|n)\leq \delta_n^2 n \underf_\mathcal{U}^4\mu_\mathcal{W}^6$, %$L_nn^{1/q}s\log^{3/2}(p\vee n) \leq \delta_n (np_\mathcal{W})^{1/2}$,
$s^3\log^3(p|V|n)\leq \delta_n^4 n \underline{f}_{\mathcal{U}}^2\mu_\mathcal{W}^3$,
$L_n^2s\log^{3/2} (p|V|n) \leq \delta_n \underline{f}_{\mathcal{U}}(n\mu_\mathcal{W})^{1/2}$, $M_n s \sqrt{\log( p|V|n)} \leq \delta_n n^{1/2}\mu_\mathcal{W}\underf_\mathcal{U}$.}

Condition CI assumes various conditional moment conditions to allow for the estimation to be conditional on $\varpi \in \mathcal{W}$. Those are analogous to the (unconditional) conditions in the high-dimensional literature in quantile regression
models, \cite{BCK2013robustQR}. In particular, condition CI(i) assumes smoothness of the density function, and of coefficients. Condition CI(ii) assumes conditions on the (conditional) population design matrices such as the ratio between eigenvalues. Condition CI(iii) pertains to the approximations errors and assumes mild moment conditions. Finally Condition CI(iv) provides sufficient conditions on the allowed growth of the model via $p$ and $|V|$ relative to the available sample size $n$. Note, Condition CI(iii) also assume $d_W$ is bounded by fixed $q$, and the proof can easily be extended to other cases.

Condition CI is a high level condition intended to allow approximate sparse models, approximation errors, tail events in $\mathcal{W}$, and to require only $q$ moments (going beyond sub-Gaussian variables). When applied to the special case of sub-Gaussian, exactly sparse, and singleton $\mathcal{W}$, it becomes a relatively standard assumption. For example, without approximation error, e.g. the multivariate Gaussian
case, Condition CI(iii) can be removed entirely. By allowing for a large number of variables (and transformations) the approximation errors can be controlled when the quantile functions belong to some smooth function class (e.g. Sobolev space). Although it is outside the scope of the current work, the ideas and results can be generalized
to dependent data, using results from \cite{chernozhukov2014clt}.

Based on Condition CI, we derive our main results regarding the proposed estimator.
Moreover, we also establish new results for $\ell_1$-penalized quantile regression methods that hold uniformly over the indices $u \in \mU$. The following theorems summarize these results.

\begin{theorem}[{Uniform Rates of Convergence for $\mathcal{W}$-Conditional Penalized Quantile Regression}]\label{theorem:rateQRgraph}
Under Condition CI, we have that with probability at least $1-o(1)$
$$ \|\hat\beta_u - \beta_u\| \lesssim  \sqrt{\frac{s(1+d_W)\log(p|V|n)}{n \underline{f}_u\Pr(\varpi)}}, \ \ \ \mbox{uniformly over $u=(a,\tau,\varpi) \in\mathcal{U}$}$$
Moreover, the thresholded estimator $\hat\beta^{\bar\lambda}$, with $\bar\lambda=\sqrt{(1+d_W)\log(p|V|n)/n}$ and $\hat\beta_{uj}^{\bar\lambda}=\hat\beta_{uj}1\{|\hat\beta_{uj}|>\\
\bar\lambda  \hat{\sigma}^{Z}_{a\varpi j}\}$, satisfies the same rate and $\|\hat\beta^{\bar\lambda}\|_0\lesssim s$.
\end{theorem}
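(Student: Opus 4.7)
The plan is to adapt the three-step analysis of $\ell_1$-penalized quantile regression developed in \cite{BC-SparseQR,BCK2013robustQR} to the present $\mathcal{W}$-weighted, high-dimensional, uniform-in-$u$ setting. The weight $K_\varpi(W)=1\{W\in\Omega_\varpi\}$ rescales the effective sample size from $n$ down to $n\Pr(\varpi)\geq n\mu_\mathcal{W}$, so every deviation bound will carry a factor of $\Pr(\varpi)^{-1}$, and after absorbing the complexity of indexing over $\varpi\in\mathcal{W}$ the pivotal penalty level will scale like $\sqrt{(1+d_W)\log(p|V|n)}$, which is where the $\sqrt{1+d_W}$ factor in the stated rate enters.

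First I would show that the data-driven penalty majorizes the score uniformly in $u\in\mathcal{U}$: with probability $1-o(1)$,
$$\lambda_{a\tau\varpi}\geq c\, n \max_{j\in[p]} \frac{|\En[K_\varpi(W)(\tau-1\{X_a\leq Z^a\beta_u\})Z^a_j]|}{\sqrt{\tau(1-\tau)\En[K_\varpi(W)(Z^a_j)^2]}}\, .$$
I decompose the numerator into a \emph{pivotal} piece obtained by replacing $Z^a\beta_u$ by $Q_{X_a}(\tau\mid X_{V\setminus a},\varpi)$ and a residual piece of order $f_u r_u Z^a_j$ coming from the approximation error. Conditionally on $(X_{V\setminus a},W)$ the indicator $1\{X_a\leq Q_{X_a}(\tau\mid X_{V\setminus a},\varpi)\}$ has the same law as $1\{U\leq\tau\}$ with $U\sim\mathrm{Uniform}(0,1)$, so the pivotal piece is dominated by $\lambda_{a\tau\varpi}$ through the conditional-quantile construction of $\Lambda_{a\mathcal{T}\mathcal{W}}$. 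The residual piece is controlled by $\max_{u,j}|\Ep[f_u r_u v_{uj}\mid\varpi]|\leq\delta_n n^{-1/2}$ together with a concentration bound that uses the moment conditions in Condition~CI(ii)--(iii). Uniformity in $\varpi\in\mathcal{W}$ is obtained by a VC-class maximal inequality (the VC dimension $d_W$ of $\mathcal{W}$ is the source of the $\sqrt{1+d_W}$), uniformity in $\tau\in\mathcal{T}$ by standard QR-process entropy, and uniformity in $a\in V$ by union bound; the probability level $1-\gamma/\{|V|n^{1+2d_W}\}$ in the penalty was calibrated to absorb exactly this combined complexity.

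Second I would establish a uniform restricted strong-convexity bound for the $K_\varpi$-weighted check loss. Using Knight's identity and the density lower bound $\underf_u$, one shows that for $\delta$ in the restricted cone $\{\|\delta_{T_u^c}\|_{1,\varpi}\leq 2c_0\|\delta_{T_u}\|_{1,\varpi}\}$ and $\|\delta\|\leq R$,
$$\Ep\bigl[K_\varpi(W)\{\rho_\tau(X_a-Z^a(\beta_u+\delta))-\rho_\tau(X_a-Z^a\beta_u)+(\tau-1\{X_a\leq Z^a\beta_u\})Z^a\delta\}\bigr]\gtrsim \underf_u\Pr(\varpi)\|\delta\|^2.$$
Passing to the empirical version uniformly in $u$ is a symmetrization and maximal-inequality argument over the VC class $\mathcal{W}$ and the QR index $\tau$; it is precisely at this step that the growth conditions in CI(iv) involving $M_n$, $L_n$, $s$, $\underf_\mathcal{U}$ and $\mu_\mathcal{W}$ are invoked to close the empirical-process bound.

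Combining the two steps in the standard basic inequality, together with the approximation-error bounds in CI(ii)--(iii), yields
$$\|\hat\beta_u-\beta_u\|\lesssim \sqrt{s}\,\lambda_{a\tau\varpi}/\{n\underf_u\Pr(\varpi)\}\lesssim \sqrt{s(1+d_W)\log(p|V|n)/(n\underf_u\Pr(\varpi))}\, ,$$
uniformly in $u\in\mathcal{U}$. For the thresholded estimator I would invoke the ``sparsity of the $\ell_1$-minimizer'' bound via the sparse-eigenvalue argument to obtain $\|\hat\beta_u\|_0\lesssim s$, then observe that hard-thresholding at $\bar\lambda\sqrt{\En[K_\varpi(W)(Z^a_j)^2]}$ only discards coordinates of order the pointwise rate, so $\|\hat\beta^{\bar\lambda}-\beta_u\|$ remains of the same order while $\|\hat\beta^{\bar\lambda}\|_0\lesssim s$ is inherited. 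The main obstacle throughout is the uniformization in $\varpi$: both the pivotal penalty control and the restricted strong convexity must hold simultaneously across a continuous VC class of conditioning events while the relevant subsample has size only $n\Pr(\varpi)\geq n\mu_\mathcal{W}$. The growth conditions in Condition~CI(iv), in particular $s^3\log^3(pn|V|)\leq \delta_n^4 n\underf_\mathcal{U}^2\mu_\mathcal{W}^3$ and $q\geq 4\vee(1+2d_W)$, are precisely tuned to make these empirical-process bounds close in this uniform sense.
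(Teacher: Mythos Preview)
Your proposal is broadly correct and follows the same three-part architecture as the paper: penalty dominates the score, a quadratic lower bound on the expected loss (identification/RSC), and a combination via the basic inequality. The paper organizes these through three events $\Omega_1$, $\Omega_2$, $\Omega_3$ and an identification lemma, yielding first a bound on the weighted prediction norm $\|\sqrt{f_u}Z^a(\hat\beta_u-\beta_u)\|_{n,\varpi}$ and then converting to $\|\hat\beta_u-\beta_u\|$.

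There are, however, two organizational differences worth noting. First, you decompose the score at $Z^a\beta_u$ into a pivotal piece plus a residual involving $f_u r_u Z^a_j$. The paper instead defines the score at the \emph{true} conditional quantile $Z^a\beta_u+r_u$, so that $1\{X_a\leq Z^a\beta_u+r_u\}$ has exactly the law of $1\{U\leq\tau\}$ and the event $\Omega_1$ is pivotal without any remainder. The approximation error $r_u$ is then absorbed into a separate event $\Omega_2$ that controls $\hat R_u(\beta_u):=\En[K_\varpi(W)\{\rho_\tau(X_a-Z^a\beta_u)-\rho_\tau(X_a-Z^a\beta_u-r_u)-\ldots\}]$ via Knight's identity; this cleanly separates the penalty calibration from the approximation error and is what makes the adaptive-in-$u$ rate transparent. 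Second, for the thresholded estimator you propose first establishing $\|\hat\beta_u\|_0\lesssim s$ via a sparse-eigenvalue/KKT argument and then inheriting sparsity after thresholding. The paper takes a shorter route: it simply uses the $\ell_1$-rate $\|\hat\beta_u-\beta_u\|_{1,\varpi}\lesssim s\bar\lambda$ together with the elementary bound $|\supp(\hat\beta_u^{\bar\lambda})|\leq s+\|\hat\beta_u-\beta_u\|_{1,\varpi}/\bar\lambda$ (their thresholding lemma), which gives $\|\hat\beta_u^{\bar\lambda}\|_0\lesssim s$ and the same prediction/$\ell_1$ rates in one stroke, without any direct sparsity bound on the un-thresholded Lasso solution. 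Finally, a small caution: your intermediate display $\|\hat\beta_u-\beta_u\|\lesssim \sqrt{s}\,\lambda_{a\tau\varpi}/\{n\underf_u\Pr(\varpi)\}$ has the curvature factor to the wrong power; the paper's route---bounding $\|\sqrt{f_u}Z^a\delta_u\|_{n,\varpi}$ first (which carries no $\Pr(\varpi)$ or $\underf_u$ because the weighted $\|\cdot\|_{1,\varpi}$ penalty self-normalizes) and then dividing by $\{\underf_u\Pr(\varpi)\}^{1/2}$---is what delivers the $\{\underf_u\Pr(\varpi)\}^{-1/2}$ dependence you correctly state at the end.
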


Theorem \ref{theorem:rateQRgraph} builds upon ideas in \cite{BC-SparseQR} however the proof strategy is designed to derive rates that are adaptive to each $u\in \mathcal{U}$. Indeed the rates of convergence are $u$-dependent and they show a slower rate for rare events $\varpi \in \mathcal{W}$.

\begin{theorem}[{Uniform Rates of Convergence for $\mathcal{W}$-Conditional Weighted Lasso}]\label{theorem:rateLASSOgraph}
Under Condition CI, we have that with probability at least $1-o(1)$
$$ \|\hat\gamma^j_u - \gamma^j_u\| \lesssim \frac{1}{\underline{f}_{u}} \sqrt{\frac{s(1+d_W)\log(p|V|n)}{n\Pr(\varpi)}} \ \ \ \mbox{and} \ \ \ \|\hat\gamma_u^j\|_0 \lesssim s, \ \ \ \mbox{uniformly over $u=(a,\tau,\varpi) \in\mathcal{U}$, $j\in[p]$.}$$
\end{theorem}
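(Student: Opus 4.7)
The plan is to recognize that $\hat\gamma^j_u$ arises from the weighted Lasso (followed by refit) for the regression of $Z_j^a$ on $Z_{-j}^a$ with observation weights $f_u K_\varpi(W)^{1/2}$, and the target $\gamma^j_u$ satisfies the first-order (population) orthogonality $\Ep[f_u^2 K_\varpi(W) Z^a_{-j,k}(Z^a_j - Z^a_{-j}\gamma^j_u)] = 0$ for each $k$. With this in hand, I would follow the standard three-ingredient recipe for $\ell_1$-penalized least squares (penalty domination, restricted eigenvalue, oracle inequality), tracking dependence on $u=(a,\tau,\varpi)$ so the bounds hold uniformly.

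First, I would show that for a penalty level $\lambda$ chosen at $\lambda \asymp \underline{f}_u\{\Pr(\varpi)\}^{1/2}\sqrt{(1+d_W)\log(p|V|n)/n}$ (up to a multiplicative constant strictly greater than one) the event
\[ \max_{k\in[p]}|\En[f_u^2 K_\varpi(W)(Z^a_j - Z^a_{-j}\gamma^j_u)Z^a_{-j,k}]| \leq \lambda/c \]
holds with probability $1-o(1)$ uniformly over $u\in\UU$ and $j\in[p]$. The uniformity is the only delicate point: the class $\{K_\varpi:\varpi\in\mathcal{W}\}$ is VC of dimension $d_W$ (so the uniform entropy integral contributes the $\sqrt{1+d_W}$), the fourth-moment bounds and envelope controls $M_n$, $L_n$ in Condition~CI(ii)--(iv) let me apply a maximal inequality of Chernozhukov--Chetverikov--Kato type, and the Lipschitz bounds on $f_u$ in $u$ together with $\Ep[|K_\varpi(W)-K_{\varpi'}(W)|]\leq \bar L\|\varpi-\varpi'\|$ permit the discretization needed to pass from finite to continuous $u$.

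Second, I would establish a restricted eigenvalue for the empirical weighted Gram $\En[f_u^2 K_\varpi(W) Z^a_{-j}(Z^a_{-j})^\top]$. At the population level, Condition~CI(ii) gives $\Ep[f_u^2(Z^a\delta)^2\mid\varpi]\geq c\,\underline{f}_u^2\,\Ep[(Z^a\delta)^2\mid\varpi]$ and $\Ep[(Z^a\delta)^2\mid\varpi]\geq\underline{\kappa}\|\delta\|^2$, so the restricted eigenvalue is bounded below by a constant times $\underline{f}_u^2\Pr(\varpi)$. Transferring to the empirical object uniformly over $u$ uses a sparse-eigenvalue concentration inequality exactly as in Belloni--Chernozhukov, applied on the VC-indexed class; the sample-size requirement $s\log^3(p|V|n)/(n\underline{f}_\UU^2 \mu_\mathcal{W}^3)\to 0$ in Condition~CI(iv) is what makes the transfer go through.

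Third, I would combine the two via the standard BRT oracle inequality for weighted Lasso. The resulting bound is
\[ \|\hat\gamma^j_u - \gamma^j_u\| \;\lesssim\; \frac{\sqrt{s}\,\lambda}{\underline{f}_u^2\,\Pr(\varpi)} \;\lesssim\; \frac{1}{\underline{f}_u}\sqrt{\frac{(1+d_W)s\log(p|V|n)}{n\,\Pr(\varpi)}},\]
uniform in $u\in\UU$ and $j\in[p]$. The sparsity claim $\|\hat\gamma^j_u\|_0\lesssim s$ follows from the standard Lasso support-size bound: the cardinality of the active set is controlled by the sparse upper eigenvalue divided by the squared penalty, and the post-Lasso refit inherits the support from the Lasso step, so it satisfies the same sparsity and (by the oracle property for post-model-selection least squares) at least the same rate. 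The main obstacle is the uniform maximal inequality in Step~1, since $\UU$ is a continuum and the weights multiply products of potentially heavy-tailed coordinates; everything downstream is a careful bookkeeping exercise once that bound is in hand.
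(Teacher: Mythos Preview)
Your proposal is correct in outline and follows essentially the same route as the paper: penalty domination for the weighted score, a restricted/sparse eigenvalue lower bound of order $\underline{f}_u^2\Pr(\varpi)$, and the standard oracle inequality, all made uniform over $u\in\mathcal{U}$ via VC-type entropy and Lipschitz discretization. The paper packages this through a general ``continuum of $\ell_1$-penalized $M$-estimators'' framework (its Assumption~\ref{ass: M} and Condition~WL together with Lemmas~\ref{Lemma:LassoMRateRaw}--\ref{Thm:ChoiceLambda}), so its proof consists mostly of verifying those abstract conditions; the underlying three-ingredient argument is exactly what you sketch. Two small points where the paper is more careful than your sketch: (i) under Condition~CI only the approximation $\bar\gamma_u^j$ is $s$-sparse, not $\gamma_u^j$ itself, and the paper carries the resulting bias as the nuisance term $C_{un}\lesssim\underline{f}_u\Pr(\varpi)^{1/2}\{s\log(p|V|n)/n\}^{1/2}$ (your oracle inequality needs this extra term, though it is of the right order and does not change the rate); (ii) the paper controls the score via self-normalized moderate deviations and \emph{penalty loadings} $\widehat\Psi_{u0}$ of size $\asymp\underline{f}_u\Pr(\varpi)^{1/2}$ rather than a $u$-specific penalty level, which is what makes a single $\lambda$ work uniformly while still delivering the $u$-adaptive rate.
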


The following result establishes a uniform Bahadur representation for the final estimators.

\begin{theorem}[{Uniform Bahadur Representation for $\mathcal{W}$-Conditional CIQGM}] \label{theorem:semiparametric} Under  Condition CI, the estimator $(\check \beta_{uj})_{u \in \mathcal{U},j\in[p]}$ satisfies
$$ \sigma_{uj}^{-1}\sqrt{n}(\check\beta_{uj} - \beta_{uj}) =   \mathbb{U}_n(u,j)   + O_P(\delta_n) \text{ in } \ell^\infty(\mathcal{U}\times [p]),$$%, \text{ uniformly in $P \in  \mathcal{P}_n$},$$
where  $\sigma^2_{uj}= \tau(1-\tau)\Ep [K_\varpi(W)v_{uj}^2]^{-1}$ and
$$\mathbb{U}_n(u,j):=\frac{\{\tau(1-\tau)\Ep [K_\varpi(W)v_{uj}^2]\}^{-1/2}}{\sqrt{n}} \sum_{i=1}^n ( \tau - 1\{U_i(a,\varpi)\leq \tau\})K_\varpi(W_i)v_{i,uj},$$ where $U_1(a,\varpi), \ldots, U_n(a,\varpi)$ are i.i.d. uniform $(0,1)$ random variables, independent of $v_{1,uj},\ldots, v_{n,uj}$.
\end{theorem}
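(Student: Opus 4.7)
The plan is to exploit the Neyman orthogonality of the score $\psi_{uj}$ with respect to the nuisance $\eta_{uj}=(\beta_{u,-j},\gamma^j_u,r_u)$, combined with the uniform rates and sparsity bounds from Theorems~\ref{theorem:rateQRgraph}--\ref{theorem:rateLASSOgraph} for the first-stage quantities $\widetilde\beta_u,\widetilde\gamma^j_u$ and for the conditional density estimator $\hat f_u$. The overall structure is the standard Z-estimator argument for an orthogonal score, but it must be made uniform across the continuum of indices $(u,j)\in\mathcal{U}\times[p]$ of size much larger than $n$.

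First I would verify Neyman orthogonality at the truth: differentiating $\Ep[\psi_{uj}]$ in the $\eta^{(1)}=\beta_{u,-j}$ direction produces $\Ep[K_\varpi f_u Z^a_{-j}v_{uj}\mid\varpi]$, which vanishes because $v_{uj}$ is by construction the residual from the weighted projection in~(\ref{Def:Model12}); the $\eta^{(2)}=\gamma^j_u$ direction vanishes because it is multiplied by the conditional mean of $\tau-1\{X_a\leq Q_{X_a}(\tau\mid X_{-a},\varpi)\}$; and the $\eta^{(3)}=r_u$ direction is absorbed into the small-bias assumption $\max_{j,u}|\Ep[f_u r_u v_{uj}\mid\varpi]|\leq\delta_n n^{-1/2}$ in Condition~CI(ii). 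Next, using the optimization definition $\check\beta_{uj}\in\arg\min L_{uj}(\alpha)$ over a shrinking neighborhood $\mathcal{A}_{a\tau\varpi j}$, together with a lower bound on the Jacobian
\[
\Gamma_{uj}\;:=\;\partial_\alpha\Ep[\psi_{uj}(X,W,\alpha,\eta_{uj})]\big|_{\alpha=\beta_{uj}}\;=\;-\Ep[K_\varpi v_{uj}^2]\;=\;-\sigma_{uj}^2/\{\tau(1-\tau)\}
\]
(the simplification uses once more the defining orthogonality of $\gamma^j_u$), I would deduce an $L_\infty$-consistency statement $\sup_{u,j}|\check\beta_{uj}-\beta_{uj}|=O_P(\{n\Pr(\varpi)\underf_u^{2}\}^{-1/2}\sqrt{\log(p|V|n)})$, allowing all subsequent expansions to be local.

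The third step is the Bahadur expansion. I would write
\[
\En[\hat\psi_i(\check\beta_{uj})]-\En[\hat\psi_i(\beta_{uj})]=-\Gamma_{uj}(\check\beta_{uj}-\beta_{uj})+R_{1,uj},
\]
where $R_{1,uj}$ is controlled by stochastic equicontinuity of an indicator class on a shrinking neighborhood, using the Lipschitz property of $f_{X_a\mid X_{-a},\varpi}$ from Condition~CI(i). Then I would replace $\En[\hat\psi_i(\beta_{uj})]$ by the oracle score
\[
\psi_i^o(u,j)\;:=\;K_\varpi(W_i)(\tau-1\{X_{ia}\leq Z^a_i\beta_u+r_u\})v_{i,uj},
\]
incurring a bilinear remainder in the nuisance errors (products of the form $\|\widetilde\beta_u-\beta_u\|\,\|\widetilde\gamma^j_u-\gamma^j_u\|$ and $\|\hat f_u-f_u\|$-type terms). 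The growth conditions in Condition~CI(iv), in particular $s^3\log^3(pn|V|)\leq \delta_n^4 n\underf_\mathcal{U}^2\mu_\mathcal{W}^3$ and $L_n^2 n^{2/q}s\log^{3/2}(pn|V|)\leq\delta_n\underf_\mathcal{U}\sqrt{n\mu_\mathcal{W}}$, are precisely what is needed to push these remainders to $o_P(n^{-1/2}\delta_n)$ uniformly in $(u,j)$. The probability-integral-transform identity $\tau-1\{X_{ia}\leq Q_{X_a}(\tau\mid X_{i,-a},\varpi)\}=\tau-1\{U_i(a,\varpi)\leq\tau\}$ with $U_i(a,\varpi):=F_{X_a\mid X_{-a},\varpi}(X_{ia}\mid X_{i,-a},\varpi)$ then yields the stated $\mathbb{U}_n(u,j)$ representation after dividing by $\sigma_{uj}^2/\{\tau(1-\tau)\}$.

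The main obstacle is the uniformity over $\mathcal{U}\times[p]$. The score $\hat\psi_i(\alpha)$ contains an indicator function whose argument depends on the high-dimensional plug-in $\widetilde\beta_u$ and on $(\tau,\varpi)$ through a VC-subgraph of controlled dimension (Condition~CI(i)); controlling the associated empirical process requires covering-number/bracketing bounds for ``shifted'' indicator classes built from approximately sparse first-stage estimators, together with the high-dimensional maximal inequalities of Chernozhukov-Chetverikov-Kato~\cite{chernozhukov2012gaussian,chernozhukov2013gaussian}. The envelopes $L_n,M_n$, the VC parameter $d_W$, and the lower bound $\mu_\mathcal{W}$ from Condition~CI(iv) enter in full force exactly at this step; handling the Lipschitzness in $u$ of $v_{uj}$ (through $f_u$) and of $K_\varpi$ is what permits passing from a net on $\mathcal{U}$ to the full index set while absorbing the discretization error into the $\delta_n$ remainder.
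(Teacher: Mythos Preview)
Your proposal is correct and follows essentially the same route as the paper: Neyman orthogonality of $\psi_{uj}$ in the three nuisance directions, the Jacobian identity $|\partial_\alpha\Ep[\psi_{uj}]|=\Ep[K_\varpi v_{uj}^2]$ via the projection property of $v_{uj}$, uniform rates and sparsity for $(\widetilde\beta_u,\widetilde\gamma_u^j)$ from Theorems~\ref{theorem:rateQRgraph}--\ref{theorem:rateLASSOgraph}, entropy control for the indicator-times-instrument class built from sparse plug-ins and VC subgraphs of index $O(1+d_W)$, and the probability-integral-transform identification of $\mathbb{U}_n$.

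The only organizational difference is that the paper does not run the expansion directly. It packages the argument by verifying the hypotheses of a general Z-estimation result (Theorem~\ref{theorem:semiparametricMain} under Assumptions~\ref{ass: S1} and~\ref{ass: AS}) and then invokes that theorem. Concretely: your smoothness and bilinear-remainder estimates correspond to the paper's verification of Assumption~\ref{ass: S1}(v)(a)--(c) with explicit constants $\bar B_{1n},\bar B_{2n}\leq C$; your consistency step and nuisance-rate inputs correspond to checking $\hat\eta_{uj}\in\mT_{ujn}$ with $\tau_n\asymp\{s\log(a_n)/(n\mu_\mathcal{W}\underf_\mathcal{U}^2)\}^{1/2}$; and your uniformity discussion corresponds to the entropy bound for $\mathcal{F}_1$, which the paper obtains by writing $\mathcal{F}_1\subset\mathcal{W}\mG_1\mG_2\cup\bar{\mathcal{F}}_0$ as a product/union of VC-subgraph pieces. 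The modular route has the advantage that the stochastic-equicontinuity step is done once in the abstract theorem; your direct route is equivalent but would require you to reproduce that maximal-inequality argument inline. One minor point: in the paper's formal verification the conditional density $f_u$ enters the score as a known weight rather than as an estimated nuisance component, so the ``$\|\hat f_u-f_u\|$-type terms'' you anticipate do not appear in the remainder analysis.
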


Theorem \ref{theorem:semiparametric} plays a key role. However, it is important to note that the marginal distribution of $\mathbb{U}_n(u,j)$ is pivotal. Nonetheless, there is a non-trivial correlation structure between $U(a,\varpi)$ and $U(\tilde a,\tilde \varpi)$. In order to construct confidence regions with non-conservative guarantees, we rely on a multiplier bootstrap method. We will approximate the process $\mathcal{N}=(\mathcal{N}_{uj})_{u\in\mathcal{U}, j\in[p]}$ by the Gaussian multiplier bootstrap based on estimates $\hat\psi_{uj}:= \{\tau(1-\tau)\En[K_{\varpi}(W)\hat{v}_{uj}^{2}]\}^{-1/2}(\tau-1\{ X_a \leq Z^a\hat\beta_u\})K_\varpi(W_i)\hat v_{uj}$ of $\bar\psi_{uj}(U,W)=\{\tau(1-\tau)\En[K_{\varpi}(W)v_{uj}^{2}]\}^{-1/2}(\tau - 1\{ U(a,\varpi) \leq \tau\})K_\varpi(W)v_{uj}$, namely
$$ \widehat{\mathcal{G}} = (\widehat{\mathcal{G}}_{uj})_{u\in\UU,j\in[p]} = \left\{ \frac{1}{\sqrt{n}}\sum_{i=1}^ng_i\hat\psi_{uj}(X_i,W_i)\right\}_{u\in\UU,j\in[p]}$$
where $(g_i)_{i=1}^n$ are independent standard normal random variables which are independent from the data $(W_i)_{i=1}^n$. Based on Theorem 5.2 of \cite{chernozhukov2013gaussian}, the following result shows that the multiplier bootstrap provides a valid approximation to the large sample probability law of $\sqrt{n}(\check \beta_{uj}- \beta_{uj})_{u \in \mathcal{U},j\in[p]}$ which is suitable for the construction of uniform confidence bands over the set of indices associated with $I_a(b)$ for all $a,b \in V$. We let $\mathcal{P}_n$  denote the collection of distributions $P$ for the data such that Condition CI is satisfied for given $n$. This is the collection of all approximately sparse models where the above sparsity conditions, moment conditions, and growth conditions are satisfied.

\begin{corollary}[{Gaussian Multiplier Bootstrap for $\mathcal{W}$-Conditional CIQGM}]\label{theorem: general bs}
Under  Condition CI with $\delta_n = o(\{ (1+d_W)\log(p|V|n)\}^{-1/2})$,  and $(1+d_W)\log(p|V|n) = o(\{ (n/L_n^2)^{1/7}\wedge (n^{1-2/q}/L_n^2)^{1/3}\})$, we have that
$$ \sup_{P \in  \mathcal{P}_n} \sup_{t,t' \in \RR, u\in\mathcal{U}, b\in V}  \left| \Pr_P\left( \max_{j\in I_a(b)}\frac{|\check\beta_{uj}-\beta_{uj}|}{n^{-1/2}\sigma_{uj}} \in [t,t'] \right) - \Pr_P\left( \max_{j\in I_a(b)}|\widehat{\mathcal{G}}_{uj}| \in [t,t']\mid (X_i,W_i)_{i=1}^n\right) \right| = o(1)  $$
\end{corollary}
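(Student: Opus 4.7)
The plan is to chain together three approximations: (i) the uniform Bahadur linearization from Theorem \ref{theorem:semiparametric}, (ii) a Gaussian coupling for the pivotal process $\mathbb{U}_n$, and (iii) the Gaussian multiplier bootstrap with estimated scores, invoking Theorem 5.2 of \cite{chernozhukov2013gaussian}. Throughout, Lemma 2.1 of that paper (Gaussian anti-concentration applied to $\max_{j\in I_a(b)}|\widehat{\mathcal{G}}_{uj}|$) converts one-sided Kolmogorov distance bounds into distance between probabilities on intervals $[t,t']$, so the uniformity in $t,t'$ comes essentially for free once we have a sup-norm coupling.

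First I would use Theorem \ref{theorem:semiparametric}: $\sigma_{uj}^{-1}\sqrt{n}(\check\beta_{uj}-\beta_{uj}) = \mathbb{U}_n(u,j) + O_P(\delta_n)$ in $\ell^\infty(\mathcal{U}\times[p])$. Since the hypothesis forces $\delta_n \sqrt{(1+d_W)\log(p|V|n)} = o(1)$, Gaussian anti-concentration gives that $\max_{j\in I_a(b)}|\sigma_{uj}^{-1}\sqrt{n}(\check\beta_{uj}-\beta_{uj})|$ and $\max_{j\in I_a(b)}|\mathbb{U}_n(u,j)|$ have Kolmogorov distance $o(1)$, uniformly over $u\in\mathcal{U}$, $b\in V$, and $P\in\mathcal{P}_n$. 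Thus the corollary reduces to bootstrap validity for the pivotal process $\mathbb{U}_n$.

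Next I would view $\mathbb{U}_n(u,j)$ as $n^{-1/2}\sum_i \bar\psi_{uj}(U_i(a,\varpi),W_i)$ with $\mathrm{Var}(\bar\psi_{uj})=1$ and envelope of order $L_n$ times a bounded factor (by Condition CI(iv)), indexed by a class whose covering entropy is polynomial in $p|V|n$ because $\mathcal{W}$ has fixed VC dimension and $\mathcal{T}$ is compact (so only polynomially many effective $(\tau,\varpi)$). I would apply Theorem 5.2 of \cite{chernozhukov2013gaussian} (or Proposition 2.1/Theorem 3.2 of \cite{chernozhukov2012gaussian}) to get a Gaussian coupling on $\ell^\infty$. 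The growth restriction $(1+d_W)\log(np|V|)=o((n/L_n^2)^{1/7}\wedge(n^{1-2/q}/L_n^2)^{1/3})$ is precisely what is required to verify the moment conditions of that theorem.

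The main obstacle, and the step that requires the most work, is to show that the feasible multiplier bootstrap process $\widehat{\mathcal{G}}$ based on the estimated scores $\hat\psi_{uj}$ is close to the same Gaussian coupling in sup-norm with high probability. Conditional on the data, $\widehat{\mathcal{G}}$ is Gaussian with covariance $\En[\hat\psi_{uj}\hat\psi_{u'j'}]$; I would invoke the Gaussian-to-Gaussian comparison (Theorem 2 or Lemma 3.1 of \cite{chernozhukov2013gaussian}) and reduce bootstrap validity to showing
\[
\max_{u\in\mathcal{U},\,j\in[p]} \bigl|\En[\hat\psi_{uj}^2] - \barEp[\bar\psi_{uj}^2]\bigr| = o_P\bigl((\log(p|V|n))^{-2}\bigr).
\]
I would bound this by decomposing $\hat\psi_{uj}-\bar\psi_{uj}$ into the contributions of $\hat\beta_u-\beta_u$, $\hat\gamma_u^j-\gamma_u^j$, $\hat f_u - f_u$, and the misclassification indicator $1\{X_a\leq Z^a_j\hat\beta_{uj}+Z_{-j}^a\widetilde\beta_{u,-j}\}-1\{X_a\leq Q_{X_a}(\tau\mid X_{-a},\varpi)\}$. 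Theorems \ref{theorem:rateQRgraph} and \ref{theorem:rateLASSOgraph} supply the $\ell_2$ and sparsity rates $s(1+d_W)\log(p|V|n)/(n\underline f_u\Pr(\varpi))$ for $\hat\beta_u$ and $\hat\gamma_u^j$; the indicator term is handled by the density bound $\bar f$ together with a standard maximal-inequality/peeling argument over the VC-subgraph class $\{Q_{X_a}(\tau\mid\cdot,\varpi)\}$, whose VC dimension is $1+Cd_W$ by Condition CI(i). The growth conditions on $s, M_n, L_n, \mu_\mathcal{W}, \underline f_\mathcal{U}$ in Condition CI(iv) are exactly calibrated so that the product of these rates is $o((\log(p|V|n))^{-2})$. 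Combining the three couplings and anti-concentration yields the displayed sup over $t,t',u,b,P$, completing the proof.
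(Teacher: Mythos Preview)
Your proposal is correct and follows essentially the same route as the paper: Bahadur linearization (Theorem \ref{theorem:semiparametric}) plus the Gaussian multiplier bootstrap machinery of \cite{chernozhukov2013gaussian}, with the key verification being that the estimated scores $\hat\psi_{uj}$ track $\bar\psi_{uj}$ uniformly. The only difference is packaging: the paper absorbs your steps (ii)--(iii) into the generic Corollary \ref{theorem: general bsMain} (which itself wraps Theorem 5.2 of \cite{chernozhukov2013gaussian}), so that the proof of Corollary \ref{theorem: general bs} reduces to noting---already done at the end of the proof of Theorem \ref{theorem:semiparametric}---that Assumptions \ref{ass: S1}, \ref{ass: AS}, \ref{ass: OSR} hold with $\rho_n = (1+d_W)$ because the score class is a product of VC-subgraph classes of index $C(1+d_W)$; your explicit decomposition of $\hat\psi_{uj}-\bar\psi_{uj}$ is exactly the content of verifying Assumption \ref{ass: OSR}(ii).
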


Corollary  \ref{theorem: general bs} allows the construction of simultaneous confidence regions for the coefficients that are uniformly valid over the set of data generating processes induced by Condition CI. Based on the coefficients whose intervals do not overlap zero, we can construct a conditional independence graph process $\hat E^{I}(\tau,\varpi), \tau \in \mathcal{T}, \varpi\in\mathcal{W}$ that contains the true conditional independence quantile graph with a specified probability.

\subsection{$\mathcal{W}$-Conditional PQGM}

In this section, we derive theoretical guarantees for the $\mathcal{W}$-conditional predictive quantile estimators uniformly over $u=(a,\tau,\varpi)\in \mathcal{U}$. For each $u\in \mathcal{U}$ the estimand of interest is $\beta_u \in \RR^p $ that corresponds to the best linear predictor under asymmetric loss function, namely
\begin{equation}\label{Def:Model11} \beta_u \in \arg\min_\beta \Ep[ \rho_\tau(X_a - X_{-a}'\beta) \mid \varpi ]\end{equation}
where the event $\varpi \in \mathcal{W}$ is used for further conditioning. In the analysis below, the conditioning is implemented through the function $K_\varpi(W) = 1\{ W \in \varpi\}$.

In the analysis of this case, the main issue is to handle the inherent misspecification of the linear form $X_{-a}'\beta_u$ with respect to the true conditional quantile. The first consequence is to handle the identification condition. Given $X_{-a}$ and $\varpi\in\mathcal{W}$, we let  $f_{u}:=f_{X_a\vert X_{-a}, \varpi } (X_{-a}'\beta_{u} \vert  X_{-a}, \varpi )$ denote the value of the conditional density function evaluated at $X_{-a}'\beta_{u}$. In our analysis, we will consider
\begin{equation}\label{Def:fuForPrediction} \underline f_{u} = \inf_{\|\delta\|=1} \frac{\Ep[f_u \{X_{-a}'\delta\}^{2}\mid \varpi]}{\Ep[(X_{-a}'\delta)^{2}\mid \varpi ]} \ \ \mbox{and} \ \ \underline{f}_\mathcal{U} =\min_{u\in\mathcal{U}} \underline f_u.\end{equation}
We remark that $\underline f_{u}$ defined in (\ref{Def:fuForPrediction}) differs from (\ref{Def:fuQuantileTrue}) which is the standard conditional density at the true quantile value.  It turns out that Knight's identity can be used by exploiting the first order condition associated with the optimization problem (\ref{Def:Model11}) which yields zero mean condition similar to the conditional quantile condition.

A second consequence of the misspecification is the lack of pivotality of the score. Such pivotal property was convenient in the previous section to define penalty parameters and to conduct inference. We will exploit bounds on the VC-dimension of the relevant classes of sets formally stated below.

{\bf Condition P.} \textit{Let $\mathcal{U} = V \times \mathcal{T}\times \mathcal{W}$ and $(X_i,W_i)_{i=1}^n$ denote a sequence of independent and identically distributed random vectors generated accordingly to models (\ref{Def:Model11}):}

\textit{(i) Suppose that  $\sup_{u\in\mathcal{U}}\|\beta_u\|\leq C$ and $\mathcal{T}$ is a fixed compact set: (a) there exists $s=s_n$ and $\bar \beta_u$ such that $\sup_{u\in\mathcal{U}}\|\bar \beta_u\|_0 \leq s$,  $\sup_{u\in\mathcal{U}}\|\bar \beta_u-\beta_u\|+s^{-1/2}\|\bar \beta_u-\beta_u\|_1 \leq \sqrt{s/n}$; (b) the conditional distribution function of $X_a$ given $X_{-a}$ and $\varpi$ is absolutely continuous with continuously differentiable density $f_{X_a\mid  X_{ -a},\varpi}(t\mid X_{-a},\varpi)$ such that its values are bounded by $\bar f$ and its derivative is bounded by $\bar f'$ uniformly over $u\in\mathcal{U}$; (c) $|f_u-f_{u'}|\leq L_f\|u-u'\|$, $\|\beta_u-\beta_{u'}\|\leq L_\beta \|u-u'\|^\kappa$ with $\kappa \in [1/2,1]$,  and $\Ep[ |K_\varpi(W)-K_{\varpi'}(W)|] \leq  L_K\|\varpi-\varpi'\|$; (d) the VC dimension $d_W$ of the set $\mathcal{W}$ is fixed, $\{1\{ X_a \leq X_{-a}'\beta_u\} : (\tau,\varpi)\in \mathcal{T}\times  \mathcal{W}\}$ is a VC-class with VC-dimension $1+d_W$ for every $a\in V$;}

\textit{(ii) The following moment conditions hold uniformly over $u\in\UU$:  $\min_{a\in V}\inf_{\|\delta\|=1}\Ep[\{X_{-a}'\delta\}^2 \vert \varpi]\geq c$, $\max_{a\in V}\sup_{\|\delta\|=1}\Ep[ \{X_{-a}'\delta\}^4 \vert \varpi ]\leq C$;}

\textit{((iii) With probability $1-\Delta_n$, uniformly over $u\in \UU$ and $a\in V$: $\En[K_\varpi(W)\{|X_{-a}'(\bar\beta_u-\beta_u)|+|X_{-a}'(\bar\beta_u-\beta_u)|^2\}(Z^a\delta)^2] \leq \delta_n \En[K_\varpi(W)f_u(X_{-a}'\delta)^2]$;}

\textit{(iv) For a fixed $q \geq 4 \lor (1+d_W)$, we have that: $\mathrm{diam}(\mathcal{W}) \leq n^{1/2q}$, $ \Ep[\max_{i\leq n}\|X_{iV}\|_\infty^q]^{1/q}/\mu_\mathcal{W} \leq M_n$, $M_n^2 \log^7(n|V|) \leq \delta_n n \underf_\UU^2\mu_\mathcal{W}^2$, $M_n^{4}\log(n|V|) \log n\leq \delta_n n \mu_\mathcal{W}$, $(L_f+L_K)^2M_n^2\log^2(|V|n)\leq \delta_n n \mu_\mathcal{W}^3\underf_\mathcal{U}^6$, $M_n^2s\log^{3/2} (n|V|) \leq \delta_n \underline{f}_{\mathcal{U}}(n\mu_\mathcal{W})^{1/2}$, $M_n s \sqrt{\log(n |V|)} \leq \delta_n (n \mu_\mathcal{W})^{1/2}$, and $s^3\log^5(n|V|)\leq \delta_n n \underline{f}_{\mathcal{U}}^2\mu_\mathcal{W}^2$.}

Condition P is a high-level condition. It allows to cover conditioning events $\varpi \in \mathcal{W}$ whose probability can decrease to zero (although slower than $n^{-1/4}$).

Next we derive our main results regarding the proposed estimator for the best linear predictor. These results are also new $\ell_1$-penalized quantile regression methods as it holds under possible misspecification of the conditional quantile function and hold uniformly over the indices $u\in \mathcal{U}$. The following theorem summarizes the result.

\begin{theorem}[{Uniform Rates of Convergence for $\mathcal{W}$-Conditional Penalized Quantile Regression under Misspecification}]\label{theorem:rateQRgraphP}
Under Condition P, we have that with probability at least $1-o(1)$, uniformly over $u=(a,\tau,\varpi) \in\mathcal{U}$,
$$ \|\hat\beta_u - \beta_u\| \lesssim  \sqrt{\frac{s(1+d_W)\log(|V|n)}{n \underline{f}_u\Pr(\varpi)}}.$$
\end{theorem}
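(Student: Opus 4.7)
The plan is to adapt the Belloni--Chernozhukov template for $\ell_1$-penalized quantile regression while handling three new complications simultaneously: non-vanishing misspecification (so $X_{-a}'\beta_u$ is only the best linear approximation under the asymmetric loss $\rho_\tau$, not the true conditional quantile), the conditioning weight $K_\varpi(W)$, and uniformity over $u=(a,\tau,\varpi)\in\UU$. The fundamental substitute for the usual zero-subgradient identity at the true quantile is the first-order condition of the population program (\ref{Def:Model11}),
$$\Ep[K_\varpi(W)(1\{X_a\leq X_{-a}'\beta_u\}-\tau)X_{-a}]=0,$$
which yields a mean-zero (but non-pivotal) score at the target $\beta_u$. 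Knight's identity, anchored at this mean-zero score, will replace the Bahadur-type expansion used in the correctly specified case.

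First, I would prove that on an event $\mathcal{A}_n$ of probability $1-o(1)$ the deterministic $\lambda_0$ dominates the rescaled empirical score uniformly in $(u,j)$,
$$\lambda_0\geq c\cdot \sup_{u\in\UU}\max_{j\in[d]}\frac{|\En[K_\varpi(W)(1\{X_a\leq X_{-a}'\beta_u\}-\tau)X_{-a,j}]|}{\{\En[K_\varpi(W)X_{-a,j}^2]\}^{1/2}}.$$
Because the score is no longer pivotal, I would use symmetrization in the spirit of van der Geer together with the VC bound on $\{1\{X_a\leq X_{-a}'\beta_u\}:u\in\UU\}$ (VC-dimension $1+d_W$, by Condition P(i)) and on $\mathcal{W}$, producing an entropy integral whose exponential inequality yields precisely the $\sqrt{2\log(8|V|^2\{ne/d_W\}^{2d_W}/\xi)}$ factor built into $\lambda_0$. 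The growth condition $M_n^4\log(n|V|)\log n\leq \delta_n n\mu_\mathcal{W}$ in Condition P(iii) supplies the self-normalization needed to replace population denominators by their empirical counterparts uniformly in $\varpi$.

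On $\mathcal{A}_n$, optimality of $\hat\beta_u$ combined with the triangle inequality for $\|\cdot\|_{1,\varpi}$ yields a basic inequality; using the sparse approximator $\bar\beta_u$ from Condition P(i) forces $\hat\delta_u=\hat\beta_u-\beta_u$ into a restricted cone $\{\delta:\|\delta_{T_u^c}\|_{1,\varpi}\leq \bar c\|\delta_{T_u}\|_{1,\varpi}+o(1)\}$ with $|T_u|\lesssim s$. Knight's identity applied to the weighted loss increment $\En[K_\varpi(W)\{\rho_\tau(X_a-X_{-a}'\hat\beta_u)-\rho_\tau(X_a-X_{-a}'\beta_u)\}]$ splits into a linear term controlled by the score bound of the previous step and a nonnegative integrated term whose conditional population mean at $\hat\delta_u$ is bounded below by $\tfrac{1}{2}\underline{f}_u\Pr(\varpi)\|\hat\delta_u\|^2$ for small $\|X_{-a}'\hat\delta_u\|_{P,2}$, via a Taylor expansion of $F_{X_a\mid X_{-a},\varpi}$ around $X_{-a}'\beta_u$ with remainder bounded by $\bar f'$. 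A restricted-eigenvalue argument for the weighted Gram matrix $\En[K_\varpi(W)X_{-a}X_{-a}']$, transferred from its population analogue via Condition P(iii), then gives the quadratic lower bound on the cone.

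The main obstacle will be gluing these ingredients together uniformly in $u\in\UU$: both the quadratic lower bound and the empirical-process control on the integrated Knight term must survive a supremum over a continuum of $(\tau,\varpi)$, with the rate adapting to the local weight $\underline{f}_u\Pr(\varpi)$. The Lipschitz regularity in Condition P(i), namely $\|\beta_u-\beta_{u'}\|\leq L_\beta\|u-u'\|^\kappa$, $|f_u-f_{u'}|\leq L_f\|u-u'\|$, and $\Ep|K_\varpi(W)-K_{\varpi'}(W)|\leq \bar L\|\varpi-\varpi'\|$, allows a reduction of $\UU$ to a polynomial-in-$n$ net, and the maximal inequalities for VC-type classes together with the growth rates in Condition P(iii) (in particular $s^3\log^3(n|V|)\leq \delta_n n \underline{f}_{\mathcal{U}}^2\mu_\mathcal{W}^2$ and $\{L_f+\bar L\}^2 M_n^2\log^2(p|V|n)/\{\mu_\mathcal{W}\underline{f}_\mathcal{U}^2\}^3\leq \delta_n n$) uniformize every concentration step without degrading the rate. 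Combining the quadratic lower bound with the linear upper bound on the restricted cone and dividing by $\underline{f}_u\Pr(\varpi)$ then delivers $\|\hat\beta_u-\beta_u\|\lesssim \sqrt{s(1+d_W)\log(|V|n)/[n\underline{f}_u\Pr(\varpi)]}$ uniformly in $u\in\UU$, as claimed.
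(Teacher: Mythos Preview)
Your proposal is essentially correct and tracks the paper's approach closely: control the non-pivotal score via symmetrization plus VC bounds to justify $\lambda_0$, force $\hat\beta_u-\bar\beta_u$ into a restricted cone via the basic inequality with the sparse proxy $\bar\beta_u$, extract a quadratic lower bound from Knight's identity using the conditional density $f_u$ evaluated at the linear-approximation point $X_{-a}'\beta_u$, and uniformize over $u$ via entropy control.

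Two points of comparison. First, the paper packages the argument into four explicit events $\Omega_1$--$\Omega_4$ feeding a deterministic lemma; the new one relative to the correctly specified case is $\Omega_4$, which bounds the \emph{conditional-population} linear term $\En[K_\varpi(W)\{\tau-F_{X_a\mid X_{-a},\varpi}(X_{-a}'\beta_u)\}X_{-a,j}]$---nonzero under misspecification since only its \emph{unconditional} mean vanishes by the first-order condition. Your organization---controlling the \emph{empirical} linear Knight term directly by the score event $\mathcal{A}_n$ and passing to conditional population only on the nonnegative integrated remainder---legitimately sidesteps $\Omega_4$, but you should say so explicitly, because this is precisely where misspecification enters and a reader following the paper's modular lemma would expect to see that term handled. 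Second, the paper's proof has a second half you omit: having obtained the rate under the conservative deterministic $\lambda_0$, it establishes validity of the data-driven multiplier-bootstrap penalty $\bar\lambda_{V\mathcal{T}\mathcal{W}}$ via a chain of couplings $Z_S\approx\bar Z_S\approx Z_N\approx\bar Z_G^*\approx\hat Z_G^*$ (using the Gaussian-approximation and comparison results of Chernozhukov--Chetverikov--Kato), then re-verifies the events for the refined penalty and loadings and re-invokes the lemma. If your $\hat\beta_u$ is meant to be the final Step-3 estimator, you will need this additional argument.
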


The data-driven choice of penalty parameter helps diminish the regularization bias and also allow to obtain sparse estimators with provably rates of convergence (through thresholding). Moreover, the $u$ specific penalty parameter combined with the new analysis yields an adaptive rate of convergence to each $u\in \mathcal{U}$ unlike previous works.

\begin{remark}[Simultaneous Confidence Bands for Coefficients in PQGMs]
We note that in some applications we might be interested in constructing (simultaneous) confidence bands for the coefficients in PQGMs. In particular, this would include the cases practitioners are using a misspecified linear specification in a quantile regression model. Provided the conditional density function at $X_{-a}'\beta_u$ can be estimated, a version of Algorithm \ref{Alg:1prime} using the penalty parameters in Algorithm \ref{Alg:2prime} for the initial step can deliver such confidence regions via a multiplier bootstrap.
\end{remark}

\section{Application: International Financial Contagion and Systemic Risk}\label{sec:Empirical}

There is widespread disagreement about what finacial contagion entails, e.g. \cite{Forbes2002, diebold2015trans}. The existing measures of contagion are mainly based on linear correlation and can only account for certain types of risk network structure or do not provide inference procedures for the large scale networks estimated. This paper defines contagion occurs whenever the quantile partial correlation from  one country to another country is nonzero, i.e. the presence of edges in PQGM. This definition takes into account network spillover effects when identifying contagion and measuring systemic risk.   Here, the weight of an edge indicates the strength of contagion effects. The estimated contagion network taking into account global interconnectedness is important for Eurozone financial regulators to identify globally systemically important EU countries, or for global financial portofolio diversification.

We revisit the analysis of of international
financial contagion, \cite{Claessens2001}. We provide an alternative approach to the literature by visualizing tail interdependence via PQGM. As shown in Section \ref{sub: Network-CoVaR}, our framework naturally extends the systemic risk measure CoVaR taking into account tail network spillover effects, hence after learning PQGM from data, we identify systemically important countries using our new systemic risk measures. We can also provide inference on networks estimated. To simplify the visualization, we provide graphical visualization for the confidence intervals of $\Delta CoVaR$s in Figure \ref{Figure:-GermanDeltaCoVaR} of Section \ref{subsection:systemic risk}.

We focus on examining financial contagion through the volatility spillovers perspective, i.e. recovering volatility interconnectedness. \cite{Engle1993} reported that international stock markets are related through their volatilities
instead of returns. \cite{DieboldYilmaz2009} studied the return and volatility
spillovers of 19 countries and found differences in return and volatility
spillovers. \footnote{Modelling the time dependence in volatility is an important issue, although not the focus of this work, there is no uniform agreement on  whether there is dependence once taking into account the heteroskedasticity of market returns or shifts in standard errors, see \cite{starica2005nonstationarities}. }

We use average two-day equity index returns, \footnote{This is to control for the fact that markets in different countries are not open during the same hours. Results are robust to whether using two-day returns or using daily returns as in older versions of our work. Daily returns are also adjusted for weekends and holidays. } September 2009 to September 2013, from Morgan Stanley Capital International (MSCI).
The returns are all translated into dollar-equivalents as of September
6th 2013. \footnote{We calculate returns based on U.S. dollars since these were most frequently used in past work on contagion. }  We use absolute returns as a proxy for volatility. \footnote{This has historically been used in the literature. While we do recognize that there are many different methods calculating volatility measures, volatility measuring itself is a large research study area and is outside the scope of the current work.} We have
a total of 45 countries in our sample, there are 21 developed markets
(Australia, Austria, Belgium, Canada, Denmark, France, Germany, Hong
Kong, Ireland, Italy, Japan, Netherlands, New Zealand, Norway, Portugal,
Singapore, Spain, Sweden, Switzerland, the United Kingdom, the United
States), 21 emerging markets (Brazil, Chile, Mexico, Greece, Israel,
China, Colombia, Czech Republic, Egypt, Hungary, India, Indonesia,
Korea, Malaysia, Peru, Philippines, Poland, Russia, Taiwan, Thailand,
Turkey), and 3 frontier markets (Argentina, Morocco, Jordan).

\begin{figure}
\begin{minipage}{\textwidth}
\centering
\begin{subfigure}[b]{0.475\textwidth}
\centering
    {{\small     Median ($\tau=0.5$)}}
            	\vskip -1.5cm
    \includegraphics[width=\textwidth]{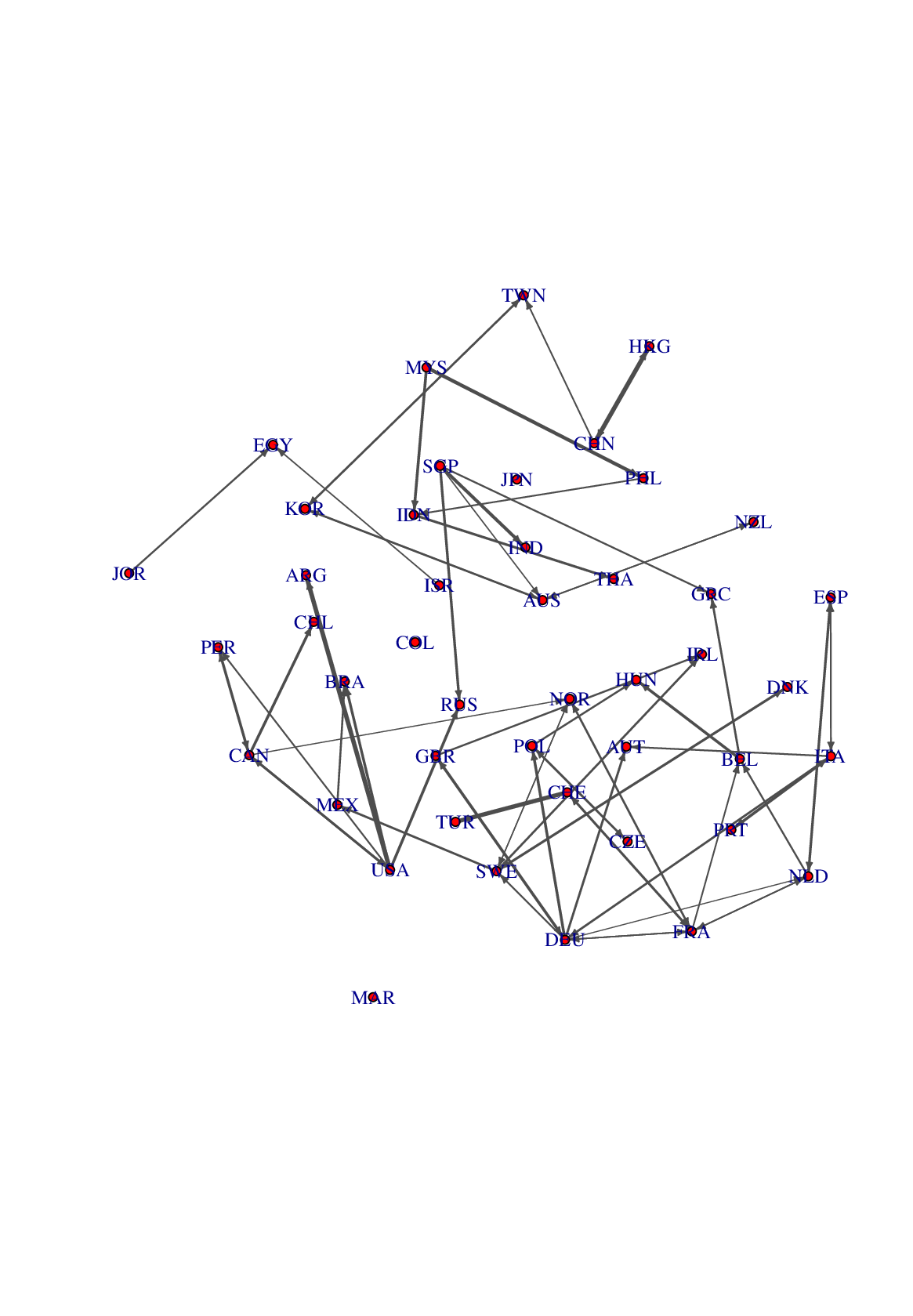}
 
\end{subfigure}
\hfill
 \begin{subfigure}[b]{0.475\textwidth}
 \centering
  	{{\small   Gaussian Graph}}
  	        	\vskip -1.5cm
	\includegraphics[width=\textwidth]{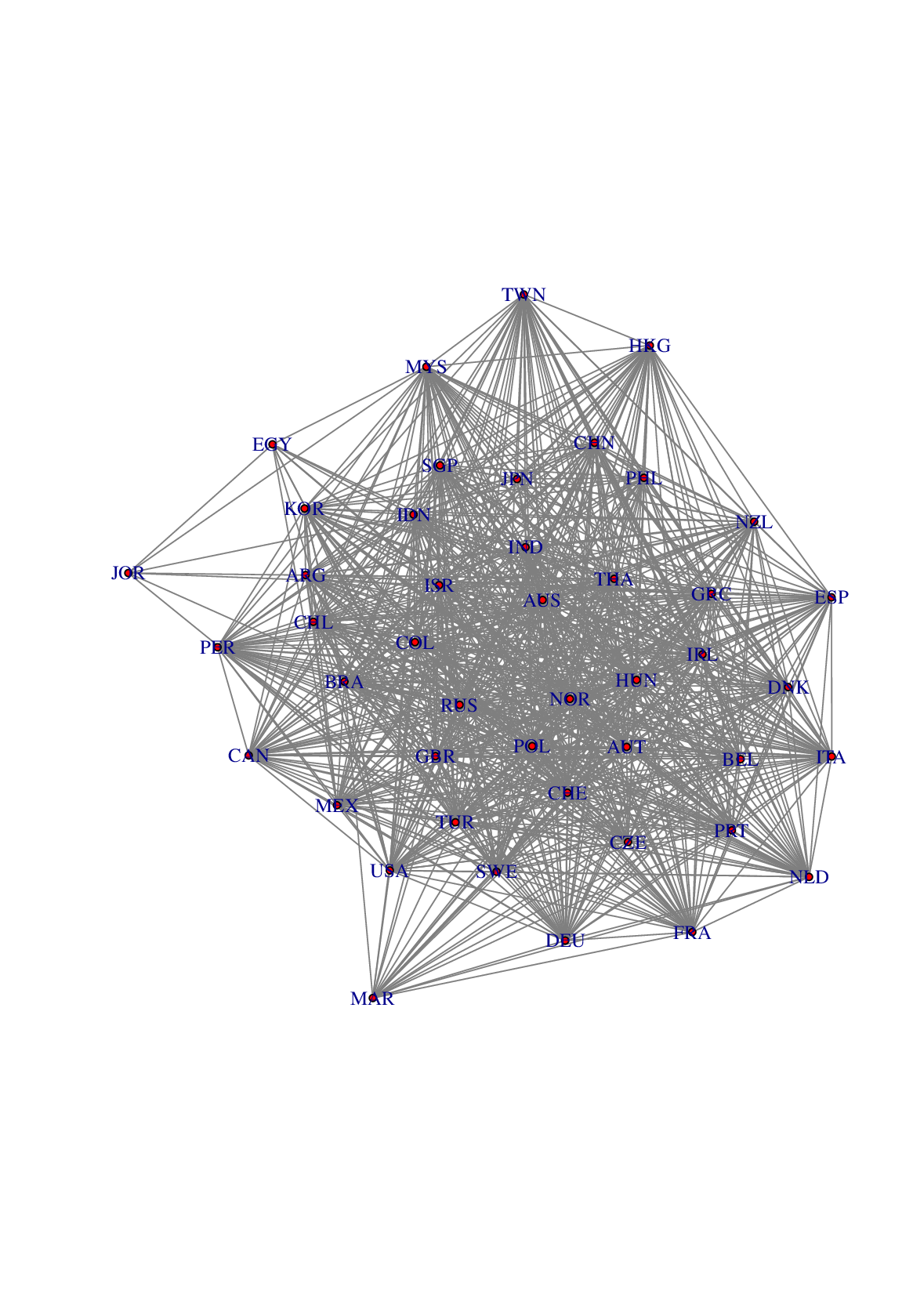}
 
\end{subfigure}
\vskip -1.5cm
 \begin{subfigure}[b]{0.475\textwidth}
 \centering
    	{{\small Low Tail ($\tau=0.1$)}}
    	        	\vskip -1.5cm
       \includegraphics[width=\textwidth]{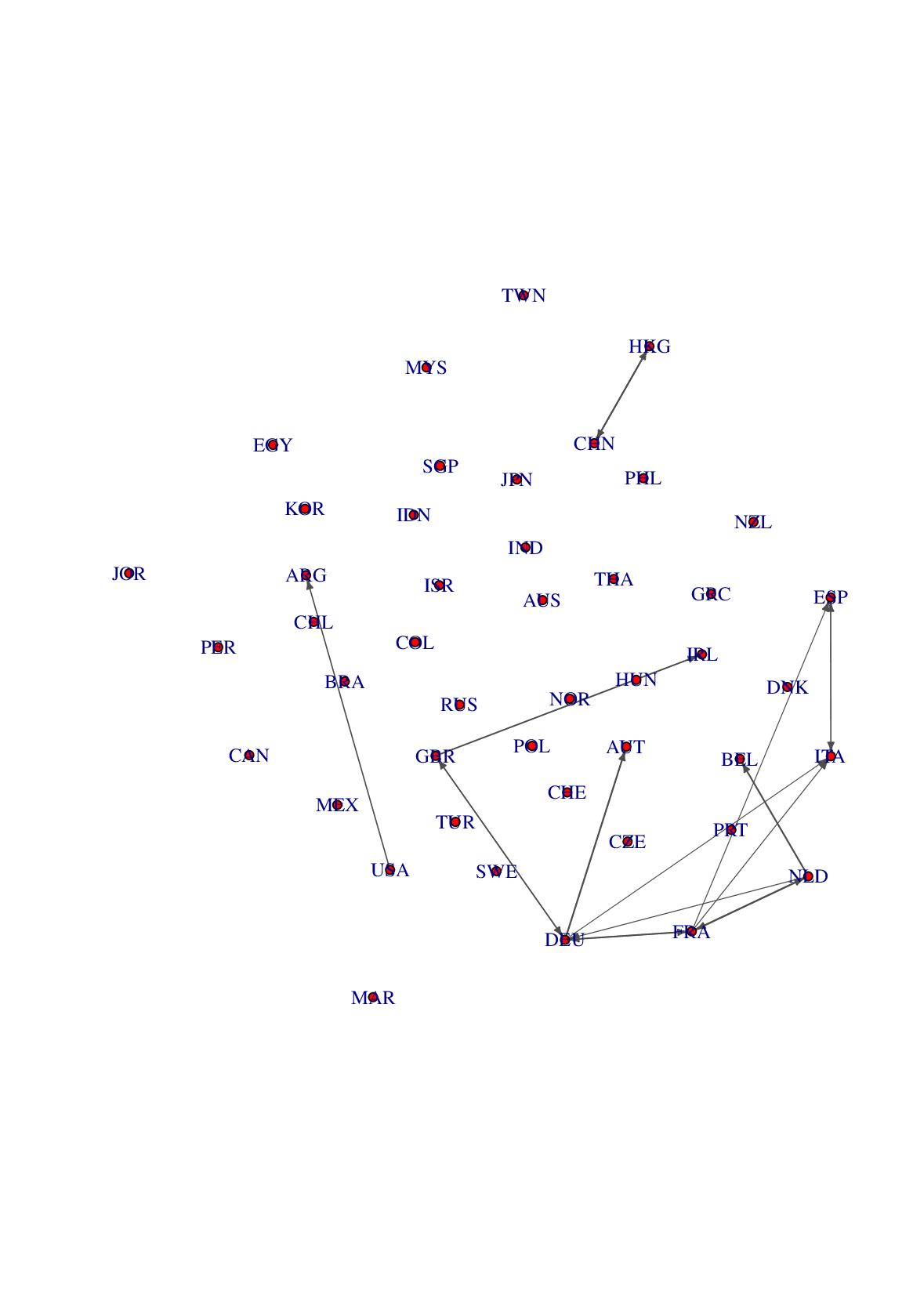}
 
\end{subfigure}
\quad
\begin{subfigure}[b]{0.475\textwidth}
\centering
	{{\small Up Tail ($\tau=0.9$) }}
	        	\vskip -1.5cm
	\includegraphics[width=\textwidth]{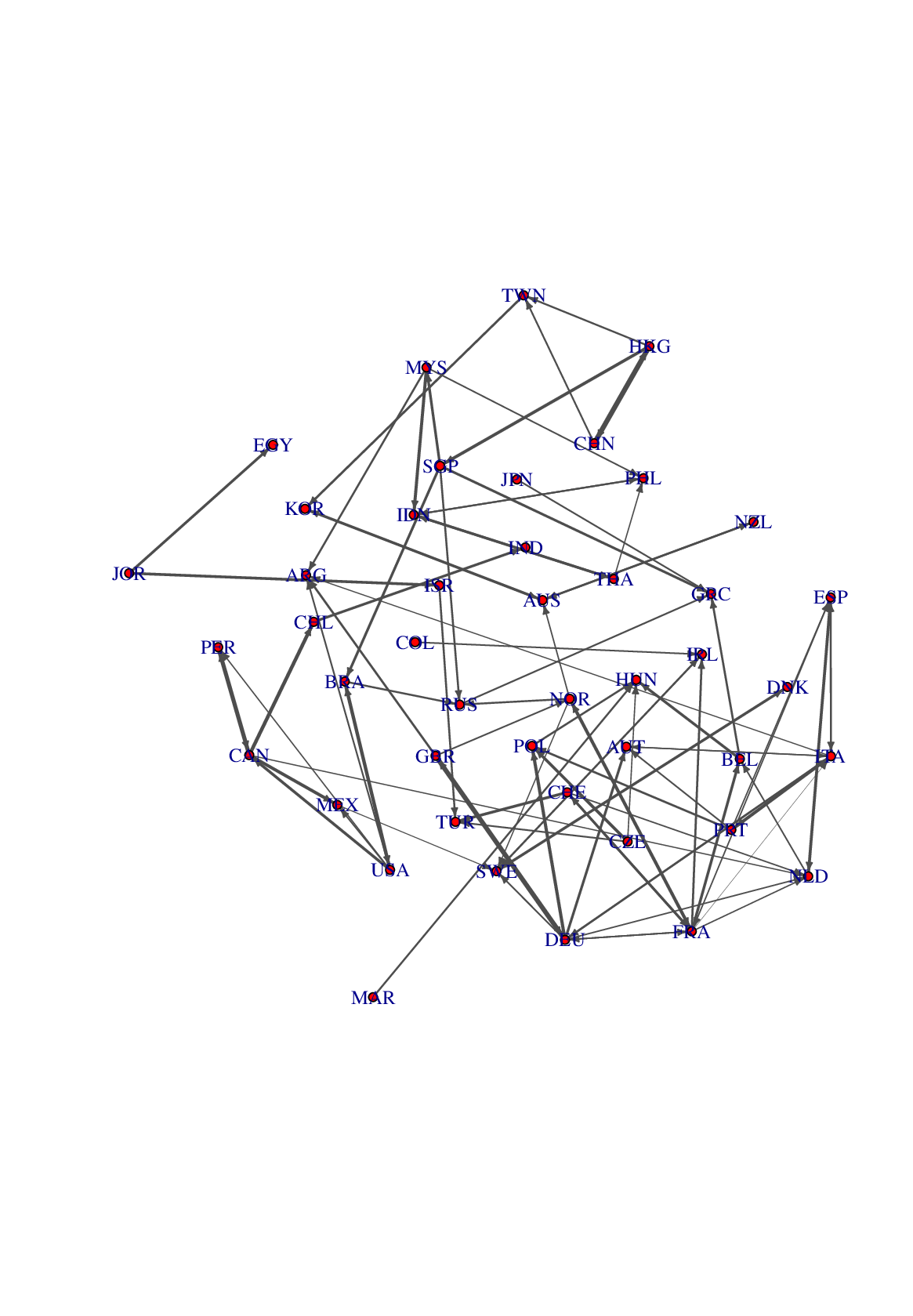}
      
\end{subfigure}
\vskip -1.8cm
     \caption[International Financial Contagion. Note: Lack of edges between nodes indicates that neither node's equity index volatility help predict the other node's volatility at $\tau$-th quantile. Arrows (or directed edges) mean that the source node helps explain the target node. These graphs show that the volatility transmission mechanism are asymmetric.]
    {\small International Financial Contagion.    Note: Lack of edges between nodes indicates that neither node's equity index volatility help predict the other node's volatility at $\tau$-th quantile. Arrows (or directed edges) mean that the source node's volatility helps predict the target node's volatility. These graphs show that the volatility transmission mechanism are asymmetric at different volatility quantiles (or tails).}
   \label{Figure-1:-International Contagion}
\end{minipage}
\end{figure}

\subsection{Contagion Networks.} Figure \ref{Figure-1:-International Contagion}   provides a full-sample analysis of global volatility network spillovers
at different tails. The networks are estimated via Algorithm \ref{Alg:2}.

We denote 10\% quantile as Low Tail, 50\% quantile
as Median, 90\% quantile as Up Tail. Results learnt from both PQGMs and GGM are presented. GGM or "Gaussian Graph" in Figure \ref{Figure-1:-International Contagion}  means the graph is estimated via graphical lasso (e.g., \cite{Friedman2008}), and the final graph is chosen by Extended Bayesian Information Criterion (ebic), see \cite{Foygel2010}.
Our purpose is to show the usefulness of PQGM in representing nonlinear
tail interdependence allowing for heteroscedasticity and to show that
PQGM can measure correlation asymmetry through looking at the tails
of the distribution (not specific to any model).

There are significant differences in the network structure in terms
of volatility spillovers when using PQGM and GGM. PQGM permits asymmetries in correlation dynamics, suited to investigate the presence of asymmetric responses. We find significant increase interdependence
at the up tail between the volatility series, that is we find downside correlations (high volatility) are much larger than
upside correlations (low volatility). This confirms findings in the finance
literature that financial markets become more interdependent during
high volatility periods.

We also find if two countries locate in the same geographic region,
with many similarities in terms of market structure and history, they
tend to be more closely connected (homophily effect as stated in network terminology),
while two economies locate in separate geographic regions are less
likely directly connected. In addition, we find among European Union member countries,
Germany appears to play a major role in the transmission of shocks
to others; while in Asia, Hong Kong, Thailand, and Singapore appear
to play major roles; and among all the north and south American countries,
Canada and US play major roles.

\subsection{Systemic Risk.}\label{subsection:systemic risk} With the estimated network, we can use different network statistics to measure the systemic risk contributions. Below we focus on the modified $\Delta CoVaR$ measure mentioned in Section \ref{sub: Network-CoVaR}.

Figure \ref{Figure:-GermanDeltaCoVaR} provides German's $\Delta CoVaR$s with $\tau = 0.9$ and their $90\%$ uniform confidence intervals obtained via Corollary \ref{theorem: general bs}. It reconfirms that France, Italy and UK contribute the most to German's $\Delta CoVaR$, means conditional on those countries being under distress relative to their median states, German would be affected the most. It is also interesting to find that other countries such as Netherlands can also have effects on German's $\Delta CoVaR$ although less statistically significant in terms of the magnitude of the effect.

\begin{figure}
\includegraphics[width=17cm, height=9cm ]{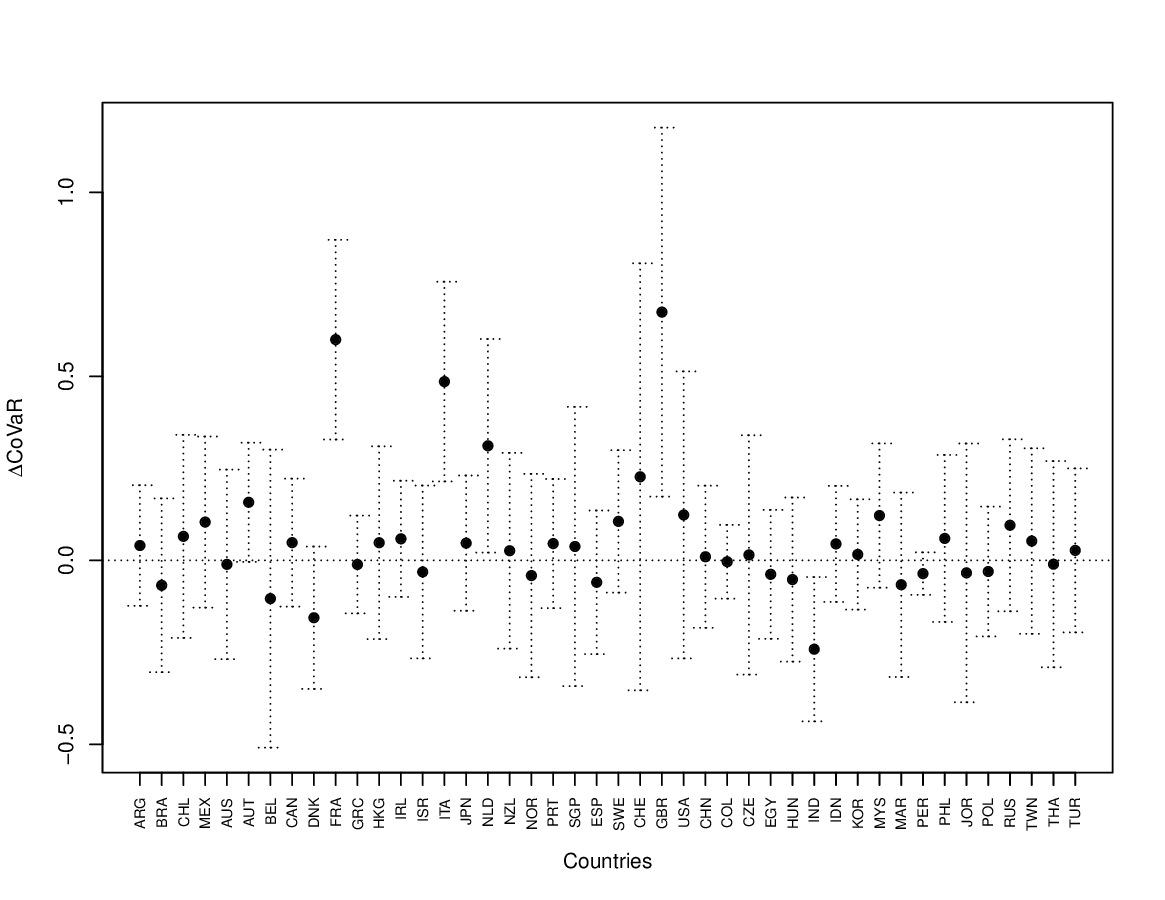}
\caption{German's $\Delta CoVaR$ and $90\%$ Confidence Intervals.}   \label{Figure:-GermanDeltaCoVaR}
\end{figure}

\begin{figure}
\includegraphics[width=17cm, height=16cm]{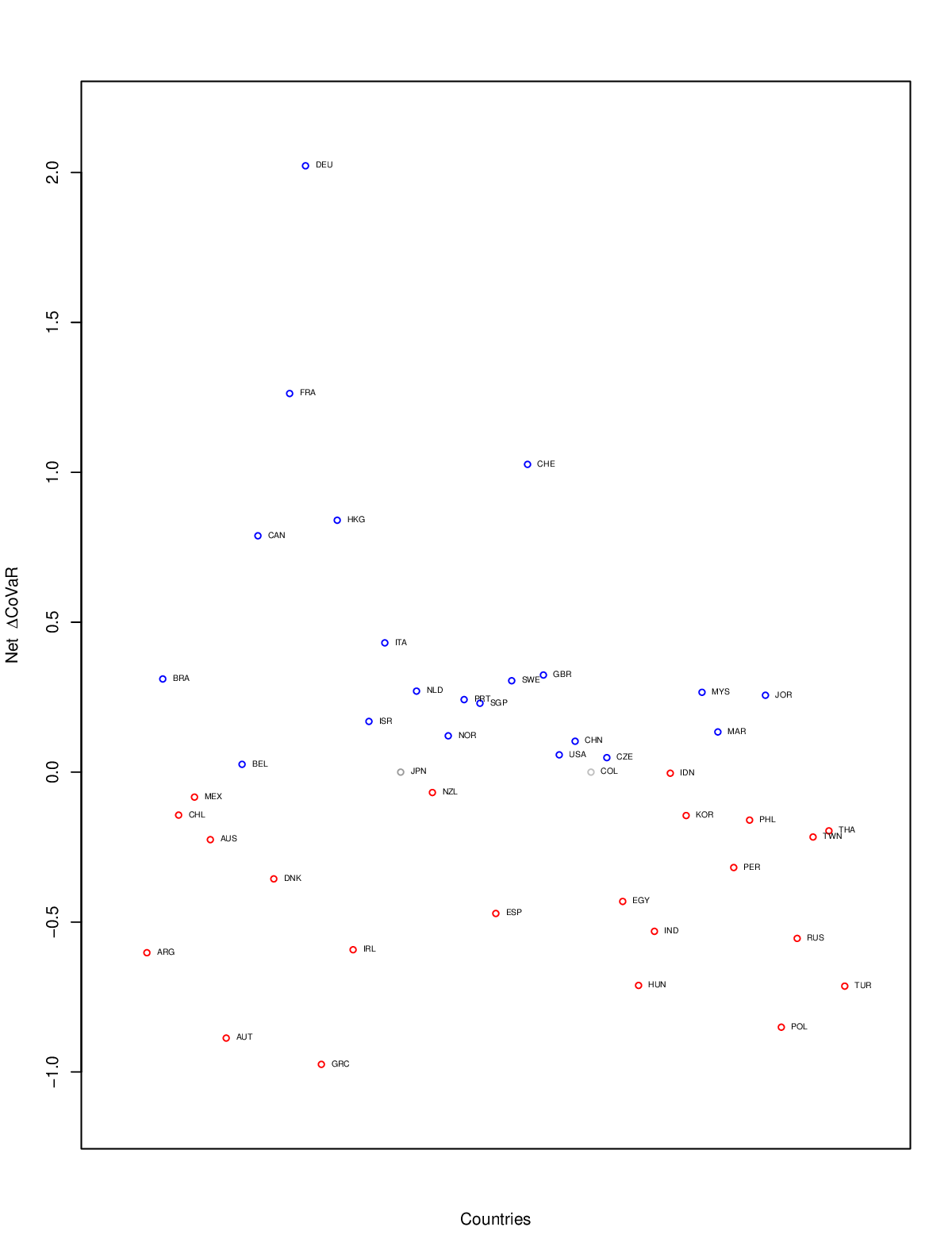}
\caption{Volatility Spillovers Net Contribution of Each Country} \label{Figure-2:-Net-DeltaCoVaR}%\tabularnewline
\end{figure}

 In addition, we present \textit{net}-$\Delta CoVaR$ discussed in Section \ref{sub: Network-CoVaR} with $\tau = 0.9$, i.e. the Up Tail, in Figure \ref{Figure-2:-Net-DeltaCoVaR} which shows that: globally, total
volatility spillovers from Germany and France to the
others are much larger than total volatility spillovers from the others
to them, and their \textit{net}-$\Delta CoVaR$ are positive. Both Greece
and Spain have negative \textit{net}-$\Delta CoVaR$.

\bibliographystyle{plain}
\bibliography{quantileGraphLiter}

\begin{thebibliography}{100}

\bibitem{abadie2014inference}
Alberto Abadie, Guido Imbens, and Fanyin Zheng.
\newblock Inference for misspecified models with fixed regressors.
\newblock {\em Journal of the American Statistical Association},
  109(508):1601--1614, 2014.

\bibitem{Acemoglu2010}
Daron Acemoglu, Asuman Ozdaglar, and Alireza Tahbaz-Salehi.
\newblock Cascades in networks and aggregate volatility.
\newblock Technical report, National Bureau of Economic Research, 2010.

\bibitem{Acemoglu2013}
Daron Acemoglu, Asuman Ozdaglar, and Alireza Tahbaz-Salehi.
\newblock Systemic risk and stability in financial networks.
\newblock {\em The American Economic Review}, 105(2):564--608, 2015.

\bibitem{Adrian2011}
Tobias Adrian and Markus Brunnermeier.
\newblock Covar.
\newblock {\em The American Economic Review}, 106(7):1705--1741, 2016.

\bibitem{Andersen2013}
Torben Andersen, Tim Bollerslev, Peter Christoffersen, and Francis Diebold.
\newblock Financial risk measurement for financial risk management.
\newblock {\em Handbook of the Economics of Finance}, 2:1127--1220, 2013.

\bibitem{Ang2002}
Andrew Ang and Joseph Chen.
\newblock Asymmetric correlations of equity portfolios.
\newblock {\em Journal of Financial Economics}, 63(3):443--494, 2002.

\bibitem{Ang2006}
Andrew Ang, Joseph Chen, and Yuhang Xing.
\newblock Downside risk.
\newblock {\em Review of Financial Studies}, 19(4):1191--1239, 2006.

\bibitem{angrist2006quantile}
Joshua Angrist, Victor Chernozhukov, and Iv{\'a}n Fern{\'a}ndez-Val.
\newblock Quantile regression under misspecification, with an application to
  the us wage structure.
\newblock {\em Econometrica}, 74(2):539--563, 2006.

\bibitem{bajari2003deciding}
Patrick Bajari and Lixin Ye.
\newblock Deciding between competition and collusion.
\newblock {\em Review of Economics and statistics}, 85(4):971--989, 2003.

\bibitem{Banerjee2008}
Onureena Banerjee, Laurent El~Ghaoui, and Alexandre d'Aspremont.
\newblock Model selection through sparse maximum likelihood estimation for
  multivariate gaussian or binary data.
\newblock {\em The Journal of Machine Learning Research}, 9:485--516, 2008.

\bibitem{BCCH2012sparse}
Alexandre Belloni, Daniel Chen, Victor Chernozhukov, and Christian Hansen.
\newblock Sparse models and methods for optimal instruments with an application
  to eminent domain.
\newblock {\em Econometrica}, 80(6):2369--2429, 2012.

\bibitem{BC-SparseQR}
Alexandre Belloni and Victor Chernozhukov.
\newblock $\ell_1$-penalized quantile regression for high dimensional sparse
  models.
\newblock {\em Annals of Statistics}, 39(1):82--130, 2011.

\bibitem{Volume2013}
Alexandre Belloni and Victor Chernozhukov.
\newblock Inference methods for high-dimensional sparse econometric models.
\newblock {\em Advances in Economics and Econometrics, 10th World Congress of
  Econometric Society}, III:245--295, 2013.

\bibitem{BC-PostLASSO}
Alexandre Belloni and Victor Chernozhukov.
\newblock Least squares after model selection in high-dimensional sparse
  models.
\newblock {\em Bernoulli}, 19(2):521--547, 2013.
\newblock ArXiv, 2009.

\bibitem{BCCW-ManyProcesses}
Alexandre Belloni, Victor Chernozhukov, Denis Chetverikov, and Ying Wei.
\newblock Uniformly valid post-regularization confidence regions for many
  functional parameters in z-estimation framework.
\newblock {\em arXiv:1512.07619}, 2015.

\bibitem{BCF2011}
Alexandre Belloni, Victor Chernozhukov, and Iv{\'a}n Fern\'{a}ndez-Val.
\newblock Conditional quantile processes based on series or many regressors.
\newblock {\em arXiv}, 2011.

\bibitem{BCFH2013program}
Alexandre Belloni, Victor Chernozhukov, Ivan Fern{\'a}ndez-Val, and Christian
  Hansen.
\newblock Program evaluation and causal inference with high-dimensional data.
\newblock {\em Econometrica}, 85(1):233--298, 2017.

\bibitem{BCH2014inference}
Alexandre Belloni, Victor Chernozhukov, and Christian Hansen.
\newblock Inference on treatment effects after selection among high-dimensional
  controls.
\newblock {\em The Review of Economic Studies}, 81(2):608--650, 2014.

\bibitem{BCK2013robustQR}
Alexandre Belloni, Victor Chernozhukov, and Kengo Kato.
\newblock Robust inference in high-dimensional approximately sparse quantile
  regression models.
\newblock {\em arXiv preprint arXiv:1312.7186}, 2013.

\bibitem{belloni2015uniform}
Alexandre Belloni, Victor Chernozhukov, and Kengo Kato.
\newblock Uniform post-selection inference for least absolute deviation
  regression and other z-estimation problems.
\newblock {\em Biometrika}, 102(1):77--94, 2015.

\bibitem{BCW-SqLASSO}
Alexandre Belloni, Victor Chernozhukov, and Lie Wang.
\newblock Square-root-lasso: Pivotal recovery of sparse signals via conic
  programming.
\newblock {\em Biometrika}, 98(4):791--806, 2011.
\newblock Arxiv, 2010.

\bibitem{BickelRitovTsybakov2009}
Peter Bickel, Ya'acov Ritov, and Alexandre Tsybakov.
\newblock Simultaneous analysis of lasso and dantzig selector.
\newblock {\em Annals of Statistics}, 37(4):1705--1732, 2009.

\bibitem{Billio2010}
Monica Billio, Mila Getmansky, Andrew~W Lo, and Loriana Pelizzon.
\newblock Measuring systemic risk in the finance and insurance sectors.
\newblock 2010.

\bibitem{bonaldi2015empirical}
Pietro Bonaldi, Ali Horta{\c{c}}su, and Jakub Kastl.
\newblock An empirical analysis of funding costs spillovers in the euro-zone
  with application to systemic risk.
\newblock Technical report, National Bureau of Economic Research, 2015.

\bibitem{Cai2011}
Tony Cai, Weidong Liu, and Xi~Luo.
\newblock A constrained $\ell_1$ minimization approach to sparse precision
  matrix estimation.
\newblock {\em Journal of the American Statistical Association}, 106(494),
  2011.

\bibitem{CanerKock:HDCI}
Mehmet Caner and Anders Kock.
\newblock Asymptotically honest confidence regions for high dimensional
  parameters by the desparsified conservative lasso.
\newblock {\em Journal of Econometrics}, 203(1):143--168, 2018.

\bibitem{chernozhukov2005extremal}
Victor Chernozhukov.
\newblock Extremal quantile regression.
\newblock {\em Annals of Statistics}, pages 806--839, 2005.

\bibitem{chernozhukov2013gaussian}
Victor Chernozhukov, Denis Chetverikov, and Kengo Kato.
\newblock Gaussian approximations and multiplier bootstrap for maxima of sums
  of high-dimensional random vectors.
\newblock {\em The Annals of Statistics}, 41(6):2786--2819, 2013.

\bibitem{chernozhukov2012gaussian}
Victor Chernozhukov, Denis Chetverikov, and Kengo Kato.
\newblock Gaussian approximation of suprema of empirical processes.
\newblock {\em The Annals of Statistics}, 42(4):1564--1597, 2014.

\bibitem{chernozhukov2012comparison}
Victor Chernozhukov, Denis Chetverikov, and Kengo Kato.
\newblock Comparison and anti-concentration bounds for maxima of gaussian
  random vectors.
\newblock {\em Probability Theory and Related Fields}, 162:47--70, 2015.

\bibitem{chernozhukov2015noncenteredprocesses}
Victor Chernozhukov, Denis Chetverikov, and Kengo Kato.
\newblock Empirical and multiplier bootstraps for supreme of empirical
  processes of increasing complexity, and related gaussian couplings.
\newblock {\em Stochastic Processes and their Applications},
  126(12):3632--3651, 2016.

\bibitem{chernozhukov2014clt}
Victor Chernozhukov, Denis Chetverikov, and Kengo Kato.
\newblock Central limit theorems and bootstrap in high dimensions.
\newblock {\em The Annals of Probability}, 45(4):2309--2352, 2017.

\bibitem{CHS:PnP}
Victor Chernozhukov, Christian Hansen, and Martin Spindler.
\newblock Post-selection and post-regularization inference in linear models
  with very many controls and instruments.
\newblock {\em American Economic Review: Papers and Proceedings},
  105(5):486--490, 2015.

\bibitem{chiong2017estimation}
Khai~Xiang Chiong and Hyungsik~Roger Moon.
\newblock Estimation of graphical models using the l 1, 2 norm.
\newblock {\em The Econometrics Journal}, 21(3):247--263, 2018.

\bibitem{Claessens2001}
Stijn Claessens and Kristin Forbes.
\newblock {\em International Financial Contagion}.
\newblock Springer, 2001.

\bibitem{Cox1996}
David Cox and Nanny Wermuth.
\newblock {\em Multivariate dependencies: Models, analysis and interpretation},
  volume~67.
\newblock CRC Press, 1996.

\bibitem{dawid1979conditional}
Philip Dawid.
\newblock Conditional independence in statistical theory.
\newblock {\em Journal of the Royal Statistical Society. Series B
  (Methodological)}, pages 1--31, 1979.

\bibitem{de2017econometrics}
Aureo De~Paula.
\newblock Econometrics of network models.
\newblock In {\em Advances in Economics and Econometrics: Theory and
  Applications, Eleventh World Congress}, pages 268--323. Cambridge University
  Press Cambridge, 2017.

\bibitem{Dempster1972}
Arthur Dempster.
\newblock Covariance selection.
\newblock {\em Biometrics}, pages 157--175, 1972.

\bibitem{DieboldYilmaz2009}
Francis Diebold and Kamil Yilmaz.
\newblock Measuring financial asset return and volatility spillovers, with
  application to global equity markets.
\newblock {\em The Economic Journal}, 119(534):158--171, 2009.

\bibitem{diebold2015trans}
Francis~X Diebold and Kamil Yilmaz.
\newblock Trans-atlantic equity volatility connectedness: Us and european
  financial institutions, 2004--2014.
\newblock {\em Journal of Financial Econometrics}, 14(1):81--127, 2015.

\bibitem{ding2016conditional}
Peng Ding.
\newblock On the conditional distribution of the multivariate t distribution.
\newblock {\em The American Statistician}, 70(3):293--295, 2016.

\bibitem{Drton2004}
Mathias Drton and Michael Perlman.
\newblock Model selection for gaussian concentration graphs.
\newblock {\em Biometrika}, 91(3):591--602, 2004.

\bibitem{Drton2007}
Mathias Drton and Michael Perlman.
\newblock Multiple testing and error control in gaussian graphical model
  selection.
\newblock {\em Statistical Science}, 22(3):430--449, 2007.

\bibitem{Drton2008}
Mathias Drton and Michael Perlman.
\newblock A sinful approach to gaussian graphical model selection.
\newblock {\em Journal of Statistical Planning and Inference},
  138(4):1179--1200, 2008.

\bibitem{Edwards2000}
David Edwards.
\newblock {\em Introduction to graphical modelling}.
\newblock Springer, 2000.

\bibitem{elliott2014financial}
Matthew Elliott, Benjamin Golub, and Matthew~O Jackson.
\newblock Financial networks and contagion.
\newblock {\em American Economic Review}, 104(10):3115--53, 2014.

\bibitem{Engle1993}
Robert Engle and Raul Susmel.
\newblock Common volatility in international equity markets.
\newblock {\em Journal of Business \& Economic Statistics}, 11(2):167--176,
  1993.

\bibitem{fan2011sparse}
Jianqing Fan, Jinchi Lv, and Lei Qi.
\newblock Sparse high dimensional models in economics.
\newblock {\em Annual review of economics}, 3:291, 2011.

\bibitem{Farrell:JMP}
Max Farrell.
\newblock Robust inference on average treatment effects with possibly more
  covariates than observations.
\newblock {\em Journal of Econometrics}, 174(2):1--23, 2015.

\bibitem{Forbes2002}
Kristin~J Forbes and Roberto Rigobon.
\newblock No contagion, only interdependence: measuring stock market
  comovements.
\newblock {\em The journal of finance}, 57(5):2223--2261, 2002.

\bibitem{Foygel2010}
Rina Foygel and Mathias Drton.
\newblock Extended bayesian information criteria for gaussian graphical models.
\newblock In {\em Advances in Neural Information Processing Systems}, pages
  604--612, 2010.

\bibitem{Friedman2008}
Jerome Friedman, Trevor Hastie, and Robert Tibshirani.
\newblock Sparse inverse covariance estimation with the graphical lasso.
\newblock {\em Biostatistics}, 9(3):432--441, 2008.

\bibitem{hansen2014challenges}
Lars~Peter Hansen.
\newblock Challenges in identifying and measuring systemic risk.
\newblock {\em Risk Topography: Systemic Risk and Macro Modeling}, 2014.

\bibitem{hardle2016tenet}
Wolfgang~Karl H{\"a}rdle, Weining Wang, and Lining Yu.
\newblock Tenet: Tail-event driven network risk.
\newblock {\em Journal of Econometrics}, 192(2):499--513, 2016.

\bibitem{harrington2008detecting}
Joseph~E Harrington.
\newblock Detecting cartels, handbook of antitrust economics, 2008.

\bibitem{hautsch2014forecasting}
Nikolaus Hautsch, Julia Schaumburg, and Melanie Schienle.
\newblock Forecasting systemic impact in financial networks.
\newblock {\em International Journal of Forecasting}, 30(3):781--794, 2014.

\bibitem{hautsch2015financial}
Nikolaus Hautsch, Julia Schaumburg, and Melanie Schienle.
\newblock Financial network systemic risk contributions.
\newblock {\em Review of Finance}, 19(2):685--738, 2015.

\bibitem{he2013quantile}
Xuming He, Lan Wang, and Hyokyoung~Grace Hong.
\newblock Quantile-adaptive model-free variable screening for high-dimensional
  heterogeneous data.
\newblock {\em The Annals of Statistics}, 41(1):342--369, 2013.

\bibitem{heckman2015causal}
James Heckman and Rodrigo Pinto.
\newblock Causal analysis after haavelmo.
\newblock {\em Econometric Theory}, 31(1):115--151, 2015.

\bibitem{jackson2010social}
Matthew~O Jackson.
\newblock {\em Social and economic networks}.
\newblock Princeton university press, 2010.

\bibitem{jankova2015confidence}
Jana Jankova and Sara van~de Geer.
\newblock Confidence intervals for high-dimensional inverse covariance
  estimation.
\newblock {\em Electronic Journal of Statistics}, 9(1):1205--1229, 2015.

\bibitem{joe1997multivariate}
Harry Joe.
\newblock {\em Multivariate models and multivariate dependence concepts}.
\newblock CRC Press, 1997.

\bibitem{kastl2017recent}
Jakub Kastl.
\newblock Recent advances in empirical analysis of financial markets:
  industrial organization meets finance.
\newblock {\em Honor{\'e}, B., Pakes A., Piazzesi, M. and L. Samuelson
  (editors) Advances in Economics and Econometrics}, 2:231--270, 2017.

\bibitem{kato2011}
Kengo Kato.
\newblock Group lasso for high dimensional sparse quantile regression models.
\newblock Preprint, ArXiv, 2011.

\bibitem{Knight1998}
Keith Knight.
\newblock Limiting distributions for {$L_1$} regression estimators under
  general conditions.
\newblock {\em The Annals of Statistics}, 26:755--770, 1998.

\bibitem{knight2008asymptotics}
Keith Knight.
\newblock Asymptotics of the regression quantile basic solution under
  misspecification.
\newblock {\em Applications of Mathematics}, 53(3):223--234, 2008.

\bibitem{Koenker2005}
Roger Koenker.
\newblock {\em Quantile regression}.
\newblock Cambridge University Press, New York, 2005.

\bibitem{KoenkerBassett1978}
Roger Koenker and Gilbert Bassett.
\newblock Regression quantiles.
\newblock {\em Econometrica}, 46(1):33--50, 1978.

\bibitem{Kolaczyk2009}
Eric Kolaczyk.
\newblock {\em Statistical analysis of network data}.
\newblock Springer, 2009.

\bibitem{Lauritzen1996}
Steffen Lauritzen.
\newblock {\em Graphical models}.
\newblock Oxford University Press, 1996.

\bibitem{LedouxTalagrandBook}
Michel Ledoux and Michel Talagrand.
\newblock {\em Probability in Banach Spaces (Isoperimetry and processes)}.
\newblock Ergebnisse der Mathematik undihrer Grenzgebiete, Springer-Verlag,
  1991.

\bibitem{lee2009efficiency}
Ying-Ying Lee.
\newblock Efficiency bounds for semiparametric estimation of quantile
  regression under misspecification.
\newblock {\em University of Wisconsin-Madison}, 2009.

\bibitem{leeb2008can}
Hannes Leeb and Benedikt P{\"o}tscher.
\newblock Can one estimate the unconditional distribution of
  post-model-selection estimators?
\newblock {\em Econometric Theory}, 24(02):338--376, 2008.

\bibitem{Liu2012}
Han Liu, Fang Han, Ming Yuan, John Lafferty, and Larry Wasserman.
\newblock High-dimensional semiparametric gaussian copula graphical models.
\newblock {\em The Annals of Statistics}, 40(4):2293--2326, 2012.

\bibitem{Liu2009}
Han Liu, John Lafferty, and Larry Wasserman.
\newblock The nonparanormal: Semiparametric estimation of high dimensional
  undirected graphs.
\newblock {\em The Journal of Machine Learning Research}, 10:2295--2328, 2009.

\bibitem{Liu2012b}
Han Liu and Lie Wang.
\newblock Tiger: A tuning-insensitive approach for optimally estimating
  gaussian graphical models.
\newblock {\em Electronic Journal of Statistics}, 11(1):241--294, 2017.

\bibitem{Liu2012a}
Weidong Liu and Xi~Luo.
\newblock High-dimensional sparse precision matrix estimation via sparse column
  inverse operator.
\newblock {\em arXiv preprint arXiv:1203.3896}, 2012.

\bibitem{loh2012structure}
Po-Ling Loh and Martin~J Wainwright.
\newblock Structure estimation for discrete graphical models: Generalized
  covariance matrices and their inverses.
\newblock In {\em Advances in Neural Information Processing Systems}, pages
  2087--2095, 2012.

\bibitem{Longin2001}
Francois Longin and Bruno Solnik.
\newblock Extreme correlation of international equity markets.
\newblock {\em The Journal of Finance}, 56(2):649--676, 2001.

\bibitem{maathuis2018handbook}
Marloes Maathuis, Mathias Drton, Steffen Lauritzen, and Martin Wainwright.
\newblock {\em Handbook of Graphical Models}.
\newblock CRC Press, 2018.

\bibitem{meinshausen2006high}
Nicolai Meinshausen and Peter B{\"u}hlmann.
\newblock High-dimensional graphs and variable selection with the lasso.
\newblock {\em The Annals of Statistics}, pages 1436--1462, 2006.

\bibitem{negahban2012unified}
Sahand Negahban, Pradeep Ravikumar, Martin Wainwright, and Bin Yu.
\newblock A unified framework for high-dimensional analysis of m-estimators
  with decomposable regularizers.
\newblock {\em Statistical Science}, 27(4):538--557, 2012.

\bibitem{newman2010networks}
Mark Newman.
\newblock {\em Networks: an introduction}.
\newblock Oxford university press, 2010.

\bibitem{ning2017general}
Yang Ning and Han Liu.
\newblock A general theory of hypothesis tests and confidence regions for
  sparse high dimensional models.
\newblock {\em The Annals of Statistics}, 45(1):158--195, 2017.

\bibitem{oliveira2013lower}
Roberto~Imbuzeiro Oliveira.
\newblock The lower tail of random quadratic forms, with applications to
  ordinary least squares and restricted eigenvalue properties.
\newblock {\em Probability Theory and Related Fields}, 166(3-4):1175--1194,
  2016.

\bibitem{Patton2004}
Andrew Patton.
\newblock On the out-of-sample importance of skewness and asymmetric dependence
  for asset allocation.
\newblock {\em Journal of Financial Econometrics}, 2(1):130--168, 2004.

\bibitem{Patton2006}
Andrew Patton.
\newblock Modelling asymmetric exchange rate dependence.
\newblock {\em International economic review}, 47(2):527--556, 2006.

\bibitem{pearl2009causality}
Judea Pearl.
\newblock {\em Causality}.
\newblock Cambridge university press, 2009.

\bibitem{peng2009partial}
Jie Peng, Pei Wang, Nengfeng Zhou, and Ji~Zhu.
\newblock Partial correlation estimation by joint sparse regression models.
\newblock {\em Journal of the American Statistical Association},
  104(486):735--746, 2009.

\bibitem{porter2005detecting}
Robert~H Porter.
\newblock Detecting collusion.
\newblock {\em Review of Industrial Organization}, 26(2):147--167, 2005.

\bibitem{ravikumar2010high}
Pradeep Ravikumar, Martin~J Wainwright, and John~D Lafferty.
\newblock High-dimensional ising model selection using $\ell$1-regularized
  logistic regression.
\newblock {\em The Annals of Statistics}, 38(3):1287--1319, 2010.

\bibitem{Ravikumar2011}
Pradeep Ravikumar, Martin~J Wainwright, Garvesh Raskutti, and Bin Yu.
\newblock High-dimensional covariance estimation by minimizing
  $\ell$1-penalized log-determinant divergence.
\newblock {\em Electronic Journal of Statistics}, 5:935--980, 2011.

\bibitem{robins1986new}
James Robins.
\newblock A new approach to causal inference in mortality studies with a
  sustained exposure period—application to control of the healthy worker
  survivor effect.
\newblock {\em Mathematical modelling}, 7(9-12):1393--1512, 1986.

\bibitem{romano2005exact}
Joseph Romano and Michael Wolf.
\newblock Exact and approximate stepdown methods for multiple hypothesis
  testing.
\newblock {\em Journal of the American Statistical Association},
  100(469):94--108, 2005.

\bibitem{starica2005nonstationarities}
Catalin Starica and Clive Granger.
\newblock Nonstationarities in stock returns.
\newblock {\em Review of economics and statistics}, 87(3):503--522, 2005.

\bibitem{Sun2012}
Tingni Sun and Cun-Hui Zhang.
\newblock Sparse matrix inversion with scaled lasso.
\newblock {\em The Journal of Machine Learning Research}, 14(1):3385--3418,
  2013.

\bibitem{vdGeer2008}
Sara van~de Geer.
\newblock High-dimensional generalized linear models and the lasso.
\newblock {\em Annals of Statistics}, 36(2):614--645, 2008.

\bibitem{GRD2014}
Sara Van~de Geer, Peter B{\"u}hlmann, Ya'acov Ritov, and Ruben Dezeure.
\newblock On asymptotically optimal confidence regions and tests for
  high-dimensional models.
\newblock {\em The Annals of Statistics}, 42(3):1166--1202, 2014.

\bibitem{vdV-W}
Aad van~der Vaart and Jon Wellner.
\newblock {\em Weak Convergence and Empirical Processes}.
\newblock Springer Series in Statistics, 1996.

\bibitem{vdVaartWellner2007}
Aad van~der Vaart and Jon Wellner.
\newblock Empirical process indexed by estimated functions.
\newblock {\em IMS Lecture Notes-Monograph Series}, 55:234--252, 2007.

\bibitem{wang2016inference}
Jialei Wang and Mladen Kolar.
\newblock Inference for high-dimensional exponential family graphical models.
\newblock In {\em Proc. of AISTATS}, volume~51, pages 751--760, 2016.

\bibitem{wu2015conditional}
Yuanshan Wu and Guosheng Yin.
\newblock Conditional quantile screening in ultrahigh-dimensional heterogeneous
  data.
\newblock {\em Biometrika}, 102(1):65--76, 2015.

\bibitem{Xue2012}
Lingzhou Xue and Hui Zou.
\newblock Regularized rank-based estimation of high-dimensional nonparanormal
  graphical models.
\newblock {\em The Annals of Statistics}, 40(5):2541--2571, 2012.

\bibitem{Xue2012a}
Lingzhou Xue, Hui Zou, and Tianxi Cai.
\newblock Nonconcave penalized composite conditional likelihood estimation of
  sparse ising models.
\newblock {\em The Annals of Statistics}, 40(3):1403--1429, 2012.

\bibitem{yang2015graphical}
Eunho Yang, Pradeep Ravikumar, Genevera~I Allen, and Zhandong Liu.
\newblock Graphical models via univariate exponential family distributions.
\newblock {\em The Journal of Machine Learning Research}, 16(1):3813--3847,
  2015.

\bibitem{Yuan2010}
Ming Yuan.
\newblock High dimensional inverse covariance matrix estimation via linear
  programming.
\newblock {\em The Journal of Machine Learning Research}, 99:2261--2286, 2010.

\bibitem{Yuan2007}
Ming Yuan and Yi~Lin.
\newblock Model selection and estimation in the gaussian graphical model.
\newblock {\em Biometrika}, 94(1):19--35, 2007.

\bibitem{zhang2014confidence}
Cun-Hui Zhang and Stephanie Zhang.
\newblock Confidence intervals for low dimensional parameters in high
  dimensional linear models.
\newblock {\em Journal of the Royal Statistical Society: Series B (Statistical
  Methodology)}, 76(1):217--242, 2014.

\bibitem{zheng2015globally}
Qi~Zheng, Limin Peng, and Xuming He.
\newblock Globally adaptive quantile regression with ultra-high dimensional
  data.
\newblock {\em Annals of statistics}, 43(5):2225, 2015.

\end{thebibliography}

\begin{appendix}
\newpage

\section{Implementation Details of Algorithms}\label{Appendix:Implementation}
This section provides details of the algorithms mentioned in Section \ref{sec:Estimator-and-Consistency}. Note, for the weighted-Lasso estimator, the choice of penalty level $\lambda := 1.1n^{-1/2}2\Phi^{-1}(1-\xi/N_n)$ and  penalty loading $\widehat \Gamma_\tau= \text{diag}[ \hat \Gamma_{\tau kk} , k \in [p]\backslash\{j\}]$
 is a diagonal matrix defined by the following procedure: (1) Compute the post-Lasso estimator $\widetilde \gamma_{a\tau}^{j}$ based on $\lambda$ and initial values $ \hat \Gamma_{\tau kk}  =   {\displaystyle \max_{i\leq n}} \|f_{ia\tau}Z^a_i\|_\infty \{\En[  |f_{a\tau}Z^a_k|^2]\}^{1/2}$. (2)  Compute the residuals $\widehat v_{i a\tau j} = f_{ia\tau}(Z_{ij}^a - Z_{i,-j}^a\widetilde \gamma_{a\tau}^{j})$ and update the loadings \begin{equation}\label{Parameter:Gamma} \hat \Gamma_{\tau kk}  =   \sqrt{\En[f_{a\tau}^2 |Z_{k}^a \widehat{v}_{a\tau j}|^2]}, \ k \in [p]\backslash\{j\} \end{equation}
and use them to recompute the post-Lasso estimator $\widetilde \gamma_{a\tau}^{j}$. In the case of Algorithm \ref{Alg:1} we can take $N_n=|V|p^3n^3$, in the case of Algorithm \ref{Alg:1prime} we take $N_n=|V|p^2\{pn^3\}^{1+d_W}$. Denote $\hat\sigma_{aj}^Z=\{\En[(Z_j^a)^2]\}^{1/2}$, $\hat\sigma_{aj}^X = \{\En[X_{-a,j}^2]\}^{1/2}$, $\hat\sigma_{a\varpi j}^Z= \{\En[K_\varpi(W)(Z^a_j)^2]\}^{1/2} $, and $\hat\sigma_{a\varpi j}^X = \{\En[K_\varpi(W)X_{-a,j}^2]\}^{1/2}$.

\begin{table}[ht]
\textbf{Detailed version of Algorithm \ref{Alg:1} (CIQGM)}\\
For each $a \in V$, $\tau \in \mathcal{T}$, and $j\in [p]$, perform the following:
\vspace{-0.1cm}\begin{enumerate}
\item Run Post-$\ell_1$-quantile regression of $X_a$ on   $Z^a$; keep fitted value $ Z_{-j}^a\widetilde \beta_{a\tau,-j}$,
$$\begin{array}{l}
\hat\beta_{a\tau}\in \arg\min_{\beta} \En[\rho_\tau(X_{a}-Z^a\beta)]+\lambda_{V\mathcal{T}} \sqrt{\tau(1-\tau)}\sum_{j=1}^p\hat\sigma_{aj}^Z|\beta_j|\\
\widetilde\beta_{a\tau}\in \arg\min_{\beta} \En[\rho_\tau(X_{a}-Z^a\beta)] \ : \ \beta_j=0 \ \ \mbox{if} \ |\hat\beta_{a\tau j}| \leq \lambda_{V\mathcal{T}}\sqrt{\tau(1-\tau)}/\hat \sigma_{aj}^Z.\\
\end{array}$$
\item Run  Post-Lasso of $f_{a\tau} Z_{j}^a$ on $f_{a\tau} Z_{-j}^a$; keep the residual $\widetilde v_i:=f_{ia\tau}\{Z_{ij}^a-Z_{i,-j}^a\widetilde\gamma_{a\tau}^j\}$,
$$\begin{array}{l}
\hat\gamma_{a\tau}^j\in \arg\min_\gamma\En[f_{a\tau}^2(Z_{j}^a-Z_{-j}^a\gamma)^2]+\lambda\|\widehat \Gamma_\tau\gamma\|_1\\
\widetilde\gamma_{a\tau}^j\in \arg\min_\gamma \En[f_{a\tau}^2(Z_{j}^a-Z_{-j}^a\gamma)^2]  \ : \ \supp(\gamma)\subseteq \supp(\hat\gamma_{a\tau}^j).\\
\end{array}$$
\item Run Instrumental Quantile Regression of $X_{a} - Z_{-j}^a\widetilde \beta_{a\tau,-j}$ on $Z_{j}^a$
using $\widetilde v$ as the instrument for $Z_{j}^a$,
$$\begin{array}{l}
\displaystyle \check \beta_{a\tau,j}\in \arg\min_{\alpha \in \mathcal{A}_{a \tau j}} \frac{\{\En[(1\{X_{a} \leq Z_{j}^a\alpha+Z_{-j}^a\widetilde\beta_{a\tau,-j}\}-\tau)\widetilde v]\}^2}{\En[(1\{X_{a} \leq Z_{j}^a\alpha+Z_{-j}^a\widetilde\beta_{a\tau,-j}\}-\tau)^2\widetilde v^2]},
\end{array}$$
with $\mathcal{A}_{a\tau j} = \{ \alpha \in \RR :   |\alpha - \widetilde \beta_{a\tau j}| \leq 10/\{\hat \sigma_{aj}^Z \log n\} \}$.
\end{enumerate}
\end{table}

\smallskip

\begin{table}[ht]
\textbf{Detailed version of Algorithm \ref{Alg:2} (PQGM)}\\
For each $a \in V$,  and $\tau \in \mathcal{T}$, perform the following:
\vspace{-0.1cm}\begin{enumerate}
\item Run Post-$\ell_1$-quantile regression of $X_a$ on $X_{-a}$,
$$\begin{array}{l}
\hat\beta_{a\tau}\in \arg\min_{\beta} \En[\rho_\tau(X_{a}-X_{-a}'\beta)]+\lambda_0\sum_{j\in [d]}\hat\sigma_{aj}^X|\beta_{j}| \\
\widetilde\beta_{a\tau}\in \arg\min_{\beta} \En[\rho_\tau(X_{a}-X_{-a}'\beta)] \ : \ \beta_j=0 \ \ \mbox{if} \ |\hat\beta_{a\tau j}| \leq \lambda_{0}/\hat \sigma_{aj}^X.\\
\end{array}
$$

\item Set $\hat \varepsilon_{ia\tau}= 1\{ X_{ia} \leq X_{i,-a}'\tilde{\beta}_{a\tau}\}-\tau$ for $i\in[n]$. Compute the penalty level $\bar{\lambda}_{V\mathcal{T}}$ via (\ref{barlambda}).

\item Run Post-$\ell_1$-quantile regression of $X_a$ on  $X_{-a}$,
$$\begin{array}{l}
\hat\beta_{a\tau} \in \arg\min_{\beta} \En[\rho_\tau(X_{a}-X_{-a}'\beta)]+\bar{\lambda}_{V\mathcal{T}}\sum_{j\in [d]}\{\En[\hat\varepsilon_{a\tau}^2X_{-a,j}^2]\}^{1/2}|\beta_j| \\
\check\beta_{a\tau}\in \arg\min_{\beta} \En[\rho_\tau(X_{a}-X_{-a}'\beta)] \ : \ \beta_j=0 \ \ \mbox{if} \ |\hat\beta_{a\tau j}| \leq \bar{\lambda}_{V\mathcal{T}}/\{\En[\hat\varepsilon_{a\tau}^2X_{-a,j}^2]\}^{1/2}.\\
\end{array}$$
\end{enumerate}
\end{table}

\begin{table}[ht]
\textbf{Detailed version of Algorithm \ref{Alg:1prime} ($\mathcal{W}$-Conditional CIQGM)}\\
For each $u = (a,\tau,\varpi) \in \UU =  V\times \mathcal{T}\times \mathcal{W}$, and $j\in [p]$, perform the following:
\vspace{-0.1cm}\begin{enumerate}
\item Run Post-$\ell_1$-quantile regression of $X_a$ on  $Z^a$; keep fitted value $ Z^a_{-j}\widetilde \beta_{u,-j}$,
$$\begin{array}{l}
\hat\beta_u\in \arg\min_{\beta} \En[K_\varpi(W)\rho_\tau(X_{a}-Z^a\beta)]+ \lambda_{u}\sum_{j=1}^{p}\hat{\sigma}_{a\varpi j}^{Z}|\beta_{j}|\\
\widetilde\beta_u\in \arg\min_{\beta} \En[K_\varpi(W)\rho_\tau(X_{a}-Z^a\beta)] \ : \ \beta_j=0 \ \ \mbox{if} \ |\hat\beta_{uj}| \leq \lambda_u/\hat\sigma_{a\varpi j}^Z.\\
\end{array}$$
\item Run  Post-Lasso of $f_u Z_j^a$ on $f_u Z_{-j}^a$; keep the residual $\widetilde v:=f_u(Z^a_j-Z^a_{-j}\widetilde\gamma^j_u)$,
$$\begin{array}{l}
\hat\gamma^j_u\in \arg\min_\theta \En[K_\varpi(W)f_u^2(Z^a_j-Z^a_{-j}\gamma)^2]+\lambda\|\widehat \Gamma_u\gamma\|_1\\
\widetilde\gamma^j_u\in \arg\min_\gamma \En[K_\varpi(W)f_u^2(Z^a_j-Z^a_{-j}\gamma)^2]  \ : \ \supp(\gamma)\subseteq \supp(\hat\gamma^j_u).\\
\end{array}$$
\item Run Instrumental Quantile Regression of $X_{a} - Z^a_{-j}\widetilde \beta_{u,-j}$ on $Z^a_{j}$
using $\widetilde v$ as the instrument,
$$\begin{array}{l}
\displaystyle \check \beta_{uj}\in \arg\min_{\alpha \in \mathcal{A}_{uj}}  \frac{\{\En[K_\varpi(W)(1\{X_{a} \leq Z^a_j\alpha+Z^a_{-j}\widetilde\beta_{u,-j}\}-\tau)\widetilde v]\}^2}{\En[K_\varpi(W)(1\{X_{a} \leq Z^a_j\alpha+Z^a_{-j}\widetilde\beta_{u,-j}\}-\tau)^2\widetilde v^2]}
\end{array}$$
where $\mathcal{A}_{uj}:= \{ \alpha \in \RR : |\alpha-\widetilde\beta_{uj}|\leq 10/\{\hat\sigma_{a\varpi j}^Z \log n\}  \}$.
\end{enumerate}
\end{table}

\begin{table}[ht]
\textbf{Detailed version of Algorithm \ref{Alg:2prime} ($\mathcal{W}$-Conditional PQGM)}\\
For each $u = (a,\tau,\varpi) \in \UU =  V\times \mathcal{T}\times \mathcal{W}$ perform the following:
\vspace{-0.1cm}\begin{enumerate}
\item Run Post-$\ell_1$-quantile regression of $X_a$ on $X_{-a}$,
$$\begin{array}{l}
\hat\beta_u\in \arg\min_{\beta} \En[K_\varpi(W)\rho_\tau(X_{a}-X_{-a}'\beta)]+\lambda_{0\mathcal{W}}\sum_{j\in[d]}\hat{\sigma}_{a\varpi j}^{X}|\beta_{j}|\\
\widetilde\beta_u\in \arg\min_{\beta} \En[K_\varpi(W)\rho_\tau(X_{a}-X_{-a}'\beta)] :  \ \beta_j=0 \ \ \mbox{if} \ |\hat\beta_{uj}| \leq \lambda_{0\mathcal{W}}/\hat\sigma_{a\varpi j}^X.  \\
\end{array}$$
\item Set $\hat\varepsilon_{iu}=1\{X_{ia} \leq X_{i,-a}'\widetilde\beta_{u}\}-\tau$ for $i\in[n]$, compute $\bar{\lambda}_{V\mathcal{T}\mathcal{W}}$ via (\ref{Def:NewLambda2}).\\
\item Run Post-$\ell_1$-quantile regression of $X_a$ on  $X_{-a}$,
$$\begin{array}{l}
\hat\beta_u\in \arg\min_{\beta} \En[K_\varpi(W)\rho_\tau(X_{a}-X_{-a}'\beta)]+\bar{\lambda}_{V\mathcal{T}\mathcal{W}} \sum_{j\in[d]}\{\En[K_\varpi(W)\hat\varepsilon_u^2X_{-a,j}^2]\}^{1/2}|\beta_j|.\\
\check\beta_{u}\in \arg\min_{\beta} \En[K_\varpi(W)\rho_\tau(X_{a}-X_{-a}'\beta)] \ : \ \beta_j=0 \ \ \mbox{if} \ |\hat\beta_{uj}| \leq \bar{\lambda}_{V\mathcal{T}\mathcal{W}}/\{\En[K_\varpi(W)\hat\varepsilon_u^2X_{-a,j}^2]\}^{1/2}.\\
\end{array}$$
\end{enumerate}
\end{table}

\section{Simulations of Quantile Graphical Models}\label{sec:Simulation}

%\subsection{Simulations of PQGMs}
In this section, we perform numerical examples to illustrate the
performance of the estimators proposed for QGMs. We will consider several different
designs. In order to compare with other proposals we will consider both
Gaussian and non-Gaussian examples.
%% mostly for CIQGM?

\subsection{Isotropic Non-Gaussian Example}
In general, the equivalence between a zero in the inverse covariance matrix and
a pair of conditional independent variables will break down for non-gaussian
distributions. The nonparanormal graphical models extends Gaussian Graphical Models
to Semiparametric Gaussian Copula models by transforming the variables
with smooth functions. We illustrate the applicability of CIQGM in representing
the conditional independence structure of a set of variables when the random variables
are not even jointly nonparanormal.

Consider i.i.d. copies of an $d$-dimensional random vector $\tilde{X}_V = (\tilde{X}_{1},\ldots, \tilde{X}_{d-1},\tilde{X}_{d})$
from the following multivariate normal distribution,
 $\tilde{X}_V \sim N(0,I_{d\times d})$,
where $I_{d\times d}$ is the identity matrix. Further, we generate
\begin{equation}
X_d  =  -\mbox{\ensuremath{\sqrt{\frac{2}{3\pi-2}}}}+\mbox{\ensuremath{\sqrt{\frac{\pi}{3\pi-2}}}} \tilde{X}_{d-1}^{2}\vert \tilde{X}_{d}\vert.\label{eq: counter location scale}
\end{equation}
It follows that $\mathrm{E}[X_d]=\sqrt{\frac{\pi}{3\pi-2}}(\mathrm{E}[\vert \tilde{X}_{d}\vert]-\sqrt{2/\pi})=0$
and $\mathrm{Var}(X_d)=\frac{\pi}{3\pi-2}(\mathrm{E}[\tilde{X}_{d}^{2}\cdot \tilde{X}_{d-1}^{4}]-\frac{2}{\pi})=1$.
In addition, equation (\ref{eq: counter location scale}) is a location-scale-shift
model in which the conditional median of the response is zero while
quantile functions other than the median are nonzero. We define
vector $X_V$ as
$$
X_V=(X_d, \tilde{X}_{1},...,\tilde{X}_{d-1})'.
$$
In this new set of variables, only $X_d$ and $\tilde{X}_{d-1}$ (i.e., node $1$ and $15$, when $d=15$) are not conditionally independent. Nonetheless, the
covariance matrix of $X_V$ is still $I_{d\times d}$.%
 
Next we consider an example with $n=300$ and
$d=15$. We show graphs, in Figure \ref{Figure-QGM-GGM} and \ref{Figure-Tiger1-2}, estimated by both CIQGM(s) and GGMs in this non-Gaussian setting.

\begin{figure}[ht]
\begin{minipage}{0.9\textwidth}
\centering

\begin{subfigure}[b]{0.3\textwidth}
     \caption[]%
        {{\footnotesize CIQGM(0.2)}}
 	\centering
	\includegraphics[width=\textwidth]{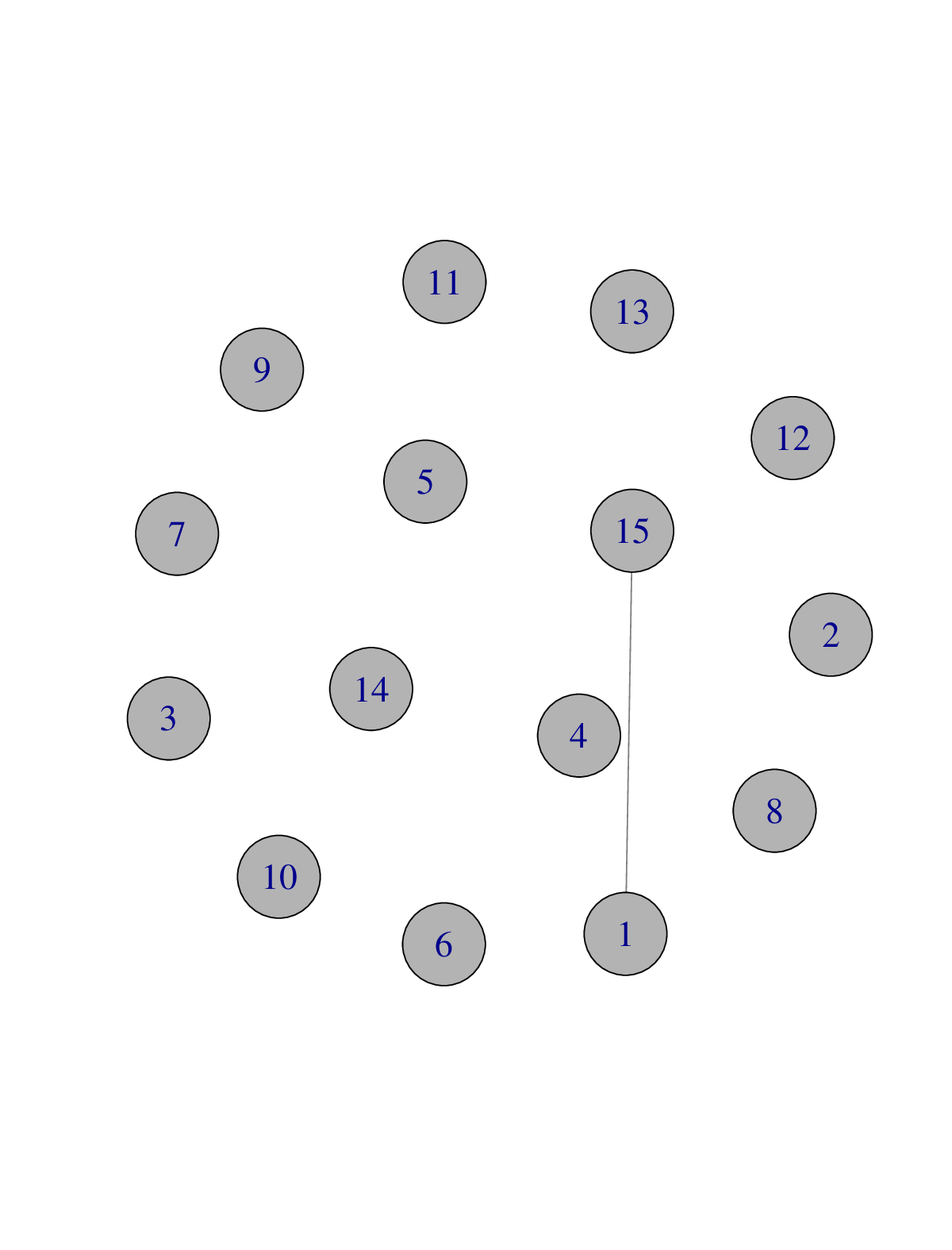}
\end{subfigure}
 \hfill
\begin{subfigure}[b]{0.3\textwidth}
   \caption[]%
        {{\footnotesize CIQGM(0.5)}}
 	\centering
	\includegraphics[width=\textwidth]{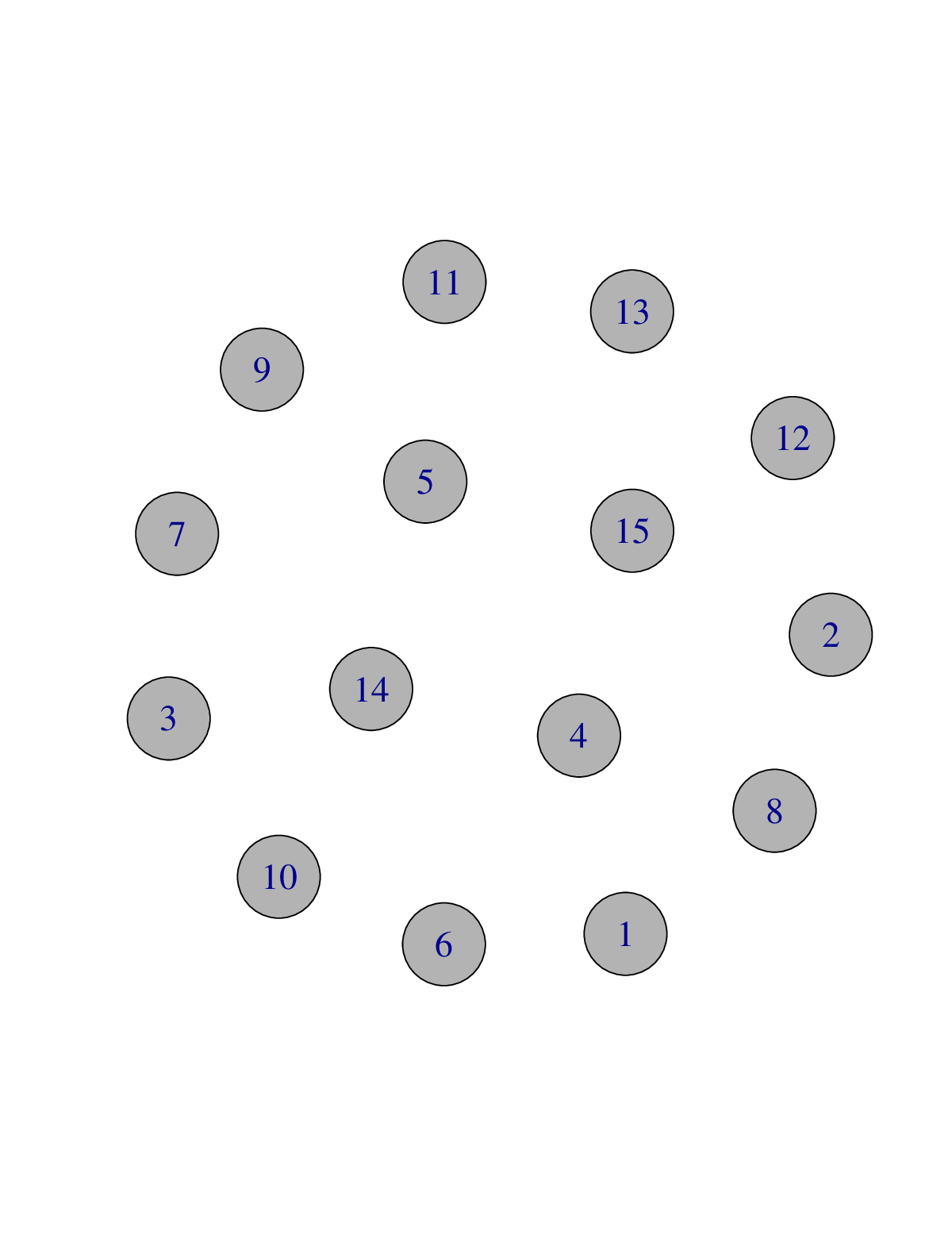}
\end{subfigure}
\hfill
\begin{subfigure}[b]{0.3\textwidth}
   \caption[]%
        {{\footnotesize CIQGM(0.8)}}
 	\centering
	\includegraphics[width=\textwidth]{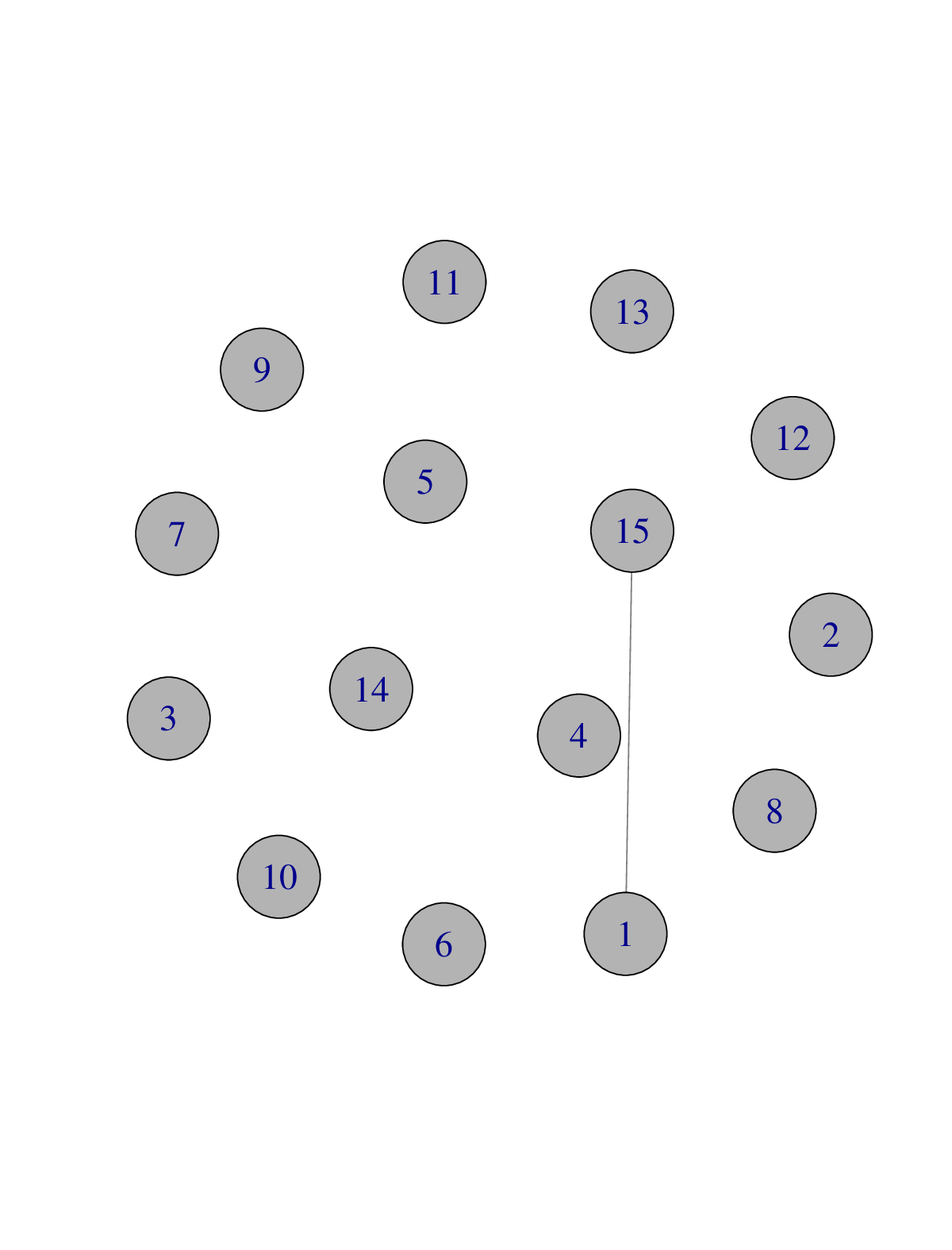}
\end{subfigure}

\vskip\baselineskip
\begin{subfigure}[b]{0.3\textwidth}
     \caption[]%
        {{\footnotesize CIQGM-Union}}
 	\centering
	\includegraphics[width=\textwidth]{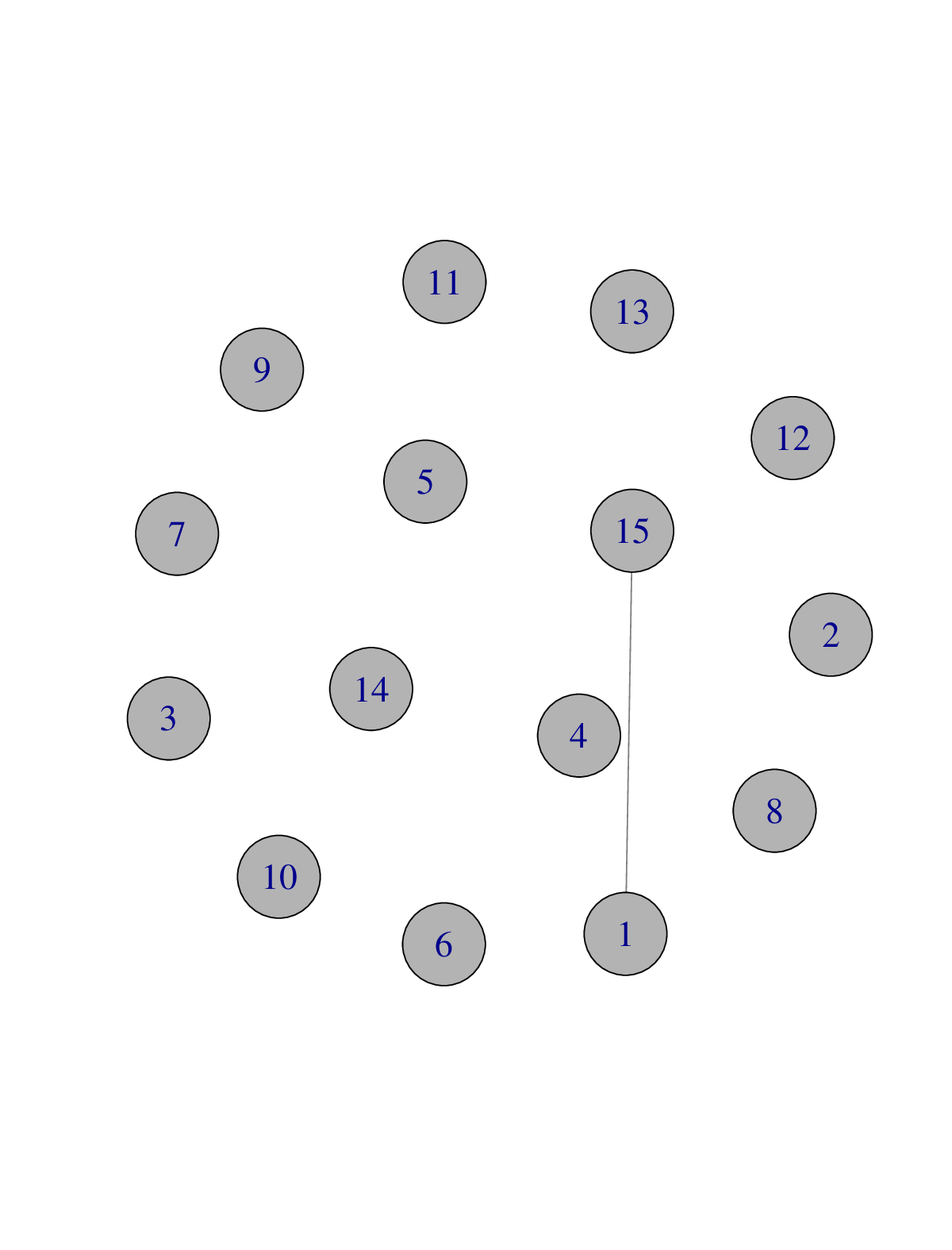}
\end{subfigure}
 \hfill
\begin{subfigure}[b]{0.3\textwidth}
   \caption[]%
        {{\footnotesize Gaussian}}
 	\centering
	\includegraphics[width=\textwidth]{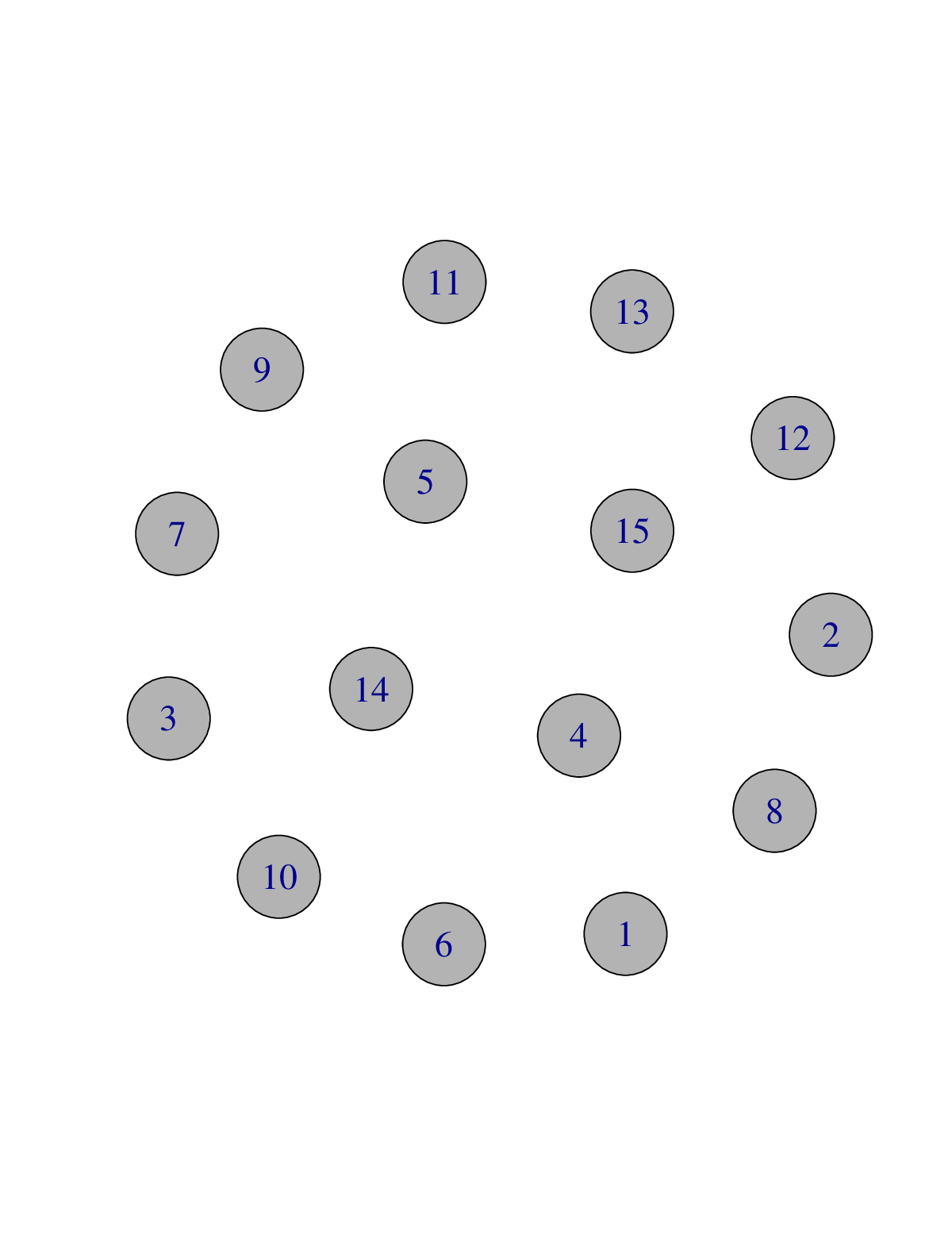}
\end{subfigure}
\hfill
\begin{subfigure}[b]{0.3\textwidth}
    \caption[]%
        {{\footnotesize Nonparanormal}}
 	\centering
	\includegraphics[width=\textwidth]{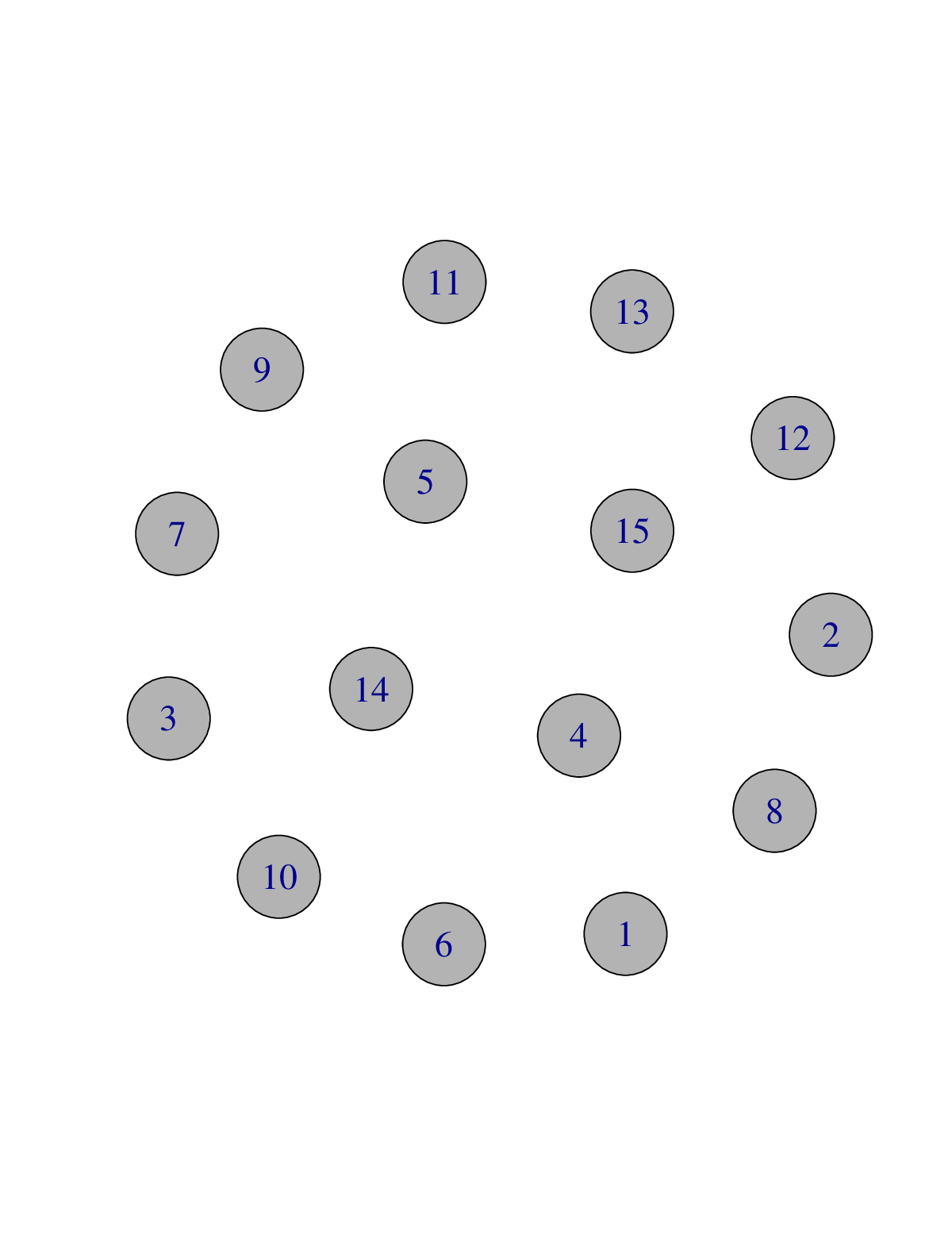}
\end{subfigure}

 \caption[ ]
        {\small QGM and GGM} \label{Figure-QGM-GGM}
\end{minipage}
\end{figure}

In Figure \ref{Figure-QGM-GGM}, Gaussian means the graph is estimated by using graphical lasso without any transformation
of $X_V$, and the final graph is chosen by Extended Bayesian
Information Criterion (ebic), see \cite{Foygel2010}. Nonparanormal means the graph
is estimated using graphical lasso (likelihood based approach) with nonparanormal transformation
of $X_V$, see \cite{Liu2009}, and again the final graph is chosen
by ebic. Both graphs are estimated using R-package \textbf{huge}.

In Figure \ref{Figure-Tiger1-2}, as a robustness check, we also compare results produced by CIQGM with those produced by neighborhood
selection methods (pseudo-likelihood approach), e.g. TIGER of \cite{Liu2012b} in R-package \textbf{flare}
the left graph is when choosing the turning parameter to be $\sqrt{\frac{\log d}{n}}$
while the right graph is when choosing the tuning parameter to be
$2\sqrt{\frac{\log d}{n}}$. Throughout, we use Tiger2
represent TIGER with penalty level $2\sqrt{\frac{\log d}{n}}$. As expected, GGM cannot detect the correct dependence structure when
the joint distribution is non-Gaussian while CIQGM can still represent
the right conditional independence structure.

\begin{figure}[ht]
\begin{minipage}{0.8\textwidth}
\centering

\begin{subfigure}[b]{0.4\textwidth}
     \caption[]%
        {{\footnotesize Tiger1}}
 	\centering
	\includegraphics[width=\textwidth]{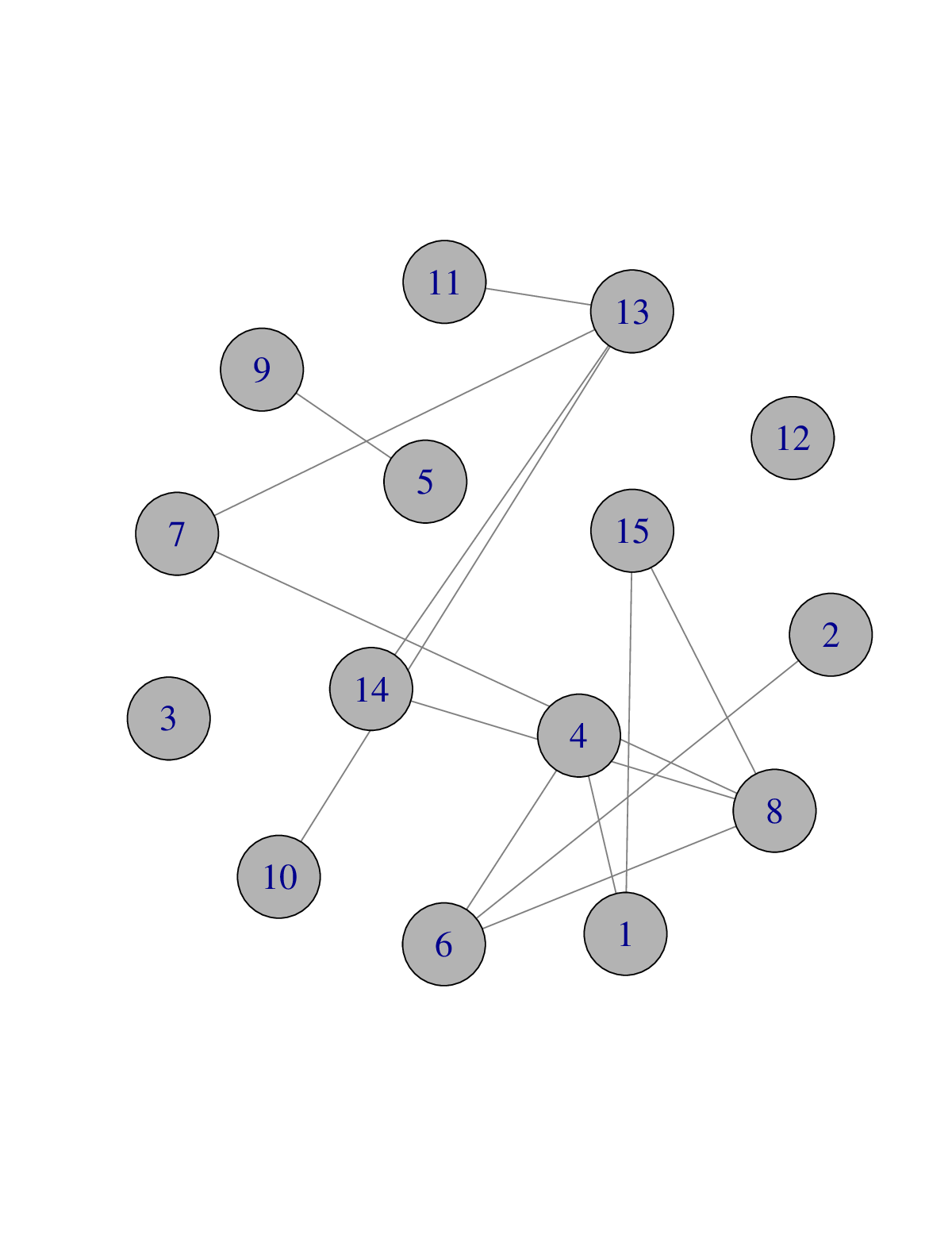}
\end{subfigure}
 \hfill
\begin{subfigure}[b]{0.4\textwidth}
   \caption[]%
        {{\footnotesize Tiger2}}
 	\centering
	\includegraphics[width=\textwidth]{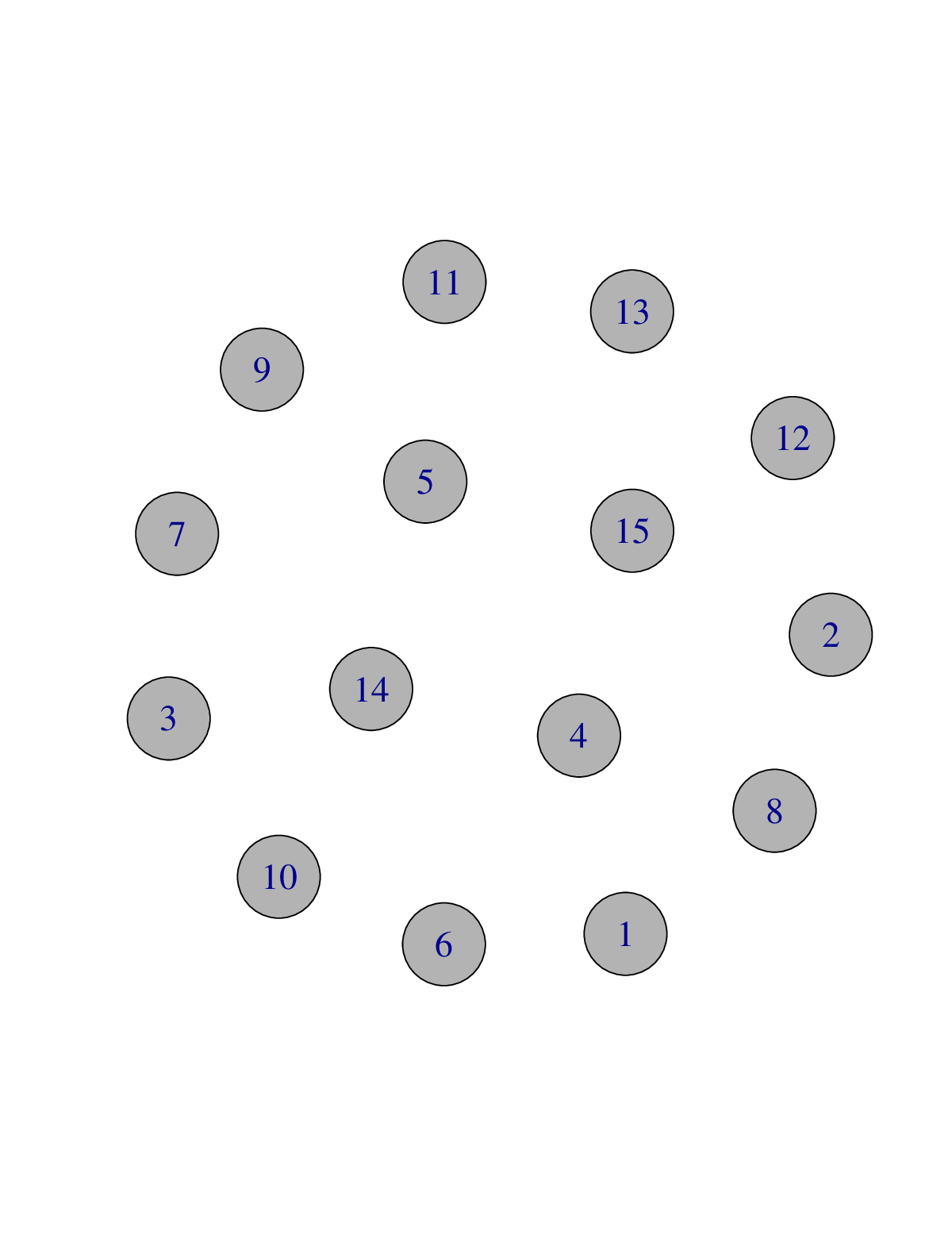}
\end{subfigure}

 \caption[ ]
        {\small TIGER} \label{Figure-Tiger1-2}
\end{minipage}
\end{figure}

\subsection{Gaussian Examples}

\subsubsection{Graph Recovery}

In this subsection, we start with comparing the numerical performance of QGM and other
methods, e.g. TIGER of \cite{Liu2012b} and graphical lasso algorithm
(Glasso) of \cite{Friedman2008}, in graph recovery using simulated datasets with different pairs of $(n,d)$. We start with one simulation for illustration purpose (the results are summarized in Figure \ref{Figure-3:-One-Simulation}), and then we show the performance of QGM through estimated degree distribution with 100 simulations (the results are summarized in Figure \ref{Figure-4:-Degree-of-PQGM}).

We mainly consider the Hub
graph, as mentioned in \cite{Liu2012b}, which also corresponds to
the star network mentioned in \cite{Acemoglu2010,Acemoglu2013}. In line with \cite{Liu2012b}, we generate a $d$-dimensional sparse
graph $G^I=(V,E^I)$ represents the conditional independence structure
between the variables. In our simulations, we consider 12 settings
to compare these methods: (A) $n=200$, $d=10$; (B) $n=200$,
$d=20$; (C) $n=200$, $d=40$; (D) $n=400$, $d=10$; (E)
$n=400$, $d=20$; (F) $n=400$, $d=40$;
(G) $n=200$, $d=100$; (H) $n=200$, $d=200$; (I) $n=200$,
$d=400$; (J) $n=400$, $d=100$; (K) $n=400$, $d=200$; (L)
$n=400$, $d=400$. We adopt the following model for generating undirected
graphs and precision matrices.

\textbf{Hub graph.} The $d$ nodes are evenly partitioned into $d/20$
(or $d/10$ when $d<20$) disjoint groups with each group contains
$20$ (or $10$) nodes. Within each group, one node is selected as
the hub and we add edges between the hub and the other $19$ (or $9$)
nodes in that group. For example, the resulting graph has $190$ edges
when $d=200$ and $380$ edges when $d=400$. Once the graph is obtained,
we generate an adjacency matrix $E^I$ by setting the nonzero
off-diagonal elements to be 0.3 and the diagonal elements to be 0.
We calculate its smallest eigenvalue $\Lambda_{\min}(E^I)$.
The precision matrix is constructed as
\begin{equation}
\Theta=\mathbf{D}[E^I+(\vert\Lambda_{\min}(E^I)\vert+0.2)\cdot I_{d\times d}]\mathbf{D}\label{eq:precision}
\end{equation}
where $\mathbf{D}\in\mathbb{R}^{d\times d}$ is a diagonal matrix
with $\mathbf{D}_{jj}=1$ for $j=1,...,d/2$ and $\mathbf{D}_{jj}=1.5$
for $j=d/2+1,...,d$. The covariance matrix $\Sigma:=\Theta^{-1}$is
then computed to generate the multivariate normal data: $X_{1},....,X_{d}\sim N(0,\Sigma)$.
Below we provide simulation results using different estimators: PQGM\footnote{Given the graphs are generated from multivariate Gaussian distribution we can use PQGM to simplify the computation.}, TIGER
and Glasso. We start with one simulation as an illustration:

\begin{figure}[ht]
\begin{minipage}{0.95\textwidth}
\centering
\begin{subfigure}[b]{0.3\textwidth}
   \caption[]%
        {{\footnotesize $n = 200, d = 10$}}
	\centering
   	 \includegraphics[width=\textwidth]{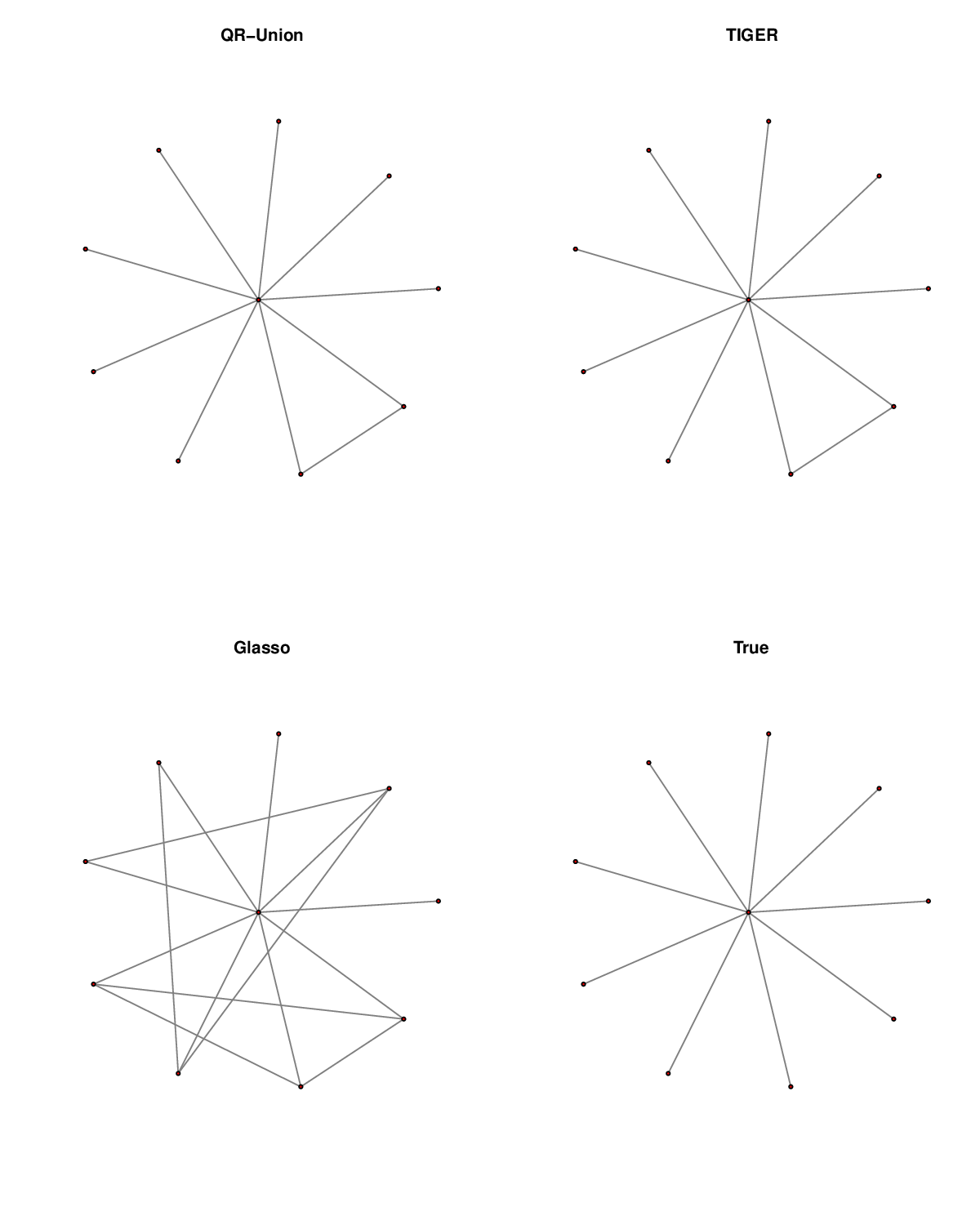}
\end{subfigure}
\hfill
\begin{subfigure}[b]{0.3\textwidth}
    \caption[]%
        {{\footnotesize $n = 200, d = 20$}}
 	\centering
	\includegraphics[width=\textwidth]{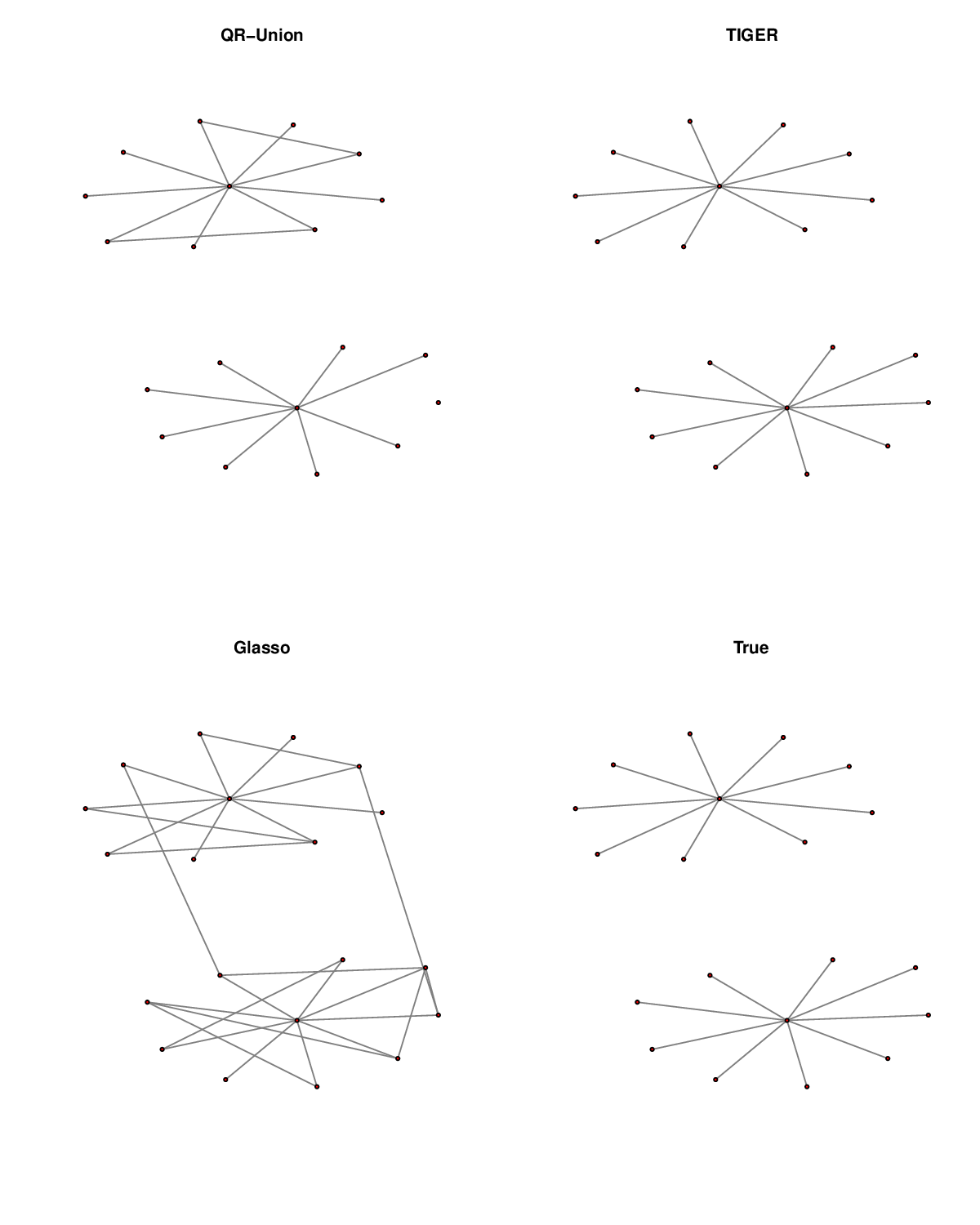}
\end{subfigure}
\hfill
\begin{subfigure}[b]{0.3\textwidth}
    \caption[]%
        {{\footnotesize $n = 200, d = 40$}}
 	\centering
	\includegraphics[width=\textwidth]{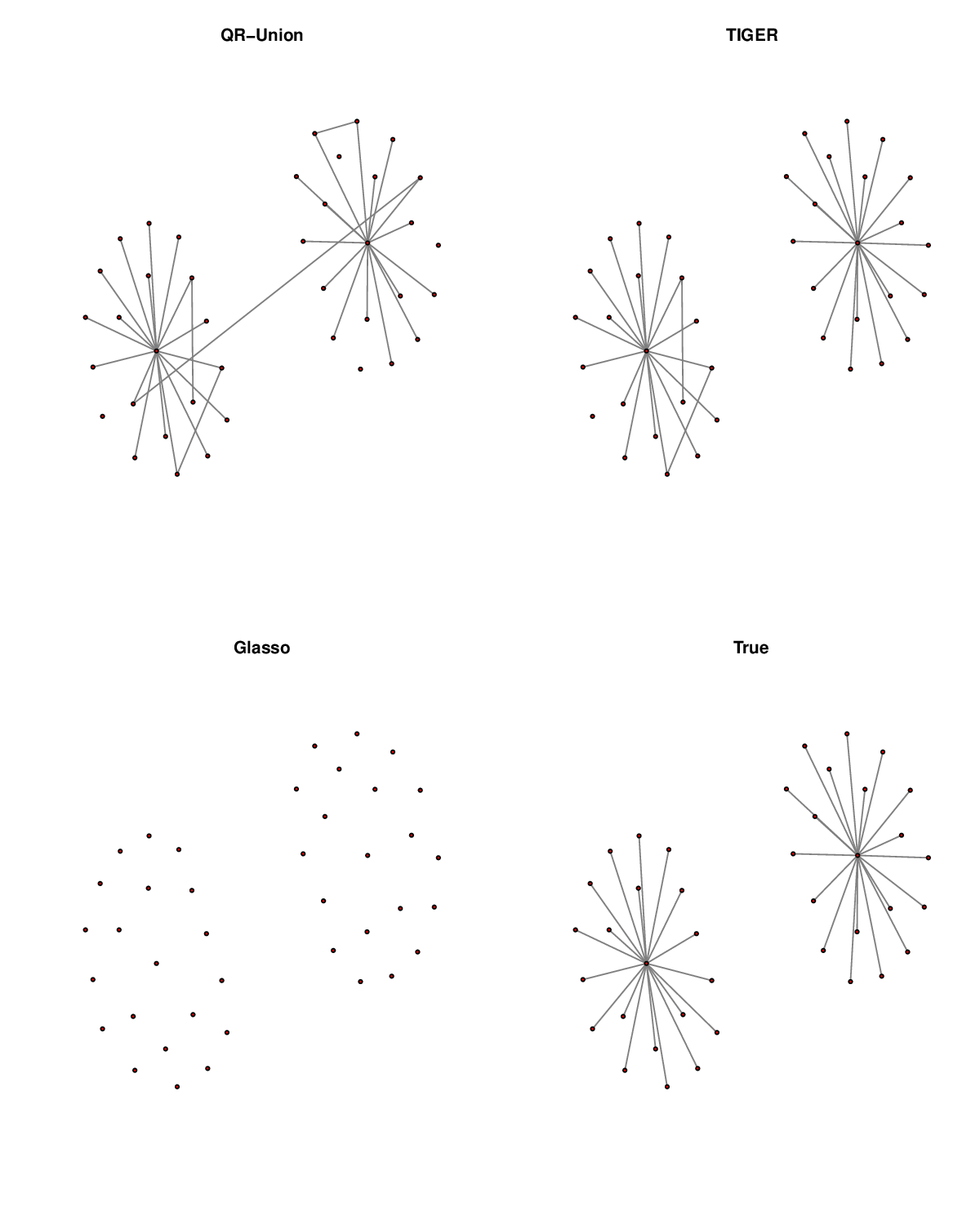}
\end{subfigure}

\vskip\baselineskip
\begin{subfigure}[b]{0.3\textwidth}
     \caption[]%
        {{\footnotesize $n = 400, d = 10$}}
 	\centering
	\includegraphics[width=\textwidth]{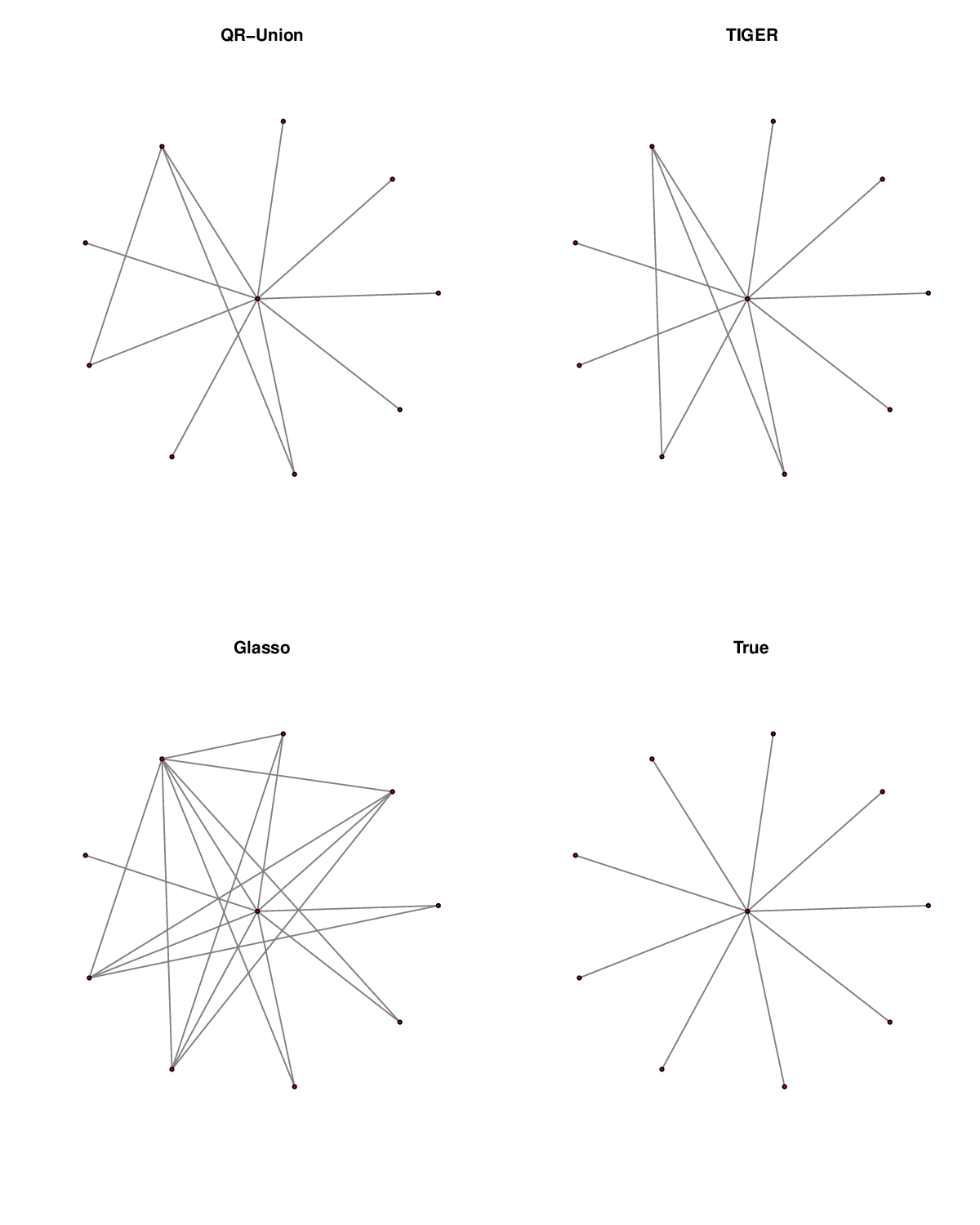}
\end{subfigure}
 \hfill
\begin{subfigure}[b]{0.3\textwidth}
   \caption[]%
        {{\footnotesize $n = 400, d = 20$}}
 	\centering
	\includegraphics[width=\textwidth]{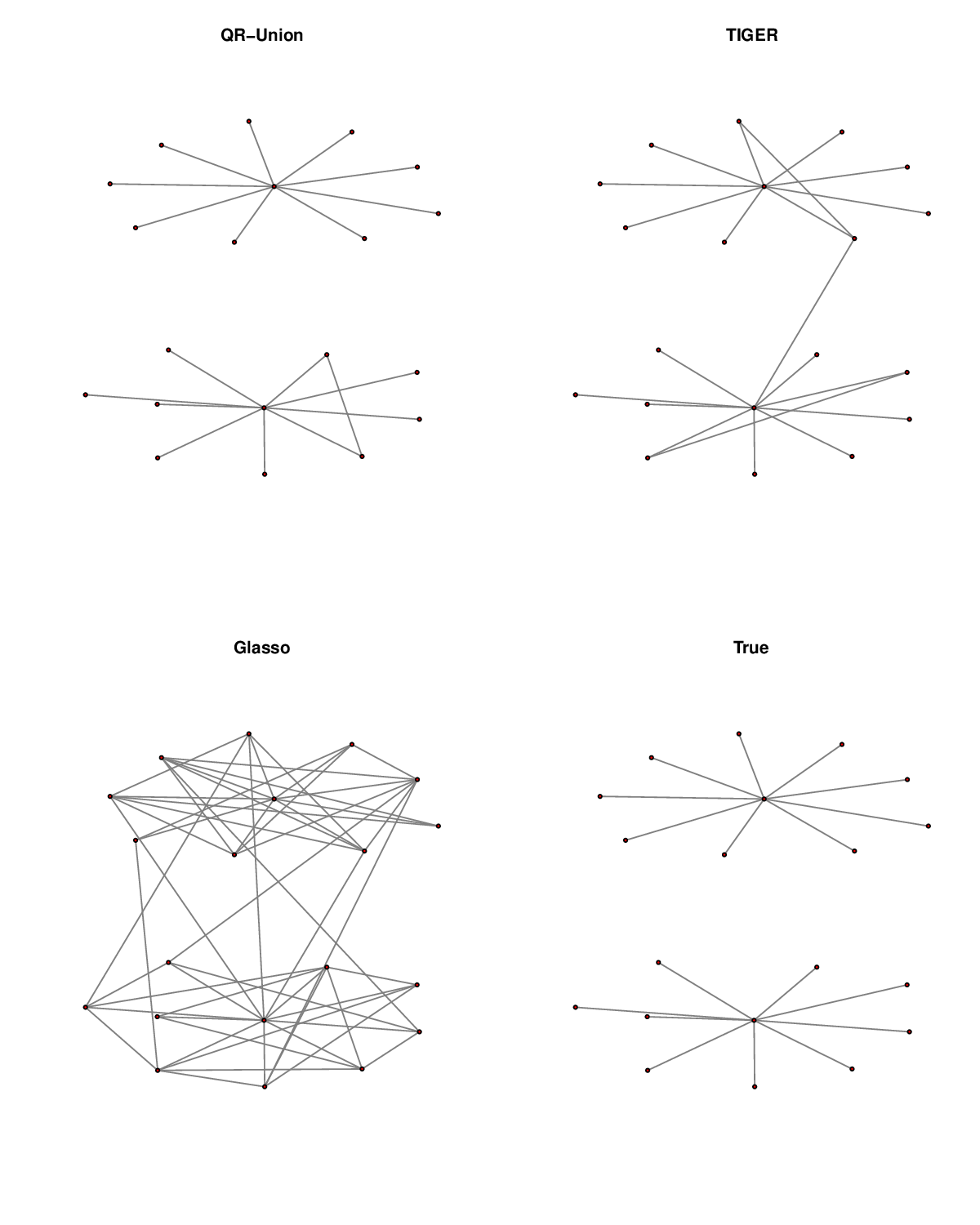}
\end{subfigure}
\hfill
\begin{subfigure}[b]{0.3\textwidth}
   \caption[]%
        {{\footnotesize $n = 400, d = 40$}}
 	\centering
	\includegraphics[width=\textwidth]{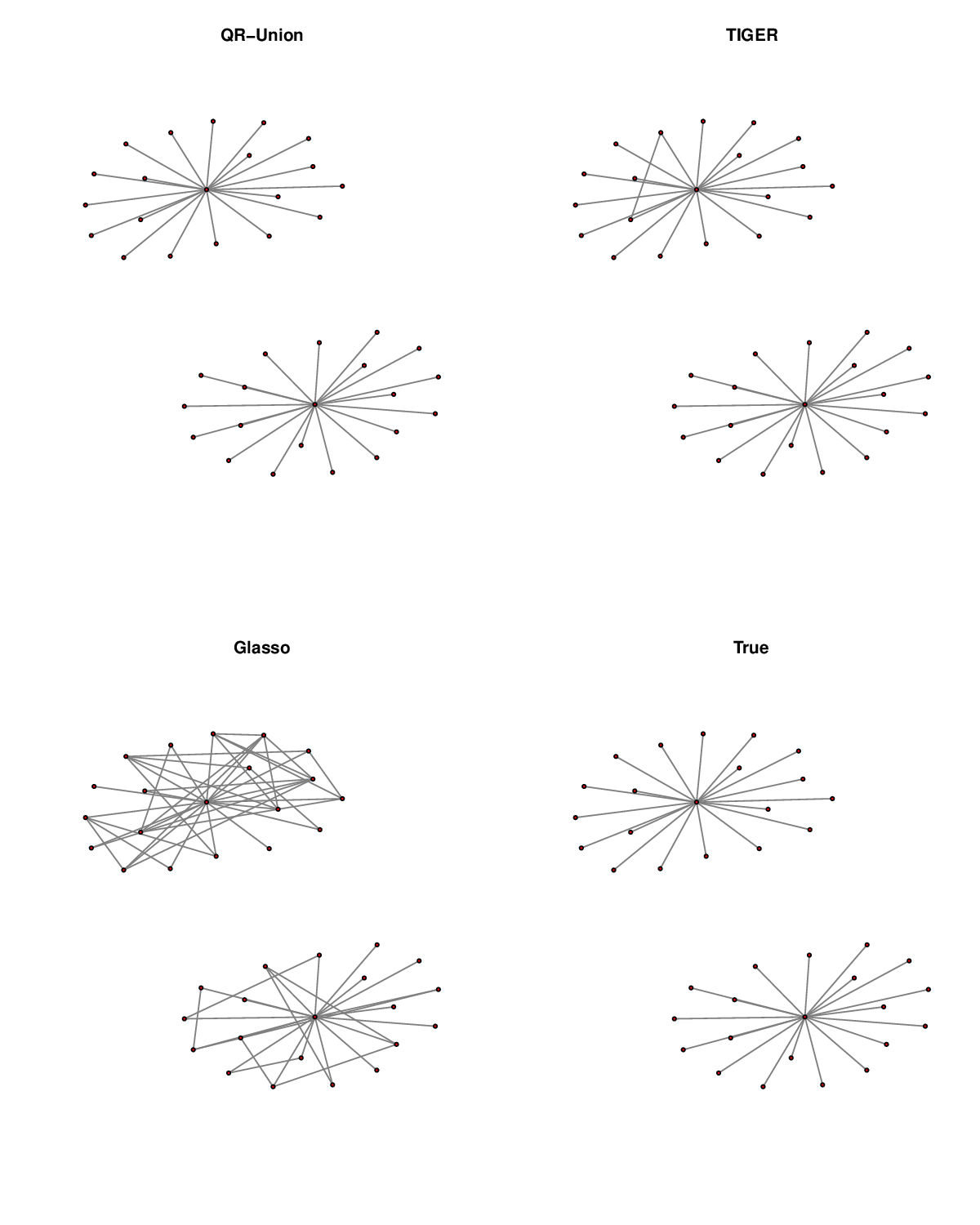}
\end{subfigure}

\caption[ ]
        {\small One simulation}
\end{minipage}
\end{figure}

\begin{figure}[ht]\ContinuedFloat
 \begin{minipage}{0.95\textwidth}
\centering

\begin{subfigure}[b]{0.3\textwidth}
     \caption[]%
        {{\footnotesize $n = 200, d = 100$}}
 	\centering
	\includegraphics[width=\textwidth]{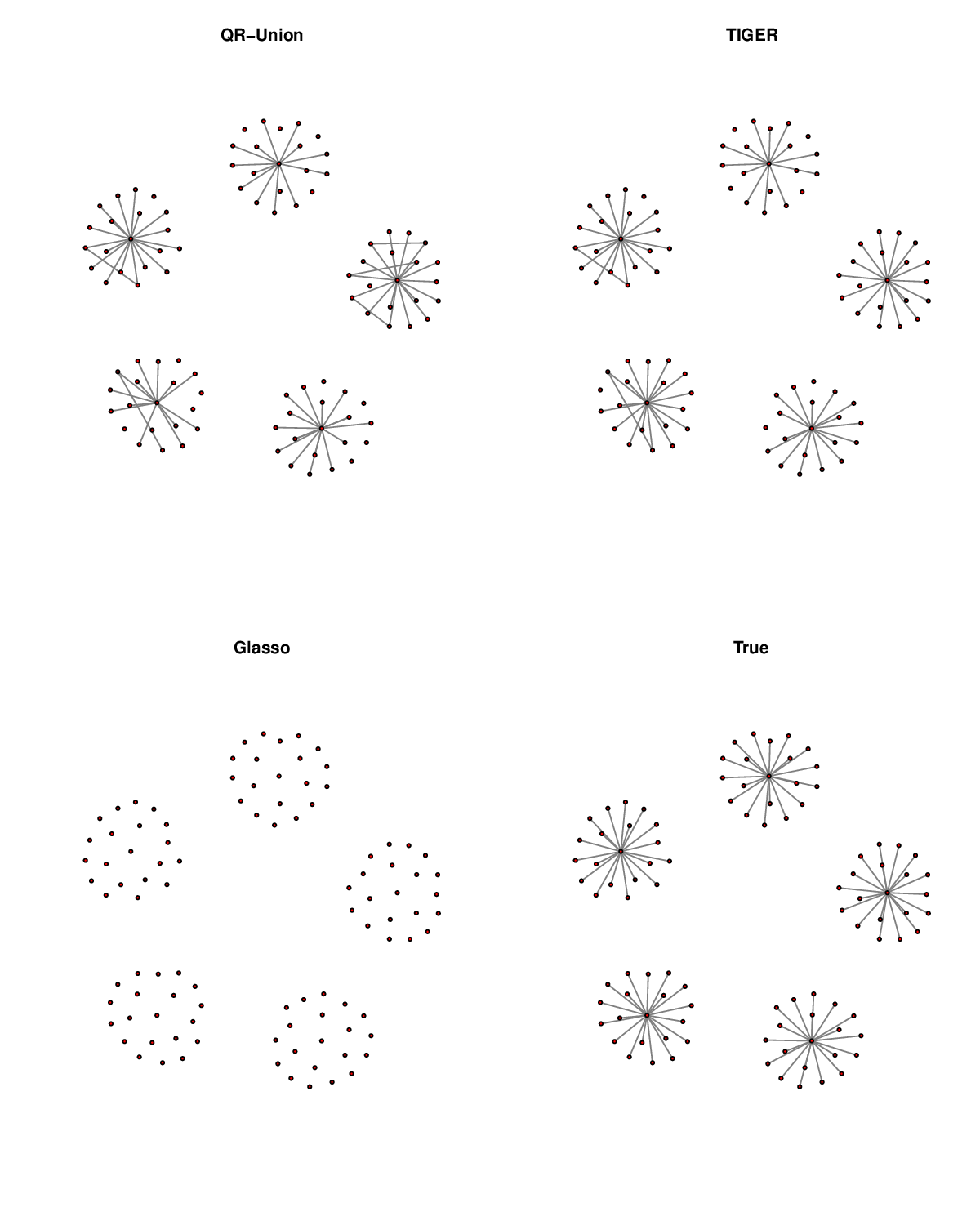}
\end{subfigure}
 \hfill
\begin{subfigure}[b]{0.3\textwidth}
   \caption[]%
        {{\footnotesize $n = 200, d = 200$}}
 	\centering
	\includegraphics[width=\textwidth]{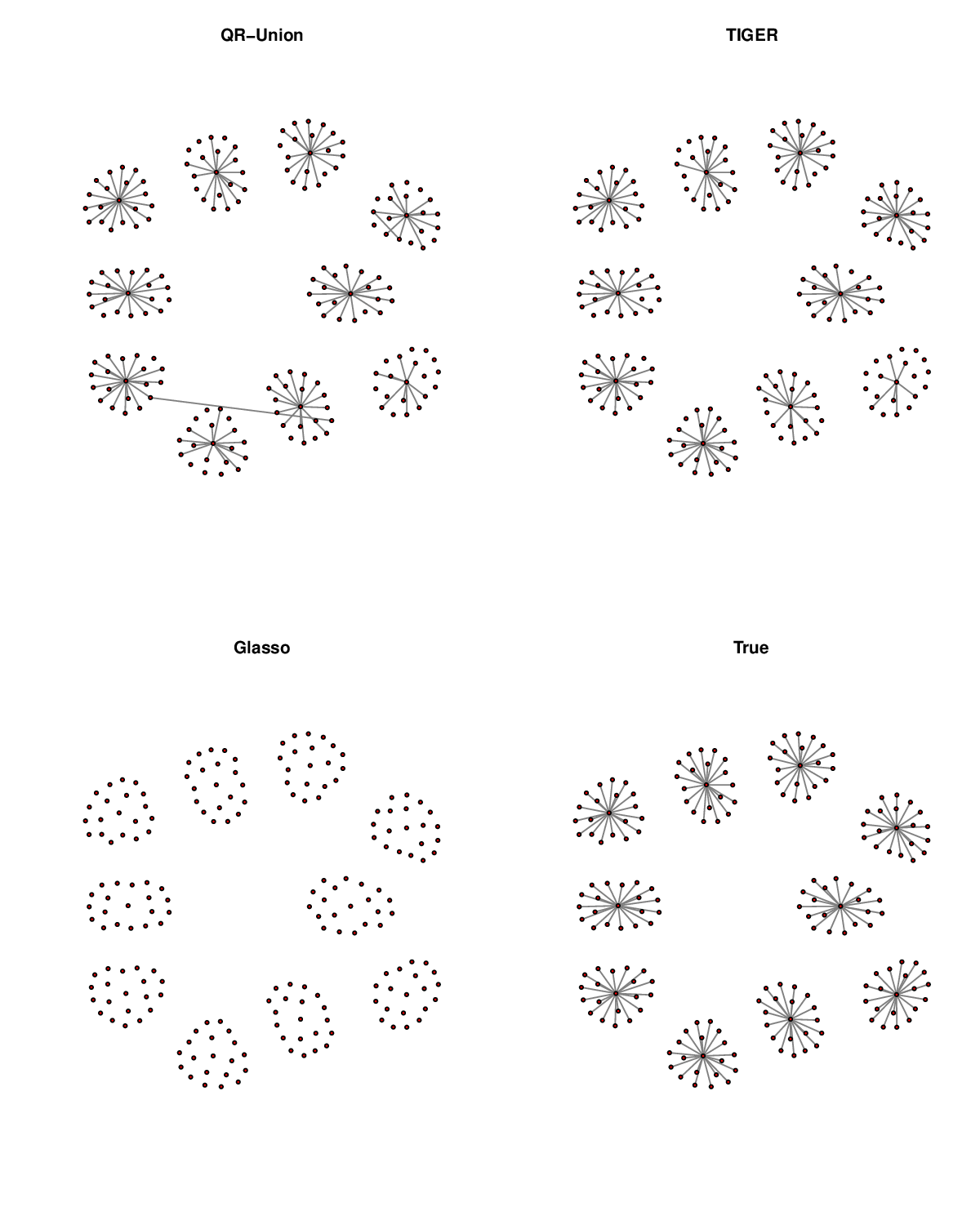}
\end{subfigure}
\hfill
\begin{subfigure}[b]{0.3\textwidth}
    \caption[]%
        {{\footnotesize $n = 200, d = 400$}}
 	\centering
	\includegraphics[width=\textwidth]{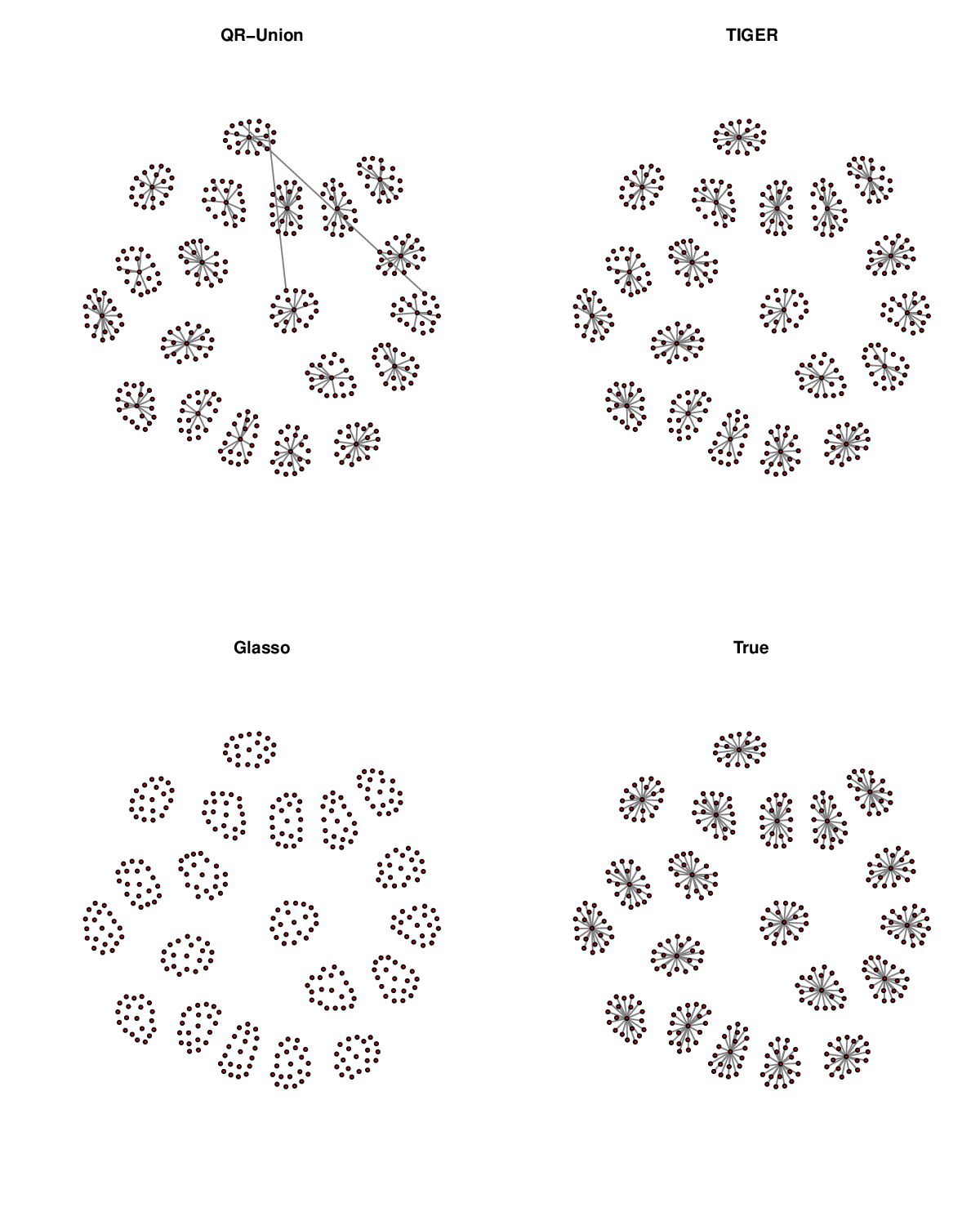}
\end{subfigure}

%\vskip\baselineskip
\begin{subfigure}[b]{0.3\textwidth}
   \caption[]%
        {{\footnotesize $n = 400, d = 100$}}
	\centering
   	 \includegraphics[width=\textwidth]{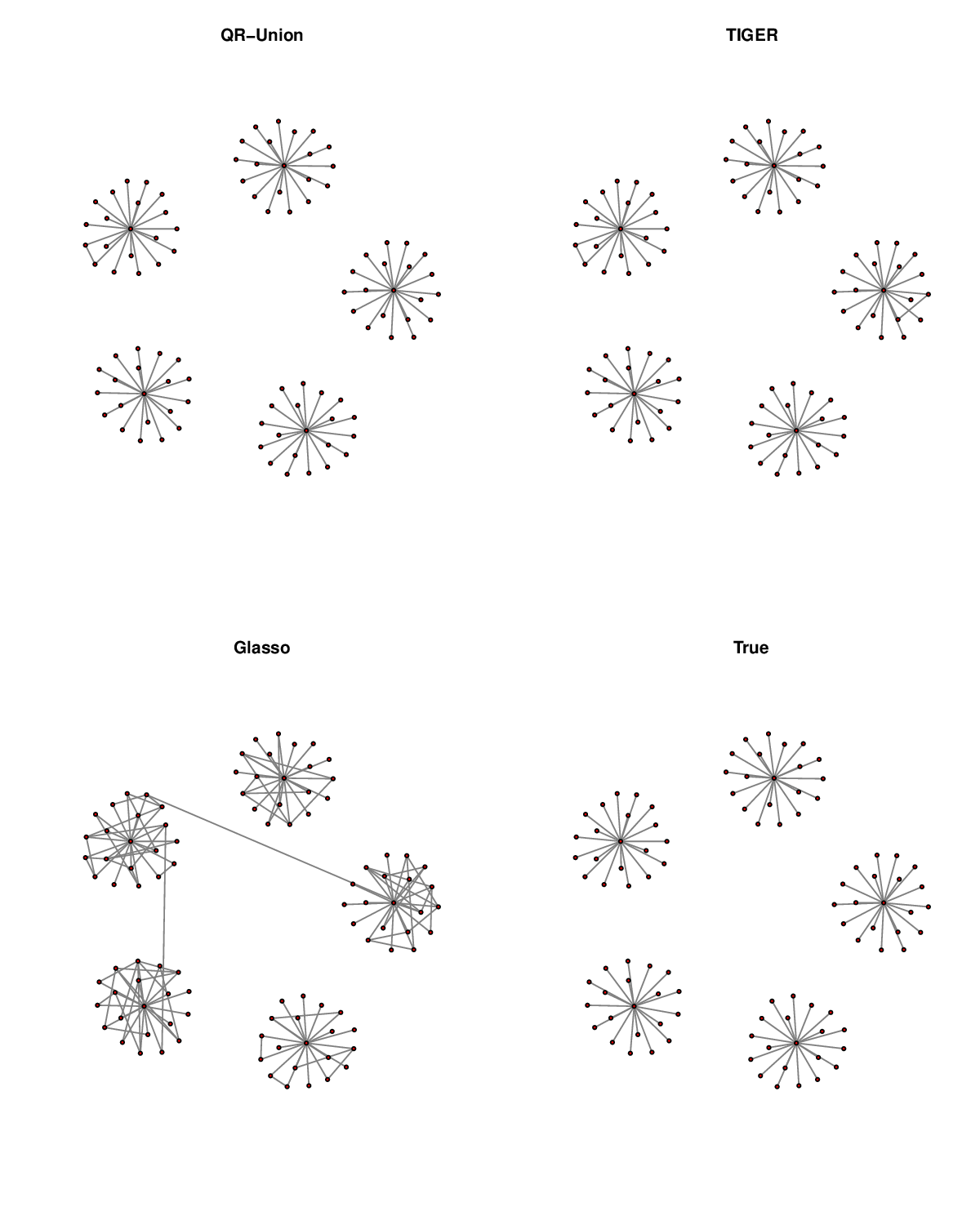}
\end{subfigure}
\hfill
\begin{subfigure}[b]{0.3\textwidth}
    \caption[]%
        {{\footnotesize $n = 400, d = 200$}}
 	\centering
	\includegraphics[width=\textwidth]{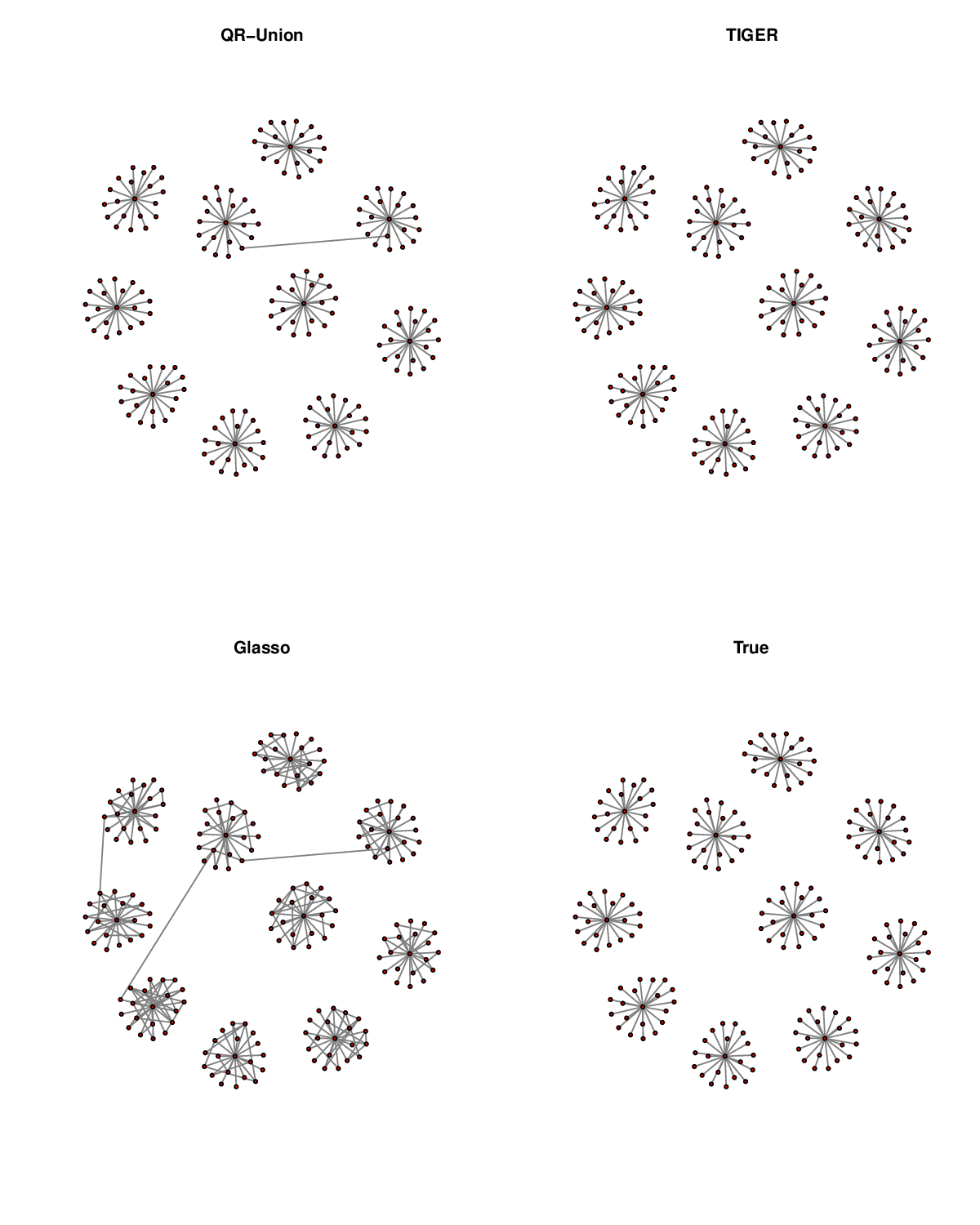}
\end{subfigure}
\hfill
\begin{subfigure}[b]{0.3\textwidth}
    \caption[]%
        {{\footnotesize $n = 400, d = 400$}}
 	\centering
	\includegraphics[width=\textwidth]{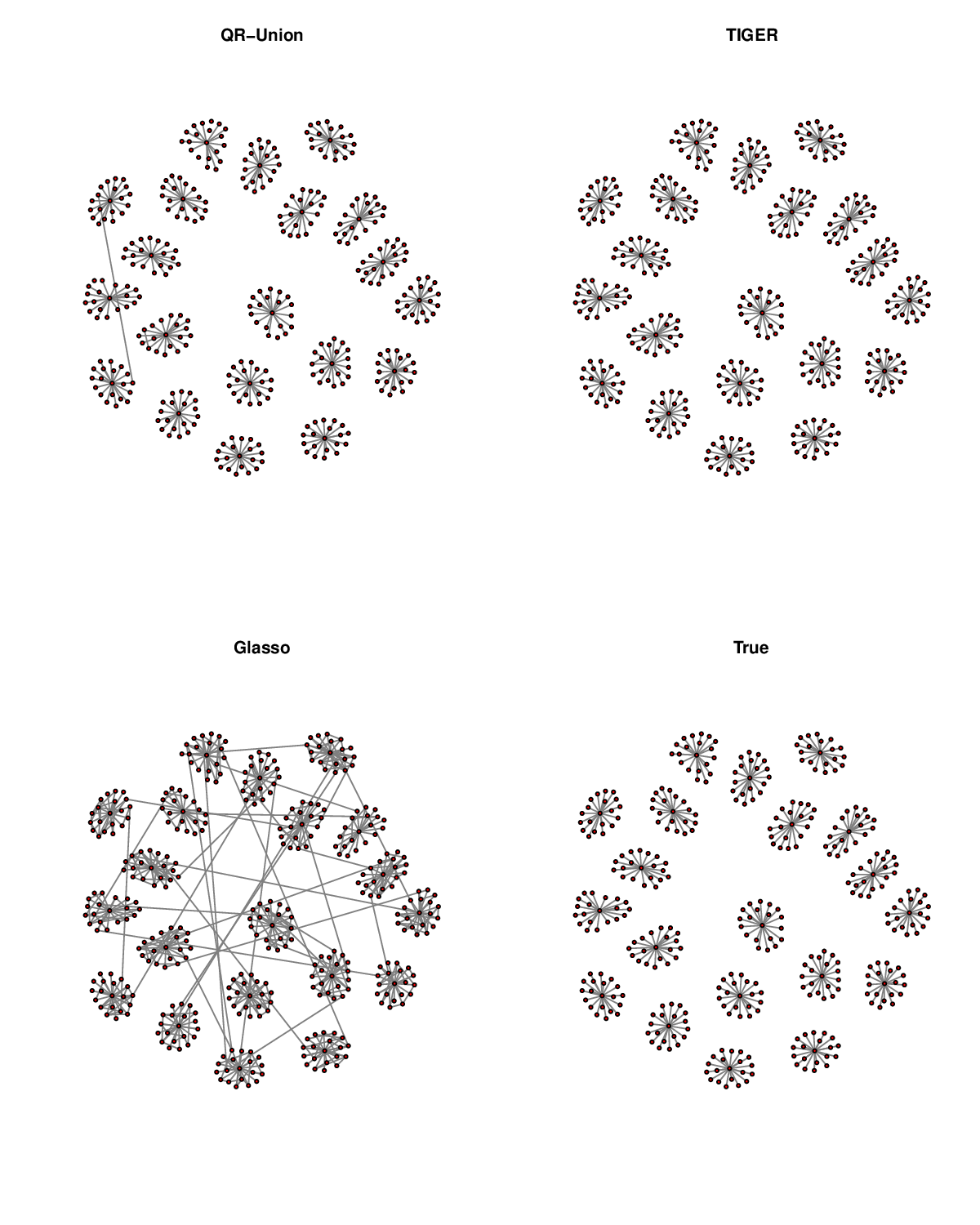}
\end{subfigure}

 \caption[ ]
        {\small One simulation (Cont.)} \label{Figure-3:-One-Simulation}
\end{minipage}
\end{figure}

Figure \ref{Figure-3:-One-Simulation} shows that: for the low dimensional cases, $d = {10, 20, 40}$, $n$ is large compared to $d$, CIQGM is comparable to TIGER and both are better than Glasso in terms of false positives; for the high dimensional cases, $d=\{100,200,300\}$, we can compare the performance of different graph estimators through looking at the “denseness” of the estimated graph (e.g., whether it is even or not), and again, both CIQGM and TIGER perform well in terms of graph recovery as compared to Glasso, and their performance are getting better when $n$ is increasing.

In what follows, Figure \ref{Figure-4:-Degree-of-PQGM} shows the degree distribution of true graph, the estimated ones, and the standard deviations of the degree difference (between the true graph and the estimated ones). It is based on simulations of Hub graph with $n=500$ and $d=40$. Simulated 100 times.

\begin{figure}[ht]
\begin{minipage}{0.9\textwidth}
\centering

\includegraphics[width=\linewidth, scale = 1.2]{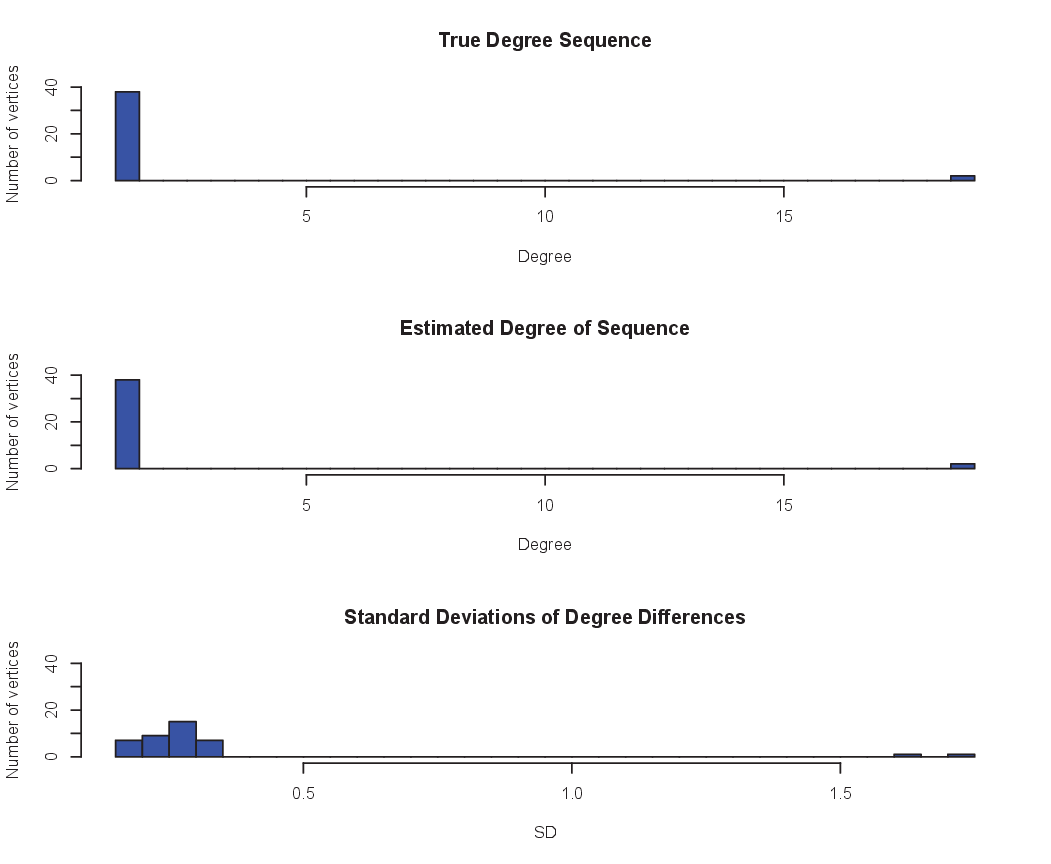}\tabularnewline
\caption{Upper panel shows the degree distribution of the true graphs. Middle panel shows the degree distribution of the estimated graphs. Bottom panel shows the standard deviations of the degree difference (between the true graph and the estimated ones). Hub graph with $n=500$ and $d=40$. Simulated 100 times. }\label{Figure-4:-Degree-of-PQGM}
 
\end{minipage}
\end{figure}

\newpage
\medskip
\subsubsection{Inference}
In this subsection Table \ref{Table2:-CIQGM-ER} shows the numerical performance of CIQGM, based on Algorithm \ref{Alg:1}, on estimating Erd\H{o}s-R{\'e}nyi random graphs. More precisely, we construct approximate $90\%$ confidence intervals for $\beta_{ab}$ with $\tau = 0.5$, and we report the coverage probabilities. Note, in the jointly Gaussian distributed case, we have closed form solution of $\beta_{ab}$ as shown in Example \ref{ExGauss}.

\textbf{Erd\H{o}s-R{\'e}nyi random graph.}
 We add an edge between each pair of nodes with probability $\sqrt{\log d/n}/d$ independently.
 
  Once the graph is obtained, we construct the adjacency matrix $E^I$ and generate the precision matrix $\Theta$ using (\ref{eq:precision}) but setting $\mathbf{D}_{jj}=1$ for $j=1,...,d/2$ and $\mathbf{D}_{jj}=1.5$ for $j=d/2+1,...,d$. We then invert $\Theta$ to get the covariance matrices $\Sigma:=\Theta^{-1}$ and generate the multivariate Gaussian data: $X_{1},....,X_{d}\sim N(0,\Sigma)$.

\begin{table}[ht]
\begin{minipage}{\textwidth}
\centering
\renewcommand{\arraystretch}{1.5}
  \begin{threeparttable}
\begin{center}
\caption{ Erd\H{o}s-R{\'e}nyi Random Graph}  \label{Table2:-CIQGM-ER}
\begin{tabular}{ccccccc}
\hline

\hline
 & $(a,b)$ & $n=200$ &  & $n=500$ &  & $n=1000$\tabularnewline
\hline
d =20 & (1,20) & 84.0 &  & 87.5 &  & 91.5\tabularnewline
 & (10,11) & 84.0 &  & 88.0 &  & 92.5\tabularnewline
 & (19, 20) & 86.5 &  & 86.0 &  & 90.0\tabularnewline
 & ACP & 86.3 &  & 89.4 &  & 89.8\tabularnewline
 &  &  &  &  &  & \tabularnewline
d=50 & (1,50) & 86.5 &  & 93.0 &  & 90.5\tabularnewline
 & (25,26) & 88.0 &  & 87.0 &  & 91.0\tabularnewline
 & (49, 50) & 87.5 &  & 90.5 &  & 87.5\tabularnewline
 & ACP & 86.7 &  & 89.4 &  & 90.1\tabularnewline
 &  &  &  &  &  & \tabularnewline
d=100 & (1,100) & 82.5 &  & 81 &  & 89.0\tabularnewline
 & (50,51) & 85.0 &  & 86 &  & 92.0\tabularnewline
 & (99, 100) & 78.5 &  & 84 &  & 87.0\tabularnewline
 & ACP & 86.5 &  & 86.8 &  & 90\tabularnewline
\hline
\end{tabular}
\begin{tablenotes}
      \small
      \item $(a,b)$, coverage probability for $\beta_{ab}$; ACP, average coverage
probability for $\beta_{ab}$, with $a\in V$, $b\in V\backslash\{a\}$.  Simulated 200 times.
    \end{tablenotes}
\par\end{center}
 \end{threeparttable}
\end{minipage}
\end{table}

\newpage

\section{Proofs of Section 4}

\begin{proof}[Proof of Theorem \ref{theorem:rateQRgraph}]
By Lemma \ref{Lemma:Matrices}, under Condition CI, for any $\theta$ such that $\|\theta\|_0\leq Cs\ell_n$, $\ell_n \to\infty$ slowly,  we have that $$\|\sqrt{f_u}Z^a\theta\|_{n,\varpi}/\{\Ep[K_\varpi(W)f_u(Z^a\theta)^2]\}^{1/2} = 1+o_P(1).$$ Moreover, $\Ep[K_\varpi(W)f_u(Z^a\theta)^2]\geq \underf_u \Ep[K_\varpi(W)(Z^a\theta)^2]$,  $\Ep[K_\varpi(W)(Z^a\theta)^2]=\Ep[(Z^a\theta)^2| \varpi]\Pr(\varpi)$, and $\Ep[(Z^a\theta)^2\mid \varpi] \geq c\|\theta\|^2$ by Condition CI. Lemma \ref{Lemma:Matrices} further implies that the ratio of the minimal and maximal eigenvalues of order $s\ell_n$ are bounded away from zero and from above uniformly over $\varpi\in \mathcal{W}$ and $a\in V$ with probability $1-o(1)$. Therefore, since $c\{\Pr(\varpi)\}^{1/2}\|\delta\|_1 \leq \|\delta\|_{1,\varpi} \leq C\{\Pr(\varpi)\}^{1/2}\|\delta\|_1$, we have $\kappa_{u,2\cc} \geq c$ uniformly over $u\in\mathcal{U}$ with the same probability for $n$ large enough, see for instance \cite{BickelRitovTsybakov2009}.

To establish rates of convergence of the estimator obtained in Step 1 we will apply Lemma \ref{Theorem:L1QRnp}. Consider the events $\Omega_1, \Omega_2,$ and $\Omega_3$ as defined in (\ref{def:Omega1}), (\ref{def:Omega2}) and (\ref{def:Omega3}). By the choice of $\lambda_u$ we have $\Pr(\Omega_1) \geq 1-o(1)$. By Condition CI with $\bar{R}_{u\xi} \leq Cs \log(p|V|n)/n$ and Lemma \ref{Lemma:ControlR} we have $\Pr(\Omega_2) \geq 1-o(1)$. Moreover,  $\Pr(\Omega_3) \geq 1-o(1)$ by Lemma \ref{LemmaOmega3} with $t_3 \leq C n^{-1/2} \sqrt{ (1+d_W)\log(p|V|nL_f) }$.

Using the same argument (with $Z^a$ replacing $X_{-a}$) as in (\ref{qA:1}), (\ref{qA:2}), and (\ref{qA:3}), for
 $$\delta\in A_u := \Delta_{\varpi,2\cc}\cup\{v:\|v\|_{1,\varpi}\leq 2\cc\bar R_{u\xi}/\lambda_u, \|\sqrt{f_u}Z^av\|_{n,\varpi} \geq C\sqrt{s(1+d_W)\log(p|V|n)/n}/\kappa_{u,2\cc}  \},$$
with the restricted set defined as  $\Delta_{u, 2\tilde\cc} = \{ \delta : \|\delta_{T^{c}_u}\|_1\leq 2\tilde\cc \|\delta_{T_u}\|_1\}$ for $u\in \mathcal{U}$. we have $ \bar q_{A_u} \geq c (\underf_\UU^{3/2}/\bar f')\mu_\mathcal{W}^{1/2}/ \{\sqrt{s} \max_{a\in V, i\leq n}\|Z_i^a\|_\infty\}$ where $\max_{a\in V, i\leq n}\|Z_i^a\|_\infty \lesssim_P M_n$. Thus the conditions on $\bar q_{A_u}$ are satisfied since Condition CI assumes $M_n^2s^2\log(p|V|n) \leq n \mu_\mathcal{W}\underf_\UU^3$. The conditions on the approximation error are assumed in Condition CI.

Therefore, setting $\xi=1/\log n$, by Lemma \ref{Theorem:L1QRnp} we have uniformly over $u=(a,\tau,\varpi)\in \mathcal{U}$
 \begin{equation}\label{InitialRatesLemma}\begin{array}{rl}
 \|\sqrt{f_u}Z^a(\hat \beta_u -\beta_u)\|_{n,\varpi} & \lesssim \sqrt{(1+(t_3/\lambda_u)\bar R_{u\xi}} + (\lambda_u + t_3)\sqrt{s} \lesssim  \sqrt{\frac{s(1+d_W)\log(p|V|n)}{n\tau(1-\tau)}} \\
 \|\hat \beta_u -\beta_u\|_{1,\varpi} & \lesssim s\sqrt{\frac{(1+d_W)\log(p|V|n)}{n}} \end{array}\end{equation}
here we used that $\lambda_u \leq C\sqrt{\frac{(1+d_W)\log(p|V|n)}{n}}$. Indeed by Lemma \ref{lem:Lambda} with $\tilde x_{ij} = K_\varpi(W_i)Z^a_{ij}$ and $\hat\sigma_j \geq c\Pr(\varpi)^{1/2}$, we can bound $\Lambda_{a\tau \varpi}(1-\xi/\{|V|n^{1+2d_W}\}| X_{-a},W)$ under $M_n^2 \log(p|V|n/\{\tau(1-\tau)\}) = o(n \tau(1-\tau)\mu_\mathcal{W})$ for all $\tau\in\mathcal{T}$ and $\varpi \in \mathcal{W}$, and the bound on $\lambda_u$ follows from the union bound.

Let $\delta_u = \hat \beta_u -\beta_u$. By triangle inequality it follows that
\begin{equation}\label{TriInit} \{\Ep[K_\varpi(W)f_u(Z^a\delta_u )^2]\}^{1/2} \leq \|\sqrt{f_u}Z^a\delta_u \|_{n,\varpi} + \|\delta_u\|_{1}\{ |(\En-\Ep)[K_\varpi(W)f_u(Z^a\delta_u)^2]/\|\delta_u\|_{1}^2]| \}^{1/2} \end{equation}
and the last term can be bounded by
$$\begin{array}{rl}{\displaystyle \sup_{\|\delta\|_{1} \leq 1}} |(\En-\Ep)[K_\varpi(W)f_u(Z^a\delta)^2/\|\delta\|_{1}^2]| & \leq   \max_{k,j}|(\En-\Ep)[K_\varpi(W)f_uZ^a_{k}Z^a_{j}]| \\
&\lesssim \sqrt{\frac{(1+d_W)\log(p|V|n)}{n}}\end{array}$$
with probability $1-o(1)$ by Lemma \ref{lemma:CCK} under our conditions.

Combining the relations above with (\ref{TriInit}), under $(1+d_W)s^2\log(p|V|n) = o(n)$ we have uniformly over $u\in\mathcal{U}$
$$\begin{array}{rl}
\|\delta_u\| & \lesssim \{\Ep[(Z^a\delta_u)^2\mid\varpi]\}^{1/2} \lesssim \{\Pr(\varpi)\}^{-1/2} \{\Ep[K_\varpi(W)(Z^a\delta_u)^2]\}^{1/2} \\
& \lesssim \{\Pr(\varpi)\underf_u\}^{-1/2}\{\Ep[K_\varpi(W)f_u(Z^a\delta_u)^2]\}^{1/2} \\
 & \leq \{\Pr(\varpi)\underf_u\}^{-1/2}\|\sqrt{f_u}Z^a\delta_u\|_{n,\varpi} + \{\Pr(\varpi)\underf_u\}^{-1/2}\sqrt[4]{\frac{(1+d_W)\log(p|V|n)}{n}}\|\delta_u\|_{1}\\
& \leq C\sqrt{\frac{s(1+d_W)\log(p|V|n)}{n\underf_u \Pr(\varpi) }}.\end{array}$$
given $\|\delta_u\|_{1}\leq \|\delta_u\|_{1,\varpi}/ \Pr(\varpi)^{1/2}$, (\ref{InitialRatesLemma}), and $s^2(1+d_W)\log(p|V|n) = o(n \mu_\mathcal{W}^4\underf_\UU^2)$ assumed in Condition CI.

Finally, let $\hat\beta^{\bar\lambda}_u$ obtained by thresholding the estimator $\hat\beta_u$ with $\bar\lambda:= \sqrt{(1+d_W)\log(p|V|n)/n}$ (note that each component is weighted by $\En[K_\varpi(W)(Z_j^a)^2]^{1/2}$). By Lemma \ref{Lemma:Bound2nNormSecond}, we have with probability $1-o(1)$
$$
\begin{array}{rl}
\|Z^a(\hat\beta^{\bar\lambda}_u - \beta_u)\|_{n,\varpi} \lesssim \sqrt{s(1+d_W)\log(p|V|n)/n} \\
\|\hat\beta^{\bar\lambda}_u - \beta_u\|_{1,\varpi} \lesssim s\sqrt{(1+d_W)\log(p|V|n)/n}\\
|\supp(\hat\beta^{\bar\lambda}_u)| \lesssim s
\end{array}
$$ by the choice of $\bar\lambda$ and the rates in (\ref{InitialRatesLemma})

\end{proof}

\begin{proof}[Proof of Theorem \ref{theorem:rateLASSOgraph}]
 
We verify Assumption \ref{ass: M} and Condition WL for the weighted Lasso model with index set $\UU \times [p]$ where $Y_u = K_\varpi(W)Z^a_{j}$, $X_u=K_\varpi(W)Z^a_{-j}$, $\theta_u=\bar \gamma_u^j$, $a_u = (f_u,\bar r_{uj})$, $\bar r_{uj}=K_\varpi(W)Z^a_{-j}(\gamma_u^j-\bar\gamma_u^j)$, $S_{uj} = K_\varpi(W)f_u^2(Z^a_{j}-Z^a_{-j}\gamma_u^j)Z^a_{-j}=K_\varpi(W)f_uv_{uj}Z^a_{-j}$, and $w_u =K_\varpi(W)f_u^2$. We will take $N_n = |V|p^2\{pn^3\}^{1+d_W}$ in the definition of $\lambda$.

We first verify Condition WL. We have $\Ep[S_{ujk}^2] \leq \bar f^2\Ep[ |v_{uj}Z^a_{-j,k}|^2] \leq \bar f^2\{\Ep[ |v_{uj}|^4|Z^a_{-j,k}|^4]\}^{1/2} \leq C$ by the bounded fourth moment condition.
We have that $$\frac{\Ep[|S_{ujk}|^3]^{1/3}}{\Ep[|S_{ujk}|^2]^{1/2}} =\frac{\Ep[|S_{ujk}|^3\mid \varpi]^{1/3}}{\Ep[|S_{ujk}|^2 \mid \varpi]^{1/2}} \{  \Pr(\varpi)\}^{-1/6} =  \frac{\Ep[|f_uv_{uj}Z_{-jk}^a|^3\mid \varpi]^{1/3}}{\Ep[|f_uv_{uj}Z^a_{-jk}|^2 \mid \varpi]^{1/2}} \{  \Pr(\varpi)\}^{-1/6} =:M_{uk} $$
By the choice of $N_n$ and $\Phi^{-1}(1-t)\leq C\sqrt{\log(1/t)}$, we have $M_{uk}\Phi^{-1}(1-\xi/\{2pN_n\}) \leq M_{uk}C(1+d_W)\log^{1/2}(pn |V|) \leq C\delta_n n^{1/6}$ where the last inequality holds by Condition CI so  Condition WL(i) holds.

To verify Condition WL(ii) we will establish the validity of the choice of $N_n$. We will consider $u=(a,\tau,\varpi)\in \UU$ and $u'=(a,\tau',\varpi')\in \UU$. By Condition CI we have that  \begin{equation}\label{Lipsf}|f_u-f_{u'}| \leq L_f\|u-u'\| \ \ \mbox{and} \ \  \Ep[|K_\varpi(W)-K_{\varpi'}(W)|] \leq L_K\|\varpi-\varpi'\|.\end{equation}
Further, by Lemma \ref{Lemma:LipsGamma} we have
\begin{equation}\label{LipsGamma} \|\gamma_u^j-\gamma_{u'}^j\|\leq  L_\gamma \{\|u-u'\|+\|\varpi-\varpi'\|^{1/2}\}.\end{equation}
By definition we have $$S_{ujk}-S_{u'jk}=\{K_\varpi(W)f_u^2-K_{\varpi'}(W)f_{u'}^2\}\{Z_j^a-Z^a_{-j}\gamma_u^j\}Z_k^a-K_{\varpi'}(W)f_{u'}^2\{Z^a_{-j}(\gamma_u^j-\gamma_{u'}^j)\}Z_k^a$$ and note that  $f_u+f_{u'} \leq 2f_u+L\|u-u'\|$, $|Z_j-Z_{-j}^a\gamma_u^j|\cdot|Z_k^a| \leq  |Z_j|^2+ 2|Z_k^a|^2 + |Z_{-j}^a\gamma_u^j|^2$. Moreover, $$\begin{array}{rl}
|K_\varpi(W)f_u^2-K_{\varpi'}(W)f_{u'}^2| & \leq K_\varpi(W)K_{\varpi'}(W)|f_u^2-f_{u'}^2| + (f_u+f_{u'})^2|K_\varpi(W)-K_{\varpi'}(W)|\\
& \leq 2 \bar f K_\varpi(W)K_{\varpi'}(W)|f_u-f_{u'}| + 4\bar f^2|K_\varpi(W)-K_{\varpi'}(W)| \\
%& \leq 2 f_uK_\varpi(W)K_{\varpi'}(W)|f_u-f_{u'}| + 4f_u \bar f|K_\varpi(W)-K_{\varpi'}(W)| \\
%& + L_f\|u-u'\|f_uK_\varpi(W)K_{\varpi'}(W)|f_u-f_{u'}| + 2L_f\|u-u'\|\bar f|K_\varpi(W)-K_{\varpi'}(W)|  \\
\end{array}$$
Using these relations we have $|\En[ S_{ujk}-S_{u'jk} ]|  \leq  (I) + (II)$ where
$$\begin{array}{rl}
(I)  & = \En[|K_\varpi(W)f_u^2-K_{\varpi'}(W)f_{u'}^2|\cdot |\{Z_j^a-Z^a_{-j}\gamma_u^j\}Z_k^a|] \\
& \leq \max_{i\leq n}\|Z_i^a\|_\infty^2(1+\|\gamma_u^j\|_1) \En[ 2\bar f K_\varpi(W)K_{\varpi'}(W)|f_u-f_{u'}| + 4\barf^2|K_\varpi(W)-K_{\varpi'}(W)|]\\
& \leq (\bar f+\bar f^2) C\sqrt{s}\max_{i\leq n}\|Z_i^a\|_\infty^2\{ L_f \|u-u'\|+ \En[|K_\varpi(W)-K_{\varpi'}(W)|]\}\\
(II) & = \En[K_{\varpi'}(W)f_{u'}^2|Z^a_{-j}(\gamma_u^j-\gamma_{u'}^j)Z_k^a|] \\
& \leq \bar f^2 \En[ \|Z^a\|_\infty^2 ] \|\gamma_u^j-\gamma_{u'}^j\|_1 \\
&  \leq \bar f^2 \En[ \|Z^a\|_\infty^2 ] \sqrt{p} L_\gamma\{ \|u-u'\|+\|\varpi-\varpi'\|^{1/2}\} \\
%\\
%&  \leq 2\En[K_\varpi(W)K_{\varpi'}(W)|f_u-f_{u'}| + 2\bar f|K_\varpi(W)-K_{\varpi'}(W)|  ] \max_{i\leq n} |v_{uji}| \|Z_i^a\|_\infty + \\
% & + L_f\|u-u'\|  \max_{i\leq n} |v_{uji}| \|Z_i^a\|_\infty \\
% & + \bar f^2\sqrt{p}\max_{i\leq n}\|Z^a_i\|_\infty^2 \|\gamma_u^j-\gamma_{u'}^j\|\\
%& \leq \bar L\|u-u'\| \max_{i\leq n} |v_{uj i}| \ \|Z^a\|_\infty + \bar L^2 \|u-u'\|^2 \sqrt{p}\|\gamma_u^j\|\max_{i\leq n} \|Z^a_i\|_\infty^2 \\
% & + \En[|K_\varpi(W)-K_{\varpi'}(W)|] \max_{i\leq n} |2v_{uji}|\ \|Z^a_i\|_\infty + \bar L \|u-u'\| \sqrt{p}\|\gamma_u^j\|\max_{i\leq n} \|Z^a_i\|_\infty^2 \\
% & + L_\gamma\{ \|u-u'\|+\|\varpi-\varpi'\|^{1/2}\} \bar f^2\sqrt{p}\max_{i\leq n}\|Z^a_i\|_\infty^2 \\
 \end{array}$$

Moreover, we have that $\max_{i\leq n} \|Z_i^a\|_\infty^2 \lesssim_P M_n^2$. For $d_\mathcal{U}=\|\cdot\|$, an uniform $\epsilon$-cover of $\mathcal{U}$ satisfies $(6\diam(\mathcal{U})/\epsilon)^{1+d_W} \geq N(\epsilon,\UU,\|\cdot\|)$. We will set $1/\epsilon = (1+\bar f^2)\{L_\gamma+L_f\}^2pnM_n^2\log^2(p|V|n)/\{\mu_\mathcal{W}\underf_\UU^2\} \leq pn^3$ so that with probability $1-o(1)$, for any pair $u,u'\in\UU$, $\|u-u'\|\leq \epsilon$, we have
$$\begin{array}{rl}
 |\En[ S_{ujk}-S_{u'jk} ]| & \lesssim (\bar f+\bar f^2) \sqrt{s}\max_{i\leq n}\|Z_i^a\|_\infty^2\{ L_f \epsilon+ \En[|K_\varpi(W)-K_{\varpi'}(W)|]\} \\
 & +\bar f^2 \En[ \|Z_i^a\|_\infty^2 ] \sqrt{p} L_\gamma\{ \epsilon+ \epsilon^{1/2}\} \\
& \lesssim  \delta_n n^{-1/2}\{\mu_\mathcal{W}\underline f_\mathcal{U}^2\}^{1/2} + \sqrt{s}M_n\log(n)\En[|K_\varpi(W)-K_{\varpi'}(W)|]\\
% &\lesssim   \bar L^2 \epsilon^2 \sqrt{p}n^{2/q}M_n\log n+ \bar L \epsilon \sqrt{p}n^{2/q}M_n^2\log n + \epsilon^{1/2} L_\gamma \sqrt{p} n^{2/q}M_n^2\log n\\
% & + \En[|K_\varpi(W)-K_{\varpi'}(W)|] n^{2/q}M_n\log n \\
% & \lesssim \delta_n n^{-1/2}\{\mu_\mathcal{W}\underline f_\mathcal{U}^2\}^{1/2} +  \En[|K_\varpi(W)-K_{\varpi'}(W)|] n^{2/q}M_n\log n
\end{array}$$
by the choice of $\epsilon$.  
To control the last term, note that $\mathcal{W}$ is a VC-class of events with VC dimension $d_W$. Thus by Lemma \ref{lemma:CCK}, with probability $1-o(1)$
$$ \begin{array}{rl}
\En[|K_\varpi(W)-K_{\varpi'}(W)|] & \leq |(\En-\Ep)[|K_\varpi(W)-K_{\varpi'}(W)|]|+\Ep[|K_\varpi(W)-K_{\varpi'}(W)|] \\
&\displaystyle \leq \sup_{\varpi,\varpi'\in\mathcal{W},\|\varpi-\varpi'\|\leq \epsilon}|(\En-\Ep)[|K_\varpi(W)-K_{\varpi'}(W)|]|+ L_K\epsilon \\
& \lesssim \sqrt{\frac{d_W \log(n/\epsilon)}{n}}\epsilon^{1/2} + \frac{d_W\log(n/\epsilon)}{n} + L_K\epsilon
\end{array}
$$ which yields uniformly over $u\in \mathcal{U}$ and $j\in[p]$
\begin{equation}\label{eq:Shelp1} |\En[ S_{ujk}-S_{u'jk} ]| \lesssim \delta_n n^{-1/2} \mu_\mathcal{W}^{1/2}\underf_\mathcal{U}\end{equation}
under $\sqrt{\epsilon d_W \log(n/\epsilon)}M_n\log n = o(\mu_\mathcal{W}^{1/2}\underf_\mathcal{U})$ and $d_W\log(n/\epsilon)M_n\log n=o(n^{1/2}\mu_\mathcal{W}^{1/2}\underf_\mathcal{U})$ assumed in Condition CI. In turn this implies
$$ \begin{array}{c}
\displaystyle \sup_{|u-u'|\leq \epsilon} \underset{j,k\in[p],j\neq k}{\max}\frac{|\En[S_{ujk}-S_{u'jk}]|}{\Ep[S_{ujk}^2]^{1/2}} \leq \delta_n n^{-1/2}  \end{array}$$
since $\Ep[S_{ujk}^2] \geq c \mu_\mathcal{W}\underf_\mathcal{U}^2$. Using the same choice of $\epsilon$, similar arguments also imply
\begin{equation}\label{eq:Shelp2} \begin{array}{c}
\ \displaystyle \sup_{|u-u'|\leq \epsilon}\underset{j,k\in[p],j\neq k}{\max}\frac{|\Ep[S_{ujk}^2-S_{u'jk}^2]|}{\Ep[S_{ujk}^2]} \leq \delta_n \end{array}\end{equation}

To establish the last requirement of Condition WL(ii), note that
\begin{equation}\label{eq:load} \begin{array}{rl}
\displaystyle \sup_{u\in \UU} \max_{j,k\in [p], j\neq k} |(\En-\Ep)[S_{ujk}^2]| &\displaystyle \leq  \sup_{u\in \UU^\epsilon} \max_{j,k\in [p], j\neq k} |(\En-\Ep)[S_{ujk}^2]| +  \Delta_n \\
\end{array}  \end{equation}
where $ \Delta_n := \sup_{u,u'\in \UU, \|u-u'\|\leq \epsilon} \max_{j,k \in [p], j\neq k} |(\En-\Ep)[S_{ujk}^2]-(\En-\Ep)[S_{u'jk}^2]|.$

To bound the first term, we will apply Corollary \ref{thm:RV34} with $k=1$, $\hat \UU := \UU^\epsilon\times[p]$ and the vector $\{(\bar X)_{uj}=S_{uj}, (u,j)\in \hat \UU\}$. In this case note that
 $$K^2=\Ep[ \max_{i\leq n} \sup_{u\in \UU} \max_{j,k\in [p], j\neq k} S_{ujk}^2 ] \leq \Ep[ \max_{i\leq n} \sup_{u\in\UU,j\in[p]} |v_{iuj}|^2\|f_{iu}Z_i^a\|_\infty^2] \leq \bar f^2 M_n^2L_n^2.$$
Therefore, by Corollary \ref{thm:RV34} and Markov inequality, we have with probability $1-o(1)$ that
$$ \begin{array}{rl}
\displaystyle \sup_{u\in \UU} \max_{j,k\in [p], j\neq k} |(\En-\Ep)[S_{ujk}^2]|
& \leq C n^{-1/2}M_nL_n \log^{1/2}(p|V|n)\leq C\delta_n \mu_\mathcal{W}\underf_\mathcal{U} + \Delta_n \end{array}  $$
under $M_n^2L_n^2\log(p|V|n) \leq \delta_n n \mu_\mathcal{W}^2\underf_\mathcal{U}^2$.

To control $\Delta_n$, note that
$$\begin{array}{rl}
 |(\En-\Ep)[S_{ujk}^2]-(\En-\Ep)[S_{u'jk}^2]| & \leq |\En[S_{ujk}^2-S_{u'jk}^2]|+|\Ep[S_{ujk}^2-S_{u'jk}^2]|\\
& \leq \En[|S_{ujk}-S_{u'jk}|]\sup_{u\in\mathcal{U}}\max_{i\leq n}|2S_{iu'jk}|+|\Ep[S_{ujk}^2-S_{u'jk}^2]|\\
& \lesssim \delta_n n^{-1/2} \mu_\mathcal{W}^{1/2}\underf_\mathcal{U} \bar f \sup_{u\in\mathcal{U}}\max_{i\leq n}|v_{iuj}|\|Z_i^a\|_\infty+\delta_n \mu_\mathcal{W}\underf_\mathcal{U}^2\\
& \lesssim \delta_n n^{-1/2} \mu_\mathcal{W}^{1/2}\underf_\mathcal{U} M_n L_n \log n+\delta_n \mu_\mathcal{W}\underf_\mathcal{U}^2\\
\end{array}$$
with probability $1-o(1)$ where we used (\ref{eq:Shelp1}) and (\ref{eq:Shelp2}). Therefore $\Delta_n \lesssim \delta_n\mu_\mathcal{W}\underf_\mathcal{U}^2$ with probability $1-o(1)$ as required.

To verify Assumption \ref{ass: M}(a), note that $[\partial_\theta M_u(Y_u,X,\theta_u)-\partial_\theta M_u(Y_u,X,\theta_u,a_u)]'\delta=  -f_u^2 K_\varpi(W)\bar r_{uj}Z_{-j}^a\delta$, so that by Cauchy-Schwartz,  we have
$$\begin{array}{rl}
 \En[\partial_\theta M_u(Y_u,X,\theta_u)-\partial_\theta M_u(Y_u,X,\theta_u,a_u)]'\delta & \leq \| f_u\bar r_{uj}\|_{n,\varpi}\| f_u Z_{-j}^a\delta\|_{n,\varpi}  \leq C_{un} \| f_u Z_{-j}^a\delta\|_{n,\varpi} \end{array} $$
where we choose $C_{un}$ so that $\{ C_{un} \geq \max_{j\in[p]} \| f_u\bar r_{uj}\|_{n,\varpi}: u\in \mathcal{U}\}$ with probability $1-o(1)$. To bound $C_{un}$, by Lemma \ref{ControlRUJ}, uniformly over $u\in\UU, j\in[p]$ we have with probability $1-o(1)$  $$\|f_u\bar r_{uj}\|_{n,\varpi} = \|f_uZ_{-j}^a(\gamma_u^j-\bar\gamma_u^j)\|_{n,\varpi}\lesssim \underf_u\{\Pr(\varpi)\}^{1/2}\{n^{-1} s\log(p|V|n)\}^{1/2}$$
so that setting $C_{un}=\underf_u\{\Pr(\varpi)\}^{1/2}\{n^{-1} s\log(p|V|n)\}^{1/2}$ suffices.

Next we show that Assumption \ref{ass: M}(b) holds. First, by (\ref{eq:load}) and the corresponding bounds, note the uniform convergence of the loadings
$$ \sup_{u\in \mathcal{U}, j, k\in[p], j\neq k} (|\En[ S_{ujk}^2] - \Ep[S_{ujk}^2]  | + |(\En-\Ep)[ K_\varpi(W)f_u^2|Z_j^aZ_{-jk}^a|^2]|) \leq \delta_n \mu_\mathcal{W}\underf_\mathcal{U}^2   $$ so that $\En[ S_{ujk}^2]/\Ep[S_{ujk}^2]= 1+o_P(1)$. It follows that  $\tilde c$ is bounded above by a constant for $n$ large enough. Indeed, uniformly over $u\in\mathcal{U}$, $j\in[p]$, since $c\underf_u\leq \Ep[|f_uv_{uj}Z^a_k|^2\mid \varpi]^{1/2}\leq C\underf_u$, with probability $1-o(1)$ we have
$c \underf_u\Pr(\varpi)^{1/2} \leq \widehat\Psi_{u0jj} \leq C \underf_u\Pr(\varpi)^{1/2}$ so that $c/C \leq \|\widehat\Psi_{u0}\|_\infty\|\widehat\Psi_{u0}^{-1}\|_\infty \leq C/c$.

Assumption \ref{ass: M}(c) follows directly from the choice of $M_u(Y_u,X_u,\theta)=K_\varpi(W)f_u^2(Z_j^a-Z^a_{-j}\theta)^2$ with $\bar q_{A_u} = \infty$.

The result for the rate of convergence then follows from Lemma \ref{Lemma:PostLassoMRateRaw}, namely
\begin{equation}\label{RateInt} \|f_u X_u'(\hat\gamma_u^j-\gamma_u^j) \|_{n,\varpi} \lesssim \frac{\|\widehat \Psi_{u0}\|_\infty}{\bar\kappa_{u,2\cc}}\sqrt{\frac{s\log(p|V|n)}{n}}+ C_{un} \lesssim \frac{\underf_u\Pr(\varpi)^{1/2}}{\bar\kappa_{u,2\cc}}\sqrt{\frac{s\log(p|V|n)}{n}} \end{equation}

By Lemma \ref{Lemma:Matrices} we have that for sparse vectors, $\|\theta\|_0\leq \ell_n s$ satisfies
$$ \|f_uZ^a_{-j}\theta\|_{n,\varpi}^2/\Ep[K_\varpi(W)f_u^2(Z^a_{-j}\theta)^2] = 1+o_P(1)$$
so that $\semax{\ell_n s,uj} \leq C\underf_u^2\Pr(\varpi)$ and $\hat s_{uj} \leq \min_{m\in \mathcal{M}_u}\semax{m,uj}L_u^2 \leq Cs$ provided $L_u^2 \lesssim s\{\underf_u^2\Pr(\varpi)\}^{-1}$. Indeed, with probability $1-o(1)$, we have $\|\widehat\Psi_{u0}^{-1}\|_\infty \leq C\underf_u^{-1}\Pr(\varpi)^{-1/2}$, so that $L_u\lesssim \underf_u^{-1}\Pr(\varpi)^{-1/2} \frac{n}{\lambda} \{ C_{un} + L_{un} \}$. Moreover, we can take $C_{un} \lesssim \underf_u\{\Pr(\varpi) n^{-1}s\log(p|V|n)\}^{1/2}$, and $L_{un} \lesssim \{n^{-1}s\log(p|V|n)\}^{1/2}$ in Assumption \ref{ass: M}  because
$$\begin{array}{rl}
 & |\{\En[\partial_\gamma M_u(Y_u,X_u,\hat\gamma_u^j)-\partial_\gamma M_u(Y_u,X_u,\gamma_u^j)]\}'\delta| \\
& =  2|\En[K_\varpi(W)f_u^2\{X_u'(\hat\gamma_u^j-\gamma_u^j)\}X_u'\delta]|\\
&\leq 2\|f_u X_u'(\hat\gamma_u^j-\gamma_u^j) \|_{n,\varpi} \|f_u X_u'\delta\|_{n,\varpi}=:L_{un}\|f_u X_u'\delta\|_{n,\varpi},\\
 \end{array}$$
where the last inequality hold by (\ref{RateInt}) since $\bar\kappa_{u,2\cc} \geq c\underf_u\{\Pr(\varpi)\}^{1/2}$. The bound on the restricted eigenvalue $\bar \kappa_{u,2\cc}$ holds\footnote{Note that there are two restricted eigenvalues definitions, one used for the quantile regression ($\kappa_{u,2\cc}$), and another used here for the weighted lasso ($\bar\kappa_{u,2\cc}$). It is a consequence of the use of different norms.} by arguments similar to (\ref{qA:1}) and using that $\|\delta\|_1\leq C\sqrt{s}\|\delta\|$ for any $\delta \in \Delta_{u, 2\cc}$, and since for any $\|\delta\|=1$, we have
$$\begin{array}{rl}
  c\underf_u\Pr(\varpi) &\leq \Ep[K_\varpi(W)f_u(Z^a\delta)^2] \\
  & \leq \{\Ep[K_\varpi(W)f_u^2(Z^a\delta)^2]\}^{1/2} \{\Ep[K_\varpi(W)(Z^a\delta)^2]\}^{1/2} \\
  & \leq \{\Ep[K_\varpi(W)f_u^2(Z^a\delta)^2]\}^{1/2} C\{\Pr(\varpi)\}^{1/2} \end{array}$$
where the first inequality follows from the definition of $\underf_u$, $\|\delta\|=1$,  and Condition CI, so that we have $\{\Ep[K_\varpi(W)f_u^2(Z^a\delta)^2]\}^{1/2} \geq c'\underf_u\{\Pr(\varpi)\}^{1/2}$.

Return to the rate of convergence we have by (\ref{RateInt}) and $\bar\kappa_{u,2\cc} \geq c\underf_u\{\Pr(\varpi)\}^{1/2}$ that
\begin{equation}\label{RateInt2} \|f_u X_u'(\hat\gamma_u^j-\gamma_u^j) \|_{n,\varpi} \lesssim \frac{\underf_u\Pr(\varpi)^{1/2}}{\bar\kappa_{u,2\cc}}\sqrt{\frac{s\log(p|V|n)}{n}} \lesssim \sqrt{\frac{s\log(p|V|n)}{n}}  \end{equation}
and the result follows by noting that $\|f_u X_u'(\hat\gamma_u^j-\gamma_u^j) \|_{n,\varpi} \geq c \underf_u\Pr(\varpi)^{1/2}\|\hat\gamma_u^j-\gamma_u^j\|$ with probability $1-o(1)$ by arguments similar to (\ref{qA:1}) under Condition CI.

The sparsity result follows from Lemma \ref{Lemma:LassoMSparsity}. The result for Post Lasso follows from Lemma \ref{Lemma:LassoMRateRaw} under the growth requirements in Condition CI.

\end{proof}

\begin{proof}[Proof of Theorem \ref{theorem:semiparametric}]
We will verify Assumptions \ref{ass: S1} and \ref{ass: AS}, and the result follows from Theorem \ref{theorem:semiparametricMain}.
The estimate of the nuisance parameter is constructed from the estimators in Steps 1 and 2 of the Algorithm.

For each $u=(a,\tau,\varpi)\in \UU$ and $j\in [p]$, let $W_{uj}=(W,X_a,Z^a,v_{uj},r_u)$, where $v_{uj} = f_u(Z_j^a - Z_{-j}^a\gamma_u^j)$ and let $\theta_{uj} \in \Theta_{uj} = \{ \theta \in \RR : |\theta-\beta_{uj}|\leq c/\log n \}$ (Assumption \ref{ass: S1}(i) holds). The score function is $$ \psi_{uj}(W_{uj},\theta,\eta_{uj}) = K_\varpi(W)\{ \tau - 1\{X_a \leq Z_j^a\theta+Z^a_{-j}\beta_{u,-j}+r_u\}\}f_u(Z_j^a - Z_{-j}^a\gamma_u^j)$$ where the nuisance parameter is $\eta_{uj}=(\beta_{u, -j},\gamma_u^j,r_u)$ and the last component is a function $r_{u}=r_{u}(X)$. Recall that $K_\varpi(W) \in \{0,1\}$ and let $
a_n = \max(n,p,|V|)$. Define the nuisance parameter set
$\mT_{uj} = \{ \eta =(\eta^{(1)},\eta^{(2)}, \eta^{(3)}): \|\eta - \eta_{uj}\|_e \leq \tau_n\}$ where
 $\|\eta-\eta_{uj}\|_e = \|(\delta_\eta^{(1)},\delta_\eta^{(2)},\delta_\eta^{(3)})\|_e = \max \{ \|\delta_\eta^{(1)}\|, \|\delta_\eta^{(2)}\|, \Ep[|\delta_\eta^{(3)}|^2]^{1/2}\}$, and
 $$ \tau_n := C\sup_{u\in\mathcal{U}} \frac{1}{1\wedge \underf_\mathcal{U}}\sqrt{\frac{s \log a_n}{n \mu_\mathcal{W}}}$$

The differentiability of the mapping $(\theta,\eta) \in \Theta_{uj}\times \mT_{uj}\mapsto \Ep\psi_{uj}(W_{uj},\theta,\eta)$ follows from the differentiability of the conditional probability distribution of $X_a$ given $X_{V\backslash \{a\}}$ and $\varpi$. Let $\eta=(\eta^{(1)},\eta^{(2)},\eta^{(3)})$, $\delta_\eta = (\delta_\eta^{(1)},\delta_\eta^{(2)},\delta_\eta^{(3)})$, and $\theta_{\bar r} = \theta + \bar{r}\delta_\theta$, $\eta_{\bar{r}} = \eta + \bar{r} \delta_\eta$.

To verify Assumption \ref{ass: S1}(v)(a) with $\alpha = 2$, for any $(\theta,\eta),(\bar \theta,\bar\eta)\in \Theta_{uj}\times \mT_{uj}$ note that $f_{X_a\mid X_{-a},\varpi}$ is uniformly bounded from above by $\bar {f}$, therefore
$$\begin{array}{rl}
\Ep[ \{ \psi_{uj}(W_{uj},\theta,\eta)-\psi_{uj}(W_{uj},\bar \theta,\bar\eta)\}^2]^{1/2} \\
\leq \bar{f}\Ep[|Z^a_{-j}(\eta^{(2)}-\bar \eta^{(2)})|^2]^{1/2} + \bar{f}^2\Ep[ (Z^a_j-Z^a_{-j}\bar\eta^{(2)})^2\{ |\eta^{(3)}-\bar \eta^{(3)}|+ |Z^a_{-j}(\eta^{(1)}-\bar \eta^{(1)})|+ |Z_j^a(\theta-\bar\theta)|\}]^{1/2}\\
\leq C\|\eta^{(2)}-\bar \eta^{(2)}\| + \bar f\Ep[(Z^a_j-Z^a_{-j}\bar\eta^{(2)})^4]^{1/4}\{ \Ep[|\eta^{(3)}-\bar \eta^{(3)}|^2]^{1/4} + C\|\eta^{(1)}-\bar \eta^{(1)}\|+|\theta-\bar\theta| \}^{1/2}\\
\leq C'|\theta-\bar\theta|^{1/2}\vee \|\eta-\bar \eta\|_e^{1/2}
\end{array}$$
for some constance $C'<\infty$ since by Condition CI we have $\Ep[|Z^a\bar{\xi}|^2]^{1/2} \leq C\|\bar{\xi}\|$ for all vectors $\bar{\xi}$, and the conditions $\sup_{u\in\UU,j\in[p]}\|\gamma_u^j\|\leq C$, $\sup_{\theta \in \Theta_{uj}}|\theta|\leq C$,  and $\sqrt{s \log(a_n)} \leq \delta_n \sqrt{n}$. This implies that $\|\eta^{(2)}-\bar\eta^{(2)}\|\leq \|\eta^{(2)}-\eta_{uj}^{(2)}\|+ \|\eta_{uj}^{(2)}-\bar\eta^{(2)}\| \leq 1$ so that $\|\eta^{(2)}-\bar\eta^{(2)}\|\leq \|\eta^{(2)}-\bar\eta^{(2)}\|^{1/2}$.

To verify Assumption \ref{ass: S1}(v)(b), let $t_{\bar r}= Z_j^a\theta_{\bar{r}}+Z^a_{-j}\eta_{\bar{r}}^{(1)}+\eta_{\bar r}^{(3)}$. We have
$$\begin{array}{rl}
\left.  \partial_r\Ep(\psi_{uj}(W_{uj},\theta+r\delta_\theta,\eta+r\delta_\eta)) \right|_{r=\bar r}  = \\ -\Ep[K_\varpi(W)f_{X_a\mid X_{-a},\varpi}(t_{\bar r})(Z_j^a - Z^a_{-j}\eta_{\bar{r}}^{(2)})\{Z_j^a\delta_\theta+Z_{-j}^a\delta_\eta^{(1)}+\delta_\eta^{(3)}\}] \\
-\Ep[K_\varpi(W)\{\tau-F_{X_a\mid X_{-a},\varpi}(t_{\bar r})\}Z_{-j}^a\delta_\eta^{(2)}] \end{array}$$
Applying Cauchy-Schwartz  we have that
   $$\begin{array}{lr}
\left|\left.  \partial_r\Ep(\psi_{uj}(W_{uj},\theta+r\delta_\theta,\eta+r\delta_\eta)) \right|_{r=\bar r}\right|
\\
\leq \bar f \Ep[(Z_j^a - Z^a_{-j}\eta_{\bar{r}}^{(2)})^2]^{1/2}\{\Ep[ (Z_j^a)^2]^{1/2}|\delta_\theta|+\Ep[(Z_{-j}^a\delta_\eta^{(1)})^2]^{1/2}+\Ep[|\delta_\eta^{(3)}|^2]^{1/2}\} +\bar{f}\Ep[(Z_{-j}^a\delta_\eta^{(2)})^2]^{1/2}\\
\leq \bar B_{1n} (|\delta_\theta| \vee \|\eta-\eta_{uj}\|_e)\end{array}$$
where $\bar B_{1n} \leq C$ by the same arguments of bounded (second) moments of linear combinations.

Assumption \ref{ass: S1}(v)(c) follows similarly as
 $$\begin{array}{rl}
\left.  \partial_r^2\Ep(\psi_{uj}(W_{uj},\theta+r\delta_\theta,\eta+r\delta_\eta)) \right|_{r=\bar r}  =\\
-\Ep[K_\varpi(W)f_{X_a\mid X_{-a},\varpi}'(t_{\bar r})(Z_j^a - Z^a_{-j}\eta_{\bar{r}}^{(2)})\{Z_j^a\delta_\theta+Z_{-j}^a\delta_\eta^{(1)}+\delta_\eta^{(3)}\}^2] \\
+2\Ep[K_\varpi(W)f_{X_a\mid X_{-a},\varpi}(t_{\bar r})(Z^a_{-j}\delta_\eta^{(2)})\{Z_j^a\delta_\theta+Z_{-j}^a\delta_\eta^{(1)}+\delta_\eta^{(3)}\}] \\
 \end{array}$$
and under $|f'_{X_a\mid X_{-a},\varpi}|\leq \bar f'$, from Cauchy-Schwartz inequality we have
  $$\begin{array}{ll}
\left|\left.  \partial_r^2\Ep(\psi_{uj}(W_{uj},\theta+r\delta_\theta,\eta+r\delta_\eta)) \right|_{r=\bar r} \right| \\
 \leq
|\bar f'_n\Ep[(Z_j^a - Z^a_{-j}\eta_{\bar{r}}^{(2)})^2]^{1/2}\{ \Ep[(Z_j^a)^4]|\delta_\theta|^2+\Ep[(Z_{-j}^a\delta_\eta^{(1)})^4]^{1/2}\}+C\Ep[\{\delta_\eta^{(3)}\}^2] \\
+2\bar f\Ep[(Z^a_{-j}\delta_\eta^{(2)})^2]^{1/2}\{\Ep[(Z_j^a)^2]^{1/2}|\delta_\theta|+\Ep[(Z_{-j}^a\delta_\eta^{(1)})^2]^{1/2}+\Ep[\{\delta_\eta^{(3)}\}^2]^{1/2}\} \\
\leq \bar B_{2n}  (\delta_\theta^2\vee \|\eta-\eta_{uj}\|_e^2) \end{array}$$
where  $\bar B_{2n} \leq C(1 + \bar f'_n)$ by the same arguments of bounded (fourth) moments as before and using that $|\Ep[(Z_j^a - Z^a_{-j}\eta_{\bar{r}}^{(2)})(\delta_\eta^{(3)})^2]|\leq \{ \Ep[(Z_j^a - Z^a_{-j}\eta_{\bar{r}}^{(2)})^2(\delta_\eta^{(3)})^2]\}^{1/2}\Ep[(\delta_\eta^{(3)})^2]^{1/2} \leq C\Ep[(\delta_\eta^{(3)})^2]$.

To verify the near orthogonality condition, note that for all $u\in\UU$ and $j\in[p]$, since by definition $f_u = f_{X_a\mid X_{-a},\varpi}(Z^a\beta_u+r_u)$  we have
$$ |\mathrm{D}_{u,j,0}[\tilde \eta_{uj} - \eta_{uj}]| = |-\Ep[K_\varpi(W)f_u\{Z_{-j}^a(\tilde\eta^{(2)} - \eta_{uj}^{(2)})+ r_{u}\}v_{uj} ]| \leq \delta_n n^{-1/2} $$
by the relations $\Ep[K_\varpi(W)(\tau - F_{X_a\mid X_{-a},\varpi}(Z^a\beta_u+r_u))Z_{-j}^a]=0$ and $\Ep[K_\varpi(W)f_uZ_{-j}^av_{uj}]=0$ implied by the model, and $|\Ep[ K_\varpi(W) f_u r_{u}v_{uj} ]|\leq \delta_n n^{-1/2}$ by Condition CI. Thus, condition (\ref{eq:cont}) holds.

Furthermore, since $\Theta_{uj} \subset \theta_{uj} \pm C/\log n$, for $J_{uj}=\partial_\theta\Ep[\psi_{uj}(W_{uj},\theta_{uj},\eta_{uj})]=\Ep[K_\varpi(W)f_uZ_j^av_{uj}]=\Ep[K_\varpi(W)v_{uj}^2] =\Ep[v_{uj}^2| \varpi]\Pr(\varpi)$ as $\Ep[K_\varpi(W)f_uZ_{-j}^av_{uj}]=0$, we have that for all $\theta\in\Theta_{uj}$
$$ \Ep[\psi_{uj}(W_{uj},\theta,\eta_{uj})] = J_{uj}(\theta-\theta_{uj})+\frac{1}{2}\partial_\theta^2\Ep[\psi_{uj}(W_{uj},\bar\theta,\eta_{uj})](\theta-\theta_{uj})^2 $$ where $|\partial_\theta^2\Ep[\psi_{uj}(W_{uj},\bar\theta,\eta_{uj})]|\leq \bar f'\Ep[|Z_j^a|^2|v_{uj}|\mid \varpi] \Pr(\varpi) \leq \bar f'\Ep[|Z_j^a|^4| \varpi]^{1/2}\Ep[|v_{uj}|^2| \varpi]^{1/2} \Pr(\varpi) \leq C\bar f' \Pr(\varpi)$ so that for all $\theta\in\Theta_{uj}$ $$|\Ep[\psi_{uj}(W_{uj},\theta,\eta_{uj})]| \geq \{|\Ep[v_{uj}^2\mid \varpi]|- (C^2\bar f')/\log n\}\Pr(\varpi)|\theta-\theta_{uj}|$$
and we can take $j_n \geq c\inf_{\varpi\in\mathcal{W}}\Pr(\varpi) = c\mu_\mathcal{W}$.

Next we verify Assumption \ref{ass: AS} with $\mT_{ujn} = \{ \eta =(\beta,\gamma,0) : \|\beta\|_0\leq Cs, \|\gamma\|_0\leq Cs,  \|\beta-\beta_{u,-j}\| \leq C\tau_n, \|\gamma - \gamma_u^j\| \leq C\tau_n, \|\gamma-\gamma_u^j\|_1 \leq C\sqrt{s}\tau_n \}$.
We will show that $\hat \eta_{uj} = (\widetilde\beta_{u,-j},\widetilde \gamma_u^j,0) \in \mT_{ujn}$ with probability $1-o(1)$, uniformly over $u\in \UU$ and $j\in [p]$.

Under Condition CI and the choice of penalty parameters, by Theorems \ref{theorem:rateQRgraph} and \ref{theorem:rateLASSOgraph}, with probability $1-o(1)$, uniformly over $u\in\mathcal{U}$ we have
$$ \|\widetilde \beta_u - \beta_u\| \leq C\tau_n,  \ \  \max_{j\in[p]}\sup_{u\in\UU} \|\widetilde \gamma_{u}^j - \gamma_{u}^j\| \leq C\tau_n, \ \ \mbox{and} \ \ \max_{j\in[p]} \sup_{u\in\UU} \|\widetilde \gamma_{u}^j\|_0 \leq \bar C s, $$
further by thresholding we can achieve $\sup_{u\in\UU} \|\widetilde \beta_u\|_0 \leq \bar C s$ using Lemma \ref{Lemma:Bound2nNormSecond}.

Next we establish the entropy bounds. For $\eta \in \mT_{ujn}$ we have that $$\psi_{uj}(W_{uj},\theta,\eta)= K_\varpi(W)( \tau - 1\{ X_a \leq Z_j^a\theta+Z^a_{-j}\beta_{-j}\} )f_u\{Z_j^a - Z_{-j}^a\gamma\} $$
It follows that  $\mF_1\subset \mathcal{W}\mG_1\mG_2\mG_3 \cup \bar{\mF_0}$ where $\bar{\mF_0}= \{ \psi_{uj}(W_{uj},\theta,\eta_{uj}) : u\in \UU, j\in[p], \theta \in \Theta_{uj}\}$, $\mG_1 =\{ \tau - 1\{ X_a\leq Z^a\beta\} : \|\beta\|_0 \leq Cs,  \tau \in \mathcal{T}, a \in V\}$, $\mG_2 =\{ Z^a \to Z^a(1,-\gamma), \|\gamma\|_0\leq Cs, \|\gamma\|\leq C, a\in V\}$, $\mG_3 = \{ f_u : u \in \UU\}$.
Under Condition CI, $\mathcal{W}$ is a VC class of sets with VC index $d_W$ (fixed). It follows that $\mG_1$ and $\mG_2$ are $p$ choose $O(s)$ VC-subgraph classes with VC indices at most
$O(s)$. Therefore, ${\rm ent}(\mG_1)\vee {\rm ent}(\mG_2)\vee {\rm ent}(\mathcal{W})\leq C s \log(a_n/\epsilon)+Cd_W\log(e/\epsilon)$ by  Theorem 2.6.7 in \cite{vdV-W} and by standard arguments. Also, since $f_u$ is Lipschitz in $u$ by Condition CI, we have ${\rm ent}(\mG_3)\leq (1+d_W)\log( a_nL_f/\epsilon)$. Moreover, an envelope $F_G$ for $\mF_1$ satisfies $$\begin{array}{rl}
 \Ep[F_G^q] & = \Ep[ \sup_{u\in\UU,j\in[p],\|\gamma-\gamma_u^j\|_1\leq C\sqrt{s}\tau_n} |v_{uj} - f_uZ^a_{-j}(\gamma-\gamma_u^j)|^q ]\\
 & \leq 2^{q-1}\Ep[ \sup_{ u\in\UU, j\in [p]} |v_{uj}|^q]  + 2^{q-1}\bar f\Ep[\max_{a\in V}\|Z^a\|_\infty^q]\{C\sqrt{s}\tau_n\}^q \\
 & \leq 2^{q-1}L_n^q + 2^{q-1}\bar f\{M_nC\sqrt{s}\tau_n\}^q \leq 2^{q}L_n^q \\
 \end{array}$$ since $M_nC\sqrt{s}\tau_n \leq \delta_n L_n/\bar f$ and $\delta_n \leq 1$ for $n$ large.

Next we bound the entropy in $\bar{\mF_0}$. Note that for any $\psi_{uj}(W_{uj},\theta,\eta_{uj})\in \bar{\mF_0}$, there is some $\delta \in [-C,C]$ such that
$$\psi_{uj}(W_{uj},\theta,\eta_{uj})= K_\varpi(W)\{ \tau - 1\{ X_a \leq Z_j^a\delta+Q_{X_a}(\tau\mid X_{-a},\varpi)\} \}v_{uj}$$
and therefore $\bar{\mF_0} \subset \mathcal{W}\{\mathcal{T}- \phi(\mathcal{V})\}\mathcal{L}$ where $\phi(t)=1\{t\leq 0\}$, $\mathcal{V}=\cup_{a\in V,j\in[p]}\mathcal{V}_{aj}$ with $$\mathcal{V}_{aj}:=\{ X_a - Z_j^a\delta - Q_{X_a}(\tau\mid X_{-a},\varpi): \tau\in \mathcal{T}, \varpi\in\mathcal{W}, |\delta| \leq C\},$$
and $\mathcal{L} = \cup_{a\in V, j\in[p]}(\mathcal{L}_{aj}+\{v_{\bar uj}\})$ where $\mathcal{L}_{aj} = \{ (X,W)\mapsto v_{uj} - v_{\bar uj} = f_uZ_{-j}^a(\gamma_u^j-\gamma_{\bar u}^j) : u\in\UU\}$.  Note that each $\mathcal{V}_{aj}$ is a VC subgraph class of functions with index $1+Cd_W$ as $\{Q_{X_a}(\tau| X_{-a},\varpi) : (\tau,\varpi)\in \mathcal{W}\times\mathcal{T}\}$ is a VC-subgraph with VC-dimension $Cd_W$ for every $a\in V$. Since $\phi$ is monotone, $\phi(\mathcal{V})$ is also the union of VC-dimension of order $1+Cd_W$.

Letting $F_1=1$ be an envelope for $\mathcal{W}$ and $\mathcal{T}-\phi(\mathcal{V})$. By Lemma \ref{Lemma:LipsGamma}, it follows that $\|\gamma_u^j-\gamma_{u'}^j\|\leq  L_\gamma \{\|u-u'\|+\|\varpi-\varpi'\|^{1/2}\}$ for some $L_\gamma$ satisfying $\log(L_\gamma) \leq C\log(p|V|n)$ under Condition CI. Therefore, $|v_{uj}-v_{\bar uj}| = |Z_{-j}^a(\gamma_u^j-\gamma_{\bar u}^j)|\leq \|Z^a\|_\infty \sqrt{p}\|\gamma_u^j-\gamma_{\bar u}^j\|$. For a choice of envelope $F_a = M_n^{-1}\|Z^a\|_\infty +2\sup_{u\in \UU} |v_{uj}|$ which satisfies $\|F_a\|_{P,q} \lesssim L_n$, we have
$$\begin{array}{rl}
\log N(\epsilon \|F_a\|_{Q,2},\mathcal{L}_{aj},\|\cdot\|_{Q,2}) & \leq \log N(\frac{\epsilon}{M_n} \| \ \|Z^a\|_\infty\|_{Q,2},\mathcal{L}_{aj},\|\cdot\|_{Q,2}) \\
&\leq  \log N(\epsilon /\{ M_n\sqrt{p}L_\gamma\},\mathcal{U},|\cdot|)  \leq Cd_u\log(M_npL_\gamma/\epsilon)\end{array}$$

Since $\mathcal{L} = \cup_{a\in V,j\in[p]}(\mathcal{L}_{aj}+\{v_{\bar uj}\})$, taking $F_L = \max_{a\in V} F_a$, we have that
 $$\begin{array}{rl}
\log N(\epsilon \|F_LF_1\|_{Q,2},\bar\mF_0,\|\cdot\|_{Q,2}) & \leq \log N(\frac{\epsilon}{4} \|F_1\|_{Q,2},\mathcal{W},\|\cdot\|_{Q,2})+\log N(\frac{\epsilon}{4} \|F_1\|_{Q,2},\mathcal{T}-\phi(\mathcal{V}),\|\cdot\|_{Q,2})\\
 & + \log \sum_{a\in V, j\in [p]}N(\frac{\epsilon}{2} \|F_a\|_{Q,2},\mathcal{L}_{aj},\|\cdot\|_{Q,2})\\
&\leq \log (p|V|) + 1+C'\{d_W+d_u\}\log(4eM_n|V|pL_\gamma/\epsilon)\\ \end{array}$$
where the last line follows from the previous bounds.

Next we verify the growth conditions in Assumption \ref{ass: AS} with the proposed $\mF_1$ and $K_n \lesssim CL_n$. We take $s_{n(\UU,p)}= (1+ d_W)s$ and $a_n = \max\{ n,p,|V| \}$. Recall that $\bar B_{1n} \leq C$, $\bar B_{2n} \leq C$, $j_n\geq c \mu_\mathcal{W}$. Thus, we have $\sqrt{n}(\tau_n/j_n)^2 \lesssim \sqrt{n}\frac{s\log(p|V|n)}{n(1\wedge \underf_\mathcal{U}^2)\mu_\mathcal{W}^3}\leq \delta_n$ under $s^2\log^2(p|V|n) \leq n(1\wedge \underf_\mathcal{U}^4)\mu_\mathcal{W}^6$. Moreover, $(\tau_n/j_n)^{\alpha/2}\sqrt{s_{n(\mathcal{U},p)}\log(a_n)} \lesssim \sqrt[4]{\frac{(1+ d_W)^3s^3\log^3(p|V|n)}{n(1\wedge \underf_\mathcal{U}^2)\mu_\mathcal{W}^3}}\lesssim \delta_n$ under $d_W$ fixed and $s^3\log^3(p|V|n) \leq \delta_n^4 n(1\wedge \underf_\mathcal{U}^2)\mu_\mathcal{W}^3$ and $s_{n(\mathcal{U},p)}n^{-\frac{1}{2}}K_n\log(a_n)\log n \lesssim  (1+ d_W)sn^{\frac{1}{q}-\frac{1}{2}}M_n\log(p|V|n)\log n \leq \delta_n$ under our conditions. Finally, the conditions of Corollary \ref{theorem: general bsMain} hold with $\rho_n = (1+d_W)$ since the score is the product of VC-subgraph classes of function with VC index bounded by $C(1+d_W)$.
\end{proof}

\begin{proof}[Proof of Theorem \ref{theorem:rateQRgraphP}]

We will invoke Lemma \ref{Theorem:L1QRnpPrediction} with $\bar \beta_u$ as the estimand and $r_{iu}=X_{i,-a}'(\beta_u-\bar\beta_u)$, therefore $\Ep[ K_\varpi(W)(\tau - 1\{ X_a \leq X_{-a}'\bar\beta_u + r_{u} \})X_{-a}]=0$. To invoke the lemma we verify that the events $\Omega_1$, $\Omega_2$, $\Omega_3$ and $\Omega_4$ hold with probability $1-o(1)$
$$\begin{array}{rl}
\Omega_1 & :=\{ \lambda_u\geq c |S_{uj}|/\hat\sigma_{a\varpi j}^{X}, \ \mbox{for all} \ \ u\in\mathcal{U}, j\in V\}, \\
\Omega_2 &:=\{\hat R_u(\bar\beta_u) \leq \bar R_{u\xi} : u\in\mathcal{U}\}\\
\Omega_3 & :=\left\{ \sup_{u\in\mathcal{U}, 1/\sqrt{n} \leq \|\delta\|_{1,\varpi} \leq \sqrt{n}} |\En[g_u(\delta,X,W)-\Ep[g_u(\delta,X,W)\mid X_{-a},W]]|/\|\delta\|_{1,\varpi} \leq t_3 \right\}\\
\Omega_4 & :=\{ K_{u} \hat\sigma_{a\varpi j}^{X} \geq |\En[h_{uj}(X_{-a},W)]|,  \mbox{for all} \ u\in\UU, j\in V\backslash\{a\} \}
\end{array}$$
where $g_u(\delta,X,W)= K_\varpi(W)\{\rho_\tau(X_a-X_{-a}'(\bar\beta_u+\delta))-\rho_\tau(X_a-X_{-a}'\bar\beta_u)\}$, $h_{uj}(X_{-a},W) =\Ep[ K_\varpi(W)\{\tau-F_{X_a\mid X_{-a},W }(X_{-a}'\bar\beta_u+r_u)\}X_{j}\mid X_{-a},W  ]$.

By Lemma \ref{Lemma:PenaltyMisspecification} with $\xi=1/n$, by setting $\lambda_u=\lambda_0=c2(1+1/16) \sqrt{2\log(8|V|^2\{ne/d_W\}^{2d_W}n)/n}$, we have  $\Pr(\Omega_1)=1-o(1)$. By Lemma \ref{Lemma:PQGMControlR}, setting $\bar R_{u\xi} = C s(1+d_W)\log(|V|n)/n$ we have $\Pr(\Omega_2)=1-o(1)$ for some $\xi = o(1)$.
By Lemma \ref{Lemma:PQGMOmega3} we have $\Pr(\Omega_3)=1-o(1)$ by setting $t_3 :=  C\sqrt{(1+d_W)\log\left(|V| n M_n/\xi \right)} $.
Finally, by Lemma \ref{Lemma:PredictionOmega4} with  $K_u =C \sqrt{\frac{(1+d_W) \log(|V|n)}{n}}$ we have $\Pr(\Omega_4)=1-o(1)$.

Moreover, we have that $\|\bar \beta_u\|_{1,\varpi} \leq \sqrt{s}\|\bar \beta_u\|_{2,\varpi} \leq C\sqrt{s}= o(\sqrt{n})$ and $\frac{1}{\lambda_u(1-1/c)}\bar R_{u\xi} = o(\sqrt{n})$ for all $u\in\UU$. Finally, we verify condition (\ref{ConditionIdenfitication}) holds for all $$\delta\in A_u := \Delta_{u,2\cc}\cup\{v:\|v\|_{1,\varpi}\leq 2\cc\bar R_{u\xi}/\lambda_u, \|\sqrt{f_u}X_{-a}'v\|_{n,\varpi} \geq C\sqrt{s(1+d_W)\log(n|V|)/n}/\kappa_{u,2\cc}  \},$$ $\bar q_{A_u}/4 \geq (\sqrt{\bar f}+1) \|r_{u}\|_{n,\varpi} + \left[\lambda_u+t_3+K_u\right]\frac{3\cc\sqrt{s}}{\kappa_{u,2\cc}}$ and $\bar q_{A_u} \geq \{ 2\cc\left(1+ \frac{t_3+K_u}{\lambda_u}\right)\bar R_{u\gamma} \}^{1/2}$.

Consider the matrices $\En[K_\varpi(W)f_uX_{-a}X_{-a}']$ and $\Ep[K_\varpi(W)f_uX_{-a}X_{-a}']$. By Lemma \ref{Lemma:Matrices}, with probability $1-o(1)$, it follows that we can take $\eta = \eta_n = C M_n\sqrt{s(1+d_W)\log(|V|n)}\log(1+s)\{\log n\}/\sqrt{n}$ and $D_{kk} = 2\eta$ in Lemma \ref{lem:transfer}. (Note that we increase $\delta_n$ by a factor of $\sqrt{\log n}$.) Therefore, with at least the same probability we have (taking $s\geq 2$)
\begin{equation}\label{qA:1} \delta'\En[K_\varpi(W)f_uX_{-a}X_{-a}']\delta  \geq \delta'\Ep[K_\varpi(W)f_uX_{-a}X_{-a}']\delta - 4\eta\|\delta\|_1^2/s\end{equation}
and by definition of $\underf_u$ we have
$$ \En[K_\varpi(W)f_u|X_{-a}'\delta|^2]  \geq \underf_u \Ep[K_\varpi(W)|X_{-a}'\delta|^2] - 4\eta\|\delta\|_1^2/s \geq c\underf_u \Pr(\varpi)\|\delta\|^2 - 4\eta\|\delta\|_1^2/s .$$
For $\delta \in \Delta_{u,2\cc}$ we have $\|\delta\|_1 \leq C\|\delta\|_{1,\varpi}/\{\Pr(\varpi)\}^{1/2} \leq C'  \|\delta_{T_u}\|_{1,\varpi}/\{\Pr(\varpi)\}^{1/2} \leq C' \sqrt{s} \|\delta_{T_u}\|_{2}$. Note that we can assume $\|\delta\| \geq c\sqrt{s(1+d_W)\log(n|V|)/n}$ otherwise we are done. So that for $\delta \in A_u \backslash \Delta_{u,2\cc}$ we have that $\|\delta\|_1 / \|\delta\|_2 \leq  C s\sqrt{\log(|V|n)/n} / \sqrt{s(1+d_W)\log(n|V|)/n}  \leq C'\sqrt{s}$.

Similarly we have
\begin{equation}\label{qA:2} \En[K_\varpi(W)|X_{-a}'\delta|^2] \leq \Ep[K_\varpi(W)|X_{-a}'\delta|^2] + 4\eta\|\delta\|_1^2/s \leq C  \Pr(\varpi)\|\delta\|^2 - 4\eta\|\delta\|_1^2/s.\end{equation}

Under the condition that $\eta = o(\underf_\UU \mu_\mathcal{W})$, which holds by Condition P, for $n$ sufficiently large we have with probability $1-o(1)$ that
\begin{equation}\label{qA:3} \begin{array}{rl}
 \bar q_{A_u} & \displaystyle \geq \frac{c}{\bar f'} \inf_{\delta\in A_u} \frac{\En[K_\varpi(W)f_u|X_{-a}'\delta|^2]^{3/2}}{\En[K_\varpi(W)|X_{-a}'\delta|^3]} \geq \frac{c}{\bar f'} \inf_{\delta\in A_u} \frac{\En[K_\varpi(W)f_u|X_{-a}'\delta|^2]^{3/2}}{\En[K_\varpi(W)|X_{-a}'\delta|^2]\max_{i\leq n}\|X_i\|_\infty\|\delta\|_1} \\
 &  \displaystyle \geq  \frac{c}{\bar f'} \inf_{\delta\in A_u} \frac{ \{ c'\underf_u \Pr(\varpi) \|\delta\|^2 \}^{3/2}}{C'\Pr(\varpi)\|\delta\|^2\max_{i\leq n}\|X_i\|_\infty\|\delta\|_1}   \displaystyle \geq  \frac{c}{\bar f'} \inf_{\delta\in A_u} \frac{ c'\underf_u^{3/2} \Pr(\varpi)^{1/2} \|\delta\|}{C'\max_{i\leq n}\|X_i\|_\infty\|\delta\|_1}   \\
& \displaystyle  \geq  C'' \frac{\underf_\UU^{3/2}}{\bar f'} \frac{\mu_\mathcal{W}^{1/2}}{\sqrt{s}\max_{i\leq n}\|X_i\|_\infty}\\
 \end{array}\end{equation}
\noindent where $\max_{i\leq n}\|X_i\|_\infty \leq \ell_n M_n$ with probability $1-o(1)$ for any $\ell_n \to \infty$.
Therefore, under the condition $M_n s\sqrt{\log(p|V|n)} = o( \sqrt{n \mu_\mathcal{W}})$ assumed in Condition P, the conditions on $\bar q_{A_u}$ are satisfied.

By Lemma \ref{Theorem:L1QRnpPrediction}, we have uniformly over all $u=(a,\tau,\varpi)\in \UU := V\times \mathcal{T}\times \mathcal{W}$
 {\small $$
\|\sqrt{f_u}X_{-a}'(\hat\beta_u - \beta_u)\|_{n,\varpi}  \leq C \sqrt{\frac{(1+d_W)\log(n|V|)}{n} }\frac{\sqrt{s}}{\kappa_{u,2\cc}} \ \ \mbox{and} \ \ \|\hat\beta_u - \beta_u\|_{1,\varpi} \leq C \sqrt{\frac{(1+d_W)\log(n|V|)}{n} }\frac{s}{\kappa_{u,2\cc}}  $$}
\noindent where $\kappa_{u,2\cc}$ is bounded away from zero with probability $1-o(1)$ for $n$ sufficiently large.
Consider the thresholded estimators  $\hat\beta_u^{\bar \lambda}$ for $\bar \lambda =\{ (1+d_W)\log(n|V|)/n\}^{1/2}$. By Lemma \ref{Lemma:Bound2nNormSecond} we have $\|\hat\beta_u^{\bar \lambda}\|_0 \leq C s$ and the same rates of convergence as $\hat\beta_u$. Therefore, by refitting over the support of $\hat\beta_u^{\bar \lambda}$ we have by Lemma \ref{Thm:RefittedGeneric}, the estimator $\widetilde \beta_u$ has the same rate of convergence where we used that $\widehat Q_u \leq \lambda_u\|\hat\beta_u^{\bar \lambda}- \beta_u\|_{1,\varpi} \lesssim C s(1+d_W)\log(|V|n)/n$ (the other conditions of Lemma \ref{Thm:RefittedGeneric} hold as for the conditions in Lemma \ref{Theorem:L1QRnpPrediction}).

Next we will invoke Lemma \ref{Theorem:L1QRnpPrediction} for the new penalty choice and penalty loadings. (We note that minor modifications cover the new penalty loadings.)
$$\begin{array}{rl}
\Omega_1 & :=\{ \lambda_u\geq c |S_{uj}|/\{\En[K_w(W)\varepsilon_u^2X_{-a,j}^2]\}^{1/2}, \ \mbox{for all} \ \ u\in\mathcal{U}, j\in V\}, \\
\Omega_2 &:=\{\hat R_u(\bar\beta_u) \leq \bar R_{u\xi} : u\in\mathcal{U}\}\\
\Omega_3 & :=\left\{ \sup_{u\in\mathcal{U}, 1/\sqrt{n} \leq \|\delta\|_{1,\varpi} \leq \sqrt{n}} |\En[g_u(\delta,X,W)-\Ep[g_u(\delta,X,W)| X_{-a},W]]|/\{\theta_u\|\delta\|_{1,\varpi}\} \leq t_3 \right\}\\
\Omega_4 & :=\{ K_{u}\theta_u \hat\sigma_{a\varpi j}^{X}  \geq |\En[h_{uj}(X_{-a},W)]|,  \mbox{for all} \ u\in\UU, j\in V\backslash\{a\} \}\\
\Omega_5 & :=\{ \theta_u \geq \max_{j\in V}\hat\sigma_{a\varpi j}^{X} / \{\En[K_\varpi(W)\varepsilon_u^2X_j^2]\}^{1/2} \}
\end{array}$$
where event $\Omega_5$ simply makes the relevant norms equivalent, $\|\cdot\|_{1,u}\leq \|\cdot\|_{1,\varpi}\leq \theta_u\|\cdot\|_{1,u}$. Note that we can always take $\theta_u \leq 1/\{\tau(1-\tau)\}\leq C$ since $\mathcal{T}$ is a fixed compact set. 

Next we show that the bootstrap approximation of the score provides a valid choice of penalty parameter. Let $\hat \varepsilon_u := 1\{ X_a \leq X_{-a}'\widetilde\beta_u\}-\tau$. For notational convenience for $u\in \mathcal{U}$, $j\in V\backslash\{a\}$ define $$\begin{array}{rl}
  \displaystyle \bar\psi_{iuj}=\frac{K_\varpi(W_i)\varepsilon_{iu} X_{ij}}{\Ep[K_\varpi(W)\varepsilon_u^2X_j^2]^{1/2}}, \ \
\psi_{iuj}=\frac{K_\varpi(W_i)\varepsilon_{iu} X_{ij}}{\En[K_\varpi(W)\varepsilon_{u}^2X_j^2]^{1/2}}, \  \ \displaystyle \hat \psi_{iuj} =\frac{K_\varpi(W_i)\hat\varepsilon_{iu} X_{ij}}{\En[K_\varpi(W)\hat\varepsilon_{u}^2X_j^2]^{1/2}}.
 \end{array}$$
We will consider the following processes:
$$\begin{array}{rl}
\bar S_{uj} = \frac{1}{\sqrt{n}}\sum_{i=1}^n \bar  \psi_{iuj}, \ \ S_{uj} = \frac{1}{\sqrt{n}}\sum_{i=1}^n \psi_{iuj}, \ \  \overline{\mathcal{G}}_{uj}= \frac{1}{\sqrt{n}}\sum_{i=1}^ng_i \bar \psi_{iuj}, \ \
\displaystyle \widehat{\mathcal{G}}_{uj}  = \frac{1}{\sqrt{n}}\sum_{i=1}^ng_i \hat \psi_{iuj}, \ \  \end{array}
$$ and $\mathcal{N}$ is a tight zero-mean Gaussian process with covariance operator given by $\Ep[\bar\psi_{uj}\bar\psi_{u'j'}]$. Their supremum are denoted by $\bar Z_{S}:=\sup_{u\in \mathcal{U}, j\in V\backslash\{a\}} | \bar S_{uj}|$, $Z_S:=\sup_{u\in \mathcal{U}, j\in V\backslash\{a\}} | S_{uj}|$, $\bar Z_G^*:=\sup_{u\in \mathcal{U}, j\in V\backslash\{a\}} | \overline{\mathcal{G}}_{uj}|$, $\hat Z_G^*:=\sup_{u\in \mathcal{U}, j\in V\backslash\{a\}} | \widehat{\mathcal{G}}_{uj}|$,   and $Z_N:=\sup_{u\in \mathcal{U}, j\in V\backslash\{a\}} | \mathcal{N}_{uj}|$.

The penalty choice should majorate $Z_S$ and we simulate via $\hat Z_G^*$. We have that
$$ \begin{array}{rl}
| \Pr( Z_S \leq t ) - \Pr( \hat Z_G^* \leq t )| & \leq | \Pr( Z_S \leq t ) - \Pr( \bar Z_S \leq t )|+| \Pr( \bar Z_S \leq t ) - \Pr(  Z_N \leq t )|\\
& +| \Pr( Z_N \leq t ) - \Pr( \bar Z_G^* \leq t )|+| \Pr( \bar Z_G^* \leq t ) - \Pr( \hat Z_G^* \leq t )|\\
\end{array}
$$
We proceed to bound each term. We have that
$$ \begin{array}{rl}
 | Z_S - \bar Z_S| &\displaystyle  \leq \bar Z_S  \sup_{u\in\mathcal{U},j\in V\backslash\{a\}}\left| \frac{\Ep[K_\varpi(W)\varepsilon_{u}^2X_j^2]^{1/2}}{\En[K_\varpi(W)\varepsilon_{u}^2X_j^2]^{1/2}} - 1  \right| \\
 & \displaystyle  \leq \bar Z_S  \sup_{u\in\mathcal{U},j\in V\backslash\{a\}}\left| \frac{(\En-\Ep)[K_\varpi(W)\varepsilon_{u}^2X_j^2]}{\En[K_\varpi(W)\varepsilon_{u}^2X_j^2]^{1/2}\{\En[K_\varpi(W)\varepsilon_{u}^2X_j^2]^{1/2}+\Ep[K_\varpi(W)\varepsilon_{u}^2X_j^2]^{1/2}\}}  \right|\end{array} $$
Therefore, since $\{ 1\{ X_{a} \leq X_{-a}'\beta_u\} : u\in \mathcal{U}\}$ is a VC-subgraph of VC dimension $1+d_W$, and $\mathcal{W}$ is a VC class of sets of dimension $d_W$, we apply Lemma \ref{lemma:CCK} with envelope $F=\|X\|_\infty^2$ and $\sigma^2 \leq \max_{j\in V}\Ep[ X_j^4] \leq C$ to obtain with probability $1-o(1)$
$$ \sup_{u\in \mathcal{U}, j \in  V\backslash\{a\} } |(\En-\Ep)[K_\varpi(W)\varepsilon_{u}^2X_j^2]| \lesssim \delta_{1n}':= \sqrt{\frac{(1+d_W)\log(|V|n)}{n}} + \frac{M_n^2(1+d_W)\log(|V|n)}{n} $$
where $\delta_{1n}=o(\mu_\mathcal{W}^2)$ under Condition P. Note that this implies that the denominator above is bounded away from zero by $c\mu_\mathcal{W}$. Therefore,
$$ | Z_S - \bar Z_S| \lesssim_P \delta_{1n} := \bar Z_S \delta_{1n}'/\mu_\mathcal{W}.$$
where $\bar Z_S \lesssim_P \{(1+d_W)\log(n|V|)\}^{1/2}$.
By Theorem 2.1 in \cite{chernozhukov2015noncenteredprocesses}, since $\Ep[ \bar \psi_{uj}^4 ] \leq C$, there is a version of $Z_N$ such that
$$ | \bar Z_S - Z_N| \lesssim_P \delta_{2n}:=\left( \frac{M_n(1+d_W)\log(n|V|)}{n^{1/2}}+ \frac{M_n^{1/3}((1+d_W)\log(n|V|))^{2/3}}{n^{1/6}}\right) $$
and by Theorem 2.2 in  \cite{chernozhukov2015noncenteredprocesses}, there is also a version of
$$ | Z_N - \bar Z_G^*| \lesssim_P \left( \frac{M_n(1+d_W)\log(n|V|)}{n^{1/2}}+ \frac{M^{1/2}_n((1+d_W)\log(n|V|))^{3/4}}{n^{1/4}}\right) $$
Finally, we have that
$$ |\bar Z_G^* -\hat Z_G^*|\leq \sup_{u\in\mathcal{U},j} \left| \frac{1}{\sqrt{n}}\sum_{i=1}^n g_i (\hat \psi_{iuj} -\bar \psi_{iuj} )  \right| $$
where conditional on $(X_i,W_i), i=1,\ldots,n$, $\frac{1}{\sqrt{n}}\sum_{i=1}^n g_i (\hat \psi_{iuj} -\bar \psi_{iuj} )  $ is a zero-mean Gaussian with variance $\En[(\hat \psi_{iuj} -\bar \psi_{iuj} )^2] \leq \bar \delta_n^2$. Next we bound $\bar\delta_n$. We have
$$ \begin{array}{rl}
\bar \delta_n & \leq \En[(\hat \psi_{uj} -  \psi_{uj} )^2]^{1/2} + \En[( \psi_{uj} - \bar \psi_{uj} )^2]^{1/2} \leq \En[(\hat \psi_{uj} -  \psi_{uj} )^2]^{1/2} + \delta_{1n}/\mu_{\mathcal{W}},\\
\end{array}
$$
and
$$\begin{array}{rl}
\En[(\hat \psi_{uj} -  \psi_{uj} )^2]^{1/2} &  \leq \frac{\En[(K_\varpi(W)X_{ij}|\hat\varepsilon_u - \varepsilon_u|)^2]^{1/2}}{\En[K_\varpi(W)X_{ij}^2\hat\varepsilon_u^2]^{1/2}} \\
 & + \frac{\En[K_\varpi(W)X_{ij}^2\varepsilon_u^2]^{1/2}}{c\Pr(\varpi)} |\En[K_\varpi(W)X_{ij}^2\hat\varepsilon_u^2]^{1/2}-\En[K_\varpi(W)X_{ij}^2\varepsilon_u^2]^{1/2}|\\
 & \leq \En[K_\varpi(W)X_{ij}^2|\hat\varepsilon_u - \varepsilon_u|^2]^{1/2} \left\{\frac{1}{\En[K_\varpi(W)X_{ij}^2\hat\varepsilon_u^2]^{1/2}} + \frac{\En[K_\varpi(W)X_{ij}^2\varepsilon_u^2]^{1/2}}{c\Pr(\varpi)} \right\},\\
\end{array}$$
note that the term in the curly brackets is bounded by $C/\Pr(\varpi)^{1/2}$ with probability $1-o(1)$. To bound the other term note that $|\hat\varepsilon_u - \varepsilon_u|^2 = | 1\{ X_a \leq X_{-a}'\widetilde\beta_u\} - 1\{ X_a \leq X_{-a}'\beta_u\}|$. Note that $\hat \varepsilon_u = 1\{ X_a \leq X_{-a}'\widetilde\beta_u\} - \tau$ where $\|\widetilde\beta_u\|_0 \leq Cs$. Therefore, we have $\{ 1\{ X_a \leq X_{-a}'\widetilde\beta_u\} : u\in \mathcal{U}\} \subset \{ 1\{ X_a \leq X_{-a}'\beta\} : \|\beta\|_0\leq Cs\}$ which is the union of $\binom{|V|}{Cs}$ VC subgraph classes of functions with VC dimension $C's$. Moreover, we have
$$\begin{array}{rl}\Ep[K_\varpi(W)X_{ij}^2|\hat\varepsilon_u - \varepsilon_u|^2] & =  \Ep[K_\varpi(W)X_{ij}^2| 1\{ X_a \leq X_{-a}'\widetilde\beta_u\} - 1\{ X_a \leq X_{-a}'\beta_u\}|] \\
 & \leq \bar f  \Ep[K_\varpi(W)X_{ij}^2| X_{-a}'(\widetilde\beta_u -\beta_u)|] \\
 & \leq \bar f  \Ep[K_\varpi(W)X_{ij}^4]^{1/2}\Ep[K_\varpi(W)| X_{-a}'(\widetilde\beta_u-\beta_u)|^2]^{1/2} \\
 & \leq C(\bar f/\underf_\UU^{1/2}) \Pr(\varpi)^{1/2} \sqrt{s(1+d_W)\log(n|V|)/n}\\
 \end{array}$$
Therefore, by Lemma \ref{lemma:CCK},  with probability $1-o(1)$ we have
$$ \left|(\En-\Ep)[K_\varpi(W)X_{ij}^2|\hat\varepsilon_u - \varepsilon_u|^2]\right| \lesssim \sqrt{\frac{s(1+d_W)\log(n|V|)}{n}} C(\bar f/\underf_\UU^{1/2})  \sqrt{s(1+d_W)\log(n|V|)/n} $$
Under $\sqrt{s(1+d_W)\log(n|V|)/n} = o( \underf_{\UU}\mu_{\mathcal{W}})$ we have that with probability $1-o(1)$ that
$$ \bar\delta_n \leq C\{s(1+d_W)\log(n|V|)/n\}^{1/4}.$$
Therefore, using again the sparsity of $\widetilde \beta_u$ in the definition of $\hat \psi_{iuj}$
$$\begin{array}{rl}
 \displaystyle \sup_{u\in \mathcal{U}, j\in V\backslash\{a\}}\left| \frac{1}{\sqrt{n}}\sum_{i=1}^ng_i (\hat \psi_{iuj} -\bar \psi_{iuj} )  \right| & \lesssim_P \bar \delta_n\sqrt{s(1+d_W)\log(|V|n)} \\
 & \lesssim_P \delta_{3n}:=\{ s\log(|V|n)/n\}^{1/4}\sqrt{s(1+d_W)\log(|V|n)} \end{array}$$

The rest of the proof follows similarly to Corollary 2.2 in \cite{BCCW-ManyProcesses} since under Condition P (and the bounds above) we have that $r_n:=\delta_{1n}+\delta_{2n}+\delta_{3n}=o( \{\Ep[Z_N]\}^{-1} ))$ where $\Ep[Z_N]\lesssim \{(1+d_W)\log(|V|n)\}^{1/2}$. Then we have $\sup_t | \Pr( Z_S \leq t ) - \Pr( \hat Z_G^* \leq t )| = o_P(1)$ which in turn implies that
$$\begin{array}{rl}
  \Pr( \Omega_1 ) & = \Pr( Z_S \leq \hat c_G^*(\xi) ) \\
  & \geq \Pr( \hat Z_G^* \leq \hat c_G^*(\xi) ) - | \Pr( Z_S \leq \hat c_G^*(\xi) ) - \Pr( \hat Z_G^* \leq \hat c_G^*(\xi) )|  \\
  & \geq 1-\xi + o_P(1)\end{array}$$
Note that the occurrence of the events $\Omega_2$, $\Omega_3$ and $\Omega_4$  follows by similar arguments. The result follows by Lemma \ref{Theorem:L1QRnpPrediction}, thresholding and applying Lemma \ref{Lemma:Bound2nNormSecond} and  Lemma \ref{Thm:RefittedGeneric} similarly to before.

\end{proof}

\section{Technical Lemmas for Conditional Independence Quantile Graphical Model}\label{Sec:ProofsTechnicalLemmaL1QR}

Let $u=(a,\tau,\varpi)\in \UU:= V\times \mathcal{T}\times \mathcal{W}$, and $T_u = \supp(\beta_u)$ where $|T_u|\leq s$ for all $u\in\UU$.

Define the pseudo-norms
$$ \|v\|_{n,\varpi}^2 := \frac{1}{n}\sum_{i=1}^n K_\varpi(W_i)(v_i)^2, \ \ \ \|\delta\|_{2,\varpi}:= \left\{\sum_{j=1}^p \{\hat\sigma_{a\varpi j}^{Z}\}^2 |\delta_j|^2\right\}^{1/2}, \ \ \ \mbox{and} \ \ \ \|\delta\|_{1,\varpi}:= \sum_{j=1}^p \hat\sigma_{a\varpi j}^Z|\delta_j|, $$ where $\hat\sigma_{a\varpi j}^{Z} =\{\En[\{K_\varpi(W)Z_j^a\}^2]\}^{1/2}$. These pseudo-norms induce the following restricted eigenvalue as
$$ \kappa_{u,\cc} = \min_{\|\delta_{T^c_u}\|_{1,\varpi} \leq \cc\|\delta_{T_u}\|_{1,\varpi} }\frac{\|\sqrt{f_{u}}Z^a\delta\|_{n,\varpi}}{\|\delta\|_{1,\varpi}/\sqrt{s}}.$$
The restricted eigenvalue $\kappa_{u,\cc}$ is an counterpart of the restricted eigenvalue proposed in \cite{BickelRitovTsybakov2009} for our setting. We note that $ \kappa_{u,\cc}$ typically will vary with the events $\varpi\in\mathcal{W}$.

We will consider three key events in our analysis. Let
 \begin{equation}\label{def:Omega1}\Omega_1:=\{ \lambda_u\geq c |S_{uj}|/\hat\sigma_{a\varpi j}^Z, \ \mbox{for all} \ \ u\in\mathcal{U}, j\in [p]\}\end{equation} which occurs with probability at least $1-\xi$ by the choice of $\lambda_u$. For CIQGMs, we have $S_{uj}:=\En[K_\varpi(W)(\tau-1\{X_a\leq Z^a\beta_u+r_{u}\}) Z^a_j]$, and $\lambda_u=\lambda_{V\mathcal{T}\mathcal{W}}\sqrt{\tau(1-\tau)}$. (In the case of PQGMs, we have $S_{uj}:= \En[K_\varpi(W)(\tau-1\{X_a\leq X_{-a}'\beta_u\}) X_{-a}]$, $\hat\sigma_{a\varpi j}^X=\{\En[K_\varpi(W)X_{-a,j}^2]\}^{1/2}$ and $\lambda_u = \lambda_0$.)

To define the next event, for each $u\in \mathcal{U}$, consider the function defined as $$\hat R_u(\beta_u) = \En[K_\varpi(W)\{\rho_u(X_a-Z^a\beta)-\rho_u(X_a-Z^a\beta_u-r_{u})-(\tau-1\{X_a\leq Z^a\beta_u+r_{u}\})(Z^a\beta-Z^a\beta_u-r_{u})\}]$$ in the case of CIQGMs. (In the case of PQGMs, we replace $Z^a$ with $X_{-a}$.)  By convexity we have $\hat R_u(\beta_u) \geq 0$. The event  \begin{equation}\label{def:Omega2}\Omega_2 :=\{\hat R_u(\beta_u) \leq \bar R_{u\xi} : u\in\mathcal{U}\}\end{equation} where $\bar R_{u\xi}$ are chosen so that $\Omega_2$ occurs with probability at least $1-\xi$. Note that by Lemma \ref{Lemma:ControlR}, we have $\En[\Ep[\hat R_u(\beta_u)\vert X_{-a},W]] \leq \bar f \|r_{u}\|_{n,\varpi}^2/2$ and with probability at least $1-\xi$, $\hat R_u(\beta_u) \leq \bar R_{u\xi}:= 4\max\{\bar f \|r_{u}\|_{n,\varpi}^2, \ \|r_{u}\|_{n,\varpi}C\sqrt{\log(n^{1+d_W}p/\xi)/n}\} \leq C' s \log(n^{1+d_W}p/\xi) / n$.

Define $g_u(\delta,X,W)= K_\varpi(W)\{\rho_\tau(X_a-Z^a(\beta_u+\delta))-\rho_\tau(X_a-Z^a\beta_u)\}$ so that event $\Omega_3$ is defined as
 \begin{equation}\label{def:Omega3} \Omega_3 := \left\{ \sup_{u\in\mathcal{U}, 1/\sqrt{n} \leq \|\delta\|_{1,\varpi} \leq \sqrt{n}} \frac{|\En[g_u(\delta,X,W)-\Ep[g_u(\delta,X,W)\vert X_{-a},W]]|}{\|\delta\|_{1,\varpi}} \leq t_3 \right\}\end{equation}
where $t_3$ is given in Lemma \ref{LemmaOmega3} so that $\Omega_3$ holds with probability at least $1-\xi$.

\begin{lemma}\label{Theorem:L1QRnp}
Suppose that $\Omega_1$, $\Omega_2$ and $\Omega_3$ holds. Further assume $2\frac{1+1/c}{1-1/c}\|\beta_u\|_{1,\varpi} + \frac{1}{\lambda_u(1-1/c)}\bar R_{u\xi} \leq \sqrt{n}$ for all $u\in\UU$, and (\ref{ConditionIdenfitication}) holds for all $\delta\in A_u := \Delta_{u,2\cc}\cup\{v:\|v\|_{1,\varpi}\leq 2\cc\bar R_{u\xi}/\lambda_u\}$, $\bar q_{A_u}/4 \geq (\sqrt{\bar f}+1) \|r_{u}\|_{n,\varpi} + \left[\lambda_u+t_3\right]\frac{3\cc\sqrt{s}}{\kappa_{u,2\cc}}$ and $\bar q_{A_u} \geq \{ 2\cc\left(1+ \frac{t_3}{\lambda_u}\right)\bar R_{u\xi} \}^{1/2}$.
Then uniformly over all $u\in \UU$ we have
{\small $$\begin{array}{rl}
\|\sqrt{f_u}Z^a(\hat\beta_u - \beta_u)\|_{n,\varpi} & \leq \sqrt{8\cc\left(1+ \frac{t_3}{\lambda_u}\right)\bar R_{u\xi}} + (\bar f^{1/2}+1) \|r_{u}\|_{n,\varpi}+ \frac{3\cc\lambda_u\sqrt{s}}{\kappa_{u,2\cc}}+t_3\frac{(1+\cc)\sqrt{s}}{\kappa_{u,2\cc}}\\
\|\hat\beta_u - \beta_u\|_{1,\varpi} & \leq  (1+2\cc)\sqrt{s}\|\sqrt{f_u}Z^a\delta_u\|_{n,\varpi}/\kappa_{u,2\cc} + \frac{2\cc}{\lambda_u}\bar R_{u\xi}  \\
\end{array}$$}\end{lemma}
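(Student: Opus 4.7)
The plan is to adapt the template for $\ell_1$--penalized quantile regression from \cite{BC-SparseQR,BCK2013robustQR} with two modifications: the conditioning indicator $K_\varpi(W)$ that propagates through the $\|\cdot\|_{n,\varpi}$ pseudo-norm, and the non-vanishing approximation error $r_u$ that requires careful accounting of first-order bias via the auxiliary functional $\hat R_u(\cdot)$. Throughout set $\delta_u:=\hat\beta_u-\beta_u$ and $Q_u(\beta):=\En[K_\varpi(W)\rho_\tau(X_a-Z^a\beta)]$.

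\textbf{Step 1 (cone dichotomy).} From the definition of $\hat R_u$ the terms $\En[K_\varpi(W)\rho_\tau(X_a-Z^a\beta_u-r_u)]$ cancel in the subtraction $\hat R_u(\hat\beta_u)-\hat R_u(\beta_u)$, yielding the deterministic identity
$$Q_u(\hat\beta_u)-Q_u(\beta_u)=\hat R_u(\hat\beta_u)-\hat R_u(\beta_u)+S_u'\delta_u.$$
Combining with the penalized optimality $Q_u(\hat\beta_u)+\lambda_u\|\hat\beta_u\|_{1,\varpi}\leq Q_u(\beta_u)+\lambda_u\|\beta_u\|_{1,\varpi}$, the pointwise bound $|S_u'\delta_u|\leq (\lambda_u/c)\|\delta_u\|_{1,\varpi}$ on $\Omega_1$, the bound $\hat R_u(\beta_u)\leq \bar R_{u\gamma}$ on $\Omega_2$, the non-negativity of $\hat R_u(\hat\beta_u)$, and the decomposition $\|\beta_u\|_{1,\varpi}-\|\hat\beta_u\|_{1,\varpi}\leq \|\delta_{uT_u}\|_{1,\varpi}-\|\delta_{uT_u^c}\|_{1,\varpi}$, one arrives at
$$(1-1/c)\lambda_u\|\delta_{uT_u^c}\|_{1,\varpi}\leq \bar R_{u\gamma}+(1+1/c)\lambda_u\|\delta_{uT_u}\|_{1,\varpi},$$
which places $\delta_u\in A_u=\Delta_{\varpi,2\cc}\cup\{v:\|v\|_{1,\varpi}\leq 2\cc\bar R_{u\gamma}/\lambda_u\}$ with $\cc=(1+1/c)/(1-1/c)$. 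The hypothesis $2\cc\|\beta_u\|_{1,\varpi}+\bar R_{u\gamma}/\{(1-1/c)\lambda_u\}\leq \sqrt{n}$ then ensures $\|\delta_u\|_{1,\varpi}\leq \sqrt{n}$ so that the empirical-process bound from $\Omega_3$ is applicable.

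\textbf{Step 2 (transfer to population).} Writing $Q_u(\hat\beta_u)-Q_u(\beta_u)=\En[g_u(\delta_u,X,W)]$ and invoking $\Omega_3$, the empirical average may be replaced by its conditional expectation up to $t_3\|\delta_u\|_{1,\varpi}$. Combined with the optimality inequality $Q_u(\hat\beta_u)-Q_u(\beta_u)\leq \lambda_u(\|\delta_{uT_u}\|_{1,\varpi}-\|\delta_{uT_u^c}\|_{1,\varpi})$, this gives
$$\En\Ep[g_u(\delta_u,X,W)\mid X_{-a},W]\leq \lambda_u\|\delta_{uT_u}\|_{1,\varpi}+t_3\|\delta_u\|_{1,\varpi}.$$
Rewriting equivalently in terms of $\hat R_u$ and using Step~1 one also gets $\En\Ep[\hat R_u(\hat\beta_u)\mid X_{-a},W]\leq (1+t_3/\lambda_u)\bar R_{u\gamma}+2\cc\lambda_u\|\delta_{uT_u}\|_{1,\varpi}$.

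\textbf{Step 3 (identification minoration and solving for the prediction norm).} Under the hypothesis (\ref{ConditionIdenfitication}) on $A_u$, applying Knight's identity around the true conditional quantile $Z^a\beta_u+r_u$ yields
$$\En\Ep[g_u(\delta_u,X,W)\mid X_{-a},W]\geq \tfrac12\|\sqrt{f_u}Z^a\delta_u\|_{n,\varpi}^{2}-(\sqrt{\bar f}+1)\|r_u\|_{n,\varpi}\|\sqrt{f_u}Z^a\delta_u\|_{n,\varpi},$$
valid while $\|\sqrt{f_u}Z^a\delta_u\|_{n,\varpi}\leq q_{A_u}/4$. On the cone branch $\delta_u\in\Delta_{\varpi,2\cc}$, substitute $\|\delta_{uT_u}\|_{1,\varpi}\leq \sqrt{s}\|\sqrt{f_u}Z^a\delta_u\|_{n,\varpi}/\kappa_{u,2\cc}$ and $\|\delta_u\|_{1,\varpi}\leq (1+2\cc)\|\delta_{uT_u}\|_{1,\varpi}$ into the Step~2 bound, producing a quadratic inequality in $\|\sqrt{f_u}Z^a\delta_u\|_{n,\varpi}$ whose resolution gives the three summands $(\sqrt{\bar f}+1)\|r_u\|_{n,\varpi}$, $3\cc\lambda_u\sqrt{s}/\kappa_{u,2\cc}$, and $t_3(1+\cc)\sqrt{s}/\kappa_{u,2\cc}$. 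On the small-$\ell_1$ branch $\|\delta_u\|_{1,\varpi}\leq 2\cc\bar R_{u\gamma}/\lambda_u$, the same chain gives directly $\|\sqrt{f_u}Z^a\delta_u\|_{n,\varpi}\leq \sqrt{8\cc(1+t_3/\lambda_u)\bar R_{u\gamma}}+(\sqrt{\bar f}+1)\|r_u\|_{n,\varpi}$. Combining the two branches yields the stated bound on $\|\sqrt{f_u}Z^a\delta_u\|_{n,\varpi}$.

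\textbf{Step 4 ($\ell_1$ bound and main obstacle).} The $\ell_1$ bound is obtained by inserting the just-proved prediction-norm bound into $\|\delta_u\|_{1,\varpi}\leq (1+2\cc)\sqrt{s}\|\sqrt{f_u}Z^a\delta_u\|_{n,\varpi}/\kappa_{u,2\cc}$ on the cone branch, and adding the explicit $2\cc\bar R_{u\gamma}/\lambda_u$ term on the complementary branch. The main technical obstacle is that the minoration in Step~3 is only valid under the localization $\|\sqrt{f_u}Z^a\delta_u\|_{n,\varpi}\leq q_{A_u}/4$: closing the argument requires the standard convexity/continuity device of \cite{BC-SparseQR}, whereby one shows that if $\delta_u$ were to exit the localization ball, convexity would force a contradiction with the prediction-norm bound derived under localization. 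This is exactly where the hypotheses $q_{A_u}/4\geq (\sqrt{\bar f}+1)\|r_u\|_{n,\varpi}+[\lambda_u+t_3](3\cc\sqrt{s}/\kappa_{u,2\cc})$ and $q_{A_u}\geq \{2\cc(1+t_3/\lambda_u)\bar R_{u\gamma}\}^{1/2}$ are used: they ensure that the rates obtained under localization remain strictly within $q_{A_u}/4$, validating the continuity argument and completing the proof.
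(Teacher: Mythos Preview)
Your proposal is correct and follows the same overall route as the paper: cone dichotomy from the basic inequality, transfer to the conditional expectation via $\Omega_3$, quadratic minoration from Knight's identity, and resolution of a quadratic in $\|\sqrt{f_u}Z^a\delta_u\|_{n,\varpi}$.

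The one packaging difference worth noting concerns your Step~3/Step~4 localization obstacle. The paper does not treat localization as a separate closing argument; instead it invokes Lemma~\ref{Lemma:IdentificationSparseQRNP}, which already absorbs the convexity argument internally and outputs a \emph{global} minoration of the form
\[
\En\Ep[g_u(\delta_u,X,W)\mid X_{-a},W]\;\geq\; \tfrac{1}{4}\|\sqrt{f_u}Z^a\delta_u\|_{n,\varpi}^{2}\wedge \bar q_{A_u}\|\sqrt{f_u}Z^a\delta_u\|_{n,\varpi} \;-\;K_{n2}\|\sqrt{f_u}Z^a\delta_u\|_{n,\varpi}.
\]
The paper then closes with the elementary algebraic fact that $(t^{2}/4)\wedge qt\leq A+Bt$ forces $t^{2}/4\leq A+Bt$ whenever $q/2>B$ and $2q^{2}>A$; these two conditions are exactly the hypotheses $q_{A_u}/4\geq(\sqrt{\bar f}+1)\|r_u\|_{n,\varpi}+[\lambda_u+t_3]\tfrac{3\cc\sqrt s}{\kappa_{u,2\cc}}$ and $q_{A_u}\geq\{2\cc(1+t_3/\lambda_u)\bar R_{u\gamma}\}^{1/2}$. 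This makes your Step~4 continuity device unnecessary as a separate step. The paper also merges your two branches in Step~3 via the single bound $\|\delta_u\|_{1,\varpi}\leq(1+2\cc)\sqrt s\,\|\sqrt{f_u}Z^a\delta_u\|_{n,\varpi}/\kappa_{u,2\cc}+\tfrac{2\cc}{\lambda_u}\bar R_{u\gamma}$, which is simultaneously the stated $\ell_1$ bound and the device for converting the $\|\delta_u\|_{1,\varpi}$ terms in the upper bound into prediction-norm terms.
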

\begin{proof}[Proof of Lemma \ref{Theorem:L1QRnp}]
Let $u=(a,\tau,\varpi)\in \UU$ and $\delta_u = \hat\beta_u - \beta_u$.  
By convexity and definition of $\hat \beta_u$ we have
 \begin{equation}\label{ImpDefbeta}\begin{array}{rl}
 & \hat R_u(\hat\beta_u)-\hat R_u(\beta_u)+S_u'\delta_u \\
 & = \En[K_\varpi(W)\rho_u(X_a-Z^a\hat\beta_u)] - \En[K_\varpi(W)\rho_u(X_a-Z^a\beta_u)] \\
  & \leq \lambda_u\|\beta_u\|_{1,\varpi}-\lambda_u\|\hat\beta_u\|_{1,\varpi}\end{array}\end{equation}
where $S_u$ is defined as in (\ref{def:Omega1}) so that under $\Omega_1$ we have $\lambda_u\geq c|S_{uj}|/\hat\sigma_{a\varpi j}^{Z}$.

Under $\Omega_1 \cap \Omega_2$, and since $\hat R_u(\hat{\beta}_u)\geq 0$, we have
\begin{equation}\label{ImpDefbetaTilde}
\begin{array}{rl}
-\hat R_u(\beta_u)-\frac{\lambda_u}{c}\|\delta_u\|_{1,\varpi} & \leq \hat R_u(\beta_u+\delta_u)-\hat R_u(\beta_u)+\En[K_\varpi(W)(\tau-1\{X_a\leq Z^a\beta_u+r_{u}\})Z^a\delta_u] \\
  & = \En[K_\varpi(W)\rho_u(X_a-Z^a(\delta_u +\beta_u))] - \En[K_\varpi(W)\rho_u(X_a-Z^a\beta_u)] \\
 & \leq \lambda_u\|\beta_u\|_{1,\varpi}-\lambda_u\|\delta_u +\beta_u\|_{1,\varpi}\end{array}\end{equation}
so that for $\cc = (c+1)/(c-1)$
$$ \|\delta_{T^c_u}\|_{1,\varpi} \leq \cc \|\delta_{T_u}\|_{1,\varpi}  + \frac{c}{\lambda_u(c-1)}\hat R_u(\beta_u).$$
To establish that $\delta_u \in A_u:=\Delta_{u,2\cc}\cup \{ v  :  \|v\|_{1,\varpi} \leq 2\cc \bar R_{u\xi}/\lambda_u\}$  we consider two cases. If $\|\delta_{T^c_u}\|_{1,\varpi} \geq 2\cc\| \delta_{T_u}\|_{1,\varpi}$ we have
$$ \frac{1}{2}\|\delta_{T^c_u}\|_{1,\varpi} \leq  \frac{c}{\lambda_u(c-1)}\hat R_u(\beta_u)$$
and consequentially $$ \|\delta_u\|_{1,\varpi}\leq \{1+1/(2c)\}\|\delta_{T^c_u}\|_{1,\varpi} \leq  \frac{2\cc}{\lambda_u
}\hat R_u(\beta_u).$$
Otherwise, we have $\|\delta_{T^c_u}\|_{1,\varpi} \leq 2\cc\|\delta_{T_u}\|_{1,\varpi}$ which implies
$$\|\delta_u\|_{1,\varpi} \leq (1+2\cc)\|\delta_{T_u}\|_{1,\varpi} \leq (1+2\cc)\sqrt{s}\|\sqrt{f_u}Z^a\delta_u\|_{n,\varpi}/\kappa_{u,2\cc}$$
by definition of $\kappa_{u,2\cc}$. Thus we have  $\delta_u \in A_u$ under $\Omega_1 \cap \Omega_2$.

Furthermore, (\ref{ImpDefbetaTilde}) also implies that
$$\begin{array}{rl}\|\delta_u +\beta_u\|_{1,\varpi} & \leq  \|\beta_u\|_{1,\varpi}+\frac{1}{c}\|\delta_u\|_{1,\varpi}+\hat R_u(\beta_u)/\lambda_u \\
&\leq (1+1/c)\|\beta_u\|_{1,\varpi}+(1/c)\|\delta_u +\beta_u\|_{1,\varpi}+\hat R_u(\beta_u)/\lambda_u.\end{array}$$
which in turn establishes
$$\|\delta_u\|_{1,\varpi} \leq 2\frac{1+1/c}{1-1/c}\|\beta_u\|_{1,\varpi} + \frac{1}{\lambda_u(1-1/c)}\hat R_u(\beta_u)\leq 2\frac{1+1/c}{1-1/c}\|\beta_u\|_{1,\varpi} + \frac{1}{\lambda_u(1-1/c)}\bar R_{u\xi} $$
where the last inequality holds under $\Omega_2$. Thus, $\|\delta_u\|_{1,\varpi}\leq \sqrt{n}$ under our condition.
In turn, $\delta_u$ is considered in the supremum that defines $\Omega_3$.

Under $\Omega_1 \cap \Omega_2 \cap \Omega_3$ we have
{\small \begin{equation}\label{AuxUpper1}\begin{array}{rl}
& \En[\Ep[K_\varpi(W)\{\rho_u(X_a-Z^a(\beta_u+ \delta_u))-\rho_u(X_a-Z^a\beta_u)\} \mid X_{-a}, W]\\
& \leq \En[K_\varpi(W)\{\rho_u(X_a-Z^a(\beta_u+ \delta_u))-\rho_u(X_a-Z^a\beta_u)\}] + t_3\|\delta_u\|_{1,\varpi}\\
 & \leq  \lambda_u\| \delta_u\|_{1,\varpi} + t_3\|\delta_u\|_{1,\varpi}\\
 & \leq  2\cc\left(1+ \frac{1}{\lambda_u}t_3\right)\bar R_{u\xi} + \|\sqrt{f_u}Z^a \delta_u\|_{n,\varpi}\left[\lambda_u+t_3\right] \frac{3\cc\sqrt{s}}{\kappa_{u,2\cc}}\\
 \end{array}\end{equation}}
here we used the bound $\|\delta_u\|_{1,\varpi} \leq (1+2\cc)\sqrt{s}\|\sqrt{f_u}Z^a\delta_u\|_{n,\varpi}/\kappa_{u,2\cc} + \frac{2\cc}{\lambda_u}\bar R_{u\xi}$ under $\Omega_1\cap \Omega_2$.

Using Lemma \ref{Lemma:IdentificationSparseQRNP}, since (\ref{ConditionIdenfitication}) holds, we have for each $u\in \mathcal{U}$
$$\begin{array}{rl}
 \En[\Ep[K_\varpi(W)\{\rho_u(X_a-Z^a(\beta_u+ \delta_u))-\rho_u(X_a-Z^a\beta_u)\} \mid X_{-a},W] \\
 \geq - (\sqrt{\bar f}+1) \|r_{u}\|_{n,\varpi} \|\sqrt{f_u}Z^a \delta_u\|_{n,\varpi} - \sup_{u\in \mathcal{U}, j\in[p]} |\En[\Ep[ S_{uj} \vert X_{-a},W]/\hat\sigma^{Z}_{a\varpi j} ]|  \ \|\delta_u\|_{1,\varpi} \\
 + \frac{\|\sqrt{f_u}Z^a\delta_u\|_{n,\varpi}^2}{4} \wedge \{\bar q_{A_u}\|\sqrt{f_u}Z^a\delta_u\|_{n,\varpi}\}\end{array}$$
here we have $\Ep[ S_{iuj}\vert X_{i,-a},W_i]=0$ since $\tau = \Pr(X_a \leq Z^a\beta_u+r_u\vert X_{-a},W)$ by the definition of conditional quantile.

Note that for positive numbers $(t^2/4) \wedge q t \leq A + B t$ implies $t^2/4 \leq A+Bt$ provided $q/2 > B$ and $2q^2>A$. (Indeed, otherwise $(t^2/4) \geq q t$ so that $t\geq 4q$ which in turn implies that $2q^2 + qt/2 \leq (t^2/4) \wedge q t \leq A + Bt$.) Since $\bar q_{A_u}/4 \geq (\sqrt{\bar f}+1) \|r_{u}\|_{n,\varpi} + \left[ \{\lambda_u+t_3\}\frac{3\cc\sqrt{s}}{\kappa_{u,2\cc}}\right]$ and $\bar q_{A_u} \geq \{ 2\cc\left(1+ \frac{t_3}{\lambda_u}\right)\bar R_{u\xi} \}^{1/2}$, the minimum on the right hand side is achieved by the quadratic part for all $u\in\mathcal{U}$. Therefore we have uniformly over $u\in\mathcal{U}$
{\small $$ \frac{\|\sqrt{f_u}Z^a\delta_u\|_{n,\varpi}^2}{4} \leq  2\cc\left(1+ \frac{t_3}{\lambda_u}\right)\bar R_{u\xi} + \|\sqrt{f_u}Z^a \delta_u\|_{n,\varpi}\left[(\sqrt{\bar f}+1) \|r_{u}\|_{n,\varpi}+ \{\lambda_u+t_3\}\frac{3\cc\sqrt{s}}{\kappa_{u,2\cc}}\right]
$$}
which implies that
{\small $$\begin{array}{rl}
\|\sqrt{f_u}Z^a\delta_u\|_{n,\varpi} & \leq \sqrt{8\cc\left(1+ \frac{t_3}{\lambda_u}\right)\bar R_{u\xi}} + \left[(\sqrt{\bar f}+1) \|r_{u}\|_{n,\varpi}+ \{\lambda_u+t_3\}\frac{3\cc\sqrt{s}}{\kappa_{u,2\cc}}\right].
\end{array}$$}
\end{proof}

\begin{lemma}[CIQGM, Event $\Omega_2$]\label{Lemma:ControlR}
Under Condition CI we have $\En[\Ep[\hat R_u(\beta_u)\vert X_{-a},\varpi]] \leq \bar f \|r_{u}\|_{n,\varpi}^2/2$, $\hat R_u(\beta_u)\geq 0$ and
$$\Pr\left( \sup_{u\in\mathcal{U}}\hat R_u(\beta_u) \leq C\{1+ \bar f\} \{n^{-1}s(1+d_W)\log(p|V|n)\} \right) = 1-o(1).$$
\end{lemma}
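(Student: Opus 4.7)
My plan is to turn the three claims into (i) an identity for $\hat R_u(\beta_u)$, (ii) a direct calculation of its conditional mean, and (iii) a uniform empirical-process bound. For (i)--(ii), I would apply Knight's identity with $b:=X_a-Z^a\beta_u-r_u$ and $v:=-r_u$, so that $\rho_\tau(X_a-Z^a\beta_u)-\rho_\tau(b)=r_u[\tau-1\{b\le 0\}]+\int_0^{-r_u}(1\{b\le s\}-1\{b\le 0\})\,ds$. Subtracting the bias term $(\tau-1\{b\le 0\})(-r_u)$ appearing in the definition of $\hat R_u$ cancels the first piece exactly, leaving the representation
$$
\hat R_u(\beta_u)=\En\!\left[K_\varpi(W)\int_0^{-r_u}\!\!\bigl(1\{b\le s\}-1\{b\le 0\}\bigr)\,ds\right].
$$
Non-negativity then follows because the integrand has the same sign as $s$ and the limits of integration flip sign with $r_u$; alternatively, this is the subgradient inequality for the convex function $\rho_\tau$ at $b$, which gives claim~(2) immediately.

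For claim~(1), I would take conditional expectation given $(X_{V\setminus a},W)$ in the representation above. Since $Z^a\beta_u+r_u$ is by definition the $\tau$-conditional quantile of $X_a$ under Condition~CI, $\Ep[1\{b\le s\}-1\{b\le 0\}\mid X_{V\setminus a},W]=F_{X_a\mid X_{V\setminus a},\varpi}(Z^a\beta_u+r_u+s)-\tau$, which is bounded by $\bar f|s|$ by the density bound in Condition~CI. Integrating $\bar f|s|$ over $s\in[0,-r_u]$ yields $\bar f r_u^2/2$, so $\Ep[\hat R_u(\beta_u)\mid X_{V\setminus a},W]\le \bar f\,\En[K_\varpi(W)r_u^2]/2=\bar f\,\|r_u\|_{n,\varpi}^2/2$, which is the first claim.

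Claim~(3) requires transferring the pointwise bound to a uniform-in-$u$ bound with high probability. Setting $g_u(X,W):=K_\varpi(W)\int_0^{-r_u}(1\{X_a\le Z^a\beta_u+r_u+s\}-1\{X_a\le Z^a\beta_u+r_u\})\,ds$, I would decompose $\hat R_u(\beta_u)=\En\Ep[g_u\mid X_{V\setminus a},W]+(\En-\Ep_{X_a\mid X_{V\setminus a},W})[g_u]$, using the pointwise bound from Stage~2 on the first term together with Condition~CI(iii), which gives $\sup_u\|r_u\|_{n,\varpi}^2\lesssim n^{-1}s\log(p|V|n)$ with probability $1-\Delta_n$. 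For the second term, the class $\mathcal{G}=\{g_u:u\in\mathcal{U}\}$ is the product of the VC-class $\{K_\varpi\}$ of dimension $d_W$ with a class of differences of indicators whose arguments $Z^a\beta_u+r_u+s$ lie in the VC-subgraph class $\{Q_{X_a}(\tau\mid X_{-a},\varpi)\}$ of index $1+Cd_W$ (postulated in Condition~CI(i)), so its uniform entropy is $O((1+d_W)\log(p|V|n/\varepsilon))$, with envelope $|g_u|\le K_\varpi(W)|r_u|$ and conditional variance bound $\Ep[g_u^2\mid X_{V\setminus a},W]\le\bar f\,K_\varpi(W)r_u^2|r_u|$. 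A Talagrand/Bousquet inequality together with a local maximal inequality of van der Vaart--Wellner/Chernozhukov--Chetverikov--Kato type then gives
$$
\sup_{u\in\mathcal{U}}\bigl|(\En-\Ep_{X_a\mid X_{V\setminus a},W})[g_u]\bigr|\lesssim\sqrt{\frac{(1+d_W)\log(p|V|n)}{n}}\sup_u\|r_u\|_{n,\varpi}+\frac{M_n(1+d_W)\log(p|V|n)}{n},
$$
and an application of $ab\le a^2+b^2/4$ absorbs the cross term into $C\bar f\,\|r_u\|_{n,\varpi}^2$ plus $Cn^{-1}s(1+d_W)\log(p|V|n)$, yielding claim~(3) under the moment condition on $M_n$ in Condition~CI(iv).

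The main obstacle is the uniform-in-$u$ step: the natural envelope and variance of $g_u$ both depend on $r_u$, whose $L^2$-scale is itself random and $u$-dependent. One must carry this scale through the maximal inequality---either via a peeling argument over the level sets $\{\|r_u\|_{n,\varpi}\le 2^{-k}\}$, or by applying a localized Talagrand inequality to the rescaled class $\{g_u/\|r_u\|_{n,\varpi}\}$---so that the final deviation is quadratic in $\|r_u\|_{n,\varpi}$ rather than the naive $n^{-1/2}$ rate. The entropy budget must exploit both the $d_W$-dependence of $\mathcal{W}$ and the VC-subgraph structure of the conditional-quantile class provided by Condition~CI(i) to land on the sharp $(1+d_W)\log(p|V|n)$ factor.
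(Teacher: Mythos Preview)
Parts (i) and (ii) of your plan coincide with the paper's argument: Knight's identity plus the density bound in Condition~CI give both $\hat R_u(\beta_u)\ge 0$ and the conditional-mean bound $\bar f\|r_u\|_{n,\varpi}^2/2$ exactly as you outline.

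For part (iii), your route is legitimate but differs from the paper's. You split $\hat R_u(\beta_u)$ into its conditional mean plus a conditional fluctuation and then propose a localized Talagrand/peeling argument to cope with the random, $u$-dependent envelope $K_\varpi(W)|r_u|$. The paper avoids this complication in two moves. First, instead of centering, it invokes the symmetrization lemma for non-zero-mean processes (Lemma~2.3.7 in van der Vaart--Wellner): since $\Pr(\hat R_u(\beta_u)\le C\bar f s/n)\ge 1/2$ by Markov, one passes directly to bounding the Rademacher average $\sup_u|\En[\epsilon K_\varpi(W)r_u z_u]|$, which sidesteps the decomposition altogether. Second, to control the entropy of the integral $z_u=\int_0^1(1\{\epsilon_u\le -tr_u\}-1\{\epsilon_u\le 0\})\,dt$, the paper writes it as $\Ep_B[1\{\epsilon_u\le -Br_u\}-1\{\epsilon_u\le 0\}]$ with an auxiliary $B\sim\text{Uniform}(0,1)$ independent of the data, and then uses the covering lemma for classes obtained by conditional expectation to transfer the entropy of the un-integrated indicator class to the averaged one. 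With these two devices the maximal inequality is applied with a \emph{deterministic} variance proxy $\sigma^2=\sup_u\Ep[K_\varpi(W)r_u^2]\le Cs/n$, so no peeling or localization is needed. Your approach works, but the symmetrization-plus-auxiliary-uniform trick is shorter and cleaner; it may be worth noting as an alternative since it removes precisely the ``main obstacle'' you flagged.
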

\begin{proof}[Proof of Lemma \ref{Lemma:ControlR}]
We have that $\hat R_u(\beta_u)\geq 0$ by convexity of $\rho_\tau$. Let $\varepsilon_{iu}=X_{ia} - Z_{i}^a \beta_u - r_{iu}$ where $\|\beta_u\|_0\leq s$ and $r_{iu}=Q_{X_a}(\tau\vert X_{-a},\varpi) - Z^a\beta_u$.

By Knight's identity (\ref{Eq:TrickRho}), $\hat R_u(\beta_u) =  -\En[ K_\varpi(W)r_{u} \int_0^1  1\{\varepsilon_{u} \leq -t r_{u}\} - 1\{\varepsilon_{u}\leq 0\} \ dt ]\geq 0$.
 $$ \begin{array}{rl}
 \En[\Ep[\hat R_u(\beta_u)\vert X_{-a},\varpi] & = \En[  K_\varpi(W)r_{u} \int_0^1  F_{X_a\mid X_{-a},\varpi}(Z^a\beta_u+(1-t)r_u) -  F_{X_a\mid X_{-a},\varpi}(Z^a\beta_u+r_u) \ dt]\\
  & \leq \En[  K_\varpi(W)r_{u} \int_0^1  \bar f t r_{u} dt] \leq \bar f \|r_{u}\|_{n,\varpi}^2/2\leq C\bar f s/n.\end{array}$$
Since Condition CI assumes $\Ep[\|r_{u}\|_{n,\varpi}^2]\leq \Pr(\varpi)s/n$, by Markov's inequality we have $\Pr( \hat R_u( \beta_u) \leq  C\bar f s/n ) \geq 1/2$. Define $z_{iu} := -\int_0^1  1\{\varepsilon_{iu} \leq -t r_{iu}\} - 1\{\varepsilon_{iu}\leq 0\} \ dt$, so that $\hat R_u(\beta_u) = \En[K_\varpi(W)r_{u}z_{u}]$ where $|z_{iu}|\leq 1$. By Lemma 2.3.7 in \cite{vdVaartWellner2007} (note that the Lemma does not require zero mean stochastic processes), for $t \geq  2 C\bar f s/n$ we have
$$ \frac{1}{2}\Pr\left( \sup_{u\in\mathcal{U}}|\En[K_\varpi(W)r_{u}z_{u}]| \geq t \right) \leq 2\Pr\left(\sup_{u\in\mathcal{U}}|\En[\varepsilon K_\varpi(W)r_{u}z_{u}]|>t/4\right) $$
where $\varepsilon_i, i=1,\ldots,n$ are Rademacher random variables independent of the data.

Next consider the class of functions $\mathcal{F}= \{  - K_\varpi(W)r_{u} (1\{\varepsilon_{iu} \leq -B_i r_{iu}\} - 1\{\varepsilon_{iu}\leq 0\} ) : u\in\mathcal{U}\}$ where $B_i\sim {\rm Uniform}(0,1)$ independent of $(X_i,W_i)_{i=1}^n$. It follows that $K_\varpi(W)r_{u}z_{u} =  \Ep[  - K_\varpi(W)r_{u} (1\{\varepsilon_{iu} \leq -B_i r_{iu}\} - 1\{\varepsilon_{iu}\leq 0\} ) \vert X_i,W_i]$ where the expectation is taken over $B_i$ only. Thus we will bound the entropy of $\overline{\mathcal{F}}= \{ \Ep[ f\vert X,W]: f \in \mathcal{F} \}$ via Lemma \ref{Lemma:PartialOutCovering}. Note that  $\mathcal{R}:=\{ r_u = Q_{X_a}(\tau\vert X_{-a},\varpi) - Z^a\beta_u: u\in\mathcal{U}\}$ where $\mathcal{G}:=\{ Z^a\beta_u: u\in\mathcal{U}\}$ is contained in the union of at most $|V|\binom{p}{s}$ VC-classes of dimension $Cs$ and  $\mathcal{H}:=\{Q_{X_a}(\tau\vert X_{-a},\varpi): u\in\mathcal{U}\}\}$ is the union of $|V|$ VC-class of functions of dimension $(1+d_W)$ by Condition CI. Finally note that $\mathcal{E}:=\{ \varepsilon_{iu}: u\in \mathcal{U}\} \subset \{ X_{ia}: a\in V\} - \mathcal{G} - \mathcal{R}$.

Therefore, we have
$$
\begin{array}{rl}
\sup_Q \log N(\epsilon \|\bar F\|_{Q,2}, \overline{\mathcal{F}}, \|\cdot\|_{Q,2}) & \leq  \sup_Q \log N( (\epsilon/4)^2 \| F\|_{Q,2}, \mathcal{F}, \|\cdot\|_{Q,2})\\
& \leq  \sup_Q \log N( \mbox{$\frac{1}{8}$}(\epsilon^2/16), \mathcal{W}, \|\cdot\|_{Q,2}) \\
&+  \sup_Q\log N( \mbox{$\frac{1}{8}$} (\epsilon^2/16) \| F\|_{Q,2}, \mathcal{R}, \|\cdot\|_{Q,2}) \\
&+  \sup_Q \log N( \mbox{$\frac{1}{8}$}(\epsilon^2/16), 1\{ \mathcal{E}+\{B\}\mathcal{R} \leq 0\} - 1\{ \mathcal{E} \leq 0\}, \|\cdot\|_{Q,2})\\
\end{array}
$$
We will apply Lemma \ref{lemma:CCK} with envelope $\bar F = \sup_{u\in\mathcal{U}} |K_\varpi(W)r_u|$, so that $ \Ep[\max_{i\leq n}\bar F_i^2]\leq C$, and $\sup_{u\in \mathcal{U}} \Ep[K_\varpi(W)r_{u}^2 ] \leq Cs/n =: \sigma^2$ by Condition CI. Thus, we have that with probability $1-o(1)$
$$ \sup_{u\in\mathcal{U}}|\En[\varepsilon K_\varpi(W)r_{u}z_{u}]| \lesssim \sqrt{\frac{s(1+d_W)\log(p|V|n)}{n}}\sqrt{\frac{s}{n}}+ \frac{s(1+d_W)\log(p|V|n)}{n}\lesssim \frac{s(1+d_W)\log(p|V|n)}{n}$$
under  $M_n\sqrt{s^2/n} \leq C$.
\end{proof}

\begin{lemma}[CIQGM, Event $\Omega_3$]\label{LemmaOmega3}
For $u=(a,\tau,\varpi)\in \mathcal{U} := V\times \mathcal{T}\times \mathcal{W}$, define the function $g_u(\delta,X,W)= K_\varpi(W)\{\rho_\tau(X_a-Z^a(\beta_u+\delta))-\rho_\tau(X_a-Z^a\beta_u)\}$, and the event
$$ \Omega_3 := \left\{ \sup_{u\in\mathcal{U}, 1/\sqrt{n} \leq \|\delta\|_{1,\varpi} \leq \sqrt{n}} \frac{|\En[g_u(\delta,X,W)-\Ep[g_u(\delta,X,W)\mid X_{-a}, W]]|}{\|\delta\|_{1,\varpi}} < t_3 \right\}.$$
Then, under Condition CI we have $P(\Omega_3) \geq 1-\xi$ for any $t_3$ satisfying
$$ t_3\sqrt{n} \geq   12 + 16\sqrt{2\log(64|V|p^2 n^{3+2d_W}\log(n)L^{1+d_W/\kappa}_\beta M_n/\xi )}  $$
\end{lemma}
\begin{proof}
We have that $\Omega_3^c := \{ \max_{a\in V} A_a \geq t_3\sqrt{n}\}$ for
$$A_a:= \sup_{(\tau,\varpi)\in\mathcal{T}\times\mathcal{W}, \underline{N} \leq \|\delta\|_{1,\varpi} \leq \bar{N}} \sqrt{n}\left|\frac{\En[g_u(\delta,X,W)-\Ep[g_u(\delta,X,W)\mid X_{-a},W]]}{\|\delta\|_{1,\varpi}}\right|.  $$
Therefore, for $\underline{N} = 1/\sqrt{n}$ and $\bar{N} = \sqrt{n}$ we have by Lemma  \ref{Lemma:EmpProc:Normalized} with $\rho = \kappa$, $L_{\eta} = L_{\beta}$, $\tilde{x} = Z^a$
$$\begin{array}{rl}
\Pr(\Omega_3^c) & = \Pr(\max_{a\in V} A_a \geq t_3\sqrt{n}) \\
& \leq |V|\max_{a\in V} \Pr(A_a \geq t_3\sqrt{n}) \\
& = |V|\max_{a\in V} \Ep_{X_{-a},W}\left\{ \Pr(A_a \geq t_3\sqrt{n} \mid X_{-a},W )\right\} \\
& \leq |V|  \max_{a\in V} \Ep_{X_{-a},W}\left\{8p |\widehat{\mathcal{N}}|\cdot |\widehat{\mathcal{W}}|\cdot |\widehat{\mathcal{T}}| \exp(-(t_3\sqrt{n}/4-3)^2/32)\right\}\\
& \leq \exp(-(t_3\sqrt{n}/4-3)^2/32) |V|64p n^{1+d_W}\log(n)L_\beta \Ep_{X_{-a}}\left\{\frac{\max_{i\leq n}\|Z^a_i\|_\infty^{1+d_W/\kappa}}{\underline{N}^{1+d_W}}\right\}\\
& \leq \xi \end{array} $$
by the choice of $t_3$ and noting that $M_n^{(1+d_W/\kappa)/q} \geq  \Ep_{X_{-a}}[\max_{i\leq n}\|Z^a_i\|_\infty^{1+d_W/\kappa}]$, $ 1+d_W/\kappa \leq  q$ and $M_n \geq 1$.
\end{proof}

\begin{lemma}[CIQGM, Uniform Control of Approximation Error in Auxiliary Equation]\label{ControlRUJ}
Under Condition CI, with probability $1-o(1)$ uniformly over $u\in\mathcal{U}$ and $j\in[p]$ we have
$$
\En[K_\varpi(W)f_u^2\{Z_{-j}^a(\gamma_u^j-\bar\gamma_u^j)\}^2]  \lesssim \underf_u^2\Pr(\varpi)  \{n^{-1}s\log (p|V| n)\}^{1/2}.
$$
\end{lemma}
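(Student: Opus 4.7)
The plan is to decompose the empirical quantity into its population analogue plus a uniform empirical process deviation, bound each piece, and use the explicit $\ell_{2}$-rate on $\gamma_u^j - \bar\gamma_u^j$ supplied by Condition CI.

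First, I would set $\delta_{u,j} := \gamma_u^j - \bar\gamma_u^j$ and collect the deterministic features from Condition CI: both $\gamma_u^j$ and $\bar\gamma_u^j$ are $s$-sparse, so $\|\delta_{u,j}\|_0 \le 2s$, and
$$
\sup_{u\in\mathcal{U},\,j\in[p]} \|\delta_{u,j}\| \;\lesssim\; \sqrt{\tfrac{s\log(|V|pn)}{n}}.
$$
The punchline will then follow from controlling $\Ep[K_\varpi(W)f_u^{2}\{Z^a_{-j}\delta_{u,j}\}^{2}]$ and showing that the empirical weighted quadratic form is equivalent to the population one uniformly over the relevant $2s$-sparse directions.

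Second, I would bound the population piece. Using the Condition CI inequality $\Ep[f_u^2(Z^a\eta)^2\mid\varpi]\le C\underline f_u^{2}\Ep[(Z^a\eta)^2\mid\varpi]$ together with the bounded-moment hypothesis $\Ep[(Z^a\eta)^{2}\mid\varpi]\le C\|\eta\|^{2}$ (from $\max_{a}\sup_{\|\eta\|=1}\Ep[\{(X_a,Z^a)\eta\}^{4}\mid\varpi]\le C$ and the $\underline\kappa$ bound), I obtain
$$
\Ep[K_\varpi(W)f_u^{2}\{Z^a_{-j}\delta_{u,j}\}^{2}]
\;=\; \Pr(\varpi)\,\Ep[f_u^{2}\{Z^a_{-j}\delta_{u,j}\}^{2}\mid\varpi]
\;\lesssim\; \underline f_u^{2}\Pr(\varpi)\,\|\delta_{u,j}\|^{2}
\;\lesssim\; \underline f_u^{2}\Pr(\varpi)\tfrac{s\log(|V|pn)}{n},
$$
uniformly over $u\in\mathcal{U},\ j\in[p]$.

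Third, I would transfer this to the empirical version. By the sparse-eigenvalue lemma invoked in the proof of Theorem \ref{theorem:rateQRgraph} (Lemma \ref{Lemma:Matrices}), with probability $1-o(1)$ the ratio $\En[K_\varpi(W)f_u^{2}\{Z^a_{-j}\eta\}^{2}]/\Ep[K_\varpi(W)f_u^{2}\{Z^a_{-j}\eta\}^{2}]$ lies in $[1/2,\,3/2]$ uniformly over all $\eta$ with $\|\eta\|_{0}\le Cs\ell_n$ (a slow $\ell_n\to\infty$) and over all $u\in\mathcal{U}$. Since each $\delta_{u,j}$ is $2s$-sparse, applying this equivalence and the population bound yields the claim with probability $1-o(1)$.

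The main obstacle will be step three: obtaining the sparse-norm equivalence \emph{uniformly} over $u=(a,\tau,\varpi)\in\mathcal{U}$ when the weights $K_\varpi(W)f_u^{2}$ themselves depend on $u$. This is handled by a chaining/maximal-inequality argument exploiting (i) the VC property of $\mathcal{W}$ and of the families $\{Q_{X_a}(\tau\mid X_{-a},\varpi)\}$ that enters $f_u$ (both with fixed VC dimension by Condition CI), (ii) the Lipschitz continuity $|f_u-f_{u'}|\le L_f\|u-u'\|$ and $\Ep|K_\varpi-K_{\varpi'}|\le \bar L\|\varpi-\varpi'\|$, and (iii) the growth conditions $n^{4/q}M_n^4\log(p|V|n)\log n\le\delta_n^{2}n\mu_{\mathcal{W}}^{2}\underline f_{\mathcal{U}}^{2}$ and $\{L_f+\bar L\}^{2}M_n^{2}\log^{2}(p|V|n)/\{\mu_{\mathcal{W}}\underline f_{\mathcal{U}}^{2}\}^{3}\le\delta_n n$. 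These are precisely the conditions used for Lemma \ref{Lemma:Matrices} in the prior proofs, so the same chaining template applies; no new ideas beyond those already used for Theorems \ref{theorem:rateQRgraph} and \ref{theorem:rateLASSOgraph} are needed.
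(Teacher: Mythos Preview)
There is a genuine gap in step three. Condition CI only asserts that $\bar\gamma_u^j$ is $s$-sparse (via $\|\beta_u\|_0 + \|\bar\gamma_u^j\|_0 \le s$); the true projection coefficient $\gamma_u^j$ is merely \emph{approximately} sparse, with $\bar\gamma_u^j$ playing the role of its sparse approximation. Consequently $\delta_{u,j} = \gamma_u^j - \bar\gamma_u^j$ need not satisfy any support bound, and Lemma~\ref{Lemma:Matrices}, whose supremum is taken over $\|\theta\|_0 \le k$, cannot be invoked for $\theta = \delta_{u,j}/\|\delta_{u,j}\|$. Your step one explicitly claims ``both $\gamma_u^j$ and $\bar\gamma_u^j$ are $s$-sparse, so $\|\delta_{u,j}\|_0 \le 2s$,'' and this is false under the stated hypotheses.

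The paper's argument avoids sparsity of $\delta_{u,j}$ altogether. It treats the class $\mG_{aj} = \{ Z^a_{-j}(\gamma_u^j - \bar\gamma_u^j) : (\tau,\varpi) \in \mathcal{T}\times\mathcal{W}\}$ as a low-dimensionally indexed family and controls its entropy through the Lipschitz property $\|\gamma_u^j - \gamma_{u'}^j\| \le L_\gamma\{\|u-u'\| + \|\varpi-\varpi'\|^{1/2}\}$ established in Lemma~\ref{Lemma:LipsGamma}, combined with the $\ell_1$-rate $\|\bar\gamma_u^j - \gamma_u^j\|_1 \lesssim s/\sqrt{n}\cdot\sqrt{\log(p|V|n)}$ from Condition CI. With this entropy bound in hand, a maximal inequality (Theorem~5.1 of \cite{chernozhukov2012gaussian}) controls $\sup_{u,j}|(\En - \Ep)[K_\varpi(W)f_u^2\{Z^a_{-j}\delta_{u,j}\}^2/\Pr(\varpi)]|$ directly, and the population piece is bounded exactly as you do in your step two. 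If you want to salvage your route, you would need either an $\ell_1$-based version of the norm equivalence (e.g.\ via a block decomposition against maximal sparse eigenvalues) or to recognize that the indexing by $u$, not sparsity, is what delivers the requisite entropy control.
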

\begin{proof}
Define the class of functions $\mG = \cup_{a\in V, j\in [p]}\mG_{aj}$ with $\mG_{aj}:=\{ Z_{-j}^a(\gamma_u^j-\bar\gamma_u^j) : \tau \in \mathcal{T}, \varpi\in \mathcal{W}\}$.
Under Condition CI we have $\sup_{u\in\UU}\|\bar\gamma_u^j\|_0 \leq Cs$, $\sup_{u\in\UU, j\in [p]}\|\bar \gamma_u^j-\gamma_u^j\| \vee \frac{\|\bar\gamma_u^j-\gamma_u^j\|_1}{\sqrt{s}}\leq \{n^{-1}s\log (p|V| n)\}^{1/2}.$
Without loss of generality we can set $\bar\gamma_{uk}^j = \gamma_{uk}^j$ for $k \in \supp(\bar\gamma_u^j)$. Letting $\mG_{aj,T}:=\{ Z_{-j}^a(\gamma_u^j-\gamma_{uT}^j) : \tau \in \mathcal{T}, \varpi\in \mathcal{W}\}$ for $T\subset \{1,\ldots,p\}$, it follows that $ \mG \subset \cup_{a\in V, j\in [p]}\cup_{|T|\leq Cs}\mG_{aj,T}$.

By Lemma \ref{Lemma:LipsGamma}, we have $\|\gamma_u^j-\gamma_{u'}^j\|\leq L_\gamma (\|u-u'\|+\|u-u'\|^{1/2})$ for each $a\in V$, $j\in[p]$. (Note that although $\bar\gamma^j_u$ might not be Lipschitz in $u$, however, for each $T$, $\gamma_{uT}^j$ satisfies the same Lipschitz relation as $\gamma_u^j$, in fact $\|\bar\gamma_{uT}^j-\gamma_{u'T}^j\|\leq \|\gamma_u^j-\gamma_{u'}^j\|$ by construction.) Therefore, for each $T$ we have
 $$\begin{array}{rl}
 & \|\{Z_{-j}^a(\bar\gamma_{uT}^j-\gamma_u^j)\}^2-\{Z_{-j}^a(\bar\gamma_{u'T}^j-\gamma_{u'}^j)\}^2\|_{Q,2}  \\
 & \leq \|Z_{-j}^a(\bar\gamma_{uT}^j-\bar\gamma_{u'T}^j+\gamma_{u'}^j-\gamma_u^j)Z_{-j}^a(\bar\gamma_{uT}^j-\gamma_u^j+\bar\gamma_{u'T}^j-\gamma_{u'}^j)\|_{Q,2}\\
& \leq \|\|Z_{-j}^a\|_\infty^2\|_{Q,2} \|\bar\gamma_{uT}^j-\bar\gamma_{u'T}^j+\gamma_{u'}^j-\gamma_u^j\|_1\|\bar\gamma_{uT}^j-\gamma_u^j+\bar\gamma_{u'T}^j-\gamma_{u'}^j\|_1\\
& \leq  4\|\|Z_{-j}^a\|_\infty^2\|_{Q,2}\sup_{u\in\UU}\|\bar\gamma_{uT}^j-\gamma_u^j\|_1 \sqrt{2p}\|\gamma_u^j-\gamma_{u'}^j\|\\
&\leq \|\|Z_{-j}^a\|_\infty^2\|_{Q,2}L_\gamma'(\|u-u'\|+\|u-u'\|^{1/2}).\end{array}$$
where $L_\gamma'=4\{n^{-1}s^2\log(p|V|n)\}^{1/2}\sqrt{2p}L_\gamma$. Thus, for the envelope $G = \max_{a\in V}\|Z^a\|_\infty^2\sup_{u\in\UU} \|\bar\gamma_u^j-\gamma_u^j\|_1^2$ that
$$ \begin{array}{rl}\log N(\epsilon \|G\|_{Q,2}, \mG, \|\cdot\|_{Q,2}) \leq Cs\log(|V|p)+\log N\left(\epsilon\frac{\sup_{u\in\UU} \|\bar\gamma_u^j-\gamma_u^j\|_1^2}{L_\gamma'}, \UU, d_\UU\right) \leq Cs(1+d_W)^2 \log (L_\gamma'n/\epsilon). \end{array}$$
Next define the functions $\mathcal{W}_0=\{ K_\varpi(W)f_u^2 : u\in\mathcal{U} \}$, $\mathcal{W}_1=\{ \Pr( \varpi)^{-1} : \varpi \in \mathcal{W} \}$ and $\mathcal{W}_2=\{ K_\varpi(W) : \varpi \in \mathcal{W} \}$. We have that $\mathcal{W}_2$ is VC class with VC index $Cd_W$ and $\mathcal{W}_1$ is bounded by $\mu_\mathcal{W}^{-1}$ and covering number bounded by $(Cd_W/\{\mu_\mathcal{W}\epsilon\})^{1+d_W}$. Finally, since $|K_\varpi(W)f_u^2-K_{\varpi'}(W)f_{u'}^2|\leq K_\varpi(W)K_{\varpi'}(W)|f_u^2-f_{u'}^2| + \bar f^2 |K_\varpi(W)-K_{\varpi'}(W)|\leq 2\bar fL_f\|u-u'\|+\bar f^2 |K_\varpi(W)-K_{\varpi'}(W)|$, we have $N(\epsilon,\mathcal{U},|\cdot|) \leq (C(1+d_W)/\epsilon)^{1+d_W}$. Therefore, using standard bounds we have
$$ \log N(\epsilon \|\mu_\mathcal{W}^{-1}G\bar f\|_{Q,2}, \mathcal{W}_0\mathcal{W}_1\mathcal{W}_2\mG, \|\cdot\|_{Q,2}) \lesssim  s(1+d_W)^2 \log (L_\gamma' L_f n/\epsilon)$$
By Lemma \ref{lemma:CCK} we have that with probability $1-o(1)$ that
$$ \begin{array}{l} \sup_{u\in \UU, j\in[p]} |(\En-\Ep)[f_u^2\{Z_{-j}^a(\gamma_u^j-\bar\gamma_u^j)\}^2/\Pr(\varpi)]| \\
 \lesssim \sqrt{ \frac{s(1+d_W)^2 \log(p|V| n) \sup_{u\in\UU}\Ep[K_\varpi(W)f_u^4\{Z_{-j}^a(\gamma_u^j-\bar\gamma_u^j)\}^4]/\Pr(\varpi)^2}{n}} + \frac{s(1+d_W)^2M_n^2\mu_\mathcal{W}^{-1}\sup_{u\in\UU} \|\bar\gamma_u^j-\gamma_u^j\|_1^2 \log(p|V|n)}{n}\\
 \lesssim \sqrt{\frac{s(1+d_W)^2\log(p|V|n)}{\mu_\mathcal{W}n}}\frac{s\log(p|V|n)}{n} + \frac{(1+d_W)^2M_n^2s^2\log(p|V|n)}{n\mu_\mathcal{W}}\frac{s\log(p|V|n)}{n}\\
 \lesssim \frac{s\log(p|V|n)}{n}\mu_\mathcal{W}\underf_\mathcal{U} \left\{ \sqrt{\frac{s(1+d_W)^2\log(p|V|n)}{\mu_\mathcal{W}^3\underf_\mathcal{U}^2 n}} + \frac{(1+d_W)^2M_n^2s^2\log(p|V|n)}{n\mu_\mathcal{W}^2\underf_\mathcal{U}}  \right\}\\
 \lesssim \frac{s\log(p|V|n)}{n}\mu_\mathcal{W}\underf_\mathcal{U} \{  \delta_n^{1/2} + \delta_n^2 \}
\end{array}$$
here we used that $\Ep[f_u^4\{Z^a\delta\}^4\vert \varpi] \leq \bar f^4 \Ep[\{Z^a\delta\}^4\vert \varpi] \leq C\|\delta\|^4$, $\|\bar\gamma_u^j-\gamma_u^j\|+s^{-1/2}\|\bar\gamma_u^j-\gamma_u^j\|_1 \leq \{n^{-1}s\log(p|V|n)\}^{1/2}$, $ s(1+d_W)^2\log(p|V|n)\leq \delta_n n \underf_\mathcal{U}^2\mu_\mathcal{W}^3$ and $(1+d_W)M_n s\log^{1/2}(p|V|n)\leq \delta_n n^{1/2}\mu_\mathcal{W}\underf_\mathcal{U}$ by Condition CI. Furthermore, by Condition CI, the result follows from $\Ep[f_u^2\{Z_{-j}^a(\bar\gamma_u^j-\gamma_u^j)\}^2\vert \varpi] \leq C\underf^2_u\|\bar\gamma_u^j-\gamma_u^j\|^2 \leq C \underf^2_u n^{-1} s\log(p|V|n)$.
\end{proof}

\begin{lemma}\label{Lemma:LipsGamma}
Under Condition CI, for $u=(a,\tau,\varpi) \in \UU$ and $u'=(a,\tau',\varpi')\in \UU$ we have that $$\|\gamma_u^j-\gamma_{u'}^j\| \leq \frac{C'}{\underline{f}_{u'}^2\Pr(\varpi')}\{ \bar f^2\Ep[\{K_{\varpi'}(W)-K_{\varpi}(W)\}^2]^{1/2}+ \Ep[K_\varpi(W)K_{\varpi'}(W)\{f_{u'}^2-f_u^2\}^2]^{1/2}\}. $$ In particular, we have $\|\gamma_u^j-\gamma_{u'}^j\| \leq L_\gamma \{ \|\varpi-\varpi'\|^{1/2}+\|u-u'\| \}$ for $L_\gamma = C\{L_f+L_K\}/\{\underf_{\mathcal{U}}^2\mu_\mathcal{W}\}$ under $\Ep[|K_\varpi(W)-K_{\varpi'}(W)|] \leq L_K\|\varpi-\varpi'\|$,  $K_\varpi(W)K_{\varpi'}(W)|f_{u'}-f_u|\leq L_f\|u'-u\|$, and $f_u \leq \bar f\leq C$.
\end{lemma}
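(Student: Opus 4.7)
The plan is to compare $\gamma_u^j$ and $\gamma_{u'}^j$ through their first-order conditions and then propagate the perturbation in $(f_u^2,K_\varpi)$ to a perturbation in the argmin via a quadratic-form lower bound. Writing the optimality conditions from~(\ref{Def:Model12}), both $\gamma_u^j$ and $\gamma_{u'}^j$ satisfy
\begin{equation*}
\Ep[f_u^{2}K_\varpi(W)(Z^a_j-Z^a_{-j}\gamma_u^{j})Z^a_{-j}]=0,\qquad \Ep[f_{u'}^{2}K_{\varpi'}(W)(Z^a_j-Z^a_{-j}\gamma_{u'}^{j})Z^a_{-j}]=0.
\end{equation*}
Letting $Q_{u'}:=\Ep[f_{u'}^{2}K_{\varpi'}(W)Z^a_{-j}(Z^a_{-j})']$ and subtracting yields
\begin{equation*}
Q_{u'}(\gamma_{u'}^{j}-\gamma_u^{j})=\Ep[\bigl(f_{u'}^{2}K_{\varpi'}(W)-f_u^{2}K_\varpi(W)\bigr)(Z^a_j-Z^a_{-j}\gamma_u^{j})Z^a_{-j}].
\end{equation*}

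Next, I lower bound the smallest eigenvalue of $Q_{u'}$. For any unit $\delta$, Cauchy--Schwarz in the form $\Ep[f_{u'}(Z^a_{-j}\delta)^2\mid\varpi']^{2}\leq \Ep[f_{u'}^{2}(Z^a_{-j}\delta)^2\mid\varpi']\,\Ep[(Z^a_{-j}\delta)^{2}\mid\varpi']$ combined with the definition~(\ref{Def:fuQuantileTrue}) of $\underline f_{u'}$ and $\underline\kappa\geq c$ from Condition CI gives $\delta'Q_{u'}\delta\geq c\,\underline f_{u'}^{2}\Pr(\varpi')$. Hence
\begin{equation*}
\|\gamma_{u'}^{j}-\gamma_u^{j}\|\leq \frac{1}{c\,\underline f_{u'}^{2}\Pr(\varpi')}\Bigl\|\Ep\bigl[\bigl(f_{u'}^{2}K_{\varpi'}-f_u^{2}K_\varpi\bigr)(Z^a_j-Z^a_{-j}\gamma_u^{j})Z^a_{-j}\bigr]\Bigr\|.
\end{equation*}

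To bound the numerator, I use the disjoint-support decomposition (valid since $K_\varpi,K_{\varpi'}\in\{0,1\}$)
\begin{equation*}
f_{u'}^{2}K_{\varpi'}-f_u^{2}K_\varpi=K_\varpi K_{\varpi'}(f_{u'}^{2}-f_u^{2})+f_{u'}^{2}K_{\varpi'}(1-K_\varpi)-f_u^{2}K_\varpi(1-K_{\varpi'}),
\end{equation*}
where the last two pieces have disjoint support, so their $L_2$-norm is controlled by $\bar f^{2}\Ep[|K_{\varpi'}-K_\varpi|]^{1/2}=\bar f^{2}\Ep[(K_{\varpi'}-K_\varpi)^{2}]^{1/2}$. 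Then a componentwise Cauchy--Schwarz together with the uniform fourth-moment bound on $(X_a,Z^a)\xi$ and $\sup_{u\in\mathcal{U},j\in[p]}\|\gamma_u^{j}\|\leq C$ from Condition CI(i)--(ii) implies $\Ep[(Z^a_j-Z^a_{-j}\gamma_u^{j})^{2}(Z^a_{-j,k})^{2}]\leq C$, so that
\begin{equation*}
\bigl\|\Ep[(f_{u'}^{2}K_{\varpi'}-f_u^{2}K_\varpi)(Z^a_j-Z^a_{-j}\gamma_u^{j})Z^a_{-j}]\bigr\|\lesssim \Ep[K_\varpi K_{\varpi'}(f_{u'}^{2}-f_u^{2})^{2}]^{1/2}+\Ep[(K_{\varpi'}-K_\varpi)^{2}]^{1/2},
\end{equation*}
which, combined with the eigenvalue bound, yields the first claim.

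For the ``in particular'' statement, I plug in the given Lipschitz hypotheses: on the support of $K_\varpi K_{\varpi'}$, $|f_{u'}^{2}-f_u^{2}|=|f_{u'}-f_u|(f_{u'}+f_u)\leq 2\bar f L'\|u-u'\|$; and $\Ep[(K_{\varpi'}-K_\varpi)^{2}]^{1/2}=\Ep[|K_{\varpi'}-K_\varpi|]^{1/2}\leq L^{1/2}\|\varpi-\varpi'\|^{1/2}$. Bounding $\Pr(\varpi')\geq\mu_\mathcal{W}$ and $\underline f_{u'}\geq\underline f_\mathcal{U}$ gives the claimed $L_\gamma=C'\{L'+L\}/\{\underline f_\mathcal{U}^{2}\mu_\mathcal{W}\}$. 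The main obstacle is purely bookkeeping at the decomposition step: one must exploit that $K_\varpi,K_{\varpi'}$ are $0/1$-valued to avoid a $K_\varpi+K_{\varpi'}$ residual that would otherwise contaminate the ``difference of $f^2$'' term with $\bar f^{2}$ times a symmetric-difference term; everything else is standard Cauchy--Schwarz and Condition CI.
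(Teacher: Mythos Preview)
Your approach is essentially the paper's: both exploit the first-order conditions for $\gamma_u^j$ and $\gamma_{u'}^j$, lower-bound the quadratic form $Q_{u'}$ through $\underline f_{u'}$ and $\underline\kappa$, and control the perturbation via the same decomposition of $K_{\varpi'}f_{u'}^2-K_\varpi f_u^2$. The one substantive slip is the ``componentwise Cauchy--Schwarz'' step. Bounding $\bigl\|\Ep[\Delta\,(Z^a_j-Z^a_{-j}\gamma_u^j)Z^a_{-j}]\bigr\|$ by summing the squared component bounds $|\Ep[\Delta\,(Z^a_j-Z^a_{-j}\gamma_u^j)Z^a_{-j,k}]|^2\leq \Ep[\Delta^2]\,\Ep[(Z^a_j-Z^a_{-j}\gamma_u^j)^2(Z^a_{-j,k})^2]$ leaves you with $\sum_k\Ep[(Z^a_j-Z^a_{-j}\gamma_u^j)^2(Z^a_{-j,k})^2]=\Ep[(Z^a_j-Z^a_{-j}\gamma_u^j)^2\|Z^a_{-j}\|^2]$, which is of order $p$ under Condition~CI. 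Your hidden constant in $\lesssim$ is therefore $\sqrt{p}$, whereas the lemma's $C'$ (and hence $L_\gamma$) must be dimension-free.

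The paper avoids this by never bounding the full vector norm: it multiplies the identity $Q_{u'}(\gamma_{u'}^j-\gamma_u^j)=\Ep[\Delta\,(Z^a_j-Z^a_{-j}\gamma_u^j)Z^a_{-j}]$ by $(\gamma_u^j-\gamma_{u'}^j)$, obtaining a scalar relation, and then applies Cauchy--Schwarz once together with the fourth-moment bound in the single direction $(\gamma_u^j-\gamma_{u'}^j)/\|\gamma_u^j-\gamma_{u'}^j\|$, giving $\Ep[\{Z^a_j-Z^a_{-j}\gamma_u^j\}^2\{Z^a_{-j}(\gamma_u^j-\gamma_{u'}^j)\}^2]^{1/2}\leq C\|\gamma_u^j-\gamma_{u'}^j\|$ with $C$ independent of $p$. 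Your argument is easily repaired by the same move, or equivalently by writing $\|v\|=\sup_{\|\delta\|=1}\delta'v$ and applying Cauchy--Schwarz with the directional fourth-moment bound $\sup_{\|\delta\|=1}\Ep[(Z^a_j-Z^a_{-j}\gamma_u^j)^2(Z^a_{-j}\delta)^2]\leq C$ before summing anything.
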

\begin{proof} Let $u=(a,\tau,\varpi)$ and $u'=(a,\tau',\varpi')$. By Condition CI we have $$\begin{array}{rl}
\|\gamma_u^j-\gamma_{u'}^j\|^2 & \leq C \Ep[\{Z^a_{-j}(\gamma_u^j-\gamma_{u'}^j)\}^2\vert \varpi' ]   \leq \{C/\Pr(\varpi')\} \Ep[K_{\varpi'}(W)\{Z^a_{-j}(\gamma_u^j-\gamma_{u'}^j)\}^2]\end{array}$$
To bound the last term of the right hand side above, by the definition of $\underline{f}_{u'}$ and  Cauchy-Schwarz's inequality we have
$$\begin{array}{rl}
\underline{f}_{u'}\Ep[K_{\varpi'}(W)\{Z^a_{-j}(\gamma_u^j-\gamma_{u'}^j)\}^2] & \leq \Ep[K_{\varpi'}(W)f_{u'}\{Z^a_{-j}(\gamma_u^j-\gamma_{u'}^j)\}^2]\\
& \leq \{\Ep[K_{\varpi'}(W)f_{u'}^2\{Z^a_{-j}(\gamma_u^j-\gamma_{u'}^j)\}^2] \ \Ep[K_{\varpi'}(W)\{Z^a_{-j}(\gamma_u^j-\gamma_{u'}^j)\}^2]\}^{1/2}\\
\end{array}
$$
so that $\Ep[K_{\varpi'}(W)\{Z^a_{-j}(\gamma_u^j-\gamma_{u'}^j)\}^2]^{1/2}\leq \{\Ep[K_{\varpi'}(W)f_{u'}^2\{Z^a_{-j}(\gamma_u^j-\gamma_{u'}^j)\}^2]\}^{1/2}/\underline{f}_{u'}$. Therefore
  \begin{equation}\label{Eq:AuxAux1}\|\gamma_u^j-\gamma_{u'}^j\|^2 \leq \{1/\underline{f}_{u'}\}^2\{C/\Pr(\varpi)\} \Ep[K_{\varpi'}(W)f_{u'}^2\{Z^a_{-j}(\gamma_u^j-\gamma_{u'}^j)\}^2].\end{equation}
We proceed to bound the last term. The optimality of $\gamma_u^j$ and $\gamma_{u'}^j$ yields
$$ \Ep[K_\varpi(W)f_u^2Z^a_{-j}(Z_j^a - Z^a_{-j}\gamma_u^j)]=0 \ \ \ \mbox{and} \ \ \ \Ep[K_{\varpi'}(W)f_{u'}^2Z^a_{-j}(Z_j^a - Z^a_{-j}\gamma_{u'}^j)]=0 $$
Therefore, we have \begin{equation}\label{AuxStep1}\begin{array}{rl}
\Ep[K_{\varpi'}(W)f_{u'}^2\{Z^a_{-j}(\gamma_u^j-\gamma_{u'}^j)\}Z^a_{-j}]& = -\Ep[K_{\varpi'}(W)f_{u'}^2\{Z^a_j-Z^a_{-j}\gamma_u^j\}Z^a_{-j}] \\
& = -\Ep[\{K_{\varpi'}(W)f_{u'}^2-K_{\varpi}(W)f_u^2\}\{Z^a_j-Z^a_{-j}\gamma_u^j\}Z^a_{-j}] \\
%& = -\Ep[\{K_{\varpi'}(W)-K_{\varpi}(W)\}f_{u'}^2\{Z^a_j-Z^a_{-j}\gamma_u^j\}Z^a_{-j}] \\
%&  -\Ep[K_{\varpi}(W)\{f_{u'}^2-f_u^2\}\{Z^a_j-Z^a_{-j}\gamma_u^j\}Z^a_{-j}] \\
\end{array}
\end{equation}
Multiplying by $(\gamma_u^j-\gamma_{u'}^j)$ both sides of (\ref{AuxStep1}),  we have $$\begin{array}{rl}
 &\Ep[K_{\varpi'}(W)f_{u'}^2\{Z^a_{-j}(\gamma_u^j-\gamma_{u'}^j)\}^2]  \\
 & \leq \Ep[\{K_{\varpi'}(W)f_{u'}^2-K_{\varpi}(W)f_u^2\}^2]^{1/2}\{\Ep[\{Z^a_j-Z^a_{-j}\gamma_u^j\}^2\{Z^a_{-j}(\gamma_u^j-\gamma_{u'}^j)\}^2]\}^{1/2}\\
 & \leq \Ep[\{K_{\varpi'}(W)f_{u'}^2-K_{\varpi}(W)f_u^2\}^2]^{1/2}C \|\gamma_u^j-\gamma_{u'}^j\|\\
 \end{array}$$
by the fourth moment assumption in Condition CI. By Condition CI, $f_u,f_{u'}\leq \bar f$, and it follows that
\begin{equation}|K_\varpi(W)f_u^2-K_{\varpi'}(W)f_{u'}^2| \leq  K_\varpi(W)K_{\varpi'}(W)|f_u^2-f_{u'}^2|+\bar f^2 |K_{\varpi}(W)-K_{\varpi'}(W)|\end{equation}

From (\ref{Eq:AuxAux1}) we obtain
$$\|\gamma_u^j-\gamma_{u'}^j\| \leq \frac{C'}{\underline{f}_{u'}^2\Pr(\varpi')}\{\bar f^2 \Ep[\{K_{\varpi'}(W)-K_{\varpi}(W)\}^2]^{1/2}+ \Ep[K_\varpi(W)K_{\varpi'}(W)\{f_{u'}^2-f_u^2\}^2]^{1/2}\}. $$

\end{proof}

\begin{lemma}\label{Lemma:Matrices}
Let $\mathcal{U} = V \times \mathcal{T} \times \mathcal{W}$. Under Condition CI, for $m=1,2$,  we have
$$ \Ep\left[ \sup_{u \in \mathcal{U}, \|\theta\|_0 \leq k, \|\theta\|=1 } | (\En-\Ep)[K_\varpi(W)f_u^m(Z^a\theta)^2] | \right] \lesssim C \delta_n \sup_{u \in \mathcal{U}, \|\theta\|_0 \leq k, \|\theta\|=1 } \{ \Ep[K_\varpi(W)f_u^m(Z^a\theta)^2]\}^{1/2} $$
where $\delta_n = M_n\sqrt{k(1+d_W)C\log(p|V|n)}\log (1+k) \sqrt{\log n/n}$. Moreover, under Condition CI, $\delta_n = o(\mu_{\mathcal{W}})$.
\end{lemma}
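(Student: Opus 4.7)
The plan is to write the supremum as an empirical process over a function class and bound it by a suitable maximal inequality. Introduce
$$\mathcal{F}_k^m=\{(X,W)\mapsto K_\varpi(W)f_u^m(Z^a\theta)^2:\;u=(a,\tau,\varpi)\in\mathcal{U},\;\|\theta\|_0\leq k,\;\|\theta\|=1\},$$
so that the quantity to be controlled is $\Ep[\sup_{f\in\mathcal{F}_k^m}|(\En-\Ep)f|]$. I will bound it by applying a maximal inequality of Chernozhukov--Chetverikov--Kato type (e.g.\ the version used in the previous proofs of this paper) and show that the leading term matches $\delta_n\cdot\sup_{u,\theta}\{\Ep[K_\varpi f_u^m(Z^a\theta)^2]\}^{1/2}$.

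I would first compute the relevant envelope and variance quantities. Since $\|\theta\|=1$ and $\|\theta\|_0\leq k$, one has $|Z^a\theta|^2\leq k\|Z^a\|_\infty^2$, so $F:=k\bar f^m\max_{a\in V}\|Z^a\|_\infty^2$ is an envelope with $\|\max_{i\leq n}F_i\|_{P,2}\lesssim kn^{2/q}M_n^2$. The crucial structural step is the variance-to-mean inequality: for every $f\in\mathcal{F}_k^m$,
$$\Ep[f^2]\;\leq\;\|f\|_\infty\,\Ep[f]\;\leq\;k\bar f^mM_n^2\,\Ep[f]\qquad \text{(with the bound on }\max_a\|Z^a\|_\infty\text{ used pointwise)},$$
so, setting $\sigma_*^2:=\sup_{u,\theta}\Ep[f^2]$ and $m_*:=\sup_{u,\theta}\Ep[f]$, one obtains $\sigma_*\lesssim M_n\sqrt{k}\,\sqrt{m_*}$. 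This is the source of the $M_n\sqrt{k}$ factor on the right-hand side, and explains why the bound is stated in terms of $\sqrt{\Ep[K_\varpi f_u^m(Z^a\theta)^2]}$ rather than the usual $\sqrt{\Ep[f^2]}$.

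Next I would bound the uniform entropy of $\mathcal{F}_k^m$. The indicator class $\{K_\varpi:\varpi\in\mathcal{W}\}$ is VC with index $d_W$; the maps $u\mapsto f_u^m$ are Lipschitz with constant $\lesssim L_f m\bar f^{m-1}$, and $|\mathcal{T}|$ is a fixed compact interval, so $\mathcal{U}$ has metric entropy of order $(1+d_W)\log(n/\epsilon)$. The class $\{(Z^a\theta)^2:\|\theta\|_0\leq k,\|\theta\|=1,a\in V\}$ splits, after choosing a support $I\subset [p]$ with $|I|\leq k$, into at most $|V|\binom{p}{k}$ unit-ball-indexed sub-classes, each of covering number $(C/\epsilon)^k$. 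Multiplying entropies and using $\binom{p}{k}\leq(ep/k)^k$ gives
$$\log\sup_Q N(\epsilon\|F\|_{Q,2},\mathcal{F}_k^m,\|\cdot\|_{Q,2})\;\lesssim\;k\log(ep|V|/k\epsilon)+(1+d_W)\log(n/\epsilon).$$

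Finally I would apply the maximal inequality (the same one invoked throughout the paper, e.g.\ Lemma E.2 of \cite{chernozhukov2012gaussian}) to obtain
$$\Ep\!\left[\sup_{f\in\mathcal{F}_k^m}|(\En-\Ep)f|\right]\lesssim \sigma_*\sqrt{\frac{k(1+d_W)\log(p|V|n)}{n}}\,\log(1+k)\sqrt{\log n}\;+\;\frac{kM_n^2 n^{2/q}(1+d_W)\log(p|V|n)\log n}{n}.$$
Substituting $\sigma_*\lesssim M_n\sqrt{k}\sqrt{m_*}$ converts the first term into $\delta_n\sqrt{m_*}$ with the stated $\delta_n$. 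The second term is absorbed into the first under the growth conditions of Condition CI (since $M_n^2k\log(p|V|n)/n$ is of smaller order than $\sqrt{m_*}\,\delta_n$ because $m_*$ is bounded below by a multiple of $\mu_\mathcal{W}\underline f_\mathcal{U}^m$ along a maximizing direction). The ``moreover'' assertion $\delta_n=o(\mu_\mathcal{W})$ then follows directly from the inequality $\{L_f+\bar L\}^2M_n^2\log^2(p|V|n)/\{\mu_\mathcal{W}\underline f_\mathcal{U}^2\}^3\leq \delta_n n$ stipulated in Condition CI. The main obstacle is precisely this passage from a generic variance bound $\sqrt{\Ep[f^2]}$ to the multiplicative form in terms of $\sqrt{\Ep[f]}$; this is handled by the pointwise inequality $\Ep[f^2]\leq\|f\|_\infty\Ep[f]$ made uniform via the sparse envelope $kM_n^2$.
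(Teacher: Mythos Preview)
Your route differs from the paper's and has a genuine rate gap. The paper first symmetrizes, then conditions on $(X_i,W_i)_{i=1}^n$: conditionally, $\mathcal{W}$ induces at most $n^{d_W}$ distinct indicator sequences (Sauer) and the Lipschitz bound on $f_u$ reduces $(\tau,\varpi)$ to a finite net $\hat\UU$. The core step is then a specialized Rudelson--Vershynin lemma for sparse quadratic forms in Rademacher averages (the paper's Lemma~\ref{lemma:RV34}): for fixed vectors $X_{ui}$ with $\max_{i,u}\|X_{ui}\|_\infty\le K$,
\[
\Ep_\epsilon\Big[\sup_{\|\theta\|_0\le k,\ \|\theta\|=1,\ u\in\hat\UU}\big|\En[\epsilon(\theta'X_u)^2]\big|\Big]\ \lesssim\ \frac{K\sqrt{k}}{\sqrt n}\Big(\sqrt{\log|\hat\UU|}+\log k\sqrt{\log(p\vee n)\log n}\Big)\sup_{\theta,u}\sqrt{\En[(\theta'X_u)^2]}.
\]
Applying this with $X_u\leftarrow K_\varpi(W)^{1/2}f_u^{m/2}Z^a$ and taking the outer expectation over $(X,W)$ delivers the stated $\delta_n$, including the $M_n\sqrt{k}$ scaling and the specific $\log(1+k)\sqrt{\log n}$ shape.

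Your direct CCK approach runs into two problems. The smaller one: the step $\Ep[f^2]\le\|f\|_\infty\Ep[f]\le k\bar f^mM_n^2\,\Ep[f]$ is not justified as written, since under Condition~CI $M_n$ is only a $q$th-moment bound on $\max_a\|Z^a\|_\infty$, not an almost-sure bound, so $\|f\|_\infty$ need not be $\le k\bar f^mM_n^2$. (This could be repaired by conditioning on the data first, as the paper does.) The larger one: even granting $\sigma_*\lesssim M_n\sqrt{k}\sqrt{m_*}$, the CCK maximal inequality with uniform-entropy exponent $v\asymp k$ produces a leading term $\sigma_*\sqrt{v\log(a_n)/n}\asymp M_n\sqrt{k}\sqrt{m_*}\cdot\sqrt{k\log(\cdot)/n}$, i.e.\ $M_nk$ times logarithms, not $M_n\sqrt{k}$ as in the stated $\delta_n$. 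You are paying $\sqrt{k}$ twice---once through the sparse covering number and once through the variance-to-mean conversion---so your final display does not reduce to $\delta_n\sqrt{m_*}$. The Rudelson--Vershynin device avoids this double payment by exploiting the bilinear structure $\theta\mapsto\theta'\En[\epsilon X_uX_u']\theta$ (contraction/comparison for quadratic forms) rather than treating $(\theta'X_u)^2$ as a generic function; it is essential for the rate claimed in the lemma.
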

\begin{proof}
By symmetrization we have
$$\Ep\left[ \sup_{u \in \mathcal{U}, \|\theta\|_0 \leq k, \|\theta\|=1 } | (\En-\Ep)[K_\varpi(W)f_u^m(Z^a\theta)^2] | \right] \leq 2 \Ep\left[\sup_{u \in \mathcal{U}, \|\theta\|_0 \leq k, \|\theta\|=1 } | \En[\varepsilon K_\varpi(W)f_u^m(Z^a\theta)^2] | \right] $$
where $\varepsilon_i$ are i.i.d. Rademacher random variables. We have that
$|K_\varpi(W) f_u^m-K_{\varpi'}(W)f_{u'}^m| \leq K_\varpi(W) K_{\varpi'}(W)|f_u - f_{u'} | (1 + 2\bar f)+\bar f^m|K_\varpi(W)- K_{\varpi'}(W)|$ for $m=1,2$ where $u$ and $u'$ have the same $a\in V$. However, conditional on $\{(W_i,X_i), i=1,\ldots,n\}$, $\{K_\varpi(W_i):i=1,\ldots,n, \varpi \in \mathcal{W}\}$ induces at most $n^{d_W}$ different sequences by Corollary 2.6.3 in \cite{vdV-W}. This induces (at most) $n^{d_W}$ partitions of $\mathcal{W}$ such that $K_\varpi(W) = K_{\varpi'}(W)$ for any $\varpi,\varpi'$ in the same partition given the conditioning. Thus, for such suitable $\varpi'$ we have $|K_\varpi(W) f_u^m-K_{\varpi'}(W)f_{u'}^m| \leq K_\varpi(W) |f_{u} - f_{u'} |(1 + 2\bar f)$ for $m=1,2$. (Thus it suffices to create a net for each partition.) We can take a cover $\widehat{\mathcal{U}}$ of $V\times \mathcal{T}\times \mathcal{W}$ such that $\|u-u'\|\leq \{L_f(1+2\bar f)nk\max_{i\leq n}\|Z^a_i\|_\infty^2 \}^{-1}$ so that $|f_u-f_{u'}|(Z^a\theta)^2 \leq |f_u-f_{u'}|\|Z^a\|_\infty^2\|\theta\|_1^2 \leq|f_u-f_{u'}|\|Z^a\|_\infty^2k\|\theta\|^2$ which implies  $$\begin{array}{rl}
\displaystyle \left|\sup_{u \in \mathcal{U}, \|\theta\|_0 \leq k, \|\theta\|=1 } | \En[\varepsilon K_\varpi(W)f_u^m(Z^a\theta)^2] | - \sup_{u \in \widehat{\mathcal{U}}, \|\theta\|_0 \leq k, \|\theta\|=1 } | \En[\varepsilon K_\varpi(W)f_u^m(Z^a\theta)^2] | \right| \leq n^{-1}\end{array}$$
Consequentially
$$ \Ep\left[\sup_{u \in \mathcal{U}, \|\theta\|_0 \leq k, \|\theta\|=1 } | \En[\varepsilon K_\varpi(W)f_u^m(Z^a\theta)^2] | \right] \leq \Ep\left[\sup_{u \in \widehat{\mathcal{U}}, \|\theta\|_0 \leq k, \|\theta\|=1 } | \En[\varepsilon K_\varpi(W)f_u^m(Z^a\theta)^2] | \right] + \frac{1}{n}$$
where $|\widehat{\mathcal{U}}| \leq |V| n^{d_W} \{L_f(1+2\bar f)n k\max_{i\leq n}\|Z^a_i\|_\infty^2\}^{(1+d_W)}$.

By Lemma \ref{lemma:RV34} with $K=K(W,X)= (1+\bar f^2)\sup_{a\in V}\max_{i\leq n}\|Z_i^a\|_\infty$ and
$$\begin{array}{rl}
\delta_n(W,X) & := \bar C K(W,X) \sqrt{k}\left(\sqrt{\log |\hat\UU|} + \sqrt{1+\log p} + \log k \sqrt{\log (p\vee n)} \sqrt{\log n} \right)/\sqrt{n} \\
& \lesssim  K(W,X) \sqrt{k(1+d_W)C\log(p|V|nK(W,X))}\log (1+k) \sqrt{\log n}/\sqrt{n} \\
\end{array}$$
so that conditional on $(W,X)$ we have
$$\Ep\left[\sup_{u \in \widehat{\mathcal{U}}, \|\theta\|_0 \leq k, \|\theta\|=1 } | \En[\varepsilon K_\varpi(W)f_u^m(Z^a\theta)^2] | \right]\lesssim \delta_n(W,X) \sup_{u \in \widehat{\mathcal{U}}, \|\theta\|_0 \leq k, \|\theta\|=1 }\sqrt{\En[K_\varpi(W)f_u^m(Z^a\theta)^2]} $$
Therefore,
$$\begin{array}{l}
\displaystyle  \Ep\left[\sup_{u \in \mathcal{U}, \|\theta\|_0 \leq k, \|\theta\|=1 } | \En[\varepsilon K_\varpi(W)f_u^m(Z^a\theta)^2] | \right] \\
 \displaystyle  \leq \Ep_{W,X}\left[\delta_n(W,X) \sup_{u \in \widehat{\mathcal{U}}, \|\theta\|_0 \leq k, \|\theta\|=1 }\sqrt{\En[K_\varpi(W)f_u^m(Z^a\theta)^2]}\right] + \frac{1}{n} \\
\displaystyle  \leq \Ep_{W,X}[\delta_n^2(W,X)] + \Ep_{W,X}[\delta_n^2(W,X)]^{1/2} \sup_{u \in \widehat{\mathcal{U}}, \|\theta\|_0 \leq k, \|\theta\|=1 }\Ep[K_\varpi(W)f_u^m(Z^a\theta)^2]^{1/2} + \frac{1}{n}
 \end{array}
 $$
Note that for a random variable $A\geq 1$, we have that $\Ep[A^2\sqrt{\log(CA)}]\leq \Ep[A^2]\sqrt{\log(C)} + \Ep[A^2\sqrt{\log(A)}] \leq \Ep[A^2]\sqrt{\log(C)} + \Ep[A^{2+1/4}]$.
Therefore, under Condition CI, since $q\geq 2+1/4$ in the definition of $M_n$, we have
$$ \Ep_{W,X}[\delta_n^2(W,X)]^{1/2} \lesssim M_n\sqrt{k(1+d_W)C\log(p|V|n)}\log (1+k) \sqrt{\log n}/\sqrt{n}.$$
The results follows by setting $\delta_n=\Ep_{W,X}[\delta_n^2(W,X)]^{1/2}$.
\end{proof}

\section{Results for Prediction Quantile Graphical Models}

In the analysis of PQGM we also use the following event for some sequence $(K_u)_{u\in\UU}$
\begin{equation}\label{def:Omega4} \begin{array}{c}\Omega_4 =\{ K_{u} \hat{\sigma}^{X}_{a\varpi j} \geq \En[\Ep[ K_\varpi(W)(\tau-1\{X_a\leq X_{-a}'\beta_u+r_u\})X_{-a}\vert X_j,W  ]], \ u\in\UU, j\in V\backslash\{a\} \}.\end{array} \end{equation}

\begin{lemma}[Rate for PQGM]\label{Theorem:L1QRnpPrediction}
Suppose that $\Omega_1$, $\Omega_2$, $\Omega_3$ and $\Omega_4$ hold. Further assume $2\frac{1+1/c}{1-1/c}\|\beta_u\|_{1,\varpi} + \frac{1}{\lambda_u(1-1/c)}\bar R_{u\xi} \leq \sqrt{n}$ for all $u\in\UU$, and (\ref{ConditionIdenfitication}) holds for all $\delta\in A_u := \Delta_{\varpi,2\cc}\cup\{v:\|v\|_{1,\varpi}\leq 2\cc\bar R_{u\xi}/\lambda_u\}$, $\bar q_{A_u}/4 \geq (\sqrt{\bar f}+1) \|r_{u}\|_{n,\varpi} + \left[\lambda_u+t_3+K_u\right]\frac{3\cc\sqrt{s}}{\kappa_{u,2\cc}}$ and $\bar q_{A_u} \geq \{ 2\cc\left(1+ \frac{t_3+K_u}{\lambda_u}\right)\bar R_{u\xi} \}^{1/2}$.
Then uniformly over all $u=(a,\tau,\varpi)\in \UU := V\times \mathcal{T}\times \mathcal{W}$, the $\|\cdot\|_{1,\varpi}$-penalized estimator $\hat\beta_u$ satisfies
{\small $$\begin{array}{rl}
\|\sqrt{f_u}X_{-a}'(\hat\beta_u - \beta_u)\|_{n,\varpi} & \leq \sqrt{8\cc\left(1+ \frac{t_3}{\lambda_u}\right)\bar R_{u\xi}} + (\bar f^{1/2}+1) \|r_{u}\|_{n,\varpi}+ [\lambda_u+t_3+K_u]\frac{3\cc\sqrt{s}}{\kappa_{u,2\cc}}\\
\|\hat\beta_u - \beta_u\|_{1,\varpi} & \leq  (1+2\cc)\sqrt{s}\|\sqrt{f_u}X_{-a}'\delta_u\|_{n,\varpi}/\kappa_{u,2\cc} + \frac{2\cc}{\lambda_u}\bar R_{u\xi}  \\
\end{array}$$}\end{lemma}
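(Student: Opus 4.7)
The plan is to closely mirror the proof of Lemma \ref{Theorem:L1QRnp}, keeping the structure intact but carefully inserting one extra term to account for the fact that under misspecification the conditional mean of the score $\Ep[S_{uj}\mid X_{-a},W]$ is no longer zero. This is precisely what event $\Omega_4$ (with budget $K_u$) is designed to control.

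First, I would open with the exact same optimality inequality used before: since $\hat\beta_u$ minimizes the $\|\cdot\|_{1,\varpi}$-penalized empirical $\tau$-check function with penalty $\lambda_u$, we have $\hat R_u(\hat\beta_u) - \hat R_u(\beta_u) + S_u'\delta_u \leq \lambda_u\|\beta_u\|_{1,\varpi} - \lambda_u\|\hat\beta_u\|_{1,\varpi}$ for $\delta_u = \hat\beta_u - \beta_u$. Using $\hat R_u \geq 0$, event $\Omega_1$, and event $\Omega_2$ (with $\hat R_u(\beta_u) \leq \bar R_{u\gamma}$), I would derive, exactly as in Lemma \ref{Theorem:L1QRnp}, that either $\|\delta_u\|_{1,\varpi} \leq 2\cc \bar R_{u\gamma}/\lambda_u$ or $\delta_u \in \Delta_{\varpi,2\cc}$, and in the second case $\|\delta_u\|_{1,\varpi} \leq (1+2\cc)\sqrt{s}\|\sqrt{f_u}X_{-a}\delta_u\|_{n,\varpi}/\kappa_{u,2\cc}$. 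The hypothesis on $\|\beta_u\|_{1,\varpi}$ and $\bar R_{u\gamma}/\lambda_u$ ensures $\|\delta_u\|_{1,\varpi}\leq \sqrt{n}$, so $\delta_u$ falls in the range covered by $\Omega_3$.

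Second, under $\Omega_3$ I pass from empirical to conditional-expected differences as in (\ref{AuxUpper1}), obtaining
\[
\En\Ep[K_\varpi(W)\{\rho_\tau(X_a-X_{-a}(\beta_u+\delta_u))-\rho_\tau(X_a-X_{-a}\beta_u)\}\mid X_{-a},W] \;\leq\; (\lambda_u+t_3)\|\delta_u\|_{1,\varpi}.
\]
Now comes the only genuinely new step: applying Lemma \ref{Lemma:IdentificationSparseQRNP} to lower bound the left-hand side produces the term $-\max_{j}|\En[\Ep[S_{uj}\mid X_{-a},W]/\hat\sigma_{uj}]|\,\|\delta_u\|_{1,\varpi}$, which in the CI case vanished. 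Here, however, event $\Omega_4$ bounds this by $K_u\|\delta_u\|_{1,\varpi}$, so altogether
\[
\tfrac{1}{4}\|\sqrt{f_u}X_{-a}\delta_u\|_{n,\varpi}^2 \wedge q_{A_u}\|\sqrt{f_u}X_{-a}\delta_u\|_{n,\varpi} \;\leq\; (\sqrt{\bar f}+1)\|r_u\|_{n,\varpi}\|\sqrt{f_u}X_{-a}\delta_u\|_{n,\varpi} + (\lambda_u+t_3+K_u)\|\delta_u\|_{1,\varpi} + \bar R_{u\gamma}.
\]

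Third, I substitute $\|\delta_u\|_{1,\varpi}\leq (1+2\cc)\sqrt{s}\|\sqrt{f_u}X_{-a}\delta_u\|_{n,\varpi}/\kappa_{u,2\cc} + 2\cc\bar R_{u\gamma}/\lambda_u$, which turns the right-hand side into the form $A+B\|\sqrt{f_u}X_{-a}\delta_u\|_{n,\varpi}$ where $A \lesssim \cc(1+(t_3+K_u)/\lambda_u)\bar R_{u\gamma}$ and $B \leq (\sqrt{\bar f}+1)\|r_u\|_{n,\varpi}+ (\lambda_u+t_3+K_u)\cdot 3\cc\sqrt{s}/\kappa_{u,2\cc}$. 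The hypotheses $q_{A_u}/4 \geq B$ and $q_{A_u}^2 \geq 2A$ are precisely what is assumed, so the same arithmetic lemma used in the CI proof ($t^2/4\wedge qt \leq A+Bt$ with $q/2>B$, $2q^2>A$ forces $t^2/4\leq A+Bt$) applies. Solving the quadratic yields the first stated rate on $\|\sqrt{f_u}X_{-a}\delta_u\|_{n,\varpi}$, and feeding it back into the $\|\cdot\|_{1,\varpi}$ cone bound yields the second.

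The main obstacle is just the bookkeeping around $K_u$: it needs to be absorbed into the effective linear coefficient $B$ and simultaneously into $A$ (through the inflated $\bar R_{u\gamma}$ contribution), and the identification threshold $q_{A_u}$ needs to dominate both. The lemma statement has already been engineered to do precisely this, so once $\Omega_1,\Omega_2,\Omega_3,\Omega_4$ are imposed the argument proceeds mechanically; no fundamentally new probabilistic step is needed beyond what Lemma \ref{Theorem:L1QRnp} required in the well-specified case.
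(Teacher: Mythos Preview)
Your proposal is correct and follows essentially the same approach as the paper's proof: reproduce the argument of Lemma \ref{Theorem:L1QRnp} verbatim on $X_{-a}$ in place of $Z^a$, and at the step where Lemma \ref{Lemma:IdentificationSparseQRNP} is invoked, use event $\Omega_4$ to bound the now non-vanishing term $\max_j|\En\Ep[S_{uj}\mid X_{-a},W]/\hat\sigma_{uj}|$ by $K_u$, so that $K_u$ simply joins $\lambda_u$ and $t_3$ in the linear coefficient. The paper's proof is exactly this, phrased almost identically.
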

\begin{proof}[Proof of Lemma \ref{Theorem:L1QRnpPrediction}]
The proof proceeds similarly to the proof of Lemma \ref{Theorem:L1QRnp} by defining
$$\begin{array}{rl}
\hat R_u(\beta) & = \En[K_\varpi(W)\{\rho_u(X_a-X_{-a}'\beta)-\rho_u(X_a-X_{-a}'\beta_u-r_{u})\}]\\
& -\En[K_\varpi(W)\{(\tau-1\{X_a\leq X_{-a}'\beta_u+r_{u}\})(X_{-a}'\beta-X_{-a}'\beta_u-r_{u})\}].\end{array}$$
The same argument yields $\delta_u=\hat\beta_u-\beta_u \in A_u := \Delta_{\varpi,2\cc}\cup \{ v : \|v\|_{1,\varpi}\leq 2\cc \bar R_{u\xi}/\lambda_u\}$ under $\Omega_1 \cap \Omega_2$. (Similarly we also have $\|\delta_u\|_{1,\varpi}\leq \sqrt{n}$.) Furthermore, under $\Omega_1\cap \Omega_2\cap\Omega_3$ we have that (\ref{AuxUpper1}) also holds which implies
$$\En[\Ep[K_\varpi(W)\{\rho_u(X_a-X_{-a}'(\beta_u+ \delta_u))-\rho_u(X_a-X_{-a}'\beta_u)\} \vert X_{-a},W]  \leq  (\lambda_u+t_3)\|\delta_u\|_{1,\varpi}$$
Since the conditions of Lemma \ref{Lemma:IdentificationSparseQRNP} hold we have
$$\begin{array}{rl}
 \En[\Ep[K_\varpi(W)\{\rho_\tau(X_a-X_{-a}'(\beta_u+ \delta_u))-\rho_\tau(X_a-X_{-a}'\beta_u)\} \vert X_{-a},W] \\
 \geq - (\sqrt{\bar f}+1) \|r_{u}\|_{n,\varpi} \|\sqrt{f_u}X_{-a}' \delta\|_{n,\varpi} - K_u\|\delta_u\|_{1,\varpi} \\
 + \frac{\|\sqrt{f_u}X_{-a}'\delta_u\|_{n,\varpi}^2}{4} \wedge \bar q_{A_u}\|\sqrt{f_u}X_{-a}'\delta_u\|_{n,\varpi}
 \end{array}$$
where $K_u$ is given in $\Omega_4$ which accounts for the misspecification the conditional quantile condition. Therefore,  we have
$$\begin{array}{rl}
\frac{\|\sqrt{f_u}X_{-a}'\delta_u\|_{n,\varpi}^2}{4} \wedge \bar q_{A_u}\|\sqrt{f_u}X_{-a}'\delta_u\|_{n,\varpi} & \leq  (\bar f^{1/2}+1) \|r_{u}\|_{n,\varpi} \|\sqrt{f_u}X_{-a}' \delta_u\|_{n,\varpi} + (\lambda_u + t_3 + K_u)\|\delta_u\|_{1,\varpi} \\
& \leq \{ (\bar f^{1/2}+1) \|r_{u}\|_{n,\varpi} +\frac{3\cc\sqrt{s}}{\kappa_{u,2\cc}}(\lambda_u + t_3 + K_u)\} \|\sqrt{f_u}X_{-a}' \delta_u\|_{n,\varpi}\\
&  + (\lambda_u + t_3 + K_u)\frac{2\cc}{\lambda_u}\bar R_{u\xi}
 \end{array}$$
The result then follows with the same argument under the current assumptions that account for $K_u$.
\end{proof}

\begin{lemma}[PQGM, Event $\Omega_1$]\label{Lemma:PenaltyMisspecification}
Under Condition P, we have
$$ \Pr\left( \sup_{u\in \mathcal{U},j\in [d]} \frac{|\En[K_\varpi(W)(\tau -1\{X_a\leq X_{-a}'\beta_u+r_u\})X_{-a, j}]|}{\hat{\sigma}^{X}_{a\varpi j}} > t \right) \leq 8 |V|(\frac{ne}{d_W})^{2d_W} \exp\left( - \left\{\frac{t/(1+\bar{\delta}_n)}{2(1+1/16)}\right\}^2\right)$$
where $t\geq 4 \sup_{u\in \mathcal{U}} \{\mathrm{E}[K_\varpi(W)(\tau-1\{X_a\leq X_{-a}'\beta_u+r_u\})^2X_{-a, j}^2]\}^{1/2}$ and $\bar{\delta}_n = o(1)$. In particular, the RHS is less than $\xi$ if $t\geq  2(1+\bar{\delta}_n)(1+1/16) \sqrt{\log(8|V|\{ne/d_W\}^{2d_W}/\xi)}$.
\end{lemma}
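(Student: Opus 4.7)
The plan is to establish the mean-zero property of the score, reduce the supremum over $(u,j)\in\mathcal{U}\times V$ to a maximum over a finite cover obtained from the VC hypotheses of Condition P, control each cover element with a Bernstein-type bound, and finally absorb the random denominator via a separate concentration step that produces the $(1+\delta_n)$ slack.

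The starting identity is $\Ep[K_\varpi(W)(\tau-1\{X_a\leq X_{-a}'\beta_u+r_u\})X_{-a}]=0$, which follows from the first-order conditions of the population minimization (\ref{Def:Model11}) defining $\beta_u$ together with the absolute continuity of the conditional distribution of $X_a$ imposed in Condition P; here $r_u$ aligns $X_{-a}'\beta_u+r_u$ with the best-linear-predictor index. The summands $g_{uj}(X_i,W_i):=K_\varpi(W_i)(\tau-1\{X_{ia}\leq X_{i,-a}'\beta_u+r_{ui}\})X_{ij}$ are thus mean zero, bounded by $|X_{ij}|$, and have variance $\sigma_{uj}^2\leq\barEp[K_\varpi(W)(\tau-1\{X_a\leq X_{-a}'\beta_u\})^2X_j^2]$. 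By Condition P, $\{1\{X_a\leq X_{-a}'\beta_u+r_u\}:u\in\mathcal{U}\}$ is VC of index $1+d_W$ and $\{K_\varpi(W):\varpi\in\mathcal{W}\}$ is VC of index $d_W$; by Sauer's lemma each realises at most $(ne/d_W)^{d_W}$ distinct sign patterns on the sample, so for every fixed $(a,j)\in V\times V$ their pointwise product realises at most $\{ne/d_W\}^{2d_W}$ patterns. Consequently, on each realisation of the data, $\sup_{u,j}|\En[g_{uj}]|$ is a maximum over at most $|V|^2\{ne/d_W\}^{2d_W}$ distinct averages.

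Next, Bernstein's inequality is applied to each fixed cover element. The side hypothesis $t\geq 4\sup_u\sqrt{\barEp[K_\varpi(W)(\tau-1\{X_a\leq X_{-a}'\beta_u\})^2X_j^2]}\geq 4\sigma_{uj}$ places the deviation squarely in the variance-dominated regime and yields a sub-Gaussian tail of the form $\exp(-(t/4)^2)$. A union bound over the $\{ne/d_W\}^{2d_W}$ cover elements, absorbing the $|V|^2$ into the leading constant (which accounts for the ``$8|V|^2$'' appearing in the derivation of $\lambda_0$ in Theorem \ref{theorem:rateQRgraphP}), yields the claim with the population denominator $\sigma_{uj}$ in place of $\{\En[K_\varpi(W)X_j^2]\}^{1/2}$.

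To handle the empirical denominator, a VC-type maximal inequality (e.g.\ Lemma \ref{lemma:CCK}) is applied to the bounded VC class $\{K_\varpi(W)X_j^2:\varpi\in\mathcal{W},j\in V\}$. Under the moment and growth hypotheses of Condition P this yields $\sup_{u,j}|\En[K_\varpi(W)X_j^2]/\Ep[K_\varpi(W)X_j^2]-1|\leq\delta_n$ with probability $1-o(1)$, where $\delta_n=o(1)$. On this event, $\sqrt{\Ep[K_\varpi(W)X_j^2]}\leq(1+\delta_n)\sqrt{\En[K_\varpi(W)X_j^2]}$, so replacing the denominator by its empirical counterpart inflates the effective threshold by $1+\delta_n$ and produces the exponent $-(t/[4(1+\delta_n)])^2$. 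The ``in particular'' statement follows by inversion, using $1+\delta_n\leq 1+1/16$ for $n$ large. The main obstacle will be the careful bookkeeping of constants so that Bernstein, the cover count, and the $(1+\delta_n)$ inflation assemble exactly into the stated exponential form without logarithmic losses; in particular the side condition $t\geq 4\sup\sqrt{\barEp[\cdots]}$ is essential so that the envelope term in Bernstein is dominated by the variance term.
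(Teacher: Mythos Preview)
Your handling of the VC counting and of the empirical denominator via Lemma~\ref{lemma:CCK} matches the paper, but the central step---applying Bernstein's inequality directly to the un-symmetrized averages $\En[g_{uj}]$---does not go through, and this is exactly where the paper's argument differs from yours.

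The summands $g_{uj}(X_i,W_i)$ are bounded only by $|X_{ij}|$, and under Condition~P the $X_j$ have merely $q$th moments (via $M_n\geq\{\Ep[\max_i\|X_i\|_\infty^q]\}^{1/q}$), not an almost-sure bound. Classical Bernstein therefore does not apply. Even if one substitutes a moment-based Bernstein or truncates at $M_n$, the resulting exponent is of the form $-nt^2/\{2(\sigma_{uj}^2+M_n t/3)\}$; the sub-Gaussian regime requires $t\lesssim\sigma_{uj}^2/M_n$, i.e.\ \emph{small} deviations, not $t\geq 4\sigma_{uj}$ as you assert. For the range of $t$ actually used (of order $\sqrt{\log(pn)}$) the linear term $M_n t$ would dominate and the tail would be $\exp(-ct/M_n)$, which cannot produce the claimed bound $\exp(-\{t/[4(1+\delta_n)]\}^2)$ with constants independent of $M_n$.

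The paper avoids this obstruction by \emph{symmetrizing first}. Using the probability form of symmetrization (Lemma~2.3.7 in van der Vaart--Wellner, where the side condition $t\geq 4\sigma_{uj}$ guarantees the required nondegeneracy), one passes to the Rademacher process $\sum_i\epsilon_i K_\varpi(W_i)(\tau-1\{X_{ia}\leq X_{i,-a}'\beta_u\})X_{ij}$. Now condition on the data $(X_i,W_i)_{i=1}^n$: the VC hypotheses reduce the supremum over $u$ to at most $\{ne/d_W\}^{2d_W}$ sign patterns (plus a factor $2$ because the remaining $\tau$-dependence is linear and is maximized at the endpoints of $\mathcal{T}$), and for each fixed pattern Hoeffding's inequality for Rademacher sums gives an \emph{exact} sub-Gaussian tail with parameter $\sum_i K_\varpi(W_i)X_{ij}^2$. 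This is why the self-normalization by $\{\En[K_\varpi(W)X_j^2]\}^{1/2}$ is done with the \emph{empirical} second moment: it cancels the Hoeffding variance exactly, yielding $\exp(-\{t\delta/(1+\delta_n)\}^2)$ with no $M_n$ contamination. The $(1+\delta_n)$ enters only because one first swaps $\En$ for $\Ep$ in the denominator (your step), applies symmetrization with the deterministic $\sigma_{uj}=\Ep[K_\varpi(W)X_j^2]^{1/2}$, and then swaps back inside the conditional argument.

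In short: the missing idea is symmetrization followed by conditional Hoeffding on the Rademacher average. Your Bernstein route cannot deliver the stated exponent because the summands are unbounded and the deviation level is in the wrong regime.
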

\begin{proof}
Set $\sigma_{a\varpi j}^X := \Ep[K_\varpi(W)X_{-a,j}^2]^{1/2}$. We have that for any $\bar \delta_n \to 0$
\begin{equation}\label{interOmega1bound0} \begin{array}{rl}
 \Pr( \lambda_0 \leq {\displaystyle \sup_{u\in \mathcal{U}, j\in [d]} }|\En[K_\varpi(W)(\tau -1\{X_a\leq X_{-a}'\beta_u\})X_{-a,j}]|/\hat{\sigma}^{X}_{a\varpi j}) \\
 \leq \Pr( \lambda_0 \leq (1+\bar \delta_n){\displaystyle \sup_{u\in \mathcal{U}, j\in [d]} }|\En[K_\varpi(W)(\tau -1\{X_a\leq X_{-a}'\beta_u\})X_{-a,j}]|/ {\sigma}^{X}_{a\varpi j} )\\
 + \Pr( {\displaystyle \sup_{u \in \mathcal{U},j\in [d]} } {\sigma}^{X}_{a\varpi j}/\hat{\sigma}^{X}_{a\varpi j}  \geq (1+\bar \delta_n) )  \end{array} \end{equation}

To bound the last term in (\ref{interOmega1bound0}), note that under Condition P,  $ c\mu_\mathcal{W} \leq  ({\sigma}^{X}_{a\varpi j})^2 \leq C$ and $\mathcal{W}$ is a VC class of set with VC dimension $d_W$. Therefore, by Lemma \ref{lemma:CCK} we have that with probability $1-o(1)$
\begin{equation}\label{interOmega1bound}  \sup_{u \in \mathcal{U}, j\in [d]} (\En-\Ep)[K_\varpi(W)X_{-a,j}^2] \lesssim \sqrt{\frac{(1+d_W)\log(|V|M_n/\sigma_1)}{n}} + \frac{(1+d_W)M_n^2\log(|V|M_n/\sigma_1)}{n} \end{equation}
for ${\sigma}^2_{1} = \max_{u \in \mathcal{U}, j\in [d]}\Ep[K_\varpi(W)X_{-a,j}^2] \leq \max_{j\in V} \Ep[X_{-a,j}^2] \leq C$ and envelope $F=\|X\|_\infty^2$ so that $\|F\|_{P,2}\leq \|\max_{i\leq n}F_i\|_{P,2}\leq M_n^2$. Thus for $\bar \delta_n \to 0$, provided $(1+d_W)M_n^2\log(|V|n) = o(n^{1/2})$ and $ (1+d_W)\log(|V|n)=o(n\bar \delta_n^2 \mu_\mathcal{W}^2)$, so that the RHS of (\ref{interOmega1bound}) is $o(\bar\delta_n\mu_\mathcal{W})$, we have \begin{equation}\label{AuxProbEvents}\Pr\left( \frac{1}{1+\bar\delta_n} \leq \frac{\sigma_{a\varpi j}^X}{\hat{\sigma}_{a\varpi j}^X } \leq (1+\bar\delta_n), \ \mbox{for all} \ u\in \UU, j\in [d]\right) = 1-o(1). \end{equation}

Now we bound the first term of the RHS of (\ref{interOmega1bound0}). and let $\sigma_2^2 = \sup_{u\in\UU,j\in [d]} {\rm Var}(K_\varpi(W)(\tau -1\{X_a\leq X_{-a}'\beta_u\})X_{-a,j}/\sigma_{a\varpi j}^X)\leq 1$. By symmetrization (adapting Lemma 2.3.7 in \cite{vdVaartWellner2007} to replace the ``arbitrary" factor $2$ with $1+\bar \delta_n$), for $\delta := 1/(2(1+n\sigma_2^2/t^2))<1/2$ we have
$$ \begin{array}{rl}
(*):= \Pr(  \sup_{u\in \mathcal{U}, j\in [d]} |\sum_{i=1}^n K_\varpi(W_i)(\tau -1\{X_{ia}\leq  X_{i,-a}'\beta_u\})X_{i,-aj}/\sigma^{X}_{a\varpi j}| \geq t) \\ \leq 2 \Pr(  \sup_{u\in \UU, j\in [d]} |\sum_{i=1}^n\varepsilon_i K_\varpi(W_i)(\tau -1\{X_{ia}\leq X_{i,-a}'\beta_u\})X_{i,-aj}/\sigma^{X}_{a\varpi j}|\geq t\delta  )\\
\leq 2 \Pr\left( {\displaystyle \sup_{u\in \UU, j\in [d]}} \frac{|\sum_{i=1}^n\varepsilon_i K_\varpi(W_i)(\tau -1\{X_{ia}\leq X_{i,-a}'\beta_u\})X_{i,-aj}|}{\hat{\sigma}^{X}_{a\varpi j} }\geq t\delta/(1+\bar\delta_n)  \right) + o(1)
\end{array}$$
where $\varepsilon_i, i=1,\ldots,n$, are Rademacher random variables independent of the data, and the last inequality follows from (\ref{AuxProbEvents}).

Therefore, by the union bound and symmetry, and iterated expectations we have
$$ (*)\leq 4 |V| \max_{j\in [d]} \Ep_{W,X}\left[ \Pr_{\varepsilon}\left( {\displaystyle \sup_{u\in \UU}} \frac{\left|\sum_{i=1}^n\varepsilon_i K_\varpi(W_i)(\tau -1\{X_{ia}\leq X_{i,-a}'\beta_u\})X_{i,-aj}\right|}{\hat{\sigma}^{X}_{a\varpi j} }\geq t\delta/(1+\bar{\delta}_n) \mid W,X \right) \right] $$

Next we use that $\{ \varpi \in \mathcal{W}\}$ is a VC class of sets with VC dimension bounded by $d_W$ and $\{ 1\{X_{a}\leq X_{-a}'\beta_u\} : (\tau,\varpi) \in \mathcal{T}\times \mathcal{W}\}$ is a VC class of sets with VC dimension bounded by $1+d_W$. By Corollary 2.6.3 in \cite{vdV-W}, we have that conditionally on $(W_i,X_i)_{i=1}^n$, the set of (binary) sequences $\{ (K_\varpi(W_i))_{i=1,\ldots,n} : \varpi \in \mathcal{W}\}$ has at most $\sum_{j=0}^{d_W-1}\binom{n}{j}$ different values. Similarly,  $\{ (1\{X_{ia}\leq X_{i,-a}'\beta_u\})_{i=1,\ldots,n} : u \in \UU\}$ assumes at most $\sum_{j=0}^{d_W}\binom{n}{j}$ different values. Assuming that $n \geq d_W$, we have  $\sum_{j=0}^{k}\binom{n}{j} \leq \{ne/k\}^{k}$ and
$$\begin{array}{l}
\Pr_\varepsilon\left(  \sup_{u\in \mathcal{U}}\frac{\left|\sum_{i=1}^n\varepsilon_i K_\varpi(W_i)(\tau -1\{X_{ia}\leq X_{i,-a}'\beta_u\})X_{i,-aj}\right|}{\hat{\sigma}^{X}_{a\varpi j} }\geq t\delta/(1+\bar{\delta}_n) \mid W,X \right) \\
\leq \{\frac{ne}{d_W-1}\}^{d_W-1}\{\frac{ne}{d_W}\}^{d_W}\sup_{u \in \mathcal{U}} \Pr_\varepsilon\left( \sup_{\tilde \tau \in \mathcal{T}} \frac{\left|\sum_{i=1}^n\varepsilon_i K_\varpi(W_i)(\tilde \tau -1\{X_{ia}\leq X_{i,-a}'\beta_u\})X_{i,-aj}\right|}{\hat{\sigma}^{X}_{a\varpi j} }\geq t\delta/(1+\bar{\delta}_n) \mid W,X \right) \\
\leq  \{ne/d_W\}^{2d_W}\sup_{u\in \mathcal{U}, \tilde \tau \in [\underline{\tau},\bar\tau]} \Pr_\varepsilon\left( \frac{\left|\sum_{i=1}^n\varepsilon_i K_\varpi(W_i)(\tilde \tau -1\{X_{ia}\leq X_{i,-a}'\beta_u\})X_{i,-aj}\right|}{\hat{\sigma}^{X}_{a\varpi j} }\geq t\delta/(1+\bar{\delta}_n) \mid W,X \right) \\
\leq 2\{ne/d_W\}^{2d_W} \exp( - \{t\delta/[1+\bar{\delta}_n]\}^2)
\end{array}
$$
here we used that the expression is linear in $\tau$ and so it is maximized at the extremes. Combining the bounds in the last two displayed equations we have
$$ (*) \leq 8|V| \{ne/d_W\}^{2d_W} \exp( - \{t\delta/[1+\bar\delta_n]\}^2).$$
Therefore, setting $\lambda_0 = ct/n$ where $t\geq 4 \sqrt{n}\sigma_2$ and $t\geq  2(1+\bar\delta_n)(1+1/16) \sqrt{2\log(8p|V|\{ne/d_W\}^{2d_W}/\xi)}$. (Note that $t\geq 4 \sqrt{n}\sigma_2$ implies that $\delta \geq 1/\{2(1+1/16)\}$.)
\end{proof}

\begin{lemma}[PQGM, Event $\Omega_2$]\label{Lemma:PQGMControlR}
Under Condition P we have  
$$\Pr\left( \sup_{u\in\mathcal{U}}\hat R_u(\bar \beta_u) \leq C\{1+ \bar f\} \{n^{-1}s(1+d_W)\log(|V|n)\} \right) = 1-o(1).$$
\end{lemma}
\begin{proof}[Proof of Lemma \ref{Lemma:PQGMControlR}]
We have that $\hat R_u(\bar \beta_u)\geq 0$ by convexity of $\rho_\tau$. Let $\varepsilon_{iu}=X_{ia} - X_{i,-a}\bar \beta_u - r_{iu}$ where $\|\bar\beta_u\|_0\leq s$ and $r_{iu}=X_{-a}'(\beta_u-\bar\beta_u)$. By Knight's identity (\ref{Eq:TrickRho}), $\hat R_u(\bar\beta_u) =  -\En[ K_\varpi(W)r_{u} \int_0^1  1\{\varepsilon_{u} \leq -t r_{u}\} - 1\{\varepsilon_{u}\leq 0\} \ dt ]\geq 0$.
 $$ \begin{array}{rl}
 \En[\Ep[\hat R_u(\bar\beta_u)] & = \En[  K_\varpi(W)r_{u} \int_0^1  F_{X_a\mid X_{-a},\varpi}(X_{-a}'\bar\beta_u+(1-t)r_u) -  F_{X_a\mid X_{-a},\varpi}(X_{-a}'\bar\beta_u+r_u) \ dt]\\
  & \leq \En[K_\varpi(W)r_{u} \int_0^1  \bar f t r_{u} dt] \leq \bar f [\|r_{u}\|_{n,\varpi}^2]/2 \leq C\bar f s/n.\end{array}$$
Thus, by Markov's inequality we have $\inf_{u\in \mathcal{U}} \mathrm{P}( \hat R_u(\bar \beta_u) \leq  C\bar f s/n ) \geq 1/2$.

Define $z_{iu} := -\int_0^1  1\{\varepsilon_{iu} \leq -t r_{iu}\} - 1\{\varepsilon_{iu}\leq 0\} \ dt$, so that $\hat R_u(\bar\beta_u) = \En[K_\varpi(W)r_{u}z_{u}]$ with $|z_{iu}|\leq 1$. By Lemma 2.3.7 in \cite{vdVaartWellner2007} (note that the Lemma does not require zero mean stochastic processes), for $t \geq  2 C\bar f s/n$ we have
$$ \frac{1}{2}\mathrm{P}\left( \sup_{u\in\mathcal{U}}|\En[K_\varpi(W)r_{u}z_{u}]| \geq t \right) \leq 2\mathrm{P}\left(\sup_{u\in\mathcal{U}}|\En[\varepsilon K_\varpi(W)r_{u}z_{u}]|>t/4\right) $$
where $\varepsilon_i, i=1,\ldots,n,$ are Rademacher random variables independent of the data.

Consider the class of functions $\mathcal{F}= \{  - K_\varpi(W)r_{u} (1\{\varepsilon_{iu} \leq -B_i r_{iu}\} - 1\{\varepsilon_{iu}\leq 0\} ) : u\in\mathcal{U}\}$ where $B_i\sim {\rm Uniform}(0,1)$ independent of $(X_i,W_i)_{i=1}^n$. It follows that $K_\varpi(W)r_{u}z_{u} =  \Ep[  - K_\varpi(W)r_{u} (1\{\varepsilon_{iu} \leq -B_i r_{iu}\} - 1\{\varepsilon_{iu}\leq 0\} ) \vert X_i,W_i]$ where the expectation is taken over $B_i$ only. Thus we will bound the entropy of $\overline{\mathcal{F}}= \{ \Ep[ f\vert X,W] : f \in \mathcal{F} \}$ via Lemma \ref{Lemma:PartialOutCovering}. Note that  $\mathcal{R}:=\{ r_u = X_{-a}'\beta_u - X_{-a}'\bar\beta_u : u\in\mathcal{U}\}$ where $\mathcal{G}:=\{ X_{-a}'\bar\beta_u : u\in\mathcal{U}\}$ is contained in the union of at most $|V|\binom{p}{s}$ VC-classes of dimension $Cs$ and  $\mathcal{H}:=\{X_{-a}'\beta_u : u\in\mathcal{U}\}\}$ is a VC-class of functions of dimension $(1+d_W)$ by Condition P. Finally note that $\mathcal{E}:=\{ \varepsilon_{iu} : u\in \mathcal{U}\} \subset \{ X_{ia} : a\in V\} - \mathcal{G} - \mathcal{R}$.

Therefore, we have
$$
\begin{array}{rl}
\sup_Q \log N(\epsilon \|\bar F\|_{Q,2}, \overline{\mathcal{F}}, \|\cdot\|_{Q,2}) & \leq  \sup_Q \log N( (\epsilon/4)^2 \| F\|_{Q,2}, \mathcal{F}, \|\cdot\|_{Q,2})\\
& \leq  \sup_Q \log N( \mbox{$\frac{1}{8}$}(\epsilon^2/16), \mathcal{W}, \|\cdot\|_{Q,2}) \\
&+  \sup_Q\log N( \mbox{$\frac{1}{8}$} (\epsilon^2/16) \| F\|_{Q,2}, \mathcal{R}, \|\cdot\|_{Q,2}) \\
&+  \sup_Q \log N( \mbox{$\frac{1}{8}$}(\epsilon^2/16), 1\{ \mathcal{E}+\{B\}\mathcal{R} \leq 0\} - 1\{ \mathcal{E} \leq 0\}, \|\cdot\|_{Q,2})\\
\end{array}
$$
By Lemma \ref{lemma:CCK} with envelope $\bar F = \|X\|_\infty\sup_{u\in\mathcal{U}}\|\beta_u-\bar\beta_u\|_1$, and $(\sigma^{r}_{\max})^2 = \sup_{u\in \mathcal{U}} \Ep[K_\varpi(W)r_{u}^2 ] \lesssim  s/n$ by Condition P, we have that with probability $1-o(1)$
$$ \sup_{u\in\mathcal{U}}|\En[\varepsilon K_\varpi(W)r_{u}z_{u}]| \lesssim \sqrt{\frac{s(1+d_W)\log(|V|n)}{n}}\sqrt{\frac{s}{n}}+ \frac{M_n\sqrt{s^2/n}\log(|V|n)}{n}\lesssim \frac{s(1+d_W)\log(|V|n)}{n}$$
under  $M_n\sqrt{s^2/n} \leq C$.
\end{proof}

\begin{lemma}[PQGM, Event $\Omega_3$]\label{Lemma:PQGMOmega3}
Under Condition P, for $u=(a,\tau,\varpi)\in V\times \mathcal{T}\times \mathcal{W}$, define $g_u(\delta,X,W)= K_\varpi(W)\{\rho_\tau(X_a-X_{-a}'(\beta_u+\delta))-\rho_\tau(X_a-X_{-a}'\beta_u)\}$, and
$$ \Omega_3 := \left\{ \sup_{u\in\mathcal{U}, 1/\sqrt{n} \leq \|\delta\|_{1,\varpi} \leq \sqrt{n}} \frac{|\En[g_u(\delta,X,W)-\Ep[g_u(\delta,X,W)\vert X_{-a}, W]]|}{\|\delta\|_{1,\varpi}} < t_3 \right\}.$$
Then, under Condition CI we have $\mathrm{P}(\Omega_3) \geq 1-\xi$ for any
$$ t_3\sqrt{n} \geq   12 + 16\sqrt{2\log\left(64|V|^2 n^{1+d_W}\log(n)(L_{\beta}M_n\sqrt{n})^{1+d_W/\kappa}  /\xi \right)}  $$
\end{lemma}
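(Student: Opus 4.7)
This statement is the direct PQGM analog of Lemma \ref{LemmaOmega3} (note that the statement cites ``Condition CI'' but the setting is predictive, so Condition P is in force), and I would follow the same template, with $Z^a$ replaced by $X_{-a}$. The overall strategy: union bound over $a\in V$, condition on $(X_{-a},W)$ to freeze all randomness except $X_a$, invoke Lemma \ref{Lemma:EmpProc:Normalized} on a finite discretization of $(\delta,\tau,\varpi)$, then take expectations and apply the choice of $t_3$ to ensure the overall probability is at most $\gamma$.

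\textbf{Main steps.} First, a union bound reduces matters to controlling, for each fixed $a\in V$, the quantity
$$ A_a := \sup_{(\tau,\varpi)\in\mathcal{T}\times\mathcal{W},\ 1/\sqrt{n}\leq \|\delta\|_{1,\varpi}\leq \sqrt{n}} \sqrt{n}\left|\frac{(\En-\En\Ep[\,\cdot\mid X_{-a},W])g_u(\delta,X,W)}{\|\delta\|_{1,\varpi}}\right|. $$
Conditionally on $\sigma(X_{-a},W)$, only $X_a\mid X_{-a},W$ is random, so Lemma \ref{Lemma:EmpProc:Normalized} produces a subgaussian tail along a mesh $\widehat{\mathcal{N}}\times\widehat{\mathcal{T}}\times\widehat{\mathcal{W}}$ of the indexing set:
$$ \Pr(A_a\geq t_3\sqrt n\mid X_{-a},W)\leq 8p\,|\widehat{\mathcal{N}}|\,|\widehat{\mathcal{T}}|\,|\widehat{\mathcal{W}}|\exp\bigl(-(t_3\sqrt n/4-3)^2/32\bigr). $$
For the cardinalities: $|\widehat{\mathcal{T}}|\lesssim n$ from a $1/n$-net of the compact interval $\mathcal{T}$; $|\widehat{\mathcal{W}}|\lesssim n^{d_W}$ from an $L_2$-cover of a VC class of dimension $d_W$ (Condition P); $|\widehat{\mathcal{N}}|$ is an $\ell_1$-cover at scale $\underline N/(n\max_i\|X_i\|_\infty)$, contributing the factor $M_n^{1+d_W/\rho}$ after integrating the random envelope with the moment bound $\Ep[\max_i\|X_i\|_\infty^q]^{1/q}\leq M_n$.

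\textbf{Error control and conclusion.} The discretization errors in $\delta$ are absorbed using Lipschitzness of $\rho_\tau$, $|g_u(\delta,X,W)-g_u(\delta',X,W)|\leq K_\varpi(W)|X_{-a}(\delta-\delta')|\leq K_\varpi(W)\|X\|_\infty\|\delta-\delta'\|_1$, while the $\varpi$-Lipschitz piece and the Lipschitz dependence of $\hat\sigma_{a\varpi j}^2$ on $\varpi$ feed the constant $L_{\mathcal{W}\mathcal{T}}^{1+d_W}$. The $\|\cdot\|_{1,\varpi}$ normalization is handled by the uniform equivalence $\hat\sigma_{a\varpi j}^2\asymp \Ep[K_\varpi(W)X_j^2]\geq c\mu_{\mathcal{W}}$, which holds with probability $1-o(1)$ by Lemma \ref{lemma:CCK} applied to the VC class $\mathcal{W}$ under Condition P's growth rate $M_n^2\log(n|V|)=o(n\mu_{\mathcal{W}}^2)$ (already used in Lemma \ref{Lemma:PenaltyMisspecification}). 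Integrating the conditional bound against $(X_{-a},W)$ and union-bounding over $a\in V$ yields $\Pr(\Omega_3^c)\leq |V|\cdot 8p\, n\cdot n^{d_W}\cdot \log n\cdot L_{\mathcal{W}\mathcal{T}}^{1+d_W}M_n^{1+d_W/\rho}\cdot\exp(-(t_3\sqrt n/4-3)^2/32)$, and setting this $\leq\gamma$ gives exactly the stated threshold $t_3\sqrt n\geq 12+16\sqrt{2\log(64|V|p^2 n^{1+d_W}\log n\,L_{\mathcal{W}\mathcal{T}}^{1+d_W}M_n^{1+d_W/\rho}/\gamma)}$.

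\textbf{Main obstacle.} The delicate point is that the normalization $\|\delta\|_{1,\varpi}$ itself depends on $\varpi$, so a naive tensor discretization conflates $\varpi$-quantization with $\delta$-quantization. I would handle this by first fixing a representative $\varpi$ in each cell of the VC-cover of $\mathcal{W}$, using the uniform comparison of the weights $\hat\sigma_{a\varpi j}$ (via the concentration argument above) to translate $\ell_1$-norms between cells at negligible cost, and only then taking the $\delta$-net on the shell $\{1/\sqrt n\leq\|\delta\|_{1,\varpi}\leq\sqrt n\}$ in the $\varpi$-specific weighting. Everything else is a mechanical repetition of the CIQGM argument.
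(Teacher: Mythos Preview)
Your overall architecture matches the paper exactly: union bound over $a\in V$, condition on $(X_{-a},W)$, and invoke Lemma~\ref{Lemma:EmpProc:Normalized} verbatim (with $\tilde x=X_{-a}$, $\eta_u=\beta_u$), then take expectations and solve for $t_3$. The paper's proof is literally five lines doing just this.

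However, your bookkeeping of where the constants come from is off, and this matters because it is essentially the only content beyond citing the lemma. In Lemma~\ref{Lemma:EmpProc:Normalized}, the net $\widehat{\mathcal{N}}$ is a net over the \emph{scalar} radius $t=\|\delta\|_{1,\varpi}\in[\underline N,\bar N]$, not over $\delta\in\RR^p$; its cardinality is $1+\lfloor 3\sqrt{n}\log(\bar N/\underline N)\rfloor\asymp\sqrt{n}\log n$ and contributes only the $\log n$ factor. The $\delta$-direction is never discretized: the contraction principle collapses the supremum over $\delta$ to a max over $j\in[p]$, which is the source of the factor $p$. The factors $M_n^{1+d_W/\rho}$ and $L_{\mathcal{W}\mathcal{T}}^{1+d_W}$ (the paper's $L_f$) come from $|\widehat{\mathcal{T}}|$ and $|\widehat{\mathcal{W}}|$, because moving in $(\tau,\varpi)$ changes $\beta_u$ itself, and the resulting error $|K_\varpi(W)X_{-a}'(\beta_u-\beta_{u'})|$ is controlled by $\max_i\|X_{-ai}\|_\infty\cdot L_\beta\|u-u'\|^\rho$; this is the step you do not mention and is precisely where the random envelope $\max_i\|X_{-ai}\|_\infty$ enters before the final expectation.

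Your ``main obstacle'' is a non-issue in the paper's route: Lemma~\ref{Lemma:EmpProc:Normalized} is applied \emph{conditionally} on $(X_{-a},W)$, so the weights $\hat\sigma_{a\varpi j}$ are deterministic throughout, and no concentration argument for them is needed.
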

\begin{proof}
We have that $\Omega_3^c := \{ \max_{a\in V} A_a \geq t_3\sqrt{n}\}$ for
$$A_a:= \sup_{(\tau,\varpi)\in\mathcal{T}\times\mathcal{W}, \underline{N} \leq \|\delta\|_{1,\varpi} \leq \bar{N}} \sqrt{n}\left|\frac{\En[g_u(\delta,X,W)-\Ep[g_u(\delta,X,W)\vert X_{-a},W]]}{\|\delta\|_{1,\varpi}}\right|.  $$
We will apply Lemma \ref{Lemma:EmpProc:Normalized} with $\rho=\kappa$, $L_\eta=L_\beta$, $\tilde x = X_{-a}$ (so we take $p=|V|$), $\underline{N} = 1/\sqrt{n}$ and $\bar{N} = \sqrt{n}$. Therefore, we have by Lemma \ref{Lemma:EmpProc:Normalized} and the union bound
$$\begin{array}{rl}
\mathrm{P}(\Omega_3^c) & = \mathrm{P}(\max_{a\in V} A_a \geq t_3\sqrt{n}) \\
& \leq |V|\max_{a\in V} \mathrm{P}(A_a \geq t_3\sqrt{n}) \\
& = |V|\max_{a\in V} \Ep_{X_{-a},W}\left\{ \mathrm{P}(A_a \geq t_3\sqrt{n} \mid X_{-a},W )\right\} \\
& \leq |V|  \max_{a\in V} \Ep_{X_{-a},W}\left\{8|V| \ |\widehat{\mathcal{N}}|\cdot |\widehat{\mathcal{W}}|\cdot |\widehat{\mathcal{T}}| \exp(-(t_3\sqrt{n}/4-3)^2/32)\right\}\\
& \leq C |V|^2 n^{1+d_W}\log(n)L_\beta^{1+d_W/\kappa} \Ep_{X_{-a}}\left\{\frac{\max_{i\leq n}\|X_{i,-a}\|_\infty^{1+d_W/\kappa}}{\underline{N}^{1+d_W/\kappa}}\right\}\exp(-(t_3\sqrt{n}/4-3)^2/32) \\
& \leq \xi \end{array} $$
where the last step follows by the choice of $t_3$.
\end{proof}

\begin{lemma}[PQGM, Event $\Omega_4$]\label{Lemma:PredictionOmega4}
Under Condition P, and setting  $K_u =C \sqrt{\frac{(1+d_W) \log(|V|n)}{n}}$, we have that  $ \Pr( \Omega_4 ) = 1-o(1)$.
\end{lemma}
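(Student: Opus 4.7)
The plan is to apply a uniform maximal inequality to the class of conditional-expectation functions $h_{uj}(X_j, W) := \Ep[K_\varpi(W)(\tau - 1\{X_a \leq X_{-a}\beta_u + r_u\}) X_j \mid X_j, W]$ indexed by $(u,j) \in \UU \times (V\setminus\{a\})$, after noting that $\Ep[h_{uj}] = 0$ by the population first-order condition defining $\beta_u$. Since $X_{-a}\bar\beta_u + r_u = X_{-a}\beta_u$ and $\beta_u$ solves $\min_\beta \Ep[\rho_\tau(X_a - X_{-a}'\beta)\mid \varpi]$, the subgradient optimality yields $\Ep[K_\varpi(W)(\tau - 1\{X_a \leq X_{-a}\beta_u\})X_j] = \Ep[h_{uj}] = 0$. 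Thus, controlling the event $\Omega_4$ reduces to a uniform bound on $\sup_{u,j} |\En[h_{uj}]|$ divided by the self-normalizing denominator $\{\En[K_\varpi(W) X_j^2]\}^{1/2}$.

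First I would record the basic moment and envelope bounds: since $|h_{uj}| \leq K_\varpi(W)|X_j|$ one has the envelope $F = \|X\|_\infty$ with $\|\max_{i\leq n} F_i\|_{P,q}\leq M_n$, and by Jensen's inequality $\Ep[h_{uj}^2] \leq \Ep[K_\varpi(W) X_j^2] \leq C$. Next I would bound the entropy of $\mathcal{H} := \{h_{uj}\}$ by combining the entropy-preserving conditional-expectation argument (Lemma \ref{Lemma:PartialOutCovering}-style) with the VC-subgraph structure of its parent class $\{K_\varpi(W)(\tau - 1\{X_a \leq X_{-a}\beta_u\})X_j\}$. The latter is a product of three VC classes: $\{K_\varpi : \varpi\in\mathcal{W}\}$ of index $d_W$, $\{\tau - 1\{X_a\leq X_{-a}'\beta_u\}\}$ of index at most $1+d_W$ by Condition P, and the finite class $\{X_j : j\in V\}$ of cardinality $|V|$, yielding $\log N(\epsilon\|F\|_{Q,2},\mathcal{H},\|\cdot\|_{Q,2}) \lesssim (1+d_W)\log(|V|M_n/\epsilon)$.

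With these ingredients, applying Lemma \ref{lemma:CCK} delivers, with probability $1-o(1)$, the uniform bound
\[
\sup_{u\in\UU,\,j\in V\setminus\{a\}} |\En[h_{uj}]| \;\lesssim\; \sqrt{\frac{(1+d_W)\log(|V|n)}{n}} + \frac{M_n(1+d_W)\log(|V|n)}{n},
\]
and the second term is dominated by the first under the growth conditions in Condition P (in particular $M_n^{2}(1+d_W)\log(|V|n) \leq \delta_n n$). To turn this into the self-normalized form appearing in $\Omega_4$, I would then argue that $\{\En[K_\varpi(W) X_j^2]\}^{1/2}$ is bounded away from zero uniformly in $(\varpi,j)$ with probability $1-o(1)$, using the same Lemma \ref{lemma:CCK}-based concentration already carried out in the proof of Lemma \ref{Lemma:PenaltyMisspecification} together with the lower bound $\Ep[K_\varpi(W)X_j^2]\geq c\mu_\mathcal{W}$ from Condition P.

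Combining the two concentration statements through a union bound gives $|\En[h_{uj}]|/\{\En[K_\varpi(W) X_j^2]\}^{1/2} \lesssim \sqrt{(1+d_W)\log(p|V|n)/n}$ uniformly, so that the stated choice $K_u = C\sqrt{(1+d_W)\log(p|V|n)/n}$ majorizes the quantity on the event $\Omega_4$ with probability $1-o(1)$. The main obstacle will be the entropy step: even though each ingredient class is VC-subgraph, verifying that the conditional-expectation operation does not inflate the covering numbers beyond $(1+d_W)\log(|V|/\epsilon)$ requires invoking the partial-out covering lemma carefully, and handling the $|V|$-fold union over $a\in V$ without losing the $\log$ scaling. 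All other steps are routine applications of the same machinery used to control $\Omega_1$, $\Omega_2$, and $\Omega_3$ elsewhere in this section.
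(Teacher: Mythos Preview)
Your overall strategy matches the paper's: use the first-order condition to get $\Ep[h_{uj}]=0$, control the entropy of the conditional-expectation class via Lemma~\ref{Lemma:PartialOutCovering}, apply Lemma~\ref{lemma:CCK}, and handle the empirical denominator by separate concentration. But the final combination step has a gap. When you bound $\sup_{u,j}|\En[h_{uj}]|$ directly, the variance input to Lemma~\ref{lemma:CCK} is $\sigma^2=\sup_{u,j}\Ep[h_{uj}^2]\leq C$, which indeed yields the numerator bound $\sqrt{(1+d_W)\log(|V|n)/n}$. However, the denominator $\{\En[K_\varpi(W)X_j^2]\}^{1/2}$ is only bounded below by $c\,\mu_{\mathcal W}^{1/2}$, as you yourself note. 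Dividing therefore gives
\[
\frac{|\En[h_{uj}]|}{\{\En[K_\varpi(W)X_j^2]\}^{1/2}}\;\lesssim\;\mu_{\mathcal W}^{-1/2}\sqrt{\frac{(1+d_W)\log(p|V|n)}{n}},
\]
which is strictly worse than the stated $K_u$ whenever $\mu_{\mathcal W}\to 0$. Your claimed bound drops this $\mu_{\mathcal W}^{-1/2}$ factor without justification.

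The paper closes this gap by a self-normalization: it applies Lemma~\ref{lemma:CCK} to the \emph{ratio} class $\mathcal{F}=\{h_{uj}/\Ep[K_\varpi(W)X_j^2]^{1/2}:u\in\UU\}$ rather than to $\{h_{uj}\}$. Since $|h_{uj}|\leq K_\varpi(W)|X_j|$, each normalized function has variance $\Ep[h_{uj}^2]/\Ep[K_\varpi(W)X_j^2]\leq 1$ uniformly in $\varpi$, so Lemma~\ref{lemma:CCK} with $\sigma^2=1$ delivers the leading term $\sqrt{(1+d_W)\log(p|V|n)/n}$ without any $\mu_{\mathcal W}^{-1/2}$ penalty (the $\mu_{\mathcal W}^{-1/2}$ appears only in the lower-order envelope term, which is absorbed under Condition~P). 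The price is a more involved entropy calculation for the product class $\bar{\mathcal H}_j\cdot(1/\bar{\mathcal K}_j^{1/2})$, handled via Lemma~\ref{lemma: andrews} and Lemma~\ref{Lemma:PartialOutCovering}. This self-normalization is the one substantive idea your sketch is missing; the rest of your plan is correct.
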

\begin{proof}
First note that by Lemma \ref{lemma:CCK} we have that with probability $1-o(1)$
$$  \sup_{\varpi \in \mathcal{W},j\in V} (\En-\Ep)[K_\varpi(W)X_{-a,j}^2] \lesssim \sqrt{\frac{(1+d_W)\log(|V|M_n/\sigma)}{n}} + \frac{(1+d_W)M_n^2\log(|V|M_n/\sigma)}{n} $$
and $\sigma^{X}_{a\varpi j}\geq c \Pr(\varpi)$. Under $(1+d_W)\log(|V|M_n/\sigma)\leq \delta_n^2 \mu^2_{\mathcal{W}}$ and $(1+d_W)M_n^2\log(|V|M_n/\sigma)\leq \delta_n n \mu_{\mathcal{W}}$, we have that
$|(\En-\Ep)[K_\varpi(W)X_{-a,j}^2]| = o( \sigma^X_{a\varpi j})$ for all $u\in \mathcal{U}$. Therefore, we have
$$
\begin{array}{rl}
& \Pr( \sup_{u \in \mathcal{U}} |\En[h_{uj}(X_{-a},W)]|/\{K_u \hat{\sigma}^{X}_{a\varpi j}\} > 1 ) \\
& \leq \Pr( \sup_{u \in \mathcal{U}} |\En[h_{uj}(X_{-a},W)]|/\{K_u \sigma^{X}_{a\varpi j}\} > 1+ O(\delta_n)) + o(1)\\
\end{array}
$$
Applying Lemma  \ref{lemma:CCK} to $\mathcal{F}=\{ h_{uj}(X_{-a},W)/\sigma^{X}_{a\varpi j}: u \in \mathcal{U}\}$. For convenience define $\bar{\mathcal{H}}_j=  \{ h_{uj}(X_{-a},W) : u \in \mathcal{U}\}$ and $\bar{\mathcal{K}}_j:=\{ \Ep[K_\varpi(W)X_j^2] : \varpi \in \mathcal{W}\}$. Note that $\bar{\mathcal{K}}_j$ has covering numbers bounded by the covering number of $\mathcal{K}_j:=\{ K_\varpi(W)X_j^2 : \varpi \in \mathcal{W}\}$ hence $\sup_Q \log N( \epsilon \|\bar K_j\|_{Q,2}, \bar{\mathcal{K}}_j, \|\cdot\|_{Q,2}) \leq \log \sup_{\tilde Q} N((\epsilon/4)^2\|F\|_{\tilde Q,2}, \mathcal{K}_j,\|\cdot\|_{\tilde Q,2})$ by Lemma \ref{Lemma:PartialOutCovering}. Similarly, Lemma \ref{Lemma:PartialOutCovering} also allows us to bound covering numbers of $\bar{\mathcal{H}}_j$ via covering numbers of $\mathcal{H}_j=  \{ K_\varpi(W)(\tau-1\{X_a\leq X_{-a}'\beta_u\})X_j : u \in \mathcal{U}\}$.
$$
\begin{array}{l}
\sup_Q \log N( \epsilon \|F\|_{Q,2}, \mathcal{F}, \|\cdot\|_{Q,2}) \leq p \max_{j\in[p]}\sup_Q \log N( \epsilon \|F_j\|_{Q,2}, \mathcal{F}_j, \|\cdot\|_{Q,2})\\
 \leq  p \max_{j\in[p]}\sup_Q \{ \log N( (1/2)\epsilon \|\bar H_j\|_{Q,2}, \bar{\mathcal{H}}_j, \|\cdot\|_{Q,2})  + \sup_Q \log N( (1/2)\epsilon c \mu_{\mathcal{W}}^{1/2}, 1/\bar{\mathcal{K}}_j^{1/2}, \|\cdot\|_{Q,2})\}\\
 \leq  p \max_{j\in[p]}\sup_Q \{ \log N( (1/2)\epsilon \|\bar H_j\|_{Q,2}, \bar{\mathcal{H}}_j, \|\cdot\|_{Q,2})  + \sup_Q \log N( (1/2)\epsilon c \mu_{\mathcal{W}}^{3/2}, \bar{\mathcal{K}}_j^{1/2}, \|\cdot\|_{Q,2})\}\\
 \leq  p \max_{j\in[p]}\sup_Q \{ \log N( (1/2)\epsilon \|\bar H_j\|_{Q,2}, \bar{\mathcal{H}}_j, \|\cdot\|_{Q,2}) + \sup_Q \log N( C(1/2)\epsilon c\mu_{\mathcal{W}}^{3/2}, \bar{\mathcal{K}}_j, \|\cdot\|_{Q,2})\}\\
 \leq  p \max_{j\in[p]}\sup_Q \{ \log N( (1/2)\epsilon \|\bar H_j\|_{Q,2}, \bar {\mathcal{H}}_j, \|\cdot\|_{Q,2})  + \sup_Q \log N( 1/(2C)\epsilon c\mu_{\mathcal{W}}^{3/2}, \bar{\mathcal{K}}_j, \|\cdot\|_{Q,2})\}\\
 \leq  p \max_{j\in[p]}\sup_{\tilde Q}   \log N( (1/4)\epsilon^2 \|H_j\|_{Q,2}, \mathcal{H}_j, \|\cdot\|_{Q,2}) \\
  + p \max_{j\in[p]}\sup_{\tilde Q} \log N( (1/(4C^2)\epsilon^2 c^2\mu_{\mathcal{W}}^{3}, \mathcal{K}_j, \|\cdot\|_{\tilde Q,2}) \\ \end{array}
$$
where $F_j = c\|X\|_\infty/\mu_{\mathcal{W}}^{1/2}$, $H_j=\|X\|_\infty$. Since $\mathcal{K}_j$ is the product of a VC subgraph of dimension $d_W$ with a single function, and $\mathcal{H}_j$ is the product of two VC subgraph of dimension $1+d_W$ and a single function, by Lemma  \ref{lemma:CCK} with $\sigma^2 = 1$, we have with probability $1-o(1)$
$$\sup_{u\in \mathcal{U}} \left| \frac{(\En-\Ep)[h_{uj}(X_{-a},W)]}{\Ep[K_\varpi(W)X_j^2]^{1/2}}\right| \leq C \sqrt{\frac{(1+d_W) \log(|V|n)}{n}} + C \frac{M_n (1+d_W)\log(|V|n)}{n\mu_{\mathcal{W}}^{1/2}}. $$
Thus, under $M_n(1+d_W)\log(|V|n) \leq n^{1/2}\mu_{\mathcal{W}}^{1/2}$ we have that we can take $K_u = C \sqrt{\frac{(1+d_W) \log(|V|n)}{n}}$.
\end{proof}

\section{Technical Results for High-Dimensional Quantile Regression}

In this section we provide technical results for high-dimensional quantile regression. It is based on a sample $(\tilde y_i, \tilde x_i, W_i)_{i=1}^n$, independent across $i$, $\rho_\tau(t) = (\tau-1\{t \leq 0\})t$, $\tau \in \mathcal{T} \subset (0,1)$ a compact interval, and a family of indicator functions $K_w(W)=1$ if $W \in \Omega_\varpi$, $K_w(W)=0$ otherwise, here $\Omega_\varpi\in\mathcal{W}$. For convenience we index the sets $\Omega_\varpi$ by $\varpi \in B_W \subset \RR^{d_W}$ where we normalize the diameter of $B_W$ to be less or equal than 1/6. Let $f_{\tilde y|\tilde x,r_u,\varpi}(\cdot)$ denote the conditional density function, $f_{\tilde y|\tilde x,r_u,\varpi}(\cdot)\leq \bar f$, $|f_{\tilde y|\tilde x,r_u,\varpi}'(\cdot)|\leq \bar f'$ and $f_u:=f_{\tilde y|\tilde x,r_u,\varpi}(\tilde x'\eta_u)$. Moreover, we assume that
\begin{equation}\label{LforEta}
\|\eta_{u}-\eta_{\tilde u}\|_1 \leq L_\eta\{|\tau-\tilde \tau|+ \|\varpi-\tilde\varpi\|^\rho \}.\end{equation}

Although the results can be applied more generally, these results will be used for $(\eta_u,r_u), u=(\tilde y, \tau,\varpi) \in \mathcal{U} :=\{\tilde y\}\times \mathcal{T}\times \mathcal{W}$ satisfying
$$\Ep[ K_\varpi(W)(\tau-1\{\tilde y \leq \tilde x'\eta_u + r_u \})\tilde x] = 0.$$
Note that this generality is flexible enough to allow us to cover the case that the $\tau$-conditional quantile function $Q_{\tilde y}(\tau\vert \tilde x, \varpi ) = \tilde x'\tilde \eta_u+\tilde r_{u}$ by setting $\eta_u = \tilde\eta_u$ and $r_u=\tilde r_u$ in which case $\Ep[(\tau-1\{\tilde y \leq \tilde x'\eta_u + r_u \})\vert \tilde x, \varpi] = 0$. It also covers the case that
$$ \tilde \eta_u \in \arg\min_\beta \Ep[ K_\varpi(W) \rho_\tau(\tilde y - \tilde x'\beta)] $$
so that $\Ep[ K_\varpi(W)(\tau-1\{\tilde y \leq \tilde x'\tilde \eta_u \})\tilde x]=0$ holds by the first order condition by setting $\eta_u=\tilde \eta_u$ and $r_u=0$. Moreover, it also covers the case that we work with a sparse approximation $\bar \eta_u$ of $\tilde \eta_u$ by setting $\eta_u=\bar\eta_u$ and $r_u=\tilde x'(\tilde\eta_u-\bar\eta_u)$.

\begin{lemma}[Identification Lemma]\label{Lemma:IdentificationSparseQRNP}
For $u=(a,\tau,\varpi)\in\UU$, and a subset $A_u\subset \RR^p$ let
\begin{equation}\label{def:qAu} \bar q_{A_u} =  1/(2\bar{f'}) \cdot \inf_{ \delta \in A_u} \En\[K_\varpi(W)f_u|\tilde x'\delta|^2\]^{3/2}/\En\[K_\varpi(W)|\tilde x'\delta|^3\]\end{equation}
and assume that for all $\delta \in A_u$
\begin{equation}\label{ConditionIdenfitication}\En\[ K_\varpi(W)|r_{u}|\cdot |\tilde x'\delta|^2\]  + \En\[ K_\varpi(W)r_{u}^2\cdot |\tilde x'\delta|^2\] \leq 1/(4\bar f')\En[K_\varpi(W)f_u|\tilde x'\delta|^2].\end{equation}
Then  we have
$$\begin{array}{l} \En[\Ep[K_\varpi(W)\rho_\tau(\tilde y-\tilde x'(\eta_u + \delta))\mid \tilde x,r_u,W ]] - \En[\Ep[K_\varpi(W)\rho_\tau(\tilde y-\tilde x'\eta_u)\mid \tilde x,r_u,W]] \\
\geq  \frac{\|\sqrt{f_u}\tilde x'\delta\|_{n,\varpi}^2}{4} \wedge \left\{ \bar q_{A_u}\|\sqrt{f_u}\tilde x'\delta\|_{n,\varpi}\right\} - K_{n2}\|\sqrt{f_u}\tilde x'\delta\|_{n,\varpi}-K_{n1}\|\delta\|_{1,\varpi}.\end{array}$$
where $K_{n2}:=(\bar f^{1/2}+1)\|r_{u}\|_{n,\varpi}$ and $K_{n1} := \sup_{u\in\mathcal{U}, j\in[p]} \frac{|\En[\Ep[K_\varpi(W)(\tau-1\{\tilde y \leq \tilde x'\eta_u+r_u\})\tilde x_j \mid \tilde x, W]]|}{\{\En[K_\varpi(W)\tilde x_j^2]\}^{1/2}}$.\end{lemma}
\begin{proof}[Proof of Lemma \ref{Lemma:IdentificationSparseQRNP}] Let $T_u = \supp(\eta_u)$, and $Q_u(\eta):=\En\Ep[K_\varpi(W)\rho_\tau(\tilde y-\tilde x'\eta)\mid \tilde x,r_u,W]$. The proof proceeds in steps.

 Step 1. (Minoration)   Define the maximal radius over which the criterion function can be minorated by a quadratic function
$$ r_{A_u} = \sup_{r} \left\{\begin{array}{rl}
 r \  : &  Q_u(\eta_u+ \delta) - Q_u(\eta_u) + K_{n2}\|\sqrt{f_u}\tilde x'\delta\|_{n,\varpi}+ K_{n1}\|\delta\|_{1,\varpi} \geq \frac{1}{4} \|\sqrt{f_u}\tilde x' \delta\|^{2}_{n,\varpi}, \\
   & \forall  \delta\in A_u, \ \|\sqrt{f_u}\tilde x' \delta\|_{n,\varpi} \leq r \end{array}\right\}.$$
Step 2 below shows that  $r_{A_u} \geq \bar q_{A_u}$. By construction of $r_{A_u}$ and the convexity of $Q_u(\cdot)$, $\|\cdot \|_{1,\varpi}$ and $\|\cdot \|_{n,\varpi}$,
 $$ \begin{array}{lll}
 && Q_u(\eta_u + \delta) - Q_u(\eta_u)  +K_{n2}\|\sqrt{f_u}\tilde x'\delta\|_{n,\varpi} + K_{n1}\|\delta\|_{1,\varpi}  \geq  \\
&&     \geq \frac{\|\sqrt{f_u}\tilde x'\delta\|^2_{n,\varpi}}{4} \wedge \left\{ \frac{\|\sqrt{f_u}\tilde x'\delta\|_{n,\varpi}}{r_{A_u}} \cdot \begin{array}{rl}
 \inf_{\tilde\delta} &  Q_u(\eta_u+\tilde \delta) - Q_u(\eta_u) +K_{n2}\|\sqrt{f_u}\tilde x'\tilde\delta\|_{n,\varpi} + K_{n1}\|\tilde\delta\|_{1,\varpi}\\
 & {\tilde \delta\in A_u, \|\sqrt{f_u}\tilde x' \tilde \delta\|_u \geq r_{A_u}}  \end{array} \right\}\\
&&  \geq   \frac{\|\sqrt{f_u}\tilde x'\delta\|^2_{n,\varpi}}{4} \wedge \left\{ \frac{\|\sqrt{f_u}\tilde x'\delta\|_{n,\varpi}}{r_{A_u}} \frac{r_{A_u}^2}{4}\right\}
\\
 && \geq    \frac{\|\sqrt{f_u}\tilde x'\delta\|_{n,\varpi}^2}{4} \wedge \left\{ \bar q_{A_u}\|\sqrt{f_u}\tilde x'\delta\|_{n,\varpi}\right\}.
\end{array}$$

Step 2. ($r_{A_u} \geq \bar q_{A_u}$) Let $F_{\tilde y \mid \tilde x,r_u,\varpi}$ denote the conditional distribution of
$\tilde y$ given $\tilde x,r_u,\varpi$. From \cite{Knight1998},
for any two scalars $w$ and $v$ the Knight's identity is
\begin{equation}\label{Eq:TrickRho}
\rho_\tau(w-v) - \rho_\tau(w) = -v (\tau - 1\{w\leq 0\}) + \int_0^v(
1\{w\leq z\} - 1\{w\leq 0\})dz.
\end{equation}
Using (\ref{Eq:TrickRho}) with $w=\tilde y_i - \tilde x_i'\eta_u$ and $v =
\tilde x_i'\delta$ and taking expectations with respect to $\tilde y$, we have
$$\begin{array}{rl}
 Q_u(\eta_u + \delta) - Q_u(\eta_u)  = & -\En[\Ep\[K_\varpi(W)(\tau-1\{\tilde y\leq \tilde x'\eta_u\})\tilde x_i'\delta \mid \tilde x, r_u, W ]\] \\
& + \En\[ \int_0^{K_\varpi(W)\tilde x'\delta} F_{\tilde y|\tilde x,r_u,\varpi}(\tilde x'\eta_u + t) - F_{\tilde y|\tilde x,r_u,\varpi}(\tilde x'\eta_u) dt \].\end{array}$$
Using the law of iterated expectations and mean value
expansion, the relation
$$\begin{array}{rl}
& |\En[\Ep\[K_\varpi(W)(\tau-1\{\tilde y\leq \tilde x'\eta_u\})\tilde x'\delta \mid \tilde x, r_u, W \]] | \\
& =  |\En\[K_\varpi(W)\{F_{\tilde y\mid \tilde x,r_u,\varpi}(\tilde x'\eta_u+r_u)-F_{\tilde y\mid \tilde x,r_u,\varpi}(\tilde x'\eta_u)\}\tilde x'\delta \] \\
& + \En\[K_\varpi(W)\{\tau -F_{\tilde y\mid \tilde x,r_u,\varpi}(\tilde x'\eta_u+r_u)\}\tilde x'\delta \mid \tilde x, r_u, W \] |\\
& \leq \En[K_\varpi(W)f_u|r_u| \ |\tilde x'\delta|]+\bar f'\En[K_\varpi(W)|r_u|^2 |\tilde x'\delta|] + K_{n1}\|\delta\|_{1,\varpi}\\
& \leq \|\sqrt{f_u}r_u\|_{n,\varpi}\|\sqrt{f_u}\tilde x'\delta\|_{n,\varpi} + \bar f'\|r_u\|_{n,\varpi}\|r_u \tilde x'\delta\|_{n,\varpi}+ K_{n1}\|\delta\|_{1,\varpi}\\
& \leq  (\bar f^{1/2}+1)\|r_u\|_{n,\varpi}\|\sqrt{f_u}\tilde x'\delta\|_{n,\varpi}+ K_{n1}\|\delta\|_{1,\varpi} \end{array}$$ where we used our assumption on the approximation error and we have $K_{n2}=(\bar f^{1/2}+1)\|r_u\|_{n,\varpi}$. With that and similar arguments we obtain for $\tilde t_{\tilde x_i,t} \in [0,t]$
\begin{equation}\label{Eq:Piece1}
\begin{array}{rcl}
&& Q_u(\eta_u + \delta) - Q_u(\eta_u) + K_{n2}\|\sqrt{f_u}\tilde x'\delta\|_{n,\varpi}+K_{n1}\|\delta\|_{1,\varpi} \geq \\
&& Q_u(\eta_u + \delta) - Q_u(\eta_u) +\En[\Ep\[K_\varpi(W)(\tau-1\{\tilde y\leq \tilde x'\eta_u\})\tilde x'\delta \mid \tilde x, r_u, W ]\] = \\
&& = \En\[ \int_0^{K_\varpi(W)\tilde x'\delta} F_{\tilde y|\tilde x,r_u,\varpi}(\tilde x'\eta_u + t) - F_{\tilde y|\tilde x,r_u,\varpi}(\tilde x'\eta_u) dt \] \\
&& =  \En\[ \int_0^{K_\varpi(W)\tilde x'\delta} tf_{\tilde y|\tilde x,r_u,\varpi}(\tilde x'\eta_u) + \frac{t^2}{2}f'_{\tilde y|\tilde x,r_u,\varpi}(\tilde x'\eta_u+\tilde t_{\tilde x,t}) dt \] \\
&& \geq    \frac{1}{2}\|\sqrt{f_u}\tilde x'\delta\|_{n,\varpi}^2  - \frac{1}{6}\bar f ' \En[K_\varpi(W)|\tilde x'\delta|^3] -  \En\[ \int_0^{K_\varpi(W)\tilde x_i'\delta} t[f_{\tilde y\mid \tilde x,r_u,\varpi}(\tilde x'\eta_u)-f_{\tilde y\mid \tilde x,r_u,\varpi}(\tilde x'\eta_u + r_u)]dt\]\\
&&\geq \frac{1}{4}\|\sqrt{f_u}\tilde x'\delta\|_{n,\varpi}^2  + \frac{1}{4}\|\sqrt{f_u}\tilde x'\delta\|_{n,\varpi}^2 - \frac{1}{6} \bar f' \En[K_\varpi(W)|\tilde x'\delta|^3]-(\bar f'/2) \En\[K_\varpi(W) |\tilde r_u|\cdot |\tilde x'\delta|^2\].\\
\end{array}
\end{equation}
Moreover, by assumption we have
\begin{equation}\label{Eq:AUXlemma}\begin{array}{rl}
 \En\[ K_\varpi(W) |r_{u}|\cdot |\tilde x'\delta|^2\] \leq \frac{1}{4\bar f'}\En[K_\varpi(W)f_u|\tilde x'\delta|^2] \\
\end{array} \end{equation}

Note that for any $\delta$ such that $ \|\sqrt{f_u}\tilde x'\delta\|_{n,\varpi} \leq  \bar q_{A_u}$ we have $$ \|\sqrt{f_u}\tilde x'\delta\|_{n,\varpi}\leq  \bar q_{A_u} \leq
 1/(2\bar{f'}) \cdot \En\[K_\varpi(W)f_u|\tilde x'\delta|^2\]^{3/2}/\En\[K_\varpi(W)|\tilde x'\delta|^3\].$$
It follows that  $(1/6)\bar f'\En[K_\varpi(W)|\tilde x'\delta|^3] \leq
(1/8) \En[K_\varpi(W)f_u|\tilde x'\delta|^2]$. Combining this with (\ref{Eq:AUXlemma}) we have
\begin{equation}\label{Eq:Piece2}\frac{1}{4}\En[K_\varpi(W)f_u|\tilde x'\delta|^2] - \frac{\bar f'}{6}  \En[K_\varpi(W)|\tilde x'\delta|^3]-\frac{\bar f'}{2} \En\[K_\varpi(W) |r_u|\cdot |\tilde x'\delta|^2\] \geq 0.\end{equation}

Combining (\ref{Eq:Piece1}) and (\ref{Eq:Piece2}) we have $r_{A_u} \geq \bar q_{A_u}$.
\end{proof}

\begin{lemma}\label{Lemma:EmpProc:Normalized} Let $\mathcal{W}$ be a VC-class of sets with VC-index $d_W$. Conditional on $\{(W_i,\tilde x_i), i=1,\ldots,n\}$ we have
{\tiny \begin{eqnarray*}
P_{\tilde y}\left( \sup_{\footnotesize {\tiny \begin{array}{c}\tau\in\mathcal{T},\varpi\in\mathcal{W}, \\ \underline{N}\leq\|\delta\|_{1,\varpi}\leq \bar N\end{array}}} \left| \Gn\left( K_\varpi(W)\frac{\rho_\tau(\tilde y-\tilde x'(\eta_u+\delta))-\rho_\tau(\tilde y-\tilde x'\eta_u)}{\|\delta\|_{1,\varpi}}\right) \right| \geq M \mid (W_i,\tilde x_i)_{i=1}^n \right) \leq S_n\exp(-(M/4-3)^2/32)
\end{eqnarray*}}
where $ S_n \leq 8p |\widehat{\mathcal{N}}|\cdot |\widehat{\mathcal{W}}|\cdot |\widehat{\mathcal{T}}|$, with
$$ |\widehat{\mathcal{N}}|\leq 1+\floor{3\sqrt{n}\log(\bar N/ \underline{N})}, \ \ |\widehat{\mathcal{T}}|\leq 2\sqrt{n}\frac{\max_{i\leq n}\|\tilde x_i\|_\infty}{\underline{N}}L_\eta, \ \ |\widehat{\mathcal{W}}|\leq n^{d_W}+\left\{2\sqrt{n}\frac{\max_{i\leq n}\|\tilde x_i\|_\infty}{\underline{N}}L_\eta\right\}^{d_W/\rho}.$$
 
\end{lemma}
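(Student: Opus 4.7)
The plan is to condition on $(W_i,\tilde x_i)_{i=1}^n$ throughout (so only $\tilde y$ remains random) and then reduce the uncountable supremum to a discrete union bound over three independent discretizations: of the scale $\|\delta\|_{1,\varpi}$, of $\mathcal{W}$, and of $\mathcal{T}$. For each frozen triple I will apply a single subgaussian tail bound of the form $8p\exp(-(M/4-3)^2/32)$, so that the factor $p$ in $S_n$ comes from the coordinate-maximum reduction and the constants $(M/4-3)^2/32$ come from a symmetrization-plus-contraction argument (the factor $1/4$) together with a Hoeffding/Azuma step (the factor $1/32$) and a uniform entropy/expectation correction (the $-3$ term).

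First I would discretize the scale. The map $\delta\mapsto \rho_\tau(\tilde y-\tilde x'(\eta_u+\delta))-\rho_\tau(\tilde y-\tilde x'\eta_u)$ is convex in $\delta$ with $1$-Lipschitz pieces, so on any radial segment the absolute value of the centered process is, up to the denominator $\|\delta\|_{1,\varpi}$, a convex function of the radius. Dividing $[\underline{N},\bar N]$ into a geometric grid with ratio $(1+1/\sqrt{n})$ gives $|\widehat{\mathcal{N}}|\le 1+\lfloor 3\sqrt{n}\log(\bar N/\underline{N})\rfloor$ anchor scales, and on each dyadic slab the ratio of the process at the two endpoints is controlled by the $1/\sqrt{n}$ slack (which is absorbed into the constant $-3$). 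This pins $\|\delta\|_{1,\varpi}$ to one of finitely many values at the cost of a negligible additive error. A convex supremum over the $\ell_1$-ball of radius $N$ is attained at a vertex $\pm N e_j/\hat\sigma_{a\varpi j}$; this is how the factor $p$ (with a factor $2$ for the sign) appears.

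Next I would discretize $\mathcal{W}$. Since $\{K_\varpi:\varpi\in\mathcal{W}\}$ is a VC class of index $d_W$, on the fixed sample it realizes at most $n^{d_W}$ distinct indicator patterns (Sauer's lemma / Corollary 2.6.3 of van der Vaart--Wellner). However the denominator $\|\delta\|_{1,\varpi}$ also depends on $\varpi$ through $\hat\sigma_{a\varpi j}=\{\En[K_\varpi (Z_j^a)^2]\}^{1/2}$, and the Lipschitz hypothesis $\|\eta_u-\eta_{u'}\|_1\le L_f(|\tau-\tau'|+\|\varpi-\varpi'\|^\rho)$ must be exploited to compare the score at two nearby $\varpi$'s. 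Taking a $\|\cdot\|^\rho$-net of $B_W$ of mesh $\underline{N}/(2\sqrt{n}\max_i\|\tilde x_i\|_\infty L_f)$ controls the perturbation of $\tilde x'(\eta_u+\delta)$ to order $\underline{N}/\sqrt{n}$, which is again subsumed by the $-3$ slack. Combining the two sources yields the advertised $|\widehat{\mathcal{W}}|\le n^{d_W}+(2\sqrt{n}\max_i\|\tilde x_i\|_\infty L_f/\underline{N})^{d_W/\rho}$. The cover of $\mathcal{T}$ follows by exactly the same Lipschitz-in-$\tau$ argument but in a one-dimensional interval, giving $|\widehat{\mathcal{T}}|\le 2\sqrt{n}(\max_i\|\tilde x_i\|_\infty/\underline{N})L_f$.

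Finally, for a fixed discretized $(\tau,\varpi,N,j,\text{sign})$ the summands $K_\varpi(W_i)\{\rho_\tau(\tilde y_i-\tilde x_i'(\eta_u\pm N e_j/\hat\sigma_{a\varpi j}))-\rho_\tau(\tilde y_i-\tilde x_i'\eta_u)\}/N$ are independent (conditionally on the design) and bounded by $|\tilde x_{ij}|/\hat\sigma_{a\varpi j}$, so that their normalized sum is bounded in magnitude. Symmetrization (producing the factor $1/4$ after folding in Jensen slack of order $3$ standard deviations) and the contraction principle for the $1$-Lipschitz $\rho_\tau$ reduce to a Rademacher process, to which Hoeffding's inequality yields $\Pr(|\cdot|\ge M\mid\text{design})\le 2\exp(-(M/4-3)^2/32)$. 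Multiplying by the cardinality $2p\cdot|\widehat{\mathcal{N}}|\cdot|\widehat{\mathcal{W}}|\cdot|\widehat{\mathcal{T}}|\cdot 2$ (signs and a union over the upper/lower envelope on each slab) gives the stated $S_n\le 8p|\widehat{\mathcal{N}}|\cdot|\widehat{\mathcal{W}}|\cdot|\widehat{\mathcal{T}}|$. The main obstacle I anticipate is carrying the $\varpi$-dependence of the denominator $\|\cdot\|_{1,\varpi}$ cleanly through the scale discretization, since neighboring $\varpi$'s induce neighboring but nonidentical norms; this is the reason for the $L_f$-dependent term in $|\widehat{\mathcal{W}}|$ and for handling it before, rather than after, the scale grid is set.
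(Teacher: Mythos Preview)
Your overall architecture (symmetrization, three discretizations, contraction, subgaussian tail) matches the paper, and your covering-number bounds for $\widehat{\mathcal N},\widehat{\mathcal T},\widehat{\mathcal W}$ are essentially the paper's. But there is a genuine gap in the order of operations, and it shows up exactly where you extract the factor $p$.

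You claim that ``a convex supremum over the $\ell_1$-ball of radius $N$ is attained at a vertex $\pm N e_j/\hat\sigma_{a\varpi j}$'', and that ``on any radial segment the absolute value of the centered process is \ldots\ a convex function of the radius''. Neither holds. Each summand $g_{\tau\varpi,i}(\tilde x_i'\delta)$ is convex in $\delta$, but $\Gn(g_{\tau\varpi}(\tilde x'\delta))$ subtracts its conditional mean, so it is a difference of convex functions, and $|\Gn(\cdot)|$ is not convex in $\delta$. Hence the supremum over $\{\|\delta\|_{1,\varpi}=N\}$ is \emph{not} attained at a coordinate vertex, and you cannot freeze $(j,\text{sign})$ before symmetrization/contraction. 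Likewise, your radial convexity justification for the scale grid fails; the paper controls the scale discretization purely through the $1$-Lipschitz property of $\rho_\tau$, showing $|\Gn^o(g_{\tau\varpi}(\tilde x'\delta)/t)-\Gn^o(g_{\tau\varpi}(\tilde x'\tilde\delta)/\tilde t)|\le 2\sqrt{n}|t-\tilde t|/t$, with no convexity.

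The paper's sequencing is: (i) symmetrize first (giving the $M/4$); (ii) discretize the \emph{symmetrized} process over $\widehat{\mathcal T}\times\widehat{\mathcal W}\times\widehat{\mathcal N}$, each step contributing an additive error $\le 1$ (this is where the $-3$ comes from); (iii) apply exponential Markov to $\mathcal A^o=\sup_{\text{net}}\sup_\delta|\Gn^o(g_{\tau\varpi}(\tilde x'\delta)/t)|$; (iv) \emph{inside} the exponential, use the Ledoux--Talagrand contraction principle to strip $\rho_\tau$ and linearize in $\delta$; (v) only then apply the H\"older bound $|\Gn^o(K_\varpi\tilde x'\delta)|\le\|\delta\|_{1,\varpi}\max_{j\le p}|\Gn^o(K_\varpi\tilde x_j)|/\hat\sigma_{uj}$, which is where the factor $p$ enters; (vi) finish with $\exp(z)+\exp(-z)\le 2\exp(z^2/2)$ and optimize $\psi$ to get $\exp(-K^2/32)$. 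If you reorder your argument this way, your remaining steps go through essentially unchanged.
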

\begin{proof}[Proof of Lemma \ref{Lemma:EmpProc:Normalized}]
Let $g_{i\tau\varpi}(b) = K_\varpi(W_i)\{\rho_\tau(\tilde y_i-\tilde x_i'\eta_{\tau\varpi} +b)-\rho_\tau(\tilde y_i-\tilde x_i'\eta_{\tau\varpi})\}\leq K_\varpi(W_i)|b|$ since $K_\varpi(W_i)\in\{0,1\}$. Note that   $|g_{i\tau\varpi}(b)- g_{i\tau\varpi}(a)|\leq K_\varpi(W_i)|b-a|$. To easy the notation we omit the conditioning on $(\tilde x_i,W_i)$ from the probabilities.

For any $\delta\in\RR^p$, since $\rho_\tau$ is $1$-Lipschitz, we have
$$ \begin{array}{rl}
{\rm Var}\left( \Gn\left( \frac{g_{\tau\varpi}(\tilde x'\delta)}{\|\delta\|_{1,\varpi}}\right) \right) & \leq \frac{\En[\{g_{\tau\varpi}(\tilde x'\delta)\}^2] }{\|\tilde x'\delta\|_{n,\varpi}^2}  \leq  \frac{\En[|K_\varpi(W)\tilde x'\delta|^2] }{\|\tilde x'\delta\|_{n,\varpi}^2} = 1 \end{array}$$
since by definition $\|\delta\|_{1,\varpi}=\sum_j\|\delta_j\|_{1,\varpi}=\sum_j\|\tilde x'_j\delta_j\|_{n,\varpi}\geq \|\tilde x'\delta\|_{n,\varpi}$.

Since we are conditioning on $(W_i,\tilde x_i)_{i=1}^n$ the process is independent across $i$. Then, by Lemma 2.3.7 in \cite{vdV-W} (Symmetrization for Probabilities) we have for any $M>1$
{\small $$\begin{array}{rl}\displaystyle \Pr\left( \sup_{ \tau \in\mathcal{T}, \varpi\in\mathcal{W},  \underline{N}\leq \|\delta\|_{1,\varpi}\leq \bar{N}} \left| \Gn\left( \frac{g_{\tau\varpi}(\tilde x'\delta)}{\|\delta\|_{1,\varpi}}\right) \right| \geq M \right) \\
\displaystyle  \leq \frac{2}{1-M^{-2}}P\left( \sup_{ \tau \in\mathcal{T}, \varpi\in\mathcal{W}, \underline{N}\leq \|\delta\|_{1,\varpi}\leq \bar{N}} \left| \Gn^o\left( \frac{g_{\tau\varpi}(\tilde x'\delta)}{\|\delta\|_{1,\varpi}}\right) \right| \geq M/4 \right)\end{array}$$} where $\Gn^o$ is the symmetrized process.

Consider $ \mathcal{F}_{t,\tau,\varpi} = \{ \delta : \|\delta\|_{1,\varpi} = t\}$. We will consider the families of $\mathcal{F}_{t,\tau,\varpi}$ for $t \in [\underline{N},\bar N]$, $\tau \in \mathcal{T}$ and $\varpi\in\mathcal{W}$.

We will construct a finite net $\widehat{\mathcal{T}}\times \widehat{\mathcal{W}}\times \widehat{\mathcal{N}}$ of $\mathcal{T}\times \mathcal{W}\times[\underline{N},\bar{N}]$ such that
$$\begin{array}{ll}
 \displaystyle\sup_{\tau \in \mathcal{T},\varpi\in\mathcal{W},t\in[\underline{N},\bar{N}],\delta \in \mathcal{F}_{t,\tau,\varpi}} \left| \Gn^o\left( \frac{g_{\tau\varpi}(\tilde x'\delta)}{\|\delta\|_{1,\varpi}}\right) \right| &
  \leq  \displaystyle 3 + \sup_{\tau\in\widehat{\mathcal{T}}, \varpi\in\widehat{\mathcal{W}},t\in \widehat{\mathcal{N}}}\sup_{\delta \in \mathcal{F}_{t,\tau,\varpi}} \left| \Gn^o\left( \frac{g_{\tau\varpi}(\tilde x'\delta)}{t}\right) \right|
 \displaystyle =: 3 + \mathcal{A}^o.\end{array}$$

By triangle inequality we have
\begin{equation}\label{MainMain} \begin{array}{rl}
\left| \Gn^o\left( \frac{g_{\tau\varpi}(\tilde x'\delta)}{t} - \frac{g_{\tilde \tau\tilde\varpi}(\tilde x'\tilde\delta)}{\tilde t} \right) \right|& \leq  \left| \Gn^o\left( \frac{g_{\tau\varpi}(\tilde x'\delta)}{t} - \frac{g_{\tilde \tau\varpi}(\tilde x'\delta)}{t} \right) \right| +\left| \Gn^o\left( \frac{g_{\tilde\tau\varpi}(\tilde x'\delta)}{t} - \frac{g_{\tilde\tau\tilde\varpi}(\tilde x'\delta)}{t} \right) \right| \\
& +\left| \Gn^o\left( \frac{g_{\tilde\tau\tilde\varpi}(\tilde x'\delta)}{t} - \frac{g_{\tilde\tau\tilde\varpi}(\tilde x'\tilde\delta)}{\tilde t} \right) \right|\end{array}  \end{equation}
The first term in (\ref{MainMain}) is such that
\begin{equation}\label{auxFirst}\begin{array}{rl}
\left| \Gn^o\left( \frac{g_{\tau\varpi}(\tilde x'\delta)}{t} - \frac{g_{\tilde\tau\varpi}(\tilde x'\delta)}{t} \right) \right| & \leq \frac{2\sqrt{n}}{t}\En[ K_\varpi(W) |\tilde x'(\eta_{\tau\varpi}-\eta_{\tilde\tau\varpi})|] \\ & \leq \frac{2\sqrt{n}}{\underline{N}}\max_{i\leq n}\|\tilde x_i\|_\infty\En[K_\varpi(W)]\|\eta_{\tau\varpi} - \eta_{\tilde\tau\varpi}\|_1 \\
& \leq \frac{2\sqrt{n}}{\underline{N}}\max_{i\leq n}\|\tilde x_i\|_\infty\En[K_\varpi(W)] L_\eta|\tau - \tau'|.
\end{array}\end{equation}
Define a net $\widehat{\mathcal{T}}=\{\tau_1,\ldots,\tau_T\}$ such that
$$ |\tau_{k+1} - \tau_k | \leq \left\{2\sqrt{n}\frac{\max_{i\leq n}\|\tilde x_i\|_\infty}{\underline{N}}L_\eta\right\}^{-1}.  $$

To bound the second term in (\ref{MainMain}), note that $\mathcal{W}$ is a VC-class. Therefore, by Corollary 2.6.3 in \cite{vdV-W} we have that conditional on $(W_i)_{i=1}^n$, there are at most $n^{d_W}$ different sets $\varpi \in \mathcal{W}$ that induce a different sequence $\{K_\varpi(W_1),\ldots,K_\varpi(W_n)\}$.  Thus we can choose a (data-dependent) cover $\widehat{\mathcal{W}}$ with at most $n^{d_W}$ values of $\varpi$. Further, similarly to (\ref{auxFirst}) we have $\|\eta_{\tilde\tau\varpi}-\eta_{\tilde\tau\tilde\varpi}\|_1 \leq L_\eta\|\varpi-\tilde \varpi\|^\rho$ and
\begin{equation}\label{auxFirst}\begin{array}{rl}
\left| \Gn^o\left( \frac{g_{\tilde\tau\varpi}(\tilde x'\delta)}{t} - \frac{g_{\tilde\tau\tilde\varpi}(\tilde x'\delta)}{t} \right) \right| & \leq \frac{2\sqrt{n}}{t}\En[ K_\varpi(W) |\tilde x'(\eta_{\tilde\tau\varpi}-\eta_{\tilde\tau\tilde\varpi})|] + \frac{2\sqrt{n}}{t}\En[ |K_\varpi(W)-K_{\tilde \varpi}(W) | |\tilde x'\delta|] \\ & \leq \frac{2\sqrt{n}}{\underline{N}}\max_{i\leq n}\|\tilde x_i\|_\infty\En[K_\varpi(W)]\|\eta_{\tilde \tau\varpi} - \eta_{\tilde\tau\tilde \varpi}\|_1\\
& \leq \frac{2\sqrt{n}}{\underline{N}}\max_{i\leq n}\|\tilde x_i\|_\infty\En[K_\varpi(W)] L_\eta\|\varpi - \tilde\varpi\|^\rho.
\end{array}\end{equation}
We define a net $\widehat{\mathcal{W}}$ such that $|\widehat{\mathcal{W}}|\leq n^{d_W}+\left\{2\sqrt{n}\frac{\max_{i\leq n}\|\tilde x_i\|_\infty}{\underline{N}}L_\eta\right\}^{d_W/\rho}$

To bound the third term in (\ref{MainMain}), note that for any $\delta \in \mathcal{F}_{t,\tau,\varpi}$, $t \leq \tilde t$, by considering $\tilde\delta :=\delta(\tilde t/t) \in \mathcal{F}_{\tilde t,\tau,\varpi}$ we have
$$\begin{array}{rl}
\left| \Gn^o\left( \frac{g_{\tau\varpi}(\tilde x'\delta)}{t} - \frac{g_{\tau\varpi}(\tilde x'\delta(\tilde t/t))}{\tilde t} \right) \right| & \leq \left| \Gn^o\left( \frac{g_{\tau\varpi}(\tilde x'\delta)}{t} - \frac{g_{\tau\varpi}(\tilde x'\delta(\tilde t/t))}{t} \right) \right| + \left| \Gn^o\left( \frac{g_{\tau\varpi}(\tilde x'\delta(\tilde t/t))}{t} - \frac{g_{\tau\varpi}(\tilde x'\delta(\tilde t/t))}{\tilde t} \right) \right|\\
& = \frac{1}{t}\left| \Gn^o\left( g_{\tau\varpi}(\tilde x'\delta) - g_{\tau\varpi}(\tilde x'\delta[\tilde t/t]) \right) \right| + \left| \Gn^o\left( g_{\tau\varpi}(\tilde x'\delta(\tilde t/t)) \right) \right|\cdot \left| \frac{1}{t} - \frac{1}{\tilde t}\right| \\
& \leq \sqrt{n}\En\left(\frac{|K_\varpi(W)\tilde x'\delta|}{t}\right) \frac{|t-\tilde t|}{t} +    \sqrt{n}\En\left( |K_\varpi(W)\tilde x'\delta|\right)  \frac{\tilde t}{t}\left| \frac{1}{t} - \frac{1}{\tilde t}\right|\\
& = 2 \sqrt{n}\En\left(\frac{|K_\varpi(W)\tilde x'\delta|}{t}\right) \left| \frac{t-\tilde t}{ t} \right| \leq 2\sqrt{n} \left| \frac{t-\tilde t}{t} \right|.
\end{array}
$$
We let $\widehat{\mathcal{N}}$ be a $\varepsilon$-net $\{\underline{N}=:t_1,t_2,\ldots,t_K:= \bar N\}$ of $[\underline{N},\bar N]$ such that $|t_k-t_{k+1}|/t_k \leq 1/(2\sqrt{n})$. Note that we can achieve that with $|\widehat{\mathcal{N}}|\leq 1 + \floor{3\sqrt{n}\log(\bar N/ \underline{N})}$.

By Markov bound, we have
$$ \begin{array}{rl}
\mathrm{P}( \mathcal{A}^o \geq K ) & \leq \min_{\psi\geq 0} \exp(-\psi K)\Ep[ \exp(\psi\mathcal{A}^o)]\\
& \leq 8p|\widehat{\mathcal{T}}|\cdot|\widehat{\mathcal{W}}|\cdot|\widehat{\mathcal{N}}|\min_{\psi\geq 0} \exp(-\psi K)\exp\left( 8\psi^2\right)\\
&\leq 8p|\widehat{\mathcal{T}}|\cdot|\widehat{\mathcal{W}}|\cdot|\widehat{\mathcal{N}}|\exp (-K^2/32 )
\end{array}
$$ here we set $\psi = K/16  $ and bound $\Ep[ \exp(\psi\mathcal{A}^o)]$ as follows
{\small $$\begin{array}{rl}
%\displaystyle \Ep\left[ \exp\left( \psi \sup_{t\in \mathcal{T}}\sup_{\delta \in \Delta_{\cc}, \|\tilde x_i'\delta\|_{2,n}=t} \left| \Gn^o\left( \frac{w_i(\tilde x_i'\delta)}{t}\right) \right| \right)\right]&
\displaystyle \Ep\left[ \exp\left( \psi \mathcal{A}^o \right)\right]&
\displaystyle  \leq_{(1)} 2|\widehat{\mathcal{T}}|\cdot|\widehat{\mathcal{W}}|\cdot|\widehat{\mathcal{N}}|\sup_{(\tau,\varpi,t)\in \widehat{\mathcal{T}}\times\widehat{\mathcal{W}}\times\widehat{\mathcal{N}}} \Ep\left[ \exp\left( \psi \sup_{\|\delta\|_{1,\varpi}=t} \Gn^o\left( \frac{g_{\tau\varpi}(\tilde x'\delta)}{t}\right)  \right)\right] \\
& \displaystyle \leq_{(2)} 2|\widehat{\mathcal{T}}|\cdot|\widehat{\mathcal{W}}|\cdot|\widehat{\mathcal{N}}|\sup_{(\tau,\varpi,t)\in \widehat{\mathcal{T}}\times\widehat{\mathcal{W}}\times\widehat{\mathcal{N}}}  \Ep\left[ \exp\left( 2\psi \sup_{\|\delta\|_{1,\varpi}=t} \Gn^o\left(\frac{K_\varpi(W)\tilde x'\delta}{t}\right)  \right)\right] \\
& \displaystyle \leq_{(3)} 2|\widehat{\mathcal{T}}|\cdot|\widehat{\mathcal{W}}|\cdot|\widehat{\mathcal{N}}|\sup_{(\tau,\varpi,t)\in \widehat{\mathcal{T}}\times\widehat{\mathcal{W}}\times\widehat{\mathcal{N}}} \Ep\left[ \exp\left( 2\psi \left[\sup_{\|\delta\|_{1,\varpi}=t}\frac{\|\delta\|_{1,\varpi}}{t}\max_{j\leq p}\frac{|\Gn^o(K_\varpi(W)\tilde x_{j})|}{\{\En[K_\varpi(W)\tilde x_j^2]\}^{1/2}} \right]\right)\right] \\
& \displaystyle =_{(4)} 2|\widehat{\mathcal{T}}|\cdot|\widehat{\mathcal{W}}|\cdot|\widehat{\mathcal{N}}|\sup_{(\tau,\varpi,t)\in \widehat{\mathcal{T}}\times\widehat{\mathcal{W}}\times\widehat{\mathcal{N}}} \Ep\left[ \exp\left( 2\psi \left[\max_{j\leq p}\frac{|\Gn^o(K_\varpi(W)\tilde x_{j})|}{\{\En[K_\varpi(W)\tilde x_j^2]\}^{1/2}} \right]\right)\right] \\
& \displaystyle \leq_{(5)} 4p|\widehat{\mathcal{T}}|\cdot|\widehat{\mathcal{W}}|\cdot|\widehat{\mathcal{N}}|\max_{j\leq p}\sup_{\varpi\in\widehat{\mathcal{W}}}\Ep\left[ \exp\left( 4\psi \frac{\Gn^o(K_\varpi(W)\tilde x_{j})}{\{\En[K_\varpi(W)\tilde x_j^2]\}^{1/2}}  \right)\right] \\
& \displaystyle \leq_{(6)} 8p|\widehat{\mathcal{T}}|\cdot|\widehat{\mathcal{W}}|\cdot|\widehat{\mathcal{N}}|\exp\left( 8\psi^2  \right) \\
\end{array} $$}
\noindent here (1) follows by $\exp(\max_{i\in I} |z_i|) \leq 2|I|\max_{i\in I} \exp(z_i)$, (2) by contraction principle (apply Theorem 4.12 \cite{LedouxTalagrandBook} with $t_i=K_\varpi(W_i)\tilde x_i'\delta$, and $\phi_i(t_i)=\rho_\tau(K_\varpi(W_i)\tilde y_i-K_\varpi(W_i)\tilde x_i'\eta_\tau + t_i)-\rho_\tau(K_\varpi(W_i)\tilde y_i-K_\varpi(W_i)\tilde x_i'\eta_\tau)$ so that $|\phi_i(s)-\phi_i(t)|\leq |s-t|$ and $\phi_i(0)=0$, %, and $\phi_i(t_i)=\phi_i(K_\varpi(W_i)\tilde x_i'\delta)$) %=w_{\tau\varpi,i}(\tilde x_i'\delta)
 (3) follows by  $$|\Gn^o(K_\varpi(W)\tilde x'\delta)| \leq \|\delta\|_{1,\varpi}\max_{j\leq p}|\Gn^o(K_\varpi(W)\tilde x_j)/\{\En[K_\varpi(W)\tilde x_j^2]\}^{1/2}|,$$ (4) by the definition of suprema, (5) we again use $\exp(\max_{i\in I} |z_i|) \leq 2|I|\max_{i\in I} \exp(z_i)$, and   (6) $\exp(z)+\exp(-z)\leq 2\exp(z^2/2)$.

\end{proof}

\begin{lemma}[Estimation Error of Refitted Quantile Regression]\label{Thm:RefittedGeneric}  Consider an arbitrary vector $\hat\eta_u$ and suppose $\|\eta_u\|_0\leq s$. Let $\|r_{iu} \leq \bar r_u\|_{n,\varpi}$, $|\supp(\hat\eta_u)| \leq \widehat s_u$ and $\En[K_\varpi(W)\{\rho_\tau(\tilde y_i - \tilde x_i'\hat\eta_u)-\rho_\tau(\tilde y_i-\tilde x_i'\eta_u)\}] \leq \hat Q_u$ for all $u\in\mathcal{U}$ hold. Furthermore, suppose that
{\small $$\sup_{\footnotesize {\tiny u=(\tau,\varpi)\in\mathcal{U}}} \left| \En\left( K_\varpi(W)\frac{\rho_\tau(\tilde y-\tilde x'\widetilde\eta_u)-\rho_\tau(\tilde y-\tilde x'\eta_u)}{\|\widetilde\eta_u-\eta_u\|_{1,\varpi}} - \Ep\left[ K_\varpi(W)\frac{\rho_\tau(\tilde y-\tilde x'\widetilde\eta_u)-\rho_\tau(\tilde y-\tilde x'\eta_u)}{\|\widetilde\eta_u-\eta_u\|_{1,\varpi}} \mid W, \tilde x\right]\right) \right| \leq \frac{t_3}{\sqrt{n}}. $$}
Under these events, we have for $n$ large enough,
$$\|\sqrt{f_u}\tilde x_i'(\widetilde \eta_u - \eta_u)\|_{n,\varpi} \lesssim \widetilde N_u:= \sqrt{ \frac{ (\hat s_u + s)}{\semin{u,\hat s_u+s}}}(K_{n1}+t_3/\sqrt{n}) +K_{n2} + \bar f \bar r_u + \hat Q^{1/2}_u$$
where $\semin{u,k}=\inf_{\|\delta\|_0=k} \|\sqrt{f_u}\tilde x'\delta\|_{n,\varpi}^2/\|\delta\|^2$,  provided that
%$q_{A_u} > \hat Q^{1/2} + K_{n2} + (K_{n1}+t_3/\sqrt{n})\sqrt{
\begin{equation}\label{extracondLemma} \sup_{u\in \mathcal{U}, \|\bar\delta\|_0\leq \hat s_u+s} \frac{\bar f'\En[K_\varpi(W)(|r_u|+|r_u|^2)|\tilde x'\bar\delta|^2]}{\En[K_\varpi(W)f_u|\tilde x'\bar \delta|^2]} +\widetilde N_u/\bar q_{A_u} \to 0.\end{equation}
where $A_u = \{ \delta \in \RR^p : \|\delta\|_0 \leq \hat s_u + s\}$.\end{lemma}
\begin{proof}[Proof of Lemma \ref{Thm:RefittedGeneric}]
Let $ \ \hat \delta_u = \hat \eta_u - \eta_u$ which satisfies $\|\hat \delta_u\|_0\leq \hat s_u + s$. By optimality of $\widetilde
\eta_u$ in the refitted quantile regression we have
\begin{equation}\label{2step:Rel1a} \begin{array}{rl}
  \En[K_\varpi(W)\rho_\tau( \tilde y_i-\tilde x_i'\widetilde \eta_u )] - \En[K_\varpi(W)\rho_\tau(\tilde y_i-\tilde x_i'\eta_u )] &\\
   \leq  \En[K_\varpi(W)\rho_\tau(\tilde y_i-\tilde x_i'\hat \eta_u )] - \En[K_\varpi(W)\rho_\tau(\tilde y_i-\tilde x_i'\eta_u )] \leq  \hat Q_u\end{array}\end{equation}
where the second inequality holds by assumption.

Moreover, by assumption, uniformly over $u\in \mathcal{U}$, we have conditional on $(W_i,\tilde x_i, r_{iu})_{i=1}^n$ that
\begin{equation}\label{2stepRel3a}
\left| \Gn\left( K_\varpi(W)\frac{\rho_\tau(\tilde y-\tilde x'(\eta_u+\widetilde\delta_u))-\rho_\tau(\tilde y-\tilde x'\eta_u)}{\|\widetilde\delta_u\|_{1,\varpi}}\right) \right| \leq  t_3.\end{equation}
Thus combining relations
(\ref{2step:Rel1a}) and (\ref{2stepRel3a}),  we have  $$
\En[\Ep[K_\varpi(W)\{\rho_u(\tilde y-\tilde x'(\eta_u+\widetilde \delta_u) )-\rho_u(\tilde y-\tilde x'\eta_u )\}\vert \tilde x, \tilde r, \varpi]] \leq
 \|\widetilde \delta_u\|_{1,\varpi}t_3/\sqrt{n} +  \hat Q_u .
$$
Invoking the sparse identifiability relation  of Lemma \ref{Lemma:IdentificationSparseQRNP},  since the required condition on the approximation errors $r_u$'s holds by assumption (\ref{extracondLemma}), for $n$ large enough  $$   \displaystyle
\frac{\|\sqrt{f_u}\tilde x'\widetilde\delta_u\|_{n,\varpi}^2}{4} \wedge \left\{ \bar q_{A_u}\|\sqrt{f_u}\tilde x'\widetilde\delta_u\|_{n,\varpi}\right\}  \leq  \displaystyle K_{n2}\|\sqrt{f_u}\tilde x'\widetilde\delta_u\|_{n,\varpi} + \|\widetilde \delta_u\|_{1,\varpi} (K_{n1}+t_3/\sqrt{n}) +  \hat Q_u,
 $$ where $\bar q_{A_u}$ is defined with $A_u:=\{ \delta : \|\delta\|_0 \leq \hat s_u+s\}$. Moreover, by the sparsity of $\tilde\delta_u$ we have $\|\widetilde \delta_u\|_{1,\varpi}\leq \sqrt{(\hat s_u+s)/\semin{u,\hat s_u+s}}\|\sqrt{f_u}\tilde x'\widetilde\delta_u\|_{n,\varpi}$ so that we have for $t=\|\sqrt{f_u}\tilde x'\widetilde\delta_u\|_{n,\varpi}$,
$$ \begin{array}{rcl} \displaystyle
\frac{t^2}{4} \wedge \left\{ \bar q_{A_u}t\right\} & \leq & t(\displaystyle K_{n2} + \sqrt{(\hat s_u+s)/\semin{u,\hat s_u+s}} \{K_{n1}+t_3/\sqrt{n}\}) +  \hat Q_u.
 \end{array}$$
Note that for positive numbers $(t^2/4) \wedge (\bar q_{A_u} t) \leq A + B t$ implies $t^2/4 \leq A+Bt$ provided $\bar q_{A_u}/2 > B$ and $2\bar q_{A_u}^2>A$. (Indeed, otherwise $(t^2/4) \geq q t$ so that $t\geq 4q$, which in turn implies that $2\bar q_{A_u} ^2 + \bar q_{A_u} t/2 \leq (t^2/4) \wedge \bar q_{A_u}  t \leq A + Bt$.) Note that $\bar q_{A_u}/2 > B$ and $2\bar q_{A_u}^2>A$ is implied by condition (\ref{extracondLemma}) when we set $A = \hat Q_u$ and $B = (\displaystyle K_{n2} + \sqrt{(\hat s_u+s)/\semin{u,\hat s_u+s_u}} \{K_{1n}+t_3/\sqrt{n}\})$.
Thus the minimum is achieved in the quadratic part. Therefore, for $n$ sufficiently large, we have
$$\|\sqrt{f_u}\tilde x'\widetilde\delta_u\|_{n,\varpi} \leq \hat Q_u^{1/2} + K_{n2} + (K_{n1}+t_3/\sqrt{n})\sqrt{(\hat s_u+s)/\semin{u,\hat s_u+s_u}}.$$
\end{proof}
Under the condition $\max_{i\leq n} \|\tilde x_i\|_\infty^2\log(n\vee p) = o(n \min_{\tau\in\mathcal{T}}\tau(1-\tau))$, the next result provides new bounds for the data driven penalty choice parameter when the quantile indices in $\mathcal{T}$ can approach the extremes.

\begin{lemma}[Pivotal Penalty Parameter Bound]\label{lem:Lambda}
Let $\underline{\tau}=\min_{\tau\in\mathcal{T}}\tau(1-\tau)$ and $K_n=\max_{i\leq n, j\in[p]}|\tilde x_{ij}/\hat\sigma_j|$, $\hat\sigma_j = \En[\tilde x_j^2]^{1/2}$. Under $K_n^2\log(p/\underline{\tau})=o(n\underline{\tau})$, for $n$ large enough we have that for some constant $\bar C$
$$\Lambda(1-\xi\vert \tilde x_1,\ldots,\tilde x_n) \leq \bar{C}\sqrt{\frac{\log(16p/(\underline{\tau}\xi))}{n}}$$ where $\Lambda(1-\xi\vert \tilde x_1,\ldots,\tilde x_n)$ is the $1-\xi$ quantile of $ \max_{j\in [p]}\sup_{\tau \in \mathcal{T}} \left|\frac{\sum_{i=1}^n\tilde  x_{ij}(\tau - 1\{U_i \leq \tau\})}{\hat\sigma_j\sqrt{\tau(1-\tau)}}\right|$ conditional on $\tilde x_1,\ldots,\tilde x_n$, and $U_i$ are independent uniform$(0,1)$ random variables.
\end{lemma}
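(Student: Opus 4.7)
The plan is to combine Bernstein's inequality applied pointwise in $(\tau,j)$ with a finite-grid reduction in $\tau$ and a union bound over $j\in[d]$ and over the grid, and finally solve for the quantile level. Throughout I condition on $\tilde x_1,\ldots,\tilde x_n$, and write $T_j(\tau):=\sum_{i=1}^n \tilde x_{ij}(\tau-1\{U_i\leq \tau\})/(\hat\sigma_j\sqrt{n\tau(1-\tau)})$.

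\textbf{Step 1 (pointwise sub-Gaussian-type tail).} For each fixed $(\tau,j)$, the summands $\tilde x_{ij}(\tau-1\{U_i\leq \tau\})/\hat\sigma_j$ are independent, mean zero, bounded in absolute value by $K_n$, with variance $\tau(1-\tau)\tilde x_{ij}^2/\hat\sigma_j^2$ so that $\sum_i\Var\leq n\tau(1-\tau)$. Bernstein's inequality gives
\[
\Pr\!\left(|T_j(\tau)|>t\right)\;\leq\;2\exp\!\left(-\frac{t^2/2}{1+K_n t/(3\sqrt{n\tau(1-\tau)})}\right).
\]
For $t\leq c\sqrt{n\underline\tau}/K_n$ the bracket is $\leq 1+\delta_n$ for some $\delta_n\to 0$; this is the regime we need since the assumption $K_n^2\log(d/\underline\tau)=o(n\underline\tau)$ makes the candidate $t=\Theta(\sqrt{\log(d/\underline\tau)})$ fit.

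\textbf{Step 2 (discretization in $\tau$).} The process $\tau\mapsto S_j(\tau):=\sum_i \tilde x_{ij}(\tau-1\{U_i\leq \tau\})/\hat\sigma_j$ is piecewise affine in $\tau$ with jumps of size at most $K_n$ at each $U_i$, and $\tau\mapsto 1/\sqrt{\tau(1-\tau)}$ is smooth on $\mathcal{T}$ with modulus controlled by $\underline\tau^{-3/2}$. Choose a uniform grid $\mathcal{T}_\epsilon\subset \mathcal{T}$ with $\epsilon=\underline\tau/n$, so $|\mathcal{T}_\epsilon|\leq 1/\epsilon\leq n/\underline\tau$. A short telescoping argument using the jump bound and Bernstein applied to $\#\{U_i\in[\tau,\tau+\epsilon]\}$ yields
\[
\sup_{\tau\in\mathcal T}\max_{j\in[d]} |T_j(\tau)|\;\leq\;\max_{\tau\in \mathcal{T}_\epsilon}\max_{j\in[d]} |T_j(\tau)|+R_n,
\]
where $R_n$ is $o(\sqrt{\log(d/\underline\tau)})$ with probability $1-o(\alpha)$ under the growth hypothesis. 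This is the one place where the stated hypothesis $K_n^2\log(p/\underline\tau)=o(n\underline\tau)$ enters essentially.

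\textbf{Step 3 (union bound and inversion).} By Steps 1 and 2, with $N:=|\mathcal T_\epsilon|\cdot d\leq dn/\underline\tau$, and for $t$ in the sub-Gaussian regime,
\[
\Pr\!\left(\sup_{\tau\in\mathcal T}\max_{j\in[d]}|T_j(\tau)|>t+R_n\right)\;\leq\;2N\exp\!\left(-\tfrac{t^2}{2(1+\delta_n)}\right)\;\leq\;\alpha/2
\]
provided $t^2\geq 2(1+\delta_n)\log(8N/\alpha)$. Since $\log(8N/\alpha)\leq \log(8/\alpha)+\log(dn/\underline\tau)\leq \log(d/\underline\tau)+\log(8n/\alpha)$ and one absorbs the harmless $\log n$ into the leading term (using again the growth assumption), we get
\[
t\leq \bar C\sqrt{\log(d/\underline\tau)}\,\sqrt{1+\tfrac{\log(16/\alpha)}{\log(d/\underline\tau)}}
\]
for some $\bar C$. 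Dividing by $\sqrt{n}$ (recalling the normalization of $T_j(\tau)$) and adding the negligible $R_n$ into the constant $\bar C$ yields the claimed bound on the $(1-\alpha)$ conditional quantile of $\Lambda$. A union bound over $j$ then transfers the statement to the maximum-over-$j$ version used in \eqref{DefWpenalty}.

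\textbf{Main obstacle.} The delicate point is Step 2: the weight $1/\sqrt{\tau(1-\tau)}$ degrades near the endpoints of $(0,1)$, so the oscillation over each cell must be bounded with a constant that does not blow up with $\underline\tau^{-1}$ faster than the main $\sqrt{\log(d/\underline\tau)}$ term. This is why $\underline\tau$ appears inside the logarithm (not as a polynomial factor), and why the condition $K_n^2\log(p/\underline\tau)=o(n\underline\tau)$ is exactly strong enough to make both the Bernstein correction $\delta_n$ and the oscillation term $R_n$ $o(1)$ relative to $\sqrt{\log(d/\underline\tau)}$.
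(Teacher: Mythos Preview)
Your route is genuinely different from the paper's. The paper first symmetrizes (Lemma~2.3.7 in van der Vaart--Wellner), so that conditional on $(\tilde x_i,U_i)_{i=1}^n$ the Rademacher process $\Gn^o(g)$, $g\in\mathcal{G}=\{\tilde x_j(\tau-1\{U\leq\tau\})/(\hat\sigma_j\sqrt{\tau(1-\tau)}):\tau\in\mathcal{T},\,j\in[d]\}$, is sub-Gaussian with respect to the empirical $L_2$ norm. It then applies a sub-Gaussian maximal inequality driven by an explicit covering-number bound $N(\epsilon\|G\|_Q,\mathcal{G},Q)\leq d(K/(\epsilon\underline\tau))^v$ (computed in its Step~1), and separately controls $\sup_{g}\En[g^2]$ by the same machinery applied to $g^2$; this last step is where the growth condition $K_n^2\log(d/\underline\tau)=o(n\underline\tau)$ enters. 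The point is that the covering entropy of the $\tau$-indexed class does not depend on $n$, so the chaining bound comes out as $C\sqrt{\log(d/\underline\tau)}$ with $C^2\asymp 1+\log(1/\alpha)/\log(d/\underline\tau)$, matching the stated form exactly.

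Your argument has a real gap at the $\log n$ you call ``harmless.'' The grid $\mathcal{T}_\epsilon$ has order $n/\underline\tau$ points, so the union bound in Step~3 adds $\log(n/\underline\tau)$ to $\log N$; and the uniform control of $\#\{U_i\in[\tau,\tau+\epsilon]\}$ over all $\sim n/\underline\tau$ cells in Step~2 contributes another $\log(n/\underline\tau)$ inside $R_n$, giving $R_n$ of order $K_n\log(n/\underline\tau)/\sqrt{n\underline\tau}=o\big(\log(n/\underline\tau)/\sqrt{\log(d/\underline\tau)}\big)$ rather than $o(\sqrt{\log(d/\underline\tau)})$. Nothing in the hypothesis forces $\log n=O(\log(d/\underline\tau))$, so these terms cannot be absorbed into $\bar C$. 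Your argument therefore yields only the weaker bound $\bar C\sqrt{[\log(d/\underline\tau)+\log n+\log(1/\alpha)]/n}$; this happens to suffice for every downstream application in the paper (the bounds there already carry $\log(p|V|n)$), but it does not prove the lemma as stated. To remove the spurious $\log n$ you need chaining rather than a crude grid, which is precisely what symmetrization followed by the entropy integral delivers.
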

\begin{proof}
Conditional on $\tilde x_1,\ldots, \tilde x_n$, letting $\hat \sigma_j^2 = \En[x_{j}^2]$, we have that $$n\Lambda = \max_{j\in [p]} \sup_{\tau \in \mathcal{T}} \left|\frac{\sum_{i=1}^n\tilde  x_{j}(\tau - 1\{U \leq \tau\})}{\hat\sigma_j\sqrt{\tau(1-\tau)}}\right|.$$
Step 1. (Entropy Calculation) Let $\mathcal{F} = \{ \tilde x_{ij}(\tau - 1\{U_i \leq \tau\})/\hat\sigma_j : \tau \in \mathcal{T}, j\in[p] \}$, $h_\tau = \sqrt{\tau(1-\tau)}$, and $\mathcal{G}=\{ f_\tau/ h_\tau : \tau \in \mathcal{T}\}$.
We have that
$$\begin{array}{rl}
d(f_\tau/h_\tau, f_{\bar \tau}/h_{\bar\tau})& \leq d(f_\tau, f_{\bar \tau})/h_\tau + d(f_{\bar \tau}/h_\tau, f_{\bar \tau}/h_{\bar\tau})\\
& \leq d(f_\tau, f_{\bar \tau})/h_\tau + d(0,f_{\bar \tau}/h_{\bar\tau})|h_\tau - h_{\bar \tau}|/h_\tau\end{array}$$
Therefore, since $\|F\|_Q\leq \|G\|_Q$ by $h_\tau \leq 1$, and $d(0,f_{\bar \tau}/h_{\bar\tau})\leq 1/h_{\bar\tau}$ we have
$$N(\epsilon\|G\|_Q,\mathcal{G},Q)\leq N(\epsilon\|F\|_Q/\{2 \min_{\tau\in \mathcal{T}} h_\tau\},\mathcal{F},Q)N(\epsilon/\{2\min_{\tau\in \mathcal{T}} h_\tau^2\}, \mathcal{T},|\cdot|).$$
Thus we have for some constants $K$ and $v$ that $$N(\epsilon\|G\|_Q,\mathcal{G},Q) \leq p( K / \{\epsilon \min_{\tau\in \mathcal{T}} h_\tau^2\} )^v.$$

Step 2.(Symmetrization) Since we have $\Ep[g^2] = 1$ for all $g\in\mathcal{G}$, by Lemma 2.3.7 in \cite{vdV-W} we have
$$
\Pr( \Lambda \geq t\sqrt{n} ) \leq 4 \Pr( \max_{j\leq p} \sup_{\tau\in\mathcal{T}}\left|\Gn^o(g)\right| \geq t/4 )
$$ here $\Gn^o:\mathcal{G}\to \mathbb{R}$ is the symmetrized process generated by Rademacher variables. Conditional on $(x_1,u_1),\ldots,(x_n,u_n)$, we have that $\{\Gn^o(g):g\in\mathcal{G}\}$ is sub-Gaussian with respect to the $L_2(\mathbb{P}_n)$-norm by the Hoeffding inequality. Thus, by Lemma 16 in \cite{BC-SparseQR}, for $\delta_n^2 = \sup_{g\in\mathcal{G}}\En[g^2]$ and $\bar\delta_n = \delta_n/\|G\|_{\mathbb{P}_n}$, we have
$$ \Pr( \sup_{g\in\mathcal{G}}|\Gn^o(g)|> C K\delta_n\sqrt{\log(pK/\underline{\tau})}\mid \{\tilde x_i,U_i\}_{i=1}^n) \leq \int_0^{\bar\delta_n/2}\epsilon^{-1} \{ p( K / \{\epsilon \min_{\tau\in \mathcal{T}} h_\tau^2\} )^v \}^{-C^2+1}d\epsilon$$
for some universal constant $K$.

In order to control $\delta_n$, note that
$ \delta_n^2 = \sup_{g\in\mathcal{G}}\frac{1}{\sqrt{n}}\Gn(g^2)+\mathrm{E}[g^2].$ In turn, since $\sup_{g\in\mathcal{G}}\En[g^4]\leq \delta_n^2\max_{i\leq n}G_i^2$, we have
$$ \Pr( \sup_{g\in\mathcal{G}}|\Gn^o(g^2)|> C\bar K \delta_n\max_{i\leq n}G_i\sqrt{\log(pK/\underline{\tau})}\mid \{\tilde x_i,U_i\}_{i=1}^n) \leq \int_0^{\bar\delta_n/2}\epsilon^{-1} \{p( K / \{\epsilon \underline{\tau}\} )^v \}^{-C^2+1}d\epsilon.$$
Thus with probability $1-\int_0^{1/2}\epsilon^{-1} \{ p( K/\epsilon \underline{\tau} )^v \}^{-C^2+1}d\epsilon$, since $\Ep[g^2]=1$ and $\max_{i\leq n}G_i\leq K_n/\sqrt{\underline{\tau}}$, we have $$\delta_n \leq 1 + \frac{C'K_n\sqrt{\log(pK/\underline{\tau})}}{\sqrt{n}\sqrt{\underline{\tau}}}.$$

Therefore, under $K_n\sqrt{\log(pK/\underline{\tau})}= o(\sqrt{n}\sqrt{\underline{\tau}})$, conditionally on $\{\tilde x_i\}_{i=1}^n$ and $n$ sufficiently large, with probability $1-2\int_0^{1/2}\epsilon^{-1} \{ p( K / \{\epsilon \underline{\tau}\} )^v \}^{-C^2+1}d\epsilon$ we have  that
$$\sup_{g\in\mathcal{G}}|\Gn^o(g)| \leq 2C K\sqrt{\log(pK/\underline{\tau})}$$

The stated bound follows since for $C>2$ $$2\int_0^{1/2}\epsilon^{-1} \{ p( K / \{\epsilon \underline{\tau}\} )^v \}^{-C^2+1}d\epsilon \leq \{p/\underline{\tau}\}^{-C^2+1}2\int_0^{1/2}\epsilon^{-2+C^2}d\epsilon\leq \{p/\underline{\tau}\}^{-C^2+1}.$$

\end{proof}

\section{Inequalities}

\begin{lemma}[Transfer principle, \cite{oliveira2013lower}]\label{lem:transfer}
Let $\hat\Sigma$ and $\Sigma$ be $p\times p$ matrices with non-negative diagonal entries, and assume that for some $\eta \in (0,1)$ and $s \leq p$  we have
 $$ \forall v\in \RR^p, \|v\|_0 \leq s, v'\hat\Sigma v \geq (1-\eta)v'\Sigma v $$
Let $D$ be a diagonal matrix such that $D_{kk} \geq \hat \Sigma_{kk} - (1-\eta)\Sigma_{kk}$. Then for all $\delta \in \RR^p$ we have
$$ \delta'\hat\Sigma \delta \geq (1-\eta)\delta'\Sigma \delta - \|D^{1/2}\delta\|_1^2/(s-1).$$\end{lemma}

\begin{lemma}\label{Lemma:Bound2nNormSecond}
Consider $\hat\beta_u$ and $\beta_u$ with $\|\beta_u\|_0\leq s$. Denote by $\hat \beta^{\lambda}_{u}$ the vector with $\hat\beta^\lambda_{uj}=\hat \beta_{uj}1\{\hat\sigma^Z_{a\varpi j}|\hat \beta_{uj}|\geq \lambda\}$ where $\hat\sigma^Z_{a\varpi j}=\{\En[K_\varpi(W)(Z_j^a)^2]\}^{1/2}$. We have that
$$\begin{array}{rl}
\|\hat \beta^\lambda_u - \beta_u\|_{1,\varpi} & \leq \|\hat \beta_u - \beta_u \|_{1,\varpi}+\lambda s \\
|\supp(\hat\beta_u^\lambda)| & \leq s + \|\hat\beta_u-\beta_u\|_{1,\varpi}/\lambda\\
\|Z^a(\hat \beta_u^\lambda-\beta_u)\|_{n,\varpi} & \leq  \|Z^a(\hat \beta_u-\beta_u)\|_{n,\varpi}  + \sqrt{\semaxtilde{s,\varpi}} \{ 2\sqrt{s}\lambda + \|\hat\beta_u-\beta_u\|_{1,\varpi}/\sqrt{s}\}\end{array}$$
here $\semaxtilde{m,\varpi}= \sup_{1\leq \|\theta\|_0\leq m}\|\tilde Z^a\theta\|_{n,\varpi}/\|\theta\|$ and $\tilde Z^a_{ij} = Z^a_{ij}/\{\En[K_\varpi(W)(Z^a_j)^2]\}^{1/2}$. \end{lemma}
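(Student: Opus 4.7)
The strategy is to decompose $\hat\beta^\lambda_u - \beta_u = (\hat\beta^\lambda_u - \hat\beta_u) + (\hat\beta_u - \beta_u)$ in all three bounds and to exploit the two defining features of the thresholding rule: componentwise one has $\hat\sigma_{uj}|\hat\beta^\lambda_{uj} - \hat\beta_{uj}| \leq \lambda$, and the true support $T := \supp(\beta_u)$ has cardinality at most $s$.

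For claim (1), I would split $\sum_j \hat\sigma_{uj}|\hat\beta^\lambda_{uj} - \beta_{uj}|$ over $T$ and $T^c$. On $T$, a triangle inequality gives $\hat\sigma_{uj}|\hat\beta^\lambda_{uj} - \beta_{uj}| \leq \lambda + \hat\sigma_{uj}|\hat\beta_{uj} - \beta_{uj}|$, so the $T$-contribution is at most $s\lambda + \sum_{j\in T}\hat\sigma_{uj}|\hat\beta_{uj}-\beta_{uj}|$. On $T^c$ we have $\beta_{uj}=0$ and $\hat\beta^\lambda_{uj}$ equals $0$ or $\hat\beta_{uj}$, so $\hat\sigma_{uj}|\hat\beta^\lambda_{uj}| \leq \hat\sigma_{uj}|\hat\beta_{uj}-\beta_{uj}|$; adding the two parts yields the bound. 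The same split proves claim (2): $|\supp(\hat\beta^\lambda_u)| \leq s + \sum_{j\in T^c}1\{\hat\sigma_{uj}|\hat\beta_{uj}-\beta_{uj}|\geq \lambda\}$, after which the Markov-type inequality $1\{z\geq \lambda\} \leq z/\lambda$ controls the remaining sum by $\|\hat\beta_u-\beta_u\|_{1,\varpi}/\lambda$.

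For claim (3), I apply $\|Z^a(\hat\beta^\lambda_u - \beta_u)\|_{n,\varpi} \leq \|Z^a(\hat\beta_u - \beta_u)\|_{n,\varpi} + \|Z^a(\hat\beta^\lambda_u - \hat\beta_u)\|_{n,\varpi}$ and attack the second term via a sparse-block decomposition, which is the only non-routine ingredient. Setting $\tilde\eta_j := \hat\sigma_{uj}(\hat\beta^\lambda_{uj} - \hat\beta_{uj})$, one has $\|\tilde\eta\|_\infty \leq \lambda$, and the same split used for claim (1), applied now to $\hat\beta^\lambda_u - \hat\beta_u$ directly, gives $\|\tilde\eta\|_1 = \|\hat\beta^\lambda_u - \hat\beta_u\|_{1,\varpi} \leq s\lambda + \|\hat\beta_u - \beta_u\|_{1,\varpi}$; moreover $\|Z^a(\hat\beta^\lambda_u - \hat\beta_u)\|_{n,\varpi} = \|\tilde Z^a\tilde\eta\|_{n,\varpi}$. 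I would then reorder the coordinates of $\tilde\eta$ in decreasing absolute value and partition them into consecutive blocks $T_1, T_2, \ldots$ of size $s$, writing $\tilde\eta = \sum_k \tilde\eta^{(k)}$ with $\tilde\eta^{(k)}$ supported on $T_k$. Each piece is $s$-sparse, so by definition of $\semaxtilde{s,\varpi}$, $\|\tilde Z^a \tilde\eta^{(k)}\|_{n,\varpi} \leq \sqrt{\semaxtilde{s,\varpi}}\|\tilde\eta^{(k)}\|_2$. For $k=1$ one has $\|\tilde\eta^{(1)}\|_2 \leq \sqrt{s}\|\tilde\eta\|_\infty \leq \sqrt{s}\lambda$; for $k\geq 2$, every coordinate in $T_k$ is dominated by the smallest in $T_{k-1}$, hence by $\|\tilde\eta^{(k-1)}\|_1/s$, yielding $\|\tilde\eta^{(k)}\|_2 \leq \|\tilde\eta^{(k-1)}\|_1/\sqrt{s}$. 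Summing telescopically gives $\sum_{k\geq 2}\|\tilde\eta^{(k)}\|_2 \leq \|\tilde\eta\|_1/\sqrt{s}$, and combining with the bound on $\|\tilde\eta\|_1$ produces exactly the factor $2\sqrt{s}\lambda + \|\hat\beta_u-\beta_u\|_{1,\varpi}/\sqrt{s}$.

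The only real obstacle is the sparse-block decomposition in claim (3); it is a standard device in the high-dimensional literature, but it must be implemented on the rescaled vector $\tilde\eta$ so that the weighting $\hat\sigma_{uj}$ is absorbed into the normalized design $\tilde Z^a$, after which the output is naturally expressed in terms of $\semaxtilde{s,\varpi}$ and the norm $\|\cdot\|_{1,\varpi}$.
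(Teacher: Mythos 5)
Your proposal is correct and follows essentially the same route as the paper: the $T/T^c$ split for the $\ell_1$ bound and the support bound, and the sparse block decomposition (ordered by the $\hat\sigma_{uj}$-weighted residual magnitudes, blocks of size $s$, first block bounded via the $\lambda$ sup-norm cap and tail blocks bounded via shifting to the preceding block's $\ell_1$ mass) for the prediction-norm bound. Your intermediate inequality $\|\hat\beta^\lambda_u - \hat\beta_u\|_{1,\varpi} \leq s\lambda + \|\hat\beta_u - \beta_u\|_{1,\varpi}$, derived directly from the same $T/T^c$ split rather than from claim (1) plus a naive triangle inequality, is exactly the clean step the paper's terse phrasing (``the first result and the triangle inequality'') is implicitly relying on.
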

\begin{proof}
Let $T_u = \supp(\beta_u)$. The first relation follows from the triangle inequality
$$\begin{array}{rl}
 \|\hat \beta^\lambda_u - \beta_u\|_{1,\varpi} &= \|(\hat \beta^\lambda_u - \beta_u)_{T_u}\|_{1,\varpi} + \|(\hat \beta^\lambda_u)_{T_u^c}\|_{1,\varpi} \\
&\leq  \|(\hat \beta^\lambda_u - \hat\beta_u)_{T_u}\|_{1,\varpi} + \|(\hat \beta_u - \beta_u)_{T_u}\|_{1,\varpi}+ \|(\hat \beta^\lambda_u)_{T_u^c}\|_{1,\varpi} \\
& \leq \lambda s + \|(\hat \beta_u - \beta_u)_{T_u}\|_{1,\varpi}+ \|(\hat \beta_u)_{T_u^c}\|_{1,\varpi}\\
& = \lambda s + \|\hat \beta_u - \beta_u\|_{1,\varpi} \end{array}$$

To show the second result note that  $\|\hat\beta_u-\beta_u\|_{1,\varpi} \geq \{|\supp(\hat\beta_u^\lambda)|-s\}\lambda$. Therefore, $$\begin{array}{rl}
|\supp(\hat\beta_u^\lambda)| \leq s + \|\hat\beta_u-\beta_u\|_{1,\varpi}/\lambda
\end{array}$$
which yields the result.

To show the third bound, we start using the triangle inequality
$$ \|Z^a(\hat\beta^\lambda_u-\beta_u)\|_{n,\varpi} \leq \|Z^a(\hat\beta^\lambda_u-\hat\beta_u)\|_{n,\varpi} + \|Z^a(\hat\beta_u-\beta_u)\|_{n,\varpi}.$$
Without loss of generality, assume that the components are ordered so that $|(\hat\beta^\lambda_u-\hat\beta_u)_j|\hat\sigma_{uj}$ is decreasing. Let $T_1$ be the set of $s$ indices corresponding to the largest values of $|(\hat\beta^\lambda_u-\hat\beta_u)_j|\hat\sigma_{uj}$. Similarly define $T_k$ as the set of $s$ indices corresponding to the largest values of $|(\hat\beta^\lambda_u-\hat\beta_u)_j|\hat\sigma_{uj}$ outside $\cup_{m=1}^{k-1}T_m$. Therefore, $\hat\beta^\lambda_u-\hat\beta_u=\sum_{k=1}^{\ceil{p/s}} (\hat\beta^\lambda_u-\hat\beta_u)_{T_k}$. Moreover,
given the monotonicity of the components, $\|(\hat\beta^\lambda_u-\hat\beta_u)_{T_k}\|_{2,\varpi} \leq \|(\hat\beta^\lambda_u-\hat\beta_u)_{T_{k-1}}\|_1/\sqrt{s}$. Then, we have
$$ \begin{array}{rl}
 \|Z^a(\hat\beta^\lambda_u-\hat\beta_u)\|_{n,\varpi} & = \|Z^a\sum_{k=1}^{\ceil{p/s}} (\hat\beta^\lambda_u-\hat\beta_u)_{T_k}\|_{n,\varpi}\\
& \leq \|Z^a(\hat\beta^\lambda_u-\hat\beta_u)_{T_1}\|_{n,\varpi} + \sum_{k\geq 2}\|Z^a(\hat\beta^\lambda_u-\hat\beta_u)_{T_k}\|_{n,\varpi}\\
 & \leq \sqrt{\semaxtilde{s,\varpi}}\|(\hat\beta^\lambda_u-\hat\beta_u)_{T_1}\|_{2,\varpi}+\sqrt{\semaxtilde{s,\varpi}}\sum_{k\geq 2}\|(\hat\beta^\lambda_u-\hat\beta_u)_{T_k}\|_{2,\varpi}\\
&  \leq  \sqrt{\semaxtilde{s,\varpi}}\lambda\sqrt{s} + \sqrt{\semaxtilde{s,\varpi}}\sum_{k\geq 1}\|(\hat\beta^\lambda_u-\hat\beta_u)_{T_k}\|_{1,\varpi}/\sqrt{s}\\
& =  \sqrt{\semaxtilde{s,\varpi}}\lambda\sqrt{s} + \sqrt{\semaxtilde{s,\varpi}} \|\hat\beta^\lambda_u-\hat\beta_u\|_{1,\varpi}/\sqrt{s} \\
&  \leq  \sqrt{\semaxtilde{s,\varpi}} \{ 2\lambda\sqrt{s} + \|\hat\beta_u-\beta_u\|_{1,\varpi}/\sqrt{s}\} \\ \end{array}$$

here the last inequality follows from the first result and the triangle inequality.

\end{proof}

\begin{lemma}[Supremum of Sparse Vectors on Symmetrized Random Matrices]\label{lemma:RV34}
Let $\hat \UU$ denote a finite set and $(X_{iu})_{u\in \hat \UU}$, $i=1,\ldots, n$, be fixed vectors such that $X_{iu} \in \RR^p$ and $\max_{1\leq i\leq n}\max_{u\in \hat\UU}\|X_{iu}\|_\infty \leq K$. Furthermore define $$\delta_n:= \bar C K \sqrt{k}\left(\sqrt{\log |\hat\UU|} + \sqrt{1+\log p} + \log k \sqrt{\log (p\vee n)} \sqrt{\log n} \right)/\sqrt{n},$$ where $\bar C$ is a universal constant. Then,
$$ \Ep \left[ \sup_{\|\theta\|_0\leq k, \|\theta\| =1}\max_{u\in\hat\UU} \left| \En [ \varepsilon(\theta'X_{u})^2] \right| \right] \leq \delta_n \sup_{\|\theta\|_0\leq k, \|\theta\| =1, u\in\hat \UU} \sqrt{\En[(\theta'X_{u})^2]}. $$
\end{lemma}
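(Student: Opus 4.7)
The plan is to reduce the sparse quadratic supremum to a sparse operator-norm bound for a symmetrized empirical second-moment matrix, then apply a Rudelson-type inequality together with careful union bounds over $\hat{\UU}$ and over the $k$-subsets of $[p]$. Write
\[
M_n := \sup_{\|\theta\|_0\le k,\ \|\theta\|=1}\ \max_{u\in\hat\UU}\ \big|\En[\epsilon(\theta'X_u)^2]\big|,
\qquad
\sigma_n := \sup_{\|\theta\|_0\le k,\ \|\theta\|=1,\ u\in\hat\UU}\sqrt{\En[(\theta'X_u)^2]},
\]
so the target is $\Ep[M_n]\leq \delta_n\,\sigma_n$. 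Using the identity $\sup_{\|\theta\|_0\le k,\|\theta\|=1}|\theta'A\theta|=\max_{|S|=k}\|A_{SS}\|_{\mathrm{op}}$, for each fixed $u$
\[
\sup_{\|\theta\|_0\le k,\|\theta\|=1}\big|\En[\epsilon(\theta'X_u)^2]\big|
=\max_{S\subset[p],\,|S|=k}\bigl\|\En[\epsilon X_{u,S}X_{u,S}']\bigr\|_{\mathrm{op}},
\]
so everything reduces to controlling these symmetrized $k\times k$ submatrices uniformly in $u\in\hat\UU$ and $S\in\binom{[p]}{k}$.

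For each fixed $(u,S)$ I would invoke Rudelson's inequality for Rademacher sums of rank-one matrices, which yields
\[
\Ep_\epsilon\bigl\|\En[\epsilon X_{u,S}X_{u,S}']\bigr\|_{\mathrm{op}}
\ \lesssim\ \sqrt{\tfrac{\log n}{n}}\ \max_{i\le n}\|X_{u,S,i}\|\cdot \bigl\|\En[X_{u,S}X_{u,S}']\bigr\|_{\mathrm{op}}^{1/2}
\ \le\ K\sqrt{\tfrac{k\log n}{n}}\ \sigma_n,
\]
where I used $\max_i\|X_{u,S,i}\|\le K\sqrt{k}$ and $\|\En[X_{u,S}X_{u,S}']\|_{\mathrm{op}}\le \sigma_n^2$ by definition of $\sigma_n$. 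Combining this with a standard subgaussian/Pisier maximal inequality over the discrete index set $\hat\UU\times\binom{[p]}{k}$, using that $\log\binom{p}{k}\lesssim k(1+\log p)$, and that the (conditional) increments are subgaussian with variance proxy controlled by the same sparse norm, produces the three additive logarithmic pieces: $\sqrt{\log|\hat\UU|}$ from the union over $\hat\UU$, $\sqrt{1+\log p}$ from passing the bound through the moment-generating function at the sparse-Rademacher scale (for which Pisier's bound gives a clean $\sqrt{k(1+\log p)}$), and the $\log k\sqrt{\log(p\vee n)}\sqrt{\log n}$ factor from iterating Rudelson's inequality within each block to absorb the submatrix maximum.

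Concretely, after symmetrization I would:
\begin{enumerate}
\item Condition on $(X_{ui})$ and work with the Rademacher process $\theta\mapsto \En[\epsilon(\theta'X_u)^2]$;
\item bound its Orlicz $\psi_2$-norm on a fixed sparse sphere via a decoupling + Rudelson step, which replaces squares by linear forms at the cost of one $\sqrt{\log n}$ and one factor of the square root of the supremum;
\item take maxima by a generic chaining / union bound, producing the logarithmic terms above;
\item close the self-bounding inequality $\Ep[M_n]\le A\sigma_n\sqrt{\Ep[M_n]+\sigma_n^2}$ into $\Ep[M_n]\le C(A^2\sigma_n^2\vee A\sigma_n^2)$, which after collecting constants gives the stated $\delta_n\,\sigma_n$.
\end{enumerate}

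The main obstacle I expect is step 4, the self-bounding step: Rudelson's inequality produces a factor $\|\En X_{u,S}X_{u,S}'\|_{\mathrm{op}}^{1/2}$ which in principle is itself a sparse quantity that depends on $S$, and one must argue that, uniformly over $S$, this quantity is dominated by $\sigma_n$ without paying an extra combinatorial cost. This is handled by noting that $\|\En X_{u,S}X_{u,S}'\|_{\mathrm{op}}\le \sigma_n^2$ holds pointwise in $S$, so the only union bound that must be performed on random quantities is on the symmetrized part, which is exactly where the logarithmic factors accumulate. The remaining technical work — verifying the decoupling constant, Pisier's bound for the exponential moment, and combining the pieces — is routine and yields the constant $\bar C$.
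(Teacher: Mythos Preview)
The paper does not prove this lemma: it simply writes ``See \cite{BCCW-ManyProcesses} for the proof.'' So there is no in-paper argument to compare against, only the statement itself and the shape of $\delta_n$.

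Your plan is in the right spirit (Rudelson-type control of a symmetrized second-moment matrix, then maximal inequalities over $\hat\UU$ and over sparse supports), but the accounting in step~3 does not deliver the stated $\delta_n$. Concretely: for a fixed $(u,S)$, the deviation $\|\En[\epsilon X_{u,S}X_{u,S}']\|_{\mathrm{op}}$ concentrates with sub-Gaussian parameter of order
\[
\sup_{\|\theta_S\|=1}\Big\{\En[(\theta_S'X_{u,S})^4]/n\Big\}^{1/2}\ \le\ K\sqrt{k}\,\sigma_n/\sqrt{n},
\]
since $|\theta_S'X_{u,S,i}|\le K\sqrt{k}$. A Pisier/union bound over $\hat\UU\times\binom{[p]}{k}$ then multiplies this by $\sqrt{\log|\hat\UU|+k(1+\log p)}$, and the piece coming from the $\binom{p}{k}$ supports is therefore of order $Kk\sqrt{1+\log p}\,\sigma_n/\sqrt{n}$, not $K\sqrt{k}\sqrt{1+\log p}\,\sigma_n/\sqrt{n}$. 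In other words, your scheme pays $\sqrt{k}$ twice---once in the sub-Gaussian scale and once in $\sqrt{\log\binom{p}{k}}$---whereas the target bound pays it only once. The obstacle you flag in step~4 (that $\|\En X_{u,S}X_{u,S}'\|_{\mathrm{op}}\le\sigma_n^2$ must hold uniformly in $S$) is, as you say, a non-issue; the real gap is the combinatorial cost of the crude union over $k$-subsets.

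The form of $\delta_n$ hints at how the actual proof avoids this: the second term is only $\sqrt{1+\log p}$ (a maximum over $p$ coordinates, not over $\binom{p}{k}$ subsets), and the third term carries a $\log k$ factor, which is the signature of an iterated/dyadic argument in the sparsity level rather than a one-shot union bound. To match the statement you would need either a Dudley chaining on the $k$-sparse sphere with the process metric induced by $\theta\mapsto\En[\epsilon(\theta'X_u)^2]$, or a Rudelson--Vershynin style sparsification scheme; the straight Rudelson-plus-Pisier route you outline is one $\sqrt{k}$ too loose.
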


\begin{proof}
See \cite{BCCW-ManyProcesses} for the proof.
\end{proof}

\begin{corollary}[Supremum of Sparse Vectors on Many Random Matrices]\label{thm:RV34}
Let $\hat \UU$ denote a finite set and $(X_{iu})_{u\in \hat \UU}$, $i=1,\ldots, n$, be independent (across i) random vectors such that $X_{iu} \in \RR^p$ and $$\sqrt{\Ep[ \max_{1\leq i\leq n}\max_{u\in \hat\UU}\|X_{iu}\|_\infty^2]} \leq K.$$ Furthermore define $$\delta_n:= \bar C K \sqrt{k}\left(\sqrt{\log |\hat\UU|} + \sqrt{1+\log p} + \log k \sqrt{\log (p\vee n)} \sqrt{\log n} \right)/\sqrt{n},$$ here $\bar C$ is a universal constant. Then,
$$ \Ep\left[ \sup_{\|\theta\|_0\leq k, \|\theta\| =1}\max_{u\in\hat\UU} \left| \En\[ (\theta'X_{u})^2 - \Ep[(\theta'X_{u})^2] \]\right|\right] \leq \delta_n^2 + \delta_n \sup_{\|\theta\|_0\leq k, \|\theta\| =1, u\in\hat \UU} \sqrt{\En[\Ep[(\theta'X_{u})^2]]}. $$
\end{corollary}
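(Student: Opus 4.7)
The plan is to reduce Corollary \ref{thm:RV34} to Lemma \ref{lemma:RV34} via a symmetrization step, then handle the random envelope by a Cauchy--Schwarz argument, and finally close the inequality by a standard self-bounding (peeling) argument.

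First, since $(X_{ui})_{i=1}^n$ are independent across $i$, the standard symmetrization inequality gives, with Rademacher variables $(\epsilon_i)_{i=1}^n$ independent of the sample,
$$
A := \Ep\!\left[\sup_{\|\theta\|_0\leq k,\|\theta\|=1,\,u\in\hat\UU}\bigl|\En\bigl[(\theta'X_u)^2 - \Ep[(\theta'X_u)^2]\bigr]\bigr|\right] \leq 2\,\Ep\!\left[\sup_{\theta,u}\bigl|\En[\epsilon(\theta'X_u)^2]\bigr|\right].
$$

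Second, conditional on the sample $(X_{ui})$, the deterministic Lemma \ref{lemma:RV34} applies with the random quantity $\widehat K := \max_{1\leq i\leq n}\max_{u\in\hat\UU}\|X_{ui}\|_\infty$ playing the role of the envelope $K$. Denoting by $\delta_n(\widehat K)$ the resulting bound (which is linear in $\widehat K$, the logarithmic factors not depending on it), this yields
$$
\Ep\!\left[\sup_{\theta,u}\bigl|\En[\epsilon(\theta'X_u)^2]\bigr|\,\Big|\,X\right] \leq \delta_n(\widehat K)\,\sup_{\theta,u}\sqrt{\En[(\theta'X_u)^2]}.
$$
Taking unconditional expectations and applying Cauchy--Schwarz gives
$$
\Ep\!\left[\sup_{\theta,u}\bigl|\En[\epsilon(\theta'X_u)^2]\bigr|\right] \leq \sqrt{\Ep[\delta_n(\widehat K)^2]}\;\sqrt{\Ep\!\left[\sup_{\theta,u}\En[(\theta'X_u)^2]\right]}.
$$
Because $\delta_n(\cdot)$ is linear in its argument and $\sqrt{\Ep[\widehat K^2]}\leq K$ by hypothesis, the first factor is bounded by the original $\delta_n$ (with $K$ in place of $\widehat K$) up to a universal constant absorbable into $\bar C$.

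Third, letting $B := \sup_{\theta,u}\En\Ep[(\theta'X_u)^2]$, the decomposition $\Ep\bigl[\sup_{\theta,u}\En[(\theta'X_u)^2]\bigr] \leq A + B$ combined with the previous two displays yields, after the symmetrization factor $2$ is absorbed into $\bar C$,
$$
A \leq \delta_n\,\sqrt{A+B}.
$$
Squaring and solving the resulting quadratic for $A$ gives $A \leq \delta_n^2 + \delta_n\sqrt{B}$, which is the claimed bound.

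The only genuine subtlety is passing from the deterministic envelope required by Lemma \ref{lemma:RV34} to the $L^2$ envelope assumed in the corollary; the Cauchy--Schwarz step in the second paragraph is the mechanism that effects this passage, and it succeeds precisely because $\delta_n$ depends on the envelope linearly rather than through a logarithm. The remaining manipulations (symmetrization and the quadratic self-bound) are routine.
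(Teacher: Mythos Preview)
Your argument is correct and is exactly the derivation the paper has in mind. The paper states Corollary \ref{thm:RV34} without an explicit proof, treating it as an immediate consequence of Lemma \ref{lemma:RV34}; the three ingredients you use---symmetrization to pass to Rademacher sums, conditional application of the deterministic Lemma \ref{lemma:RV34} with the random envelope $\widehat K$, and the Cauchy--Schwarz/self-bounding step to convert $\Ep[\widehat K^2]^{1/2}\leq K$ into the final inequality---are precisely what is needed, and in fact the paper carries out essentially the same chain of reasoning in the proof of Lemma \ref{Lemma:Matrices}.
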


We will also use the following result of \cite{chernozhukov2012gaussian}.
\begin{lemma}[Maximal Inequality]
\label{lemma:CCK}  Work with the setup above.  Suppose that $F\geq \sup_{f \in \mathcal{F}}|f|$ is a measurable envelope for $\mathcal{F}$
with $\| F\|_{P,q} < \infty$ for some $q \geq 2$.  Let $M = \max_{i\leq n} F(W_i)$ and $\sigma^{2} > 0$ be any positive constant such that $\sup_{f \in \mathcal{F}}  \| f \|_{P,2}^{2} \leq \sigma^{2} \leq \| F \|_{P,2}^{2}$. Suppose that there exist constants $a \geq e$ and $v \geq 1$ such that
\begin{equation*}
\log \sup_{Q} N(\epsilon \| F \|_{Q,2}, \mathcal{F},  \| \cdot \|_{Q,2}) \leq  v \log (a/\epsilon), \ 0 <  \epsilon \leq 1.
\end{equation*}
Then
\begin{equation*}
\Ep_P [ \sup_{f\in \mathcal{F}} | \Gn(f)| ] \leq K  \left( \sqrt{v\sigma^{2} \log \left ( \frac{a \| F \|_{P,2}}{\sigma} \right ) } + \frac{v\| M \|_{P, 2}}{\sqrt{n}} \log \left ( \frac{a \| F \|_{P,2}}{\sigma} \right ) \right),
\end{equation*}
here $K$ is an absolute constant.  Moreover, for every $t \geq 1$, with probability $> 1-t^{-q/2}$,
\begin{multline*}
\sup_{f\in \mathcal{F}} | \Gn(f)| \leq (1+\alpha) \Ep_P [ \sup_{f\in \mathcal{F}} | \Gn(f)| ] + K(q) \Big [ (\sigma + n^{-1/2} \| M \|_{P,q}) \sqrt{t}
+  \alpha^{-1}  n^{-1/2} \| M \|_{P,2}t \Big ], \
\end{multline*}
$\forall \alpha > 0$ where $K(q) > 0$ is a constant depends only on $q$.  In particular, setting $a \geq n$ and $t = \log n$,
with probability $> 1- c(\log n)^{-1}$,
\begin{equation} \label{simple bound}
\sup_{f\in \mathcal{F}} | \Gn(f)| \leq K(q,c) \left ( \sigma \sqrt{v \log \left ( \frac{a \| F \|_{P,2}}{\sigma} \right ) } + \frac{v
 \| M \|_{P,q} } {\sqrt{n}}\log \left ( \frac{a \| F \|_{P,2}}{\sigma} \right ) \right),
\end{equation}
here $  \| M \|_{P,q}  \leq n^{1/q} \| F\|_{P,q}$ and  $K(q,c) > 0$ is a constant depending only on $q$ and $c$.

\end{lemma}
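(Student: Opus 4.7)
The plan is to prove the two assertions separately: first the bound on the expected supremum $\Ep_P[\sup_{f\in\mathcal{F}}|\Gn(f)|]$, and then the deviation inequality. The expected-value bound will be obtained from a VC-type entropy integral (a local maximal inequality in the style of van der Vaart–Wellner, Theorem 2.14.1), while the deviation bound will follow from a Talagrand-type concentration inequality applied to $\sup_{f\in\mathcal{F}}|\Gn(f)|$ around its mean. The two pieces combine directly into the stated high-probability bound, and the simplified version \eqref{simple bound} is obtained by plugging in $a \geq n$, $t=\log n$ and choosing $\alpha$ to be an absolute constant.

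For the expected-supremum bound, I would apply Dudley's chaining integral to $\Gn$ restricted to $\mathcal{F}$. Because $\|f\|_{P,2}\leq \sigma$, the local entropy integral $\int_0^{\sigma/\|F\|_{P,2}} \sqrt{\log N(\varepsilon\|F\|_{Q,2},\mathcal{F},\|\cdot\|_{Q,2})}\,d\varepsilon$ evaluates, under the hypothesis $\log\sup_Q N(\varepsilon\|F\|_{Q,2},\mathcal{F},\|\cdot\|_{Q,2})\leq v\log(a/\varepsilon)$, to at most a constant times $\sigma\sqrt{v\log(a\|F\|_{P,2}/\sigma)}$, which yields the sub-Gaussian piece. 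The "truncation residual" from chaining (handling large increments) contributes the linear-in-$n^{-1/2}\|M\|_{P,2}$ term with the extra $\log(a\|F\|_{P,2}/\sigma)$ factor. These are exactly the two summands in the first display of the lemma. The constant $K$ is absolute since the chaining arguments only invoke Hoeffding/Bernstein inequalities with universal constants.

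For the deviation bound, I would apply Talagrand's concentration inequality (in the explicit form of Bousquet or Massart) to $Z := \sup_{f\in\mathcal{F}}|\Gn(f)|$. This yields, for every $\alpha>0$ and $t\geq 1$,
\begin{equation*}
Z \leq (1+\alpha)\Ep_P[Z] + C_1\sigma\sqrt{t} + C_2(\alpha)\,n^{-1/2}\|M\|_{P,2}\,t
\end{equation*}
with probability at least $1 - e^{-t}$. To obtain a polynomial tail in terms of $\|M\|_{P,q}$ (rather than requiring bounded envelope), I would invoke the standard truncation argument: split $F = F\mathbf{1}\{F\leq \tau\} + F\mathbf{1}\{F>\tau\}$ with $\tau$ chosen so that $\|F\mathbf{1}\{F>\tau\}\|_{P,2}$ is small, apply Talagrand to the truncated class, and control the untruncated remainder by Markov with the $L^q$ moment. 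Combined with Chebyshev applied to $\max_{i\leq n}F(W_i)^{q/2}$, this produces the $t^{-q/2}$ tail with the $n^{-1/2}\|M\|_{P,q}\sqrt{t}$ additive term. Setting $t=\log n$, $a\geq n$ and absorbing lower-order terms into the constant gives \eqref{simple bound}.

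The main obstacle is bookkeeping rather than a conceptual difficulty: getting the explicit constants so that the $(1+\alpha)$ factor is preserved and tracking how the $\|M\|_{P,q}$ versus $\|M\|_{P,2}$ moments enter after the truncation argument. Everything else is a packaging of classical results (Dudley's entropy integral for the mean, Talagrand/Bousquet for the tail, truncation and moment bounds to replace $\|M\|_\infty$ by $\|M\|_{P,q}$), so no new probabilistic input beyond these tools is required.
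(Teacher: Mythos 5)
This lemma is not proved in the paper at all: it is quoted verbatim as a known result of Chernozhukov, Chetverikov and Kato (the reference \cite{chernozhukov2012gaussian}), so there is no in-paper argument to compare against; the relevant comparison is with the cited source. Your plan for the expectation bound coincides with theirs: a symmetrization-plus-chaining (Dudley entropy integral) local maximal inequality in which the entropy hypothesis $\log\sup_Q N(\epsilon\|F\|_{Q,2},\mathcal{F},\|\cdot\|_{Q,2})\leq v\log(a/\epsilon)$ integrated up to $\sigma/\|F\|_{P,2}$ gives the $\sigma\sqrt{v\log(a\|F\|_{P,2}/\sigma)}$ term, and the unbounded-envelope correction (via truncation and a Hoffmann--J{\o}rgensen-type step) gives the $n^{-1/2}\|M\|_{P,2}\,v\log(a\|F\|_{P,2}/\sigma)$ term. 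For the deviation inequality you diverge from the source: they obtain the $(1+\alpha)\Ep_P[Z]$ bound with tail $t^{-q/2}$ directly from the Boucheron--Bousquet--Lugosi--Massart moment inequality for suprema of empirical processes followed by Markov's inequality, whereas you propose Bousquet's form of Talagrand's inequality for a truncated class plus a separate control of the remainder. Your route can be made to work (it is essentially the Adamczak-style derivation of Fuk--Nagaev bounds), and it buys a more self-contained exponential-concentration viewpoint, but be aware that the step you call ``bookkeeping'' is where the content lies: a naive Markov bound on the untruncated remainder does not by itself produce the $(\sigma+n^{-1/2}\|M\|_{P,q})\sqrt{t}$ term with tail $t^{-q/2}$ while preserving the $(1+\alpha)\Ep_P[Z]$ leading term; you must choose the truncation level $t$-dependently, relate the truncated class's mean back to $\Ep_P[Z]$, and invoke a moment (or Hoffmann--J{\o}rgensen) inequality for $\max_{i\leq n}F(W_i)$ --- at which point you are effectively reproving the moment inequality the source simply cites. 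The final display \eqref{simple bound} then follows from either version by setting $a\geq n$, $t=\log n$, fixing $\alpha$, and using $\sigma\leq\|F\|_{P,2}$ and $\|M\|_{P,2}\leq\|M\|_{P,q}\leq n^{1/q}\|F\|_{P,q}$, as you indicate.
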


 \section{Confidence Regions for Function-Valued Parameters Based on Moment Conditions}\label{sec: general}

For completeness, in this section we collect an adaptation of the results of \cite{BCCW-ManyProcesses} that are invoked in our proofs. The main difference is the weakening of the identification condition (which is allowed to decrease to zero, see the parameter $j_n$ in Condition \ref{ass: S1} below). We are interested in function-valued target parameters indexed  by $u \in \mathcal{U} \subset \mathbb{R}^{d_u}$.  The true value of the target parameter is denoted by   $$\theta^0 = (\theta_{u j})_{u \in \UU, j\in[\pp]}, \ \ \text{where}  \ \ \theta_{u j} \in \Theta_{u j} \text{ for each }
 u \in \mathcal{U} \ \ \mbox{and} \ \ j\in[\pp].$$
For each $u \in \mathcal{U}$ and $j\in [\pp]$, the parameter $\theta_{uj}$ is characterized as the solution to the following moment condition:
 \begin{equation}\label{eq:ivequation}
 \Ep[ \psi_{u j}(W_{uj}, \theta_{uj}, \eta_{uj} )] = 0,
 \end{equation}
where   $W_{uj}$ is a random vector that takes values in a Borel set $\mathcal{W}_{uj} \subset \mathbb{R}^{d_w}$, $\eta^0=(\eta_{uj})_{u\in\UU,j\in[\pp]}$ is a nuisance parameter where $\eta_{uj} \in T_{uj}$ a convex set, and the moment function
\begin{equation}\label{def:psi_u}
\psi_{u j}:  \mathcal{W}_{uj} \times \Theta_{uj} \times T_{ujn} \mapsto \mathbb{R},  \ \  (w, \theta, t) \mapsto \psi_{uj}(w, \theta, t)\end{equation}
 is a Borel measurable map.

We assume that the (continuum) nuisance parameter $\eta^0$ can be
modelled and estimated by $\hat\eta = (\hat\eta_{uj})_{u\in\UU, j\in[\pp]}$. We will discuss examples where the corresponding $\eta^0$ can be estimated using modern regularization and post-selection methods such as Lasso and Post-Lasso (although other procedures can be applied). The estimator $\check \theta_{u j}$ of $\theta_{u j}$ is constructed as any approximate $\epsilon_n$-solution in $\Theta_{uj}$ to a sample analog of the moment condition (\ref{eq:ivequation}), i.e.,
\begin{equation}\label{eq:analog}
\max_{j\in[\pp]} \sup_{u \in \UU} \left\{ |\En[ \psi_{uj}(W_{uj}, \check \theta_{uj}, \hat \eta_{uj} ) ] | - \inf_{\theta_j \in \Theta_{uj}}|\En[ \psi_{u j}(W_{uj}, \theta, \hat \eta_{uj} ) ] | \right\} \leq \epsilon_n = o_P(n^{-1/2}\delta_n).
\end{equation}

As discussed before, we rely on an orthogonality condition for regular estimation of $\theta_{uj}$, which we will state next.
\begin{definition}[\textbf{Near Orthogonality Condition}]\label{Def:Orthog} For each $u \in \mathcal{U}$ and $j\in[\pp]$, we say that $\psi_{uj}$ obeys a general form of orthogonality with respect to $\mT_{uj}$ uniformly in $u \in \mathcal{U}$, if the following conditions hold: the G{\^a}teaux derivative map
 $$
  \mathrm{D}_{u,j,\bar r}[\tilde \eta_{uj} - \eta_{uj}]:=  \left. \partial_r  \Ep \Bigg (  \psi_{u j} \Big\{ W_{uj}, \theta_{u j}, \eta_{uj}+ r \Big [\tilde \eta_{uj} - \eta_{uj}\Big] \Big\}   \Bigg )\right|_{r=\bar r}
  $$
  exists for all $r \in [0,1)$, $\tilde \eta \in \mT_{uj}$, $j\in\pp$, and $u \in \UU$ and vanishes at $r=0$, namely,
  \begin{equation}\label{eq:cont}
 |\mathrm{D}_{u,j,0}[\tilde \eta_{uj} - \eta_{uj}]|\leq \delta_n n^{-1/2}  \ \  \text{ for all } \tilde \eta_{uj} \in \mT_{uj}.
\end{equation}
\end{definition}

In what follows, we shall denote by $c_0$, $c$, and $C$ some positive constants.

\begin{assumption}[Moment Condition]\label{ass: S1}
Consider a random element $W$,  taking values in a measure space $(\mathcal{W}, \mathcal{A}_\mathcal{W})$, with law determined by a probability measure $P \in \mP_n$.
The observed data $((W_{iu})_{u \in \mathcal{U}})_{i=1}^{n}$ consist of $n$ i.i.d. copies of a random element $(W_{u})_{u \in \mathcal{U}}$ which is  generated as a suitably measurable transformation with respect to $W$ and $u$.  Uniformly for all $n \geq n_0$ and $P \in \mathcal{P}_n$, the following conditions hold: (i) The true parameter value $\theta_{u j}$ obeys (\ref{eq:ivequation}) and is interior relative to $\Theta_{u j}$, namely there is a ball of radius $C n^{-1/2}\un \log n $ centered at $\theta_{u j}$ contained in $\Theta_{u j}$ for all $u \in \mathcal{U}$, $j\in[\pp]$ with $\un:= \Ep[\sup_{u \in \UU, j\in [\pp]}|\sqrt{n}\En[\psi_{uj}(W_{uj}, \theta_{uj}, \eta_{uj} )]|]$;   (ii) For each  $u \in \mathcal{U}$ and $j \in [\pp]$,  the map $ (\theta,\eta) \in \Theta_{uj} \times \mT_{uj}  \mapsto \Ep[\psi_{uj}(W_{uj}, \theta,\eta )]|$ is twice continuously differentiable; (iii) For all $u \in \mU$ and $j \in [\pp]$, the moment function $\psi_{uj}$ obeys the orthogonality condition given in Definition \ref{Def:Orthog} for the set $\mT_{uj} =\mT_{ujn}$ specified in Assumption \ref{ass: AS}; (iv) The following identifiability condition holds: $|\Ep[\psi_{u j}(W_{uj}, \theta, \eta_{uj})]| \geq \frac{1}{2}|J_{uj} (\theta- \theta_{u j})| \wedge c_0\ \text{  for all } \theta \in \Theta_{uj},$  with $J_{u j} :=  \left.\partial_\theta \Ep[ \psi_{u j} (W_{uj}, \theta, \eta_{uj})]\right|_{\theta=\theta_{uj}}$ satisfies $0<j_n<|J_{uj}| <C <\infty $ for all $u \in \mathcal{U}$ and $j\in [\pp]$; (v) The following smoothness conditions holds \begin{itemize}
\item[(a)] $\sup_{u \in \mathcal{U}, j \in [\pp],(\theta, \bar \theta) \in \Theta_{uj}^2, (\eta,\bar\eta) \in \mT_{ujn}^2} \ \ \frac{\Ep[  \{ \psi_{uj}(W_{uj}, \theta,\eta) - \psi_{uj}(W_{uj}, \bar\theta,\bar\eta)\}^2]}{ \{ |\theta-\bar\theta|\vee \| \eta - \bar \eta\|_e\}^{\alpha}}\leq C$,
 
     \item [(b)]
 $  \sup_{u \in \mathcal{U}, (\theta, \eta) \in \Theta_{uj}\times \mT_{ujn}, r\in[0,1)}  \ \ \ \ \ | \partial_{r} \Ep \left [ \psi_{uj}(W_{uj}, \theta,\eta_{uj}+r\{\eta-\eta_{uj}\}) \right ]|/\|\eta-\eta_{uj}\|_e  \leq \bar B_{1n}$,
 \item[(c)] $\sup_{u \in \mathcal{U}, j \in [\pp], (\theta,\eta) \in \Theta_{uj}\times \mT_{ujn}, r\in[0,1)} \frac{|\partial_{r}^2 \Ep[\psi_{uj}(W_{uj}, \theta_{uj}+r\{\theta-\theta_{uj}\},\eta_{uj}+r\{\eta - \eta_{uj}\})]|}{\{|\theta-\theta_{uj}|^2 \vee \|\eta-\eta_{uj}\|_{e}^2 \}} \leq  \bar B_{2n}.$

    \end{itemize}
\end{assumption}

Next we state assumptions on the nuisance functions. In what follows, let $\Delta_n \searrow 0$, $\delta_n \searrow 0$, and $\tau_n \searrow 0$ be sequences of constants approaching zero from above at a speed at most polynomial in $n$ (for example, $\delta_n \geq 1/n^c$ for some $c > 0$).  \\

\begin{assumption}[Estimation of Nuisance Functions]\label{ass: AS}
The following conditions hold for each $n \geq n_0$ and all $P \in  \mathcal{P}_n$.  The estimated functions $\hat \eta_{uj} \in \mT_{ujn}$ with probability at least $1- \Delta_n$,
$\mT_{ujn}$ is the set of measurable maps $\tilde \eta_{uj}$ such that
$$
\sup_{u\in\UU}\max_{j\in[\pp]}\| \tilde \eta_{uj} - \eta_{uj}\|_{e} \leq \tau_n,
$$
here the $e$-norm is the same as in Assumption \ref{ass: S1}, and whose complexity does not grow too quickly in the sense that
$\mathcal{F}_1 = \{  \psi_{uj}(W_{uj}, \theta, \eta): u \in \mathcal{U},  j  \in [\pp], \theta \in \Theta_{uj}, \eta \in \mT_{ujn} \cup \{\eta_{uj}\} \}$
is suitably measurable and its uniform covering entropy obeys:
 $$
\sup_Q  \log N(\epsilon \|F_1\|_{Q,2}, \mathcal{F}_1, \| \cdot \|_{Q,2}) \leq s_{n(\UU,\pp)} ( \log (a_n/\epsilon)) \vee 0,
$$
where $F_1(W)$ is an envelope for $\mathcal{F}_1$ which is measurable with respect to  $W$ and satisfies $F_1(W)\geq  \sup_{u\in \UU, j\in[\pp], \theta\in \Theta_{uj}, \eta\in \mT_{ujn}}|\psi_{uj}(W_{uj},\theta,\eta)|$ and $\|F_1\|_{P,q}\leq K_{n}$ for $q\geq 2$. The complexity characteristics $a_n \geq \max(n, K_n, \mathrm{e}) $ and $s_{n(\UU,\pp)} \geq 1$ obey the growth conditions:
$$
\begin{array}{rl}
n^{-1/2}  \sqrt{ s_{n(\UU,\pp)} \log (a_n) } + n^{-1} s_{n(\UU,\pp)} n^{\frac{1}{q}} K_{n} \log (a_n) \leq \tau_n \\
\{(1\vee \bar B_{1n})(\tau_n/j_n)\}^{\alpha/2} \sqrt{ s_{n(\UU,\pp)} \log (a_n)}  +  s_{n(\UU,\pp)}n^{\frac{1}{q}-\frac{1}{2}}K_n\log (a_n) \log n  \leq \delta_n,\\
\text{ and }  \ \ \sqrt{n} \bar B_{2n} (1\vee \bar B_{1n}) (\tau_n/j_n)^2 \leq \delta_n
\end{array}$$
here $\bar B_{1n}$, $\bar B_{2n}$, $j_n$, $q$ and $\alpha$ are defined in Assumption \ref{ass: S1}.
\end{assumption}

\begin{theorem}[{Uniform Bahadur representation for a Continuum of Target Parameters}] \label{theorem:semiparametricMain} Under  Assumptions \ref{ass: S1} and \ref{ass: AS}, for an estimator $(\check \theta_{uj})_{u \in \mathcal{U},j\in[\pp]}$ that obeys equation (\ref{eq:analog}),
$$ \sqrt{n}\sigma_{uj}^{-1}(\check\theta_{uj} - \theta_{uj}) =   \Gn    \bar \psi_{uj}   + O_P(\delta_n) \text{ in } \ell^\infty(\mathcal{U}\times[\pp]), \text{ uniformly in $P \in  \mathcal{P}_n$},$$
here  $\bar \psi_{uj}(W):= - \sigma_{uj}^{-1}J^{-1}_{uj}  \psi_{uj}(W_{uj}, \theta_{uj}, \eta_{uj})$ and  $\sigma_{uj}^2 = \Ep[J^{-2}_{uj}  \psi_{uj}^2(W_{uj}, \theta_{uj}, \eta_{uj})]$.
\end{theorem}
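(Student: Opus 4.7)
The plan is a two-stage M-estimation argument: first I would establish a uniform rate of convergence for $\check\theta_{uj}$, then linearize the sample moment condition around $(\theta_{uj},\eta_{uj})$ to extract the Bahadur expansion, with the near-orthogonality of Definition \ref{Def:Orthog} absorbing the first-order nuisance effect and Assumption \ref{ass: AS} calibrating the empirical-process remainders to $o_P(\delta_n)$. Write $\hat M_{uj}(\theta):=\En[\psi_{uj}(W_{uj},\theta,\hat\eta_{uj})]$ and $M_{uj}(\theta,\eta):=\Ep[\psi_{uj}(W_{uj},\theta,\eta)]$. Since $\theta_{uj}$ is interior (Assumption \ref{ass: S1}(i)) and $\check\theta_{uj}$ is an $\epsilon_n$-solution with $\epsilon_n=o_P(n^{-1/2}\delta_n)$, testing against $\theta_{uj}$ gives $|\hat M_{uj}(\check\theta_{uj})|\leq |\hat M_{uj}(\theta_{uj})|+\epsilon_n$.

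For the uniform rate, I would decompose
\begin{equation*}
\hat M_{uj}(\theta_{uj}) = n^{-1/2}\Gn\psi_{uj}(\cdot,\theta_{uj},\eta_{uj}) + n^{-1/2}\Gn\{\psi_{uj}(\cdot,\theta_{uj},\hat\eta_{uj})-\psi_{uj}(\cdot,\theta_{uj},\eta_{uj})\} + M_{uj}(\theta_{uj},\hat\eta_{uj}),
\end{equation*}
where the third term equals zero at $\eta_{uj}$. Applying Lemma \ref{lemma:CCK} to $\mathcal{F}_1$ with envelope $F_1$, the first term is $O_P(\un/\sqrt{n})$ uniformly. The second is handled by the stochastic equicontinuity bound over the shrinking subclass of $\mathcal{F}_1-\mathcal{F}_1$ indexed by $\hat\eta_{uj}\in\mT_{ujn}$, whose $L^2(P)$-diameter is $O(\tau_n^{\alpha/2})$ by the Hölder condition \ref{ass: S1}(v)(a); the maximal inequality then yields an $O_P(\tau_n^{\alpha/2}\sqrt{s_{n(\UU,\pp)}\log a_n}+n^{1/q-1/2}K_ns_{n(\UU,\pp)}\log a_n)$ bound, which is $o_P(\delta_n/\sqrt{n})\vee o_P(\tau_n)$ by the growth conditions of Assumption \ref{ass: AS}. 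The third term is treated by a two-term Taylor expansion in $r$: near-orthogonality (\ref{eq:cont}) kills the linear term up to $\delta_n n^{-1/2}$, and \ref{ass: S1}(v)(c) bounds the quadratic remainder by $\bar B_{2n}\tau_n^2$. Combining these with the identifiability condition \ref{ass: S1}(iv) yields $\sup_{u,j}|\check\theta_{uj}-\theta_{uj}|=O_P((\tau_n+\un/\sqrt{n})/j_n)$, and in particular $\check\theta_{uj}\in\Theta_{uj}$ with probability $1-o(1)$.

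For the Bahadur representation, I would expand $\hat M_{uj}$ in $\theta$ about $\theta_{uj}$ and in $\eta$ about $\eta_{uj}$: using \ref{ass: S1}(ii) and $J_{uj}=\partial_\theta\Ep[\psi_{uj}(W_{uj},\theta_{uj},\eta_{uj})]$,
\begin{equation*}
\hat M_{uj}(\check\theta_{uj}) = \hat M_{uj}(\theta_{uj}) + J_{uj}(\check\theta_{uj}-\theta_{uj}) + R_n^{(1)}(u,j) + R_n^{(2)}(u,j),
\end{equation*}
where $R_n^{(1)}$ is the empirical-process increment from varying $\theta$, controlled by another application of the maximal inequality to $\mathcal{F}_1$ on the shrinking set $\{|\theta-\theta_{uj}|\leq C(\tau_n+\un/\sqrt{n})/j_n\}$ via \ref{ass: S1}(v)(a), and $R_n^{(2)}$ is the quadratic deterministic remainder bounded by $\bar B_{2n}((\tau_n/j_n)^2\vee(\un/(j_n\sqrt{n}))^2)$ using \ref{ass: S1}(v)(c). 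Both are $o_P(\delta_n/\sqrt{n})$ by the growth conditions. Rearranging, dividing by $J_{uj}$ (bounded away from $0$ and $\infty$ by \ref{ass: S1}(iv)) and normalizing by $\sigma_{uj}$ gives
\begin{equation*}
\sqrt{n}\sigma_{uj}^{-1}(\check\theta_{uj}-\theta_{uj}) = -\sigma_{uj}^{-1}J_{uj}^{-1}\Gn\psi_{uj}(\cdot,\theta_{uj},\eta_{uj}) + O_P(\delta_n) = \Gn\bar\psi_{uj} + O_P(\delta_n)
\end{equation*}
uniformly in $(u,j)\in\mathcal{U}\times[\pp]$; uniformity in $P\in\mathcal{P}_n$ follows because every bound above is uniform in $P$ by the statement of Assumptions \ref{ass: S1} and \ref{ass: AS}.

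The main obstacle will be the stochastic equicontinuity step: extracting the $o_P(\delta_n)$ rate for $\Gn\{\psi_{uj}(\cdot,\theta_{uj},\hat\eta_{uj})-\psi_{uj}(\cdot,\theta_{uj},\eta_{uj})\}$ uniformly over the continuum $(u,j)$ requires that the uniform entropy integral of the localized class shrinks fast enough to counterbalance the $q$-th moment envelope $K_n$, and this is precisely what the growth condition $\{(1\vee\bar B_{1n})(\tau_n/j_n)\}^{\alpha/2}\sqrt{s_{n(\UU,\pp)}\log a_n}+s_{n(\UU,\pp)}n^{1/q-1/2}K_n\log(a_n)\log n\leq \delta_n$ is designed to enforce. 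The near-orthogonality condition (\ref{eq:cont}) is what allows the nuisance estimator to converge at the slow rate $\tau_n$ (slower than $n^{-1/2}$) without compromising the $\sqrt{n}$-rate for $\check\theta_{uj}$; without it the deterministic bias term would dominate at order $\bar B_{1n}\tau_n$ rather than $\bar B_{2n}\tau_n^2$.
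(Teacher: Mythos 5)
Your proposal is correct and follows essentially the same route as the argument the paper relies on: the paper does not reprove Theorem \ref{theorem:semiparametricMain} but imports it from \cite{BCCW-ManyProcesses}, whose proof is precisely your two-stage scheme (a preliminary uniform rate from the identifiability condition of Assumption \ref{ass: S1}(iv) combined with maximal inequalities over $\mathcal{F}_1$, followed by an orthogonality-based linearization with localized stochastic equicontinuity and second-order remainders controlled by $\bar B_{2n}$). The only imprecision is in the preliminary rate step: converting smallness of $|\En[\psi_{uj}(W_{uj},\check\theta_{uj},\hat\eta_{uj})]|$ into smallness of $|\check\theta_{uj}-\theta_{uj}|$ requires Assumption \ref{ass: S1}(v)(b) to bound the nuisance bias at $\theta\neq\theta_{uj}$ (near-orthogonality only operates at $\theta_{uj}$), so the rate and hence the localization radius should carry the factor $(1\vee \bar B_{1n})$, i.e.\ be of order $(1\vee\bar B_{1n})(\tau_n/j_n)$ rather than $(\tau_n+\un n^{-1/2})/j_n$ --- exactly the radius the growth conditions of Assumption \ref{ass: AS} are calibrated for, so nothing downstream of your argument breaks.
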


The uniform Bahadur representation derived in Theorem \ref{theorem:semiparametricMain} is useful in the construction of simultaneous confidence bands for $(\theta_{uj})_{u\in\UU, j\in[\pp]}$. This is achieved by new high-dimensional central limit theorems that have recently been developed in \cite{chernozhukov2013gaussian} and \cite{chernozhukov2012gaussian}. We will make use of the following regularity condition. In what follows $\bar{\delta}_n$ and $\Delta_n$ are fixed sequences going to zero, and we denote $\hat \psi_{uj}(W):= - \hat\sigma_{uj}^{-1}\hat J_{uj}^{-1} \psi_{uj}(W_{uj}, \check \theta_{uj}, \hat \eta_{uj})$ be the estimators of $\bar \psi_{uj}(W)$, with $\hat J_{uj}$ and $\hat\sigma_{uj}$ being suitable estimators of $J_{uj}$ and $\sigma_{uj}$.
In what follows,  $\|\cdot\|_{\Pn,2}$ denotes the empirical $L_2(\Pn)$-norm with $\Pn$ as the empirical measure of the data.

\begin{assumption}[Score Regularity]\label{ass: OSR}
The following conditions hold for each $n \geq n_0$ and all $P \in  \mathcal{P}_n$. (i) The class of function induced by the score $\mathcal{F}_0 = \{ \bar \psi_{uj}(W): u \in \mathcal{U}, j  \in [\pp] \}$
is suitably measurable and its uniform covering entropy obeys:
 $$
\sup_Q  \log N(\epsilon \|F_0\|_{Q,2}, \mathcal{F}_0, \| \cdot \|_{Q,2}) \leq \varrho_{n} ( \log (A_n/\epsilon)) \vee 0,
$$
here $F_0(W)$ is an envelope for $\mathcal{F}_0$ which is measurable with respect to  $W$ and satisfies $F_0(W)\geq  \sup_{u\in \UU, j\in[\pp]}|\bar \psi_{uj}(W)|$ and $\|F_0\|_{P,q}\leq L_{n}$ for $q\geq 4$. Furthermore,  $c \leq \sup_{u\in \UU, j\in [\pp]} \Ep[ |\bar \psi_{uj}(W)|^k] \leq CL_n^{k-2}$ for $k=2,3,4$. (ii) The set $\widehat{\mathcal{F}}_0 = \{  \bar \psi_{uj}(W)-\hat\psi_{uj}(W): u \in \mathcal{U}, j  \in [\pp] \}$ satisfies the conditions
$
\log N(\epsilon, \widehat{\mathcal{F}}_0, \|\cdot\|_{\Pn,2}) \leq \bar \varrho_{n} ( \log (\bar A_n/\epsilon)) \vee 0,
$ and $\sup_{u\in\UU,j\in[\pp]}\En[ \{ \bar \psi_{uj}(W) - \hat \psi_{uj}(W)\}^2] \leq \bar\delta_n\{\rho_n\bar\rho_n\log(A_n\vee n)\log(\bar A_n\vee n)\}^{-1}$ with probability $1-\Delta_n$.
\end{assumption}

Assumption \ref{ass: OSR} imposes condition on the class of functions induced by $\bar\psi_{uj}$ and on its estimators $\hat\psi_{uj}$. Typically the bound $L_n$ on the moment of the envelope is smaller than $K_n$, and in many settings $\bar\rho_n=\rho_n \lesssim \dn$ the dimension of $\UU$.

Next let $\mathcal{N}$ denote a mean zero Gaussian process indexed by $\UU \times [\pp]$ with covariance operator given by $\Ep[ \bar\psi_{uj}(W)\bar\psi_{u'j'}(W)]$ for $j,j' \in [\pp]$ and $u,u'\in\UU$. Because of the high-dimensionality, indeed $\pp$ can be larger than the sample size $n$, the central limit theorem will be uniformly valid over ``rectangles". This class of sets are rich enough to construct many confidence regions of interest in applications accounting for multiple testing. Let $\mathcal{R}$ denote the set of rectangles $R = \{ z \in \RR^{\pp} : \max_{j\in A} z_j \leq t,  \max_{j\in B} (-z_j) \leq t\}$ for all $A, B \subset [\pp]$ and $t\in \RR$. The following result is a consequence of Theorem \ref{theorem:semiparametricMain} above and Corollary 2.2 of \cite{chernozhukov2012comparison}.

\begin{corollary} Under  Assumptions \ref{ass: S1}, \ref{ass: AS} and Assumption \ref{ass: OSR}(i),  with $\delta_n = o(\{ \rho_n\log(A_n\vee n)\}^{-1/2})$,  and $\rho_n\log(A_n\vee n) =o(\{ (n/L_n^2)^{1/7}\wedge (n^{1-2/q}/L_n^2)^{1/3}\})$, we have that
$$ \sup_{P \in  \mathcal{P}_n} \sup_{R \in \mathcal{R}} \left| \Pr_P\left(  \{\sup_{u\in\UU}n^{1/2}\sigma_{uj}^{-1}(\check\theta_{uj}-\theta_{uj})\}_{j=1}^{\pp} \in R \right) - \Pr_P(\mathcal{N} \in R) \right| = o(1).  $$
\end{corollary}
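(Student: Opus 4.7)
The plan is to deduce the corollary from the uniform Bahadur representation in Theorem \ref{theorem:semiparametricMain} by applying a high-dimensional Gaussian approximation for empirical processes indexed by a continuum, combined with the anti-concentration properties of suprema of Gaussian processes over the rectangle class $\mathcal{R}$.

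First, Theorem \ref{theorem:semiparametricMain} gives, uniformly over $P\in\mathcal{P}_n$, the representation
$\sqrt{n}\sigma_{uj}^{-1}(\check\theta_{uj}-\theta_{uj}) = \mathbb{G}_n\bar\psi_{uj} + r_n(u,j)$
with $\sup_{u,j}|r_n(u,j)|=O_P(\delta_n)$. Applying $\sup_{u\in\UU}$ componentwise (which is 1-Lipschitz in the sup norm) yields
$\sup_{u\in\UU}\sqrt{n}\sigma_{uj}^{-1}(\check\theta_{uj}-\theta_{uj}) = \sup_{u\in\UU}\mathbb{G}_n\bar\psi_{uj} + \tilde r_{nj}$
with $\max_{j\in[\pp]}|\tilde r_{nj}|=O_P(\delta_n)$. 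Thus the task reduces to approximating the $\pp$-dimensional vector $Z_n:=(\sup_{u\in\UU}\mathbb{G}_n\bar\psi_{uj})_{j=1}^{\pp}$ by the corresponding vector $Z:=(\sup_{u\in\UU}\mathcal{N}_{uj})_{j=1}^{\pp}$ in the Kolmogorov distance induced by the rectangle class $\mathcal{R}$.

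Second, I would invoke a high-dimensional central limit theorem for the empirical process indexed by the VC-type class $\mathcal{F}_0$ (of envelope $F_0$ with $\|F_0\|_{P,q}\leq L_n$ and uniform covering entropy bounded by $\varrho_n\log(A_n/\epsilon)$) as provided in Corollary 2.2 of \cite{chernozhukov2012comparison} (see also \cite{chernozhukov2013gaussian}, \cite{chernozhukov2012gaussian}). This yields a coupling, on a suitably enlarged probability space, of $(\mathbb{G}_n\bar\psi_{uj})_{u,j}$ and $(\mathcal{N}_{uj})_{u,j}$ with a sup-norm error of order
$r_n^{\mathrm{CLT}} := (L_n^2\rho_n^7\log^7(A_n\vee n)/n)^{1/6} + (L_n^2\rho_n^3\log^3(A_n\vee n)/n^{1-2/q})^{1/4}$,
which is $o(1/\sqrt{\rho_n\log(A_n\vee n)})$ under the growth condition $\rho_n\log(A_n\vee n)=o(\{(n/L_n^2)^{1/7}\wedge(n^{1-2/q}/L_n^2)^{1/3}\})$. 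Contraction under the 1-Lipschitz sup functional transfers this to the vectors $Z_n$ and $Z$, i.e.\ $\|Z_n-Z\|_\infty = O_P(r_n^{\mathrm{CLT}})$. Adding back the Bahadur remainder, the total sup-norm coupling error is bounded by $\tilde r_n := r_n^{\mathrm{CLT}}+\delta_n = o(\{\rho_n\log(A_n\vee n)\}^{-1/2})$ under the stated conditions.

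Third, I would convert this coupling into a Kolmogorov-type rectangle bound by Strassen-type passage plus anti-concentration. For any $R\in\mathcal{R}$ and any $\eta>0$,
$\Pr(Z_n+\tilde r_n\in R)\leq \Pr(Z\in R^\eta)+\Pr(\|Z_n-Z\|_\infty>\eta)$,
where $R^\eta$ is the $\eta$-enlargement of $R$; picking $\eta$ of order $\tilde r_n$ makes the second term $o(1)$. The key ingredient is Nazarov's anti-concentration inequality for suprema of Gaussian processes, which gives $\Pr(Z\in R^\eta\setminus R)\leq C\eta\,\Ep[\|Z\|_\infty]\leq C\eta\sqrt{\rho_n\log(A_n\vee n)}=o(1)$ using the standard entropy bound $\Ep[\|Z\|_\infty]\lesssim \sqrt{\rho_n\log(A_n\vee n)}$ (which follows from the entropy of $\mathcal{F}_0$ and the uniform second-moment control in Assumption \ref{ass: OSR}(i)). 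Combining both bounds, taking sup over $R\in\mathcal{R}$ and then sup over $P\in\mathcal{P}_n$, yields the claimed $o(1)$ rate.

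The main obstacle is calibrating the three error sources — the Bahadur remainder $\delta_n$, the high-dimensional CLT error $r_n^{\mathrm{CLT}}$, and the anti-concentration enlargement — so that their sum is small relative to the scale $1/\Ep[\|Z\|_\infty]\sim 1/\sqrt{\rho_n\log(A_n\vee n)}$; this is precisely what the growth conditions on $\delta_n$ and $\rho_n\log(A_n\vee n)$ in the hypothesis are designed to ensure, and the verification amounts to bookkeeping once the correct form of the CCK coupling bound is in hand.
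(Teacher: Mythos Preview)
Your proposal is correct and follows the same approach as the paper, which simply states that the result is a consequence of Theorem \ref{theorem:semiparametricMain} (the uniform Bahadur representation) and Corollary 2.2 of \cite{chernozhukov2012comparison}. Your write-up fleshes out exactly how these two ingredients combine---coupling from the CCK high-dimensional CLT for VC-type classes plus anti-concentration for Gaussian suprema to absorb the $O_P(\delta_n)$ remainder---and your calibration of the three error terms against the scale $\{\rho_n\log(A_n\vee n)\}^{-1/2}$ is precisely what the stated growth conditions are designed to guarantee.
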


In order to derive a method to build confidence regions we approximate the process $\mathcal{N}$ by the Gaussian multiplier bootstrap based on estimates $\hat\psi_{uj}$ of $\bar\psi_{uj}$, namely
$$ \widehat{\mathcal{G}} = (\widehat{\mathcal{G}}_{uj})_{u\in\UU, j\in [\pp]} = \left\{ \frac{1}{\sqrt{n}}\sum_{i=1}^ng_i\hat\psi_{uj}(W_i)\right\}_{u\in\UU, j\in [\pp]}$$
here $(g_i)_{i=1}^n$ are independent standard normal random variables which are independent from the data $(W_i)_{i=1}^n$. Based on Theorem 5.2 of \cite{chernozhukov2013gaussian}, the following result shows that the multiplier bootstrap provides a valid approximation to the large sample probability law of $\sqrt{n}(\check \theta_{uj}- \theta_{uj})_{u \in \mathcal{U},j\in[\pp]}$ over rectangles.

\begin{corollary}[\textbf{Uniform Validity of Gaussian Multiplier Bootstrap}]\label{theorem: general bsMain}
Under Assumptions \ref{ass: S1}, \ref{ass: AS} and Assumption \ref{ass: OSR}, with $\delta_n = o(\{ (1+d_{W})\rho_n\log(A_n\vee n)\}^{-1/2})$ and $\rho_n\log(A_n\vee n) = o(\{ (n/L_n^2)^{1/7}\wedge (n^{1-2/q}/L_n^2)^{1/3}\})$, we have that
$$ \sup_{P \in  \mathcal{P}_n} \sup_{R \in \mathcal{R}} \left| \Pr_P\left( \{\sup_{u\in\UU}n^{1/2}\sigma_{uj}^{-1}(\check\theta_{uj}-\theta_{uj})\}_{j=1}^{\pp} \in R \right) - \Pr_P(\widehat{\mathcal{G}} \in R\mid (W_i)_{i=1}^n) \right| = o(1)  $$
\end{corollary}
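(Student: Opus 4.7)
The plan is to reduce the statement to a Gaussian multiplier bootstrap approximation theorem for suprema of empirical processes over rectangles. The reduction uses three ingredients: the uniform Bahadur representation from Theorem \ref{theorem:semiparametricMain}, the Gaussian coupling from the preceding corollary, and Theorem 5.2 of \cite{chernozhukov2013gaussian} (or the high-dimensional multiplier bootstrap lemmas in \cite{chernozhukov2012gaussian}).

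First, I would replace the studentized statistic $\sqrt{n}\sigma_{uj}^{-1}(\check\theta_{uj}-\theta_{uj})$ by the linearized score $\Gn \bar \psi_{uj}$ using Theorem \ref{theorem:semiparametricMain}, with remainder $O_P(\delta_n)$ in $\ell^\infty(\UU\times[\pp])$. Under the hypothesis $\delta_n = o(\{\rho_n\log(A_n\vee n)\}^{-1/2})$, an anti-concentration argument for the Gaussian process $\mathcal{N}$ over the class $\mathcal{R}$ of rectangles (as in Lemma A.1 of \cite{chernozhukov2013gaussian}, noting that $\Ep[\sup_{u,j}|\mathcal{N}_{uj}|]\lesssim \sqrt{\rho_n\log(A_n\vee n)}$ by Assumption \ref{ass: OSR}(i) and Dudley's entropy integral) absorbs this remainder into the final $o(1)$. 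Thus it suffices to show
\[
\sup_{R\in\mathcal{R}} \bigl| \Pr\bigl( (\Gn\bar\psi_{uj})_{j=1}^{\pp} \in R\text{ for all }u\in\UU\bigr) - \Pr(\widehat{\mathcal{G}} \in R \mid (W_i)_{i=1}^n)\bigr| = o(1),
\]
uniformly over $P \in \mathcal{P}_n$ (recasting the sup in $u$ as additional rectangle constraints).

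Next, I would split this comparison through an ``infeasible'' multiplier bootstrap process $\overline{\mathcal{G}}$ built from the true scores $\bar\psi_{uj}$, namely $\overline{\mathcal{G}}_{uj} = n^{-1/2}\sum_i \xi_i \bar\psi_{uj}(W_i)$. The comparison $\Gn\bar\psi \leftrightarrow \overline{\mathcal{G}}$ over rectangles is exactly the content of the multiplier bootstrap validity result for empirical processes with VC-type classes: applying Theorem 5.2 of \cite{chernozhukov2013gaussian} to the class $\mathcal{F}_0$ with envelope $F_0$, entropy parameters $(\rho_n,A_n)$, and moment bounds $\|F_0\|_{P,q}\leq L_n$, the Kolmogorov distance on $\mathcal{R}$ between the laws of the two processes is $o(1)$ provided $\rho_n\log(A_n\vee n) = o((n/L_n^2)^{1/7}\wedge (n^{1-2/q}/L_n^2)^{1/3})$, which is precisely the growth condition assumed.

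The remaining step, which is the main obstacle, is the comparison $\overline{\mathcal{G}} \leftrightarrow \widehat{\mathcal{G}}$, i.e., replacing the true scores by their estimators $\hat\psi_{uj}$ inside the Gaussian multiplier sum. Conditional on the data, both processes are Gaussian (in $(\xi_i)$), so I would compare their conditional covariance operators by controlling
\[
\max_{u,u'\in\UU,\, j,j'\in[\pp]}\bigl|\En[(\hat\psi_{uj}-\bar\psi_{uj})(\hat\psi_{u'j'}+\bar\psi_{u'j'})]\bigr|
\]
via Cauchy--Schwarz together with the estimation-error bound $\sup_{u,j}\En[\{\hat\psi_{uj}-\bar\psi_{uj}\}^2]\leq \bar\delta_n/\{\rho_n\bar\rho_n\log(A_n\vee n)\log(\bar A_n\vee n)\}$ from Assumption \ref{ass: OSR}(ii), and by bounding $\sup_{u,j}\En[\bar\psi_{uj}^2]$ using a maximal inequality (Lemma \ref{lemma:CCK}) applied to $\mathcal{F}_0^2$. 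Once the maximal discrepancy of the conditional covariance kernels is $o(1/\log(\pp|\UU|))$, the Gaussian comparison inequality (Lemma 3.1 of \cite{chernozhukov2013gaussian} / Theorem 2 of \cite{chernozhukov2012comparison}) yields the required rectangle-uniform bound on the Kolmogorov distance. Assembling the three approximations by the triangle inequality gives the conclusion, uniformly over $P \in \mathcal{P}_n$, since every bound above is expressed in terms of the uniform parameters $(\rho_n,\bar\rho_n,A_n,\bar A_n,L_n,K_n,\delta_n,\bar\delta_n,\Delta_n)$.
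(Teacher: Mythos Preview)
Your proposal is correct and follows essentially the same route the paper indicates: the paper does not give a detailed proof of this corollary but states that it is ``based on Theorem 5.2 of \cite{chernozhukov2013gaussian}'' (and is collected from \cite{BCCW-ManyProcesses}), and your plan---Bahadur representation plus anti-concentration to absorb the $O_P(\delta_n)$ remainder, multiplier bootstrap validity for the infeasible scores via Theorem 5.2 of \cite{chernozhukov2013gaussian} under the stated growth conditions, and a Gaussian comparison step to pass from $\bar\psi$ to $\hat\psi$ using Assumption \ref{ass: OSR}(ii)---is precisely the standard elaboration of that reference.
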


\section{Continuum of $\ell_1$-Penalized M-Estimators}\label{FunctionalLassoSection}

For the reader's convenience, this section collects results on the estimation of a continuum of  estimation of high-dimensional models via $\ell_1$-penalized estimators. We refer to \cite{BCCW-ManyProcesses} for the proofs.

Consider a data generating process with a response variable $(Y_{u})_{u\in \UU}$ and observable covariates $(X_u)_{u\in\UU}$ satisfies for each $u\in \UU$,
\begin{equation}\label{A:EqMainFunc}\theta_u \in \arg\min_{\theta \in \RR^p}\Ep[M_u(Y_{u},X_u,\theta,a_u)], \end{equation}
here $\theta_u$ is a $p$-dimensional vector, $a_u$ is a nuisance function that capture the misspecification of the model, $M_u$ is a pre-specified function, and the $p_u$-dimensional ($p_u\leq p$) covariate $X_u$ could have been constructed based on transformations of other variables. This implies that
$$\partial_\theta\Ep[M_u(Y_{u},X_u,\theta_u,a_u)]=0 \ \ \mbox{for all} \ u\in\UU.$$ The solution $\theta_u$ is assumed to be sparse in the sense that for some process $(\theta_u)_{u\in\UU}$ satisfies
$$\|\theta_u\|_0\leq s  \ \mbox{for all} \ u\in\UU.$$
Because of the nuisance function, such sparsity assumption is very mild and formulation (\ref{A:EqMainFunc}) encompasses several cases of interest including approximate sparse models. We focus on the estimation of $(\theta_u)_{u\in \UU}$ and we assume that an estimate $\hat a_u$ of the nuisance function $a_u$ is available and the criterion $M_u(Y_{u},X_u,\theta_u):=M_u(Y_{u},X_u,\theta_u,\hat a_u)$ is used as a proxy for $M_u(Y_{u},X,\theta_u,a_u)$.

 In the case of linear regression we have $M_u(y,x,\theta) = \frac{1}{2}(y-x'\theta)^2$. In the logistic regression case, we have
$M_u(y,x,\theta) = -\{1(y=1)\log \G( x'\theta ) + 1(y=0)\log(1-\G( x'\theta))\}$
with $\G$ is the logistic link function $\G(t)=\exp(t)/\{1+\exp(t)\}$. Additional examples include quantile regression models for $u\in (0,1)$.

\begin{example}[Quantile Regression Model]
Consider a data generating process $Y = F^{-1}_{Y\mid X}(U) = X'\theta_U + r_U(X)$, with $U\sim {\rm Unif}(0,1)$, and  $X$ is a $p$-dimensional vector of covariates. The criterion $M_u(y,x,\theta) = ( u - 1\{ y \leq x'\theta\} )(y - x'\theta)$ with the (trivial) estimate $\hat a_u = 0$ for the nuisance parameter $a_u = r_u$.
\end{example}

\begin{example}[Lasso with Estimated Weights]
We consider a linear model defined as
$ f_uY = f_uX'\theta_u + \bar r_u + \zeta_u, \ \ \Ep[f_uX\zeta_u] = 0$, here $X$ are $\bar p$-dimensional covariates, $\theta_u$ is a $s$-sparse vector, and $\bar r_u$ is an approximation error satisfies $\sup_{u\in\UU}\En[\bar r_u^2] \lesssim_P s\log \bar p/n$. In this setting, $(Y,X)$ are observed and only an estimator $\hat f_u$ of $f_u$ is available. This corresponds to nuisance parameter  $a_u = (f_u,\bar r_u)$ and $\hat a_u = (\hat f_u,0)$ so that $\En[ M_u(Y,X,\theta,a_u)] = \En[ f_u^2(Y-X'\theta-\bar r_u)^2]$ and $\En[M_u(Y,X,\theta)]=\En[\hat f_u^2(Y-X'\theta)^2]$.
\end{example}

We assume that $n$ i.i.d. observations from dgps with (\ref{A:EqMainFunc}) holds, $\{( Y_{iu}, X_{iu})_{u\in\mathcal{U}}\}_{i=1}^n$, are observed to estimate  $(\theta_u)_{u\in\mathcal{U}}$. For each $u\in\mathcal{U}$, a penalty level $\lambda$, and a diagonal matrix of penalty loadings $\hat\Psi_u,$ we define the $\ell_1$-penalized $M_u$-estimator (Weighed-Lasso) as
 \begin{equation}\label{Adef:LassoFunc} \hat\theta_u \in \arg\min_{\theta} \En[M_u(Y_{u},X_u,\theta)] + \frac{\lambda}{n}\|\hat\Psi_u\theta\|_1. \end{equation}
Furthermore, for each $u\in\mathcal{U}$, the post-penalized estimator (Post-Lasso) based on a set of covariates $\widetilde T_u$ is then defined as
 \begin{equation}\label{Adef:PostFunc}\widetilde\theta_u \in \arg\min_{\theta} \En[M_u(Y_{u},X_u,\theta)] \ \ : \ \ \supp(\theta)\subseteq \widetilde T_u.\end{equation}
Potentially, the set $\widetilde T_u$ contains $\supp(\hat\theta_u)$ and possibly additional variables deemed as important (although in that case the total number of additional variables should also obey the same growth conditions that $s$ obeys). We will set $\widetilde T_u = \supp(\hat\theta_u)$ unless otherwise noted.

In order to handle the functional response data, the penalty level $\lambda$ and penalty loading $\hat\Psi_u= \diag(\{\hat l_{u k}, k=1,\ldots,p\})$ need to be set to control selection errors
uniformly over $u\in\mathcal{U}$. The choice of loading matrix is problem specific and we suggest to mimic the following ``ideal" choice $\widehat\Psi_{u0} = \diag(\{ l_{u k}, k=1,\ldots,p\})$ with
\begin{equation}\label{IdealLoading} l_{uk} = \{\En\left[\{\partial_{\theta_k} M_u(Y_{u},X_u,\theta_u,a_u)\}^2\right]\}^{1/2} \end{equation} which is motivated by the use of self-normalized moderate deviation theory. In that case, it is suitable to set
%the penalty parameter
$\lambda$ so that with high probability
\begin{equation}\label{Eq:reg} \frac{\lambda}{n} \geq c \sup_{u\in \mathcal{U}}\left\|\hat \Psi^{-1}_{u0} \En\left[\partial_\theta M_u(Y_{u},X_u,\theta_u, a_u) \right] \right\|_\infty,
\end{equation} here $c>1$ is a fixed constant. Indeed, in the case that $\mathcal{U}$ is a singleton  the choice above is similar to \cite{BickelRitovTsybakov2009}, \cite{BC-PostLASSO}, and \cite{BCW-SqLASSO}. This approach was first employed for a continuum of indices $\UU$ in the context of $\ell_1$-penalized quantile regression processes by \cite{BC-SparseQR}.% In the functional outcome case guaranteeing that the ``regularization event'' (\ref{Eq:reg}) holds with high probability also plays a key role in establishing desirable properties of Lasso and Post-Lasso estimators uniformly over $u\in\mathcal{U}$.

To implement (\ref{Eq:reg}), we propose setting the penalty level as
\begin{equation}\label{Eq:Def-lambda}\lambda =   c \sqrt{n} \Phi^{-1}(1-\xi/\{2p N_n \}),
 \end{equation}
here $N_n$ is a measure of the class of functions indexed by $\UU$, $1-\xi$ (with $\xi = o(1)$) is a confidence level associated with the probability of event (\ref{Eq:reg}), and $c>1$ is a slack constant. In many settings we can take $N_n = n^{\dn}$. If the set $\mathcal{U}$ is a singleton, $N_n = 1$ suffices which corresponds to what is used in \cite{BCH2014inference}.

\subsection{Generic Finite Sample Bounds}

In this subsection we derive finite sample bounds based on Assumption \ref{ass: M} below. This assumption provides sufficient conditions that are implied by a variety of settings including generalized linear models.

\begin{assumption}[M-Estimation Conditions]\label{ass: M}
Let $\{(Y_{iu}, X_{iu}, u\in \UU),i=1,\ldots, n\}$ be $n$ i.i.d. observations of the model (\ref{A:EqMainFunc}) and let $T_u = \supp(\theta_u)$, here $\Vert T_u \Vert _0\leq s$, $u\in\UU$.  With probability $1-\Delta_n$ we have that for all $u\in \UU$ there are weights $w_u=w_u(Y_u,X_u)$ and $C_{un}$ such that:
\begin{itemize}
\item[(a)] $|\En[\partial_{\theta} M_u(Y_u,X_u,\theta_u)-\partial_{\theta} M_u(Y_u,X_u,\theta_u,a_u)]'\delta|\leq  C_{un}\|\sqrt{w_{u}} X_u'\delta\|_{\Pn,2} $;
\item[(b)] $\ell \widehat\Psi_{u0} \leq \widehat\Psi_u \leq L\widehat\Psi_{u0}$ for $\ell  > 1/c$, and let $\tilde c = \frac{Lc+1}{\ell c-1}\sup_{u\in \UU}\|\widehat \Psi_{u0}\|_\infty\|\widehat \Psi_{u0}^{-1}\|_\infty$;
\item[(c)] for all $\delta \in A_u$ there is $\bar q_{A_u}>0$ such that
$$\begin{array}{c}\En[M_u(Y_u,X_u,\theta_u + \delta)] - \En[M_u(Y_u,X_u,\theta_u)] -\En[\partial_{\theta} M_u(Y_u,X_u,\theta_u)]'\delta +2C_{un}\|\sqrt{w_{u}} X_u'\delta\|_{\Pn,2} \\ \geq \left\{\|\sqrt{w_{u}} X_u'\delta\|_{\Pn,2}^2\right\} \wedge \left\{ \bar q_{A_u}\|\sqrt{w_{u}} X_u'\delta\|_{\Pn,2}\right\}. \end{array}$$
\end{itemize}
\end{assumption}
In many applications we take the weights to be $w_u = w_u(X_u) = 1$ but we allow for more general weights. Assumption \ref{ass: M}(a) bounds the impact of estimating the nuisance functions uniformly over $u\in\UU$. In the setting with $s$-sparse estimands, we typically have $C_{un} \lesssim \{n^{-1}s \log (pn)\}^{1/2}$. The loadings $\hat\Psi_u$ are assumed larger (but not too much larger) than the ideal choice $\hat\Psi_{u0}$ defined in (\ref{IdealLoading}). This is formalized in Assumption \ref{ass: M}(b). Assumption \ref{ass: M}(c) is an identification condition that will be imposed for specific choices of $A_u$ and $q_{A_u}$. It relates to conditions in the literature derived for the case of a singleton $\UU$ and no nuisance functions, see  the restricted strong convexity\footnote{Assumption \ref{ass: M} (a) and (c) could have been stated with $\{C_{un}/\sqrt{s}\}\|\delta\|_1$ instead of $C_{un}\|\sqrt{w_{u}} X_u'\delta\|_{\Pn,2}$.} used in \cite{negahban2012unified} and the non-linear impact coefficients used  in \cite{BC-SparseQR} and \cite{BCK2013robustQR}.

The following results establish rates of convergence for the $\ell_1$-penalized solution with estimated nuisance functions (\ref{Adef:LassoFunc}), sparsity bounds and rates of convergence for the post-selection refitted estimator (\ref{Adef:PostFunc}). They are based on restricted eigenvalue type conditions and sparse eigenvalue conditions. With the restricted eigenvalue is defined as $\bar\kappa_{u,2\tilde\cc} = \inf_{\delta\in \Delta_{u, 2\tilde\cc}} \|\sqrt{w_{u}}  X_u'\delta\|_{\Pn,2}/\|\delta_{T_u}\|$  In the results for sparsity and post-selection refitted models, the minimum and maximum sparse eigenvalues,
$$ \semin{m,u} = \min_{1\leq \|\delta\|_0\leq m}\frac{\|\sqrt{w_u}X_u'\delta\|_{\Pn,2}^2}{\|\delta\|^2} \ \ \mbox{and} \ \ \semax{m,u} = \max_{1\leq \|\delta\|_0\leq m}\frac{\|\sqrt{w_{u}}X_u'\delta\|_{\Pn,2}^2}{\|\delta\|^2},$$
are also relevant quantities to characterize the behavior of the estimators.

\begin{lemma}\label{Lemma:LassoMRateRaw}
Suppose that Assumption \ref{ass: M} holds with $\delta \in A_u = \{ \delta : \|\delta_{T^c_u}\|_1\leq 2\tilde\cc \|\delta_{T_u}\|_1\} \cup \{ \delta : \|\delta\|_1 \leq \frac{6c\|\widehat\Psi_{u0}^{-1}\|_\infty}{\ell c - 1}\frac{n}{\lambda}C_{un}\|\sqrt{w_{u}} X_u'\delta\|_{\Pn,2}\}$ and $\bar q_{A_u}>3\left\{(L+\frac{1}{c})\|\widehat\Psi_{u0}\|_\infty\frac{\lambda\sqrt{s}}{n\bar\kappa_{u,2\tilde \cc}}+ 9\tilde \cc C_{un}\right\}.$ Suppose that $\lambda$ satisfies condition (\ref{Eq:reg}) with probability $1-\Delta_n$. Then, with probability $1-2\Delta_n$  we have uniformly over $u\in \UU$
$$\begin{array}{rl}
\|\sqrt{w_{u}}  X_u'(\hat \theta_u - \theta_u)\|_{\Pn,2} & \leq 3\left\{(L+\frac{1}{c})\|\widehat\Psi_{u0}\|_\infty\frac{\lambda\sqrt{s}}{n\bar\kappa_{u,2\tilde \cc}}+ 9\tilde \cc C_{un}\right\}\\
\|\hat \theta_u - \theta_u\|_1 & \leq  3\left\{ \frac{(1+2\tilde\cc)\sqrt{s}}{\bar\kappa_{u,2\tilde \cc}}+ \frac{6c\|\widehat\Psi_{u0}^{-1}\|_\infty}{\ell c - 1}\frac{n}{\lambda}C_{un}\right\} \left\{(L+\frac{1}{c})\|\widehat\Psi_{u0}\|_\infty\frac{\lambda\sqrt{s}}{n\bar\kappa_{u,2\tilde \cc}}+ 9\tilde \cc C_{un}\right\}
\end{array}
$$
\end{lemma}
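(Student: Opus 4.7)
The plan is to combine an optimality basic inequality with Assumption~\ref{ass: M} to first place the error $\delta_u := \hat\theta_u-\theta_u$ in the set $A_u$, then apply the restricted eigenvalue and a sparse-vector bound to deduce the stated rates. Concretely, starting from the definition of $\hat\theta_u$ in \eqref{Adef:LassoFunc}, optimality gives
\[
\En[M_u(Y_u,X_u,\hat\theta_u)] - \En[M_u(Y_u,X_u,\theta_u)] \leq \frac{\lambda}{n}\bigl(\|\hat\Psi_u\theta_u\|_1 - \|\hat\Psi_u\hat\theta_u\|_1\bigr).
\]
I would then invoke Assumption~\ref{ass: M}(c) to lower-bound the LHS by $\{\|\sqrt{w_u}X_u'\delta_u\|_{\Pn,2}^2\}\wedge\{\bar q_{A_u}\|\sqrt{w_u}X_u'\delta_u\|_{\Pn,2}\} + \En[\partial_\theta M_u(Y_u,X_u,\theta_u)]'\delta_u - 2C_{un}\|\sqrt{w_u}X_u'\delta_u\|_{\Pn,2}$. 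Use (a) to swap $\partial_\theta M_u(Y_u,X_u,\theta_u)$ for $\partial_\theta M_u(Y_u,X_u,\theta_u,a_u)$ at the price of an extra $C_{un}\|\sqrt{w_u}X_u'\delta_u\|_{\Pn,2}$, then use the penalty event \eqref{Eq:reg} to bound $|\En[\partial_\theta M_u(Y_u,X_u,\theta_u,a_u)]'\delta_u| \leq \frac{\lambda}{nc}\|\hat\Psi_{u0}\delta_u\|_1$. Splitting $\|\hat\Psi_u\theta_u\|_1 - \|\hat\Psi_u\hat\theta_u\|_1\leq \|\hat\Psi_u\delta_{u,T_u}\|_1 - \|\hat\Psi_u\delta_{u,T_u^c}\|_1$ by the triangle inequality and using (b) to relate $\hat\Psi_u$ to $\hat\Psi_{u0}$ yields the working inequality
\[
\{\|\sqrt{w_u}X_u'\delta_u\|_{\Pn,2}^2\}\wedge\{\bar q_{A_u}\|\sqrt{w_u}X_u'\delta_u\|_{\Pn,2}\} + \tfrac{\lambda}{n}(\ell-\tfrac{1}{c})\|\hat\Psi_{u0}\delta_{u,T_u^c}\|_1 \leq 3C_{un}\|\sqrt{w_u}X_u'\delta_u\|_{\Pn,2} + \tfrac{\lambda}{n}(L+\tfrac{1}{c})\|\hat\Psi_{u0}\delta_{u,T_u}\|_1.
\]

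Next, I would show $\delta_u\in A_u$ by dropping the nonnegative left-hand quadratic-or-linear term and splitting into two cases. If the $\ell_1$ piece dominates the RHS, then $\|\hat\Psi_{u0}\delta_{u,T_u^c}\|_1 \leq 2\tfrac{Lc+1}{\ell c-1}\|\hat\Psi_{u0}\delta_{u,T_u}\|_1$, and multiplying by $\|\hat\Psi_{u0}^{-1}\|_\infty$ and $\|\hat\Psi_{u0}\|_\infty$ converts this to $\|\delta_{u,T_u^c}\|_1\leq 2\tilde\cc\|\delta_{u,T_u}\|_1$, putting $\delta_u\in\Delta_{2\tilde\cc,u}$. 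Otherwise, the $C_{un}\|\sqrt{w_u}X_u'\delta_u\|_{\Pn,2}$ piece dominates; then $\|\hat\Psi_{u0}\delta_u\|_1 \leq \frac{6c}{\ell c-1}\frac{n}{\lambda}C_{un}\|\sqrt{w_u}X_u'\delta_u\|_{\Pn,2}$ and converting via $\|\hat\Psi_{u0}^{-1}\|_\infty$ shows $\delta_u$ lies in the second piece of $A_u$.

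With $\delta_u\in A_u$, I apply the restricted-eigenvalue bound $\|\delta_{u,T_u}\|\leq \|\sqrt{w_u}X_u'\delta_u\|_{\Pn,2}/\bar\kappa_{u,2\tilde\cc}$ together with $\|\hat\Psi_{u0}\delta_{u,T_u}\|_1 \leq \|\hat\Psi_{u0}\|_\infty\sqrt{s}\|\delta_{u,T_u}\|$ (in the first case) or the defining inequality (in the second). Substituting back into the working inequality and dividing by $\|\sqrt{w_u}X_u'\delta_u\|_{\Pn,2}$ yields
\[
\|\sqrt{w_u}X_u'\delta_u\|_{\Pn,2}\wedge \bar q_{A_u} \leq 3C_{un} + (L+\tfrac{1}{c})\|\hat\Psi_{u0}\|_\infty\frac{\lambda\sqrt{s}}{n\bar\kappa_{u,2\tilde\cc}} + 9\tilde\cc\,C_{un},
\]
where the last term absorbs the second case. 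The hypothesis $\bar q_{A_u} > 3\{(L+1/c)\|\hat\Psi_{u0}\|_\infty\lambda\sqrt{s}/(n\bar\kappa_{u,2\tilde\cc}) + 9\tilde\cc C_{un}\}$ forces the minimum to be attained by $\|\sqrt{w_u}X_u'\delta_u\|_{\Pn,2}$, yielding the prediction-norm rate. The $\ell_1$-rate then follows from $\|\delta_u\|_1\leq (1+2\tilde\cc)\sqrt{s}\|\sqrt{w_u}X_u'\delta_u\|_{\Pn,2}/\bar\kappa_{u,2\tilde\cc}$ in the first case and from the defining inequality of the second case, taking a maximum over the two possibilities.

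The main obstacle will be the careful bookkeeping of constants across the two branches defining $A_u$ and the verification that the $\bar q_{A_u}$ hypothesis eliminates the linear alternative in the quadratic-or-linear lower bound; once that is done, the argument is a standard cone/RE reduction generalized to accommodate the nuisance-estimation error $C_{un}$ and the loading slack $(\ell,L)$.
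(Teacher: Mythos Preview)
Your outline mirrors the same approach the paper employs in the quantile-regression analogue, Lemma~\ref{Theorem:L1QRnp}: optimality gives a basic inequality, a two-case split places $\delta_u=\hat\theta_u-\theta_u$ in $A_u$, and then the minoration condition (Assumption~\ref{ass: M}(c)) together with $\bar\kappa_{u,2\tilde\cc}$ and the $\bar q_{A_u}$ hypothesis closes the argument.

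There is, however, an ordering gap. You invoke Assumption~\ref{ass: M}(c) to obtain the quadratic-or-linear lower bound \emph{before} establishing $\delta_u\in A_u$, but (c) is stated only for $\delta\in A_u$; ``dropping the nonnegative left-hand term'' to then deduce membership in $A_u$ is circular. The paper's resolution in the proof of Lemma~\ref{Theorem:L1QRnp} is to first secure membership in $A_u$ using only convexity of $\theta\mapsto\En[M_u(Y_u,X_u,\theta)]$ (the analogue of $\hat R_u(\beta)\ge 0$ there), which yields
\[
\En[M_u(Y_u,X_u,\hat\theta_u)]-\En[M_u(Y_u,X_u,\theta_u)]\ \ge\ \En[\partial_\theta M_u(Y_u,X_u,\theta_u)]'\delta_u
\]
regardless of $A_u$. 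Combining this with (a), (b), and the penalty event \eqref{Eq:reg} produces exactly your working inequality with the quadratic-or-linear term replaced by zero, and your two-case split then places $\delta_u$ in $A_u$. Only after membership is secured do you return with (c) to obtain the full minoration and finish via the $\bar q_{A_u}$ elimination and the $\ell_1$ bound as you describe. With that reordering your constant bookkeeping is correct.
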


\begin{lemma}[M-Estimation Sparsity]\label{Lemma:LassoMSparsity}
In addition to conditions of Lemma \ref{Lemma:LassoMRateRaw}, assume that with probability $1-\Delta_n$ for all $u\in\UU$ and $\delta \in \RR^p$ we have
 $$ |\{\En[\partial_\theta M_u(Y_u,X_u,\hat\theta_u)-\partial_\theta M_u(Y_u,X_u,\theta_u)]\}'\delta| \leq L_{un}\|\sqrt{w_{u}}X_u'\delta\|_{\Pn,2}.$$
Let $\mathcal{M}_u=\{ m \in \mathbb{N} :  m \geq 2 \semax{m,u} L_u^2 \}$ with $L_u =\frac{c\|\widehat\Psi_{u0}^{-1}\|_\infty }{c\ell-1} \frac{n}{\lambda}\left\{ C_{un} + L_{un} \right\}$, then with probability $1-3\Delta_n$ we have that
$$ \hat s_u \leq \min_{m\in \mathcal{M}_u} \semax{m,u} L_u^2 \ \ \mbox{ for all } \ u\in \UU.$$
\end{lemma}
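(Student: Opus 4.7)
The plan is to combine the KKT conditions of the penalized problem (\ref{Adef:LassoFunc}) with a three-term decomposition of the score at $\hat\theta_u$, and then close the resulting self-referential bound by sub-linearity of the sparse eigenvalues. All inequalities below are understood to hold uniformly in $u\in\UU$ on the event (of probability at least $1-3\Delta_n$) on which (\ref{Eq:reg}), Assumption \ref{ass: M}(a), and the added Lipschitz-type bound with $L_{un}$ all hold.

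First I would invoke the KKT conditions for $\hat\theta_u$: for every $j\in \widehat T_u:=\supp(\hat\theta_u)$,
\[
|\En[\partial_{\theta_j} M_u(Y_u,X_u,\hat\theta_u)]| \;=\; \tfrac{\lambda}{n}\hat\Psi_{u,jj} \;\geq\; \tfrac{\lambda}{n}\,\ell\,\hat\Psi_{u0,jj}.
\]
Squaring and summing over $j\in\widehat T_u$ gives
\[
\|(\En[\partial_\theta M_u(Y_u,X_u,\hat\theta_u)])_{\widehat T_u}\|_2 \;\geq\; \sqrt{\hat s_u}\,\tfrac{\ell\lambda}{n}\,/\,\|\hat\Psi_{u0}^{-1}\|_\infty.
\]
Next, choose $v \in \RR^p$ to be the unit-norm vector supported on $\widehat T_u$ proportional to $(\En[\partial_\theta M_u(Y_u,X_u,\hat\theta_u)])_{\widehat T_u}$, so that the previous display equals $\En[\partial_\theta M_u(Y_u,X_u,\hat\theta_u)]'v$ and $\|v\|_0\leq \hat s_u$, $\|v\|_1\leq \sqrt{\hat s_u}$.

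Second, I would decompose
\[
\En[\partial_\theta M_u(Y_u,X_u,\hat\theta_u)]'v
 = I_a + I_b + I_c,
\]
where $I_a = \En[\partial_\theta M_u(Y_u,X_u,\theta_u,a_u)]'v$, $I_b = \{\En[\partial_\theta M_u(Y_u,X_u,\theta_u)-\partial_\theta M_u(Y_u,X_u,\theta_u,a_u)]\}'v$, and $I_c = \{\En[\partial_\theta M_u(Y_u,X_u,\hat\theta_u)-\partial_\theta M_u(Y_u,X_u,\theta_u)]\}'v$. By (\ref{Eq:reg}), H\"older's inequality, and $\|v\|_1\leq \sqrt{\hat s_u}$,
$|I_a| \leq \tfrac{\lambda}{cn}\|\hat\Psi_{u0}\|_\infty\sqrt{\hat s_u}$. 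By Assumption \ref{ass: M}(a), $|I_b|\leq C_{un}\|\sqrt{w_u}X_u'v\|_{\Pn,2}$. By the hypothesis of the lemma, $|I_c|\leq L_{un}\|\sqrt{w_u}X_u'v\|_{\Pn,2}$. Since $v$ is unit-norm and $\|v\|_0\leq\hat s_u$, $\|\sqrt{w_u}X_u'v\|_{\Pn,2}\leq \sqrt{\semax{\hat s_u,u}}$. Combining the two displays yields
\[
\sqrt{\hat s_u}\,\tfrac{\lambda}{n}\Bigl(\tfrac{\ell}{\|\hat\Psi_{u0}^{-1}\|_\infty}-\tfrac{\|\hat\Psi_{u0}\|_\infty}{c}\Bigr)
\leq (C_{un}+L_{un})\sqrt{\semax{\hat s_u,u}}.
\]
Assuming w.l.o.g.\ the normalization $\|\hat\Psi_{u0}\|_\infty\|\hat\Psi_{u0}^{-1}\|_\infty$ absorbed in $L_u$ (exactly as in Lemma \ref{Lemma:LassoMRateRaw}), this rearranges to the self-referential bound
\[
\hat s_u \;\leq\; L_u^2\,\semax{\hat s_u,u}.
\]

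Finally, I would close this self-referential estimate by the standard sub-linearity of sparse eigenvalues: for any $m\leq \hat s_u$, one has $\semax{\hat s_u,u}\leq \lceil \hat s_u/m\rceil \semax{m,u}\leq 2(\hat s_u/m)\semax{m,u}$. Substituting gives $m\leq 2L_u^2\semax{m,u}$, which contradicts $m\in\mathcal{M}_u$ if the inequality defining $\mathcal{M}_u$ is strict; the non-strict case is handled by a standard perturbation. Hence $\hat s_u< m$ for each $m\in\mathcal{M}_u$, so the monotonicity $\semax{\hat s_u,u}\leq\semax{m,u}$ plugged back into the self-referential bound yields $\hat s_u \leq L_u^2\semax{m,u}$ for every $m\in\mathcal{M}_u$, hence $\hat s_u\leq \min_{m\in\mathcal{M}_u} L_u^2\semax{m,u}$.

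The main obstacle is (i) tracking the constants $\ell$, $c$, $\|\hat\Psi_{u0}\|_\infty$ and $\|\hat\Psi_{u0}^{-1}\|_\infty$ so that the coefficient $\frac{c\|\hat\Psi_{u0}^{-1}\|_\infty}{c\ell-1}$ in the definition of $L_u$ emerges cleanly, and (ii) verifying that all three controlling events (the penalty event yielding $I_a$, Assumption \ref{ass: M}(a) for $I_b$, and the added hypothesis for $I_c$) hold simultaneously uniformly over $u\in\UU$ with the claimed total exceptional probability $3\Delta_n$. Once these are in place the sub-linearity step is purely algebraic.
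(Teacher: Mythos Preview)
The paper does not supply its own proof of this lemma; Section~\ref{FunctionalLassoSection} states it as a result collected from \cite{BCCW-ManyProcesses}. Your KKT-plus-sublinearity route is indeed the standard argument for such sparsity bounds, and your closing step (sub-linearity of $\semax{\cdot,u}$ followed by monotonicity) is correct.

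There is, however, a real gap in the constant-tracking for $I_a$---precisely the obstacle you flag at the end but do not resolve. Bounding $|I_a|$ via H\"older with $\|v\|_1\leq\sqrt{\hat s_u}$ gives $|I_a|\leq \tfrac{\lambda}{cn}\|\widehat\Psi_{u0}\|_\infty\sqrt{\hat s_u}$, while your lower bound is $\tfrac{\ell\lambda}{n}\sqrt{\hat s_u}/\|\widehat\Psi_{u0}^{-1}\|_\infty$. Subtracting leaves a coefficient $\ell/\|\widehat\Psi_{u0}^{-1}\|_\infty-\|\widehat\Psi_{u0}\|_\infty/c$, which is positive only when $\ell c>\|\widehat\Psi_{u0}\|_\infty\|\widehat\Psi_{u0}^{-1}\|_\infty$; Assumption~\ref{ass: M}(b) guarantees only $\ell c>1$. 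The ``w.l.o.g.\ absorbed in $L_u$'' claim is therefore unjustified, since $\|\widehat\Psi_{u0}\|_\infty$ does not appear in the stated $L_u$.

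The fix is to bound $I_a$ by Cauchy--Schwarz ($\|v\|_2=1$) rather than H\"older, and to postpone converting the lower bound to $\sqrt{\hat s_u}$. From the KKT equality and (\ref{Eq:reg}) one has, componentwise on $\widehat T_u$,
\[
\|(\En[\partial_\theta M_u(\hat\theta_u)])_{\widehat T_u}\|_2\ \geq\ \tfrac{\ell\lambda}{n}\Bigl(\textstyle\sum_{j\in\widehat T_u}\widehat\Psi_{u0,jj}^2\Bigr)^{1/2},
\qquad
|I_a|\ \leq\ \tfrac{\lambda}{cn}\Bigl(\textstyle\sum_{j\in\widehat T_u}\widehat\Psi_{u0,jj}^2\Bigr)^{1/2},
\]
so the loading-dependent quantity cancels and one is left with
\[
\tfrac{\lambda(c\ell-1)}{cn}\Bigl(\textstyle\sum_{j\in\widehat T_u}\widehat\Psi_{u0,jj}^2\Bigr)^{1/2}\ \leq\ (C_{un}+L_{un})\sqrt{\semax{\hat s_u,u}}.
\]
Only now apply $(\sum_{j\in\widehat T_u}\widehat\Psi_{u0,jj}^2)^{1/2}\geq \sqrt{\hat s_u}/\|\widehat\Psi_{u0}^{-1}\|_\infty$, and the coefficient $\tfrac{c\|\widehat\Psi_{u0}^{-1}\|_\infty}{c\ell-1}$ in $L_u$ appears exactly, giving $\hat s_u\leq L_u^2\semax{\hat s_u,u}$. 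Your sub-linearity step then closes the argument without change.
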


\begin{lemma}\label{Lemma:PostLassoMRateRaw}
Let $\widetilde T_u, u\in \UU,$ be the support used for post penalized estimator (\ref{Adef:PostFunc}) and $\tilde s_u = \Vert \widetilde T_u\Vert_0 $ its cardinality. In addition to conditions of Lemma \ref{Lemma:LassoMRateRaw}, suppose that Assumption \ref{ass: M}(c) holds also for $A_u=\{ \delta : \|\delta\|_0 \leq \tilde s_u + s\}$ with probability $1-\Delta_n$, $\bar q_{A_u}>2\left\{ \frac{\sqrt{\tilde s_u+s_u}\|\En[S_u]\|_\infty}{\sqrt{\semin{\tilde s_u+s_u,u}}} + 3C_{un}\right\}$ and $\bar q_{A_u}>2\{\En[M_u(Y_u,X_u,\tilde \theta_u)] - \En[M_u(Y_u,X_u,\theta_u)]\}_+^{1/2}$. Then, we have uniformly over $u\in\UU$
$$ \|\sqrt{w_{u}} X_u'(\tilde \theta_u-\theta_u)\|_{\Pn,2} \leq \{\En[M_u(Y_u,X_u,\tilde \theta_u)] - \En[M_u(Y_u,X_u,\theta_u)]\}_+^{1/2} +  \frac{\sqrt{\tilde s_u+s_u}\|\En[S_u]\|_\infty}{\sqrt{\semin{\tilde s_u+s_u,u}}} + 3C_{un}.$$
\end{lemma}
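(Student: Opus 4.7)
The plan is to reduce the claim to an application of the sparse-identification hypothesis Assumption \ref{ass: M}(c) evaluated at the deviation $\delta := \widetilde\theta_u - \theta_u$, and then to handle the min-branch dichotomy by a contradiction argument driven by the two lower bounds imposed on $\bar q_{A_u}$.

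First, observe that since $\widetilde\theta_u$ is supported on $\widetilde T_u$ and $\theta_u$ on $T_u$, the deviation satisfies $\|\delta\|_{0}\leq \tilde s_u+s$, so $\delta\in A_u=\{v:\|v\|_0\leq \tilde s_u+s\}$ and the hypothesized version of Assumption \ref{ass: M}(c) applies. Setting $t:=\|\sqrt{w_u}X_u'\delta\|_{\Pn,2}$ and $\Delta_u:=\{\En[M_u(Y_u,X_u,\widetilde\theta_u)]-\En[M_u(Y_u,X_u,\theta_u)]\}_{+}$, Assumption \ref{ass: M}(c) yields
\begin{equation*}
t^{2}\wedge \bar q_{A_u}\,t\;\leq\;\Delta_u\;+\;\bigl|\En[\partial_\theta M_u(Y_u,X_u,\theta_u)]'\delta\bigr|\;+\;2C_{un}\,t.
\end{equation*}
The linear term is split using Assumption \ref{ass: M}(a) to replace $\partial_\theta M_u(\cdot,\theta_u)$ by its true-nuisance counterpart at a cost of $C_{un}\,t$, giving $|\En[\partial_\theta M_u(Y_u,X_u,\theta_u)]'\delta|\leq \|\En[S_u]\|_\infty\|\delta\|_1 + C_{un}\,t$.

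Next, I exploit sparsity of $\delta$ to convert the $\ell_1$ bound into a prediction-norm bound: since $\|\delta\|_0\leq \tilde s_u+s$, $\|\delta\|_1\leq \sqrt{\tilde s_u+s}\,\|\delta\|$, and by the minimum sparse eigenvalue $\|\delta\|\leq t/\sqrt{\semin{\tilde s_u+s,u}}$. Collecting terms, write $B_u:=\sqrt{\tilde s_u+s}\,\|\En[S_u]\|_\infty/\sqrt{\semin{\tilde s_u+s,u}}+3C_{un}$, so that
\begin{equation*}
t^{2}\wedge \bar q_{A_u}\,t\;\leq\;\Delta_u + B_u\,t.
\end{equation*}

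The main (and only) obstacle is to show that the minimum on the left is attained by the quadratic branch. Suppose, for contradiction, that $t\geq \bar q_{A_u}$, so the linear branch applies; then $\bar q_{A_u}\,t\leq \Delta_u+B_u\,t$. The hypothesis $\bar q_{A_u}>2B_u$ gives $\tfrac{1}{2}\bar q_{A_u}\,t\leq \Delta_u$, while $\bar q_{A_u}>2\sqrt{\Delta_u}$ gives $\Delta_u<\bar q_{A_u}^{2}/4$, so $t<\bar q_{A_u}/2<\bar q_{A_u}$, a contradiction. Hence $t^{2}\leq \Delta_u+B_u\,t$; solving this quadratic inequality (in $t\geq 0$) and using $\sqrt{a+b}\leq \sqrt{a}+\sqrt{b}$ yields $t\leq \sqrt{\Delta_u}+B_u$, which is exactly the stated bound. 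All steps are uniform in $u\in\mathcal{U}$ because the hypotheses on Assumption \ref{ass: M}(a),(c), on the penalty event, and on $\bar q_{A_u}$ are assumed to hold uniformly in $u$ with the single probability $1-\Delta_n$.
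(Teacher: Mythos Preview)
Your argument is correct and mirrors the approach the paper itself uses in the closely related quantile-specific result (Lemma~\ref{Thm:RefittedGeneric}); the paper states Lemma~\ref{Lemma:PostLassoMRateRaw} as a collected result without an explicit proof, but the template there is exactly yours: sparsity of $\delta=\widetilde\theta_u-\theta_u$ places it in $A_u$, Assumption~\ref{ass: M}(c) yields the $t^2\wedge \bar q_{A_u}t$ lower bound, Assumption~\ref{ass: M}(a) plus the sparse-eigenvalue inequality converts the score term into $B_u t$, and the two hypotheses on $\bar q_{A_u}$ force the quadratic branch, after which solving $t^2\leq \Delta_u+B_u t$ gives the claim.
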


In Lemma \ref{Lemma:PostLassoMRateRaw}, if  $\widetilde T_u=\supp(\hat \theta_u)$, we have that
$$\En[M_u(Y_u,X_u,\tilde \theta_u)] - \En[M_u(Y_u,X_u,\theta_u)] \leq \En[M_u(Y_u,X_u,\hat \theta_u)] - \En[M_u(Y_u,X_u,\theta_u)] \leq \lambda C' \|\hat\theta_u - \theta_u\|_1$$ and $\sup_{u\in\UU}\|\En[S_u]\|_\infty \leq C' \lambda$ with high probability, $C' \leq L \sup_{u\in\UU}\|\widehat \Psi_{u0}\|_\infty$.

These results generalize important results of the $\ell_1$-penalized estimators to the case of functional response data and estimated of nuisance functions.
A key assumption in Lemmas \ref{Lemma:LassoMRateRaw}-\ref{Lemma:PostLassoMRateRaw} is that the choice of $\lambda$ satisfies (\ref{Eq:reg}). We next provide a set of simple generic conditions that will imply the validity of the proposed choice. These generic conditions can be verified in many applications of interest.

{\bf Condition WL.} {\it For each $u\in \UU$, let  $S_{u} = \partial_\theta M_u(Y_u,X_u,\theta_u,a_u)$, suppose that:\\
 (i)  ${\displaystyle\sup_{u\in\UU} \max_{k\leq p} } \{\Ep[|S_{uk}|^3]\}^{1/3}/\{\Ep[|S_{uk}|^2]\}^{1/2}\Phi^{-1}(1-\xi/\{2p N_n\}) \leq \delta_n n^{1/6}$, for all $u\in \UU$, $k\in[p]$;\\
(ii)  $N_n \geq N(\epsilon,\mathcal{U},d_\mathcal{U})$, here $\epsilon$ is such that with probability $1-\Delta_n$:\\   ${\displaystyle \sup_{d_\mathcal{U}(u,u')\leq \epsilon}  \max_{k\leq p}} \frac{\| \En[ S_{u}-S_{u'} ]\|_\infty}{\Ep[|S_{uk}|^2]^{1/2}}\leq \delta_n n^{-\frac{1}{2}}$, and
${\displaystyle \sup_{d_\mathcal{U}(u,u')\leq \epsilon}  \max_{k\leq p}} \ \frac{|\Ep[S_{uk}^2-S_{u'k}^2]| +|(\En-\Ep)[S_{uk}^2]|}{\Ep[|S_{uk}|^2]}\leq \delta_n$.
%$$\begin{array}{c}
%{\displaystyle \sup_{u\in\mathcal{U}}\max_{j\leq p}} |(\En-\barEp_P)[f_j(X_u)^2\zeta_{u}^2]| \leq \delta_n, \ \ \mbox{and} \\ % {\displaystyle \sup_{u\in\mathcal{U}}} \En[ r_{u}^2] \leq  c_r^2, \\
%{\displaystyle \sup_{u,u'\in\mathcal{U},d_\mathcal{U}(u,u')\leq \epsilon} } \{(\En+\barEp_P)[(\zeta_{u}-\zeta_{u'})^2]\}^{1/2} \leq C\{\epsilon^\nu + n^{-1/2}\}.
%\end{array}$$
%(iv) Uniformly in $u\in \mathcal{U}$, $\ell \widehat \Psi_{u 0} \leq \widehat \Psi_{u} \leq L \widehat \Psi_{u 0}$,  where $\widehat \Gamma_{u 0jj}=\{\En[f_j(X_u)^2\zeta_{u}^2]\}^{1/2}$,  $1 - \delta_n\leq \ell \leq L  \leq C$ with probability $1-\Delta_n$.
}

The following technical lemma justifies the choice of penalty level $\lambda$. It is based on self-normalized moderate deviation theory.

\begin{lemma}[Choice of $\lambda$]\label{Thm:ChoiceLambda}
Suppose Condition WL holds, let $c'>c>1$ be constants, $\xi \in [1/n,1/\log n]$, and $\lambda = c'\sqrt{n}\Phi^{-1}(1-\xi/\{2pN_n\})$. Then for $n \geq n_0$ large enough depends only on Condition WL,
$$\Pr\left ( \lambda/n \geq c \sup_{u\in\mathcal{U}}\|\hat  \Psi^{-1}_{u0}\En[ \partial_\theta M_u(Y_u,X_u,\theta_u,a_u) ]\|_\infty \right ) \geq 1-\xi - o(\xi)-\Delta_n.$$
\end{lemma}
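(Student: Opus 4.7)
The plan is to combine self-normalized moderate deviation theory (SNMD) for the coordinates of the score with a covering-plus-continuity argument over the continuum index set $\mathcal{U}$. Observe first that since $\widehat\Psi_{u0}=\diag(\{\Ep[S_{uk}^2]^{1/2}\}_{k\in[p]})$ is the ``ideal'' loading (or, more precisely, its sample version with diagonal $\En[S_{uk}^2]^{1/2}$), we have
$$\|\widehat\Psi_{u0}^{-1}\En[S_u]\|_\infty = \max_{k\in[p]} \frac{|\En[S_{uk}]|}{\{\En[S_{uk}^2]\}^{1/2}},$$
and the task reduces to a uniform in $(u,k)$ control of the self-normalized sum $\sqrt{n}\En[S_{uk}]/\{\Ep[S_{uk}^2]\}^{1/2}$, up to replacing $\Ep$ by $\En$ in the denominator.

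First, I would fix a minimal $\epsilon$-net $\mathcal{U}_\epsilon$ of $\mathcal{U}$ with $|\mathcal{U}_\epsilon| \leq N_n$, where $\epsilon$ is chosen as in Condition WL(ii). For each $u \in \mathcal{U}_\epsilon$ and each $k\in[p]$, Condition WL(i) guarantees that the third-to-second moment ratio of $S_{uk}$ satisfies the key SNMD requirement $M_{uk}\Phi^{-1}(1-\gamma/\{2pN_n\}) \leq \delta_n n^{1/6}$; hence by the Jing--Shao--Wang type SNMD inequality (cf.\ \cite{BC-SparseQR}) one has, for $n$ large,
$$\Pr\!\left(\frac{\sqrt{n}\,|\En[S_{uk}]|}{\Ep[S_{uk}^2]^{1/2}} > \Phi^{-1}(1-\gamma/\{2pN_n\})\right) \leq (1+o(1))\,\gamma/\{pN_n\}.$$
A union bound over the $p\cdot|\mathcal{U}_\epsilon|\leq pN_n$ choices then gives $\max_{u\in\mathcal{U}_\epsilon,k\in[p]} \sqrt{n}|\En[S_{uk}]|/\Ep[S_{uk}^2]^{1/2} \leq \Phi^{-1}(1-\gamma/\{2pN_n\}) = \lambda/(c'\sqrt{n})$ with probability $1-\gamma-o(\gamma)$.

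Second, I would extend the bound from the net to all of $\mathcal{U}$ by continuity. For any $u\in\mathcal{U}$, pick $u'\in\mathcal{U}_\epsilon$ with $d_\mathcal{U}(u,u')\leq \epsilon$ and decompose, for each $k$,
$$\frac{|\En[S_{uk}]|}{\Ep[S_{uk}^2]^{1/2}} \leq \frac{|\En[S_{u'k}]|}{\Ep[S_{u'k}^2]^{1/2}}\cdot\frac{\Ep[S_{u'k}^2]^{1/2}}{\Ep[S_{uk}^2]^{1/2}} + \frac{|\En[S_{uk}-S_{u'k}]|}{\Ep[S_{uk}^2]^{1/2}}.$$
By Condition WL(ii) the first ratio is $1+o(1)$ uniformly, while the increment term is bounded by $\delta_n/\sqrt{n}$; both are negligible relative to the slack $c'/c>1$ in the penalty. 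Finally, Condition WL(ii) also yields $|(\En-\Ep)[S_{uk}^2]|/\Ep[S_{uk}^2]\leq\delta_n$ with probability $1-\Delta_n$, so one may replace $\Ep[S_{uk}^2]^{1/2}$ by $\En[S_{uk}^2]^{1/2}$ in the denominator at the cost of a $(1+o(1))$ factor. Assembling these three approximations delivers
$$\sup_{u\in\mathcal{U}} \|\widehat\Psi_{u0}^{-1}\En[S_u]\|_\infty \leq \frac{1+o(1)}{c'\sqrt{n}}\cdot\lambda \leq \frac{\lambda}{c\,n}$$
with probability at least $1-\gamma-o(\gamma)-\Delta_n$, provided $n$ is large enough that $(1+o(1))c\leq c'$.

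The main obstacle is bookkeeping: three distinct $1+o(1)$ factors (from SNMD's Gaussian approximation under the third-moment growth in WL(i); from variance stability under the continuity condition in WL(ii); and from $\Ep\!\to\!\En$ in the denominator) must simultaneously fit inside the fixed slack $c'/c$. The hypothesis $\gamma\in[1/n,1/\log n]$ is what guarantees that $\Phi^{-1}(1-\gamma/\{2pN_n\}) \asymp \sqrt{\log(pN_n/\gamma)}$ grows slowly enough for the SNMD threshold $\delta_n n^{1/6}$ in Condition WL(i) to comfortably dominate it, so that each approximation error is truly $o(1)$ uniformly over $(u,k)$.
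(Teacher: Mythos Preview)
Your approach is correct and matches the method the paper indicates (the lemma is stated as a collected result with the remark that it ``is based on self-normalized moderate deviation theory,'' and the section header notes these results are imported from \cite{BCCW-ManyProcesses}): apply Jing--Shao--Wang SNMD pointwise, union-bound over a net of size $N_n$, and use the continuity bounds in Condition~WL(ii) to pass from the net to all of $\mathcal{U}$ and from $\Ep$ to $\En$ in the normalization. Two small cleanups: (i) SNMD is most naturally stated with the \emph{empirical} self-normalizer $\En[S_{uk}^2]^{1/2}$ in the denominator, so it is slightly cleaner to run the SNMD/union-bound step directly with $\En$ and use WL(ii) only for the continuity across $u$; (ii) your final display has a missing $\sqrt{n}$ --- from $\sqrt{n}\,|\En[S_{uk}]|/\{\cdot\}^{1/2}\leq \lambda/(c'\sqrt{n})$ you get $\|\widehat\Psi_{u0}^{-1}\En[S_u]\|_\infty \leq (1+o(1))\lambda/(c'\,n)$, not $\lambda/(c'\sqrt{n})$, which then yields $\leq \lambda/(cn)$ for $n$ large since $c'>c$.
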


We note that Condition WL(ii) contains high level conditions. See \cite{BCFH2013program} for examples that satisfy these conditions. The following corollary summarizes these results for many applications of interest in well behaved designs.

\begin{corollary}[Rates under Simple Conditions]
Suppose that with probability $1-o(1)$ we have that $C_{un} \vee L_{un} \leq C\{n^{-1}s\log(pn)\}^{1/2}$, $(Lc+1)/(\ell c-1)\leq C$, $w_u = 1$, and Condition WL holds with $\log N_n \leq C\log(pn)$. Further suppose that with probability $1-o(1)$ the sparse minimal and maximal eigenvalues are well behaved, $c\leq \semin{s \ell_n,u } \leq \semax{s \ell_n,u } \leq C$ for some $\ell_n\to \infty$ uniformly over $u\in\UU$. Then with probability $1-o(1)$ we have
$$ \sup_{u\in\UU}\|X_u'(\hat \theta_u - \theta_u)\|_{\Pn,2} \lesssim \sqrt{\frac{s\log(p n)}{n}}, \ \ \sup_{u\in\UU}\|\hat \theta_u - \theta_u\|_1 \lesssim \sqrt{\frac{s^2\log(p n)}{n}}, \ \ \mbox{and} \ \  \sup_{u\in\UU}\|\hat \theta_u \|_0 \lesssim s.$$
Moreover, if  $\widetilde T_u=\supp(\hat \theta_u)$, we have that
$$ \sup_{u\in\UU}\|X_u'(\tilde \theta_u - \theta_u)\|_{\Pn,2} \lesssim \sqrt{\frac{s\log(p n)}{n}}$$
\end{corollary}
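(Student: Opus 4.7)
The plan is to consolidate the three preceding lemmas (\ref{Lemma:LassoMRateRaw}, \ref{Lemma:LassoMSparsity}, \ref{Lemma:PostLassoMRateRaw}) together with the penalty-validity Lemma \ref{Thm:ChoiceLambda} under the simplified regularity given in the hypothesis. First I would fix the penalty as $\lambda = c'\sqrt{n}\Phi^{-1}(1-\gamma/\{2pN_n\})$ with $\gamma \in [1/n,1/\log n]$; since $\log N_n \leq C\log(pn)$, this gives $\lambda \asymp \sqrt{n\log(pn)}$, and Condition WL (assumed) combined with Lemma \ref{Thm:ChoiceLambda} yields $\lambda/n \geq c\sup_{u\in\UU}\|\widehat\Psi_{u0}^{-1}\En[\partial_\theta M_u(Y_u,X_u,\theta_u,a_u)]\|_\infty$ with probability $1-o(1)$.

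Next I would convert the sparse eigenvalue condition into a uniform restricted eigenvalue bound. Since $c \leq \semin{s\ell_n,u} \leq \semax{s\ell_n,u}\leq C$ uniformly for some $\ell_n\to\infty$, the standard sparse-to-RE argument (as in Bickel--Ritov--Tsybakov) gives $\bar\kappa_{u,2\tilde\cc}\geq c'>0$ uniformly in $u\in\UU$; I would also note that $\|\widehat\Psi_{u0}\|_\infty\vee\|\widehat\Psi_{u0}^{-1}\|_\infty\lesssim 1$ under the bounded-moment side of Condition WL, so $\tilde\cc$ is bounded. With $C_{un}\lesssim\sqrt{s\log(pn)/n}$ and $\lambda\sqrt{s}/n\lesssim\sqrt{s\log(pn)/n}$, the identifiability threshold in the hypothesis of Lemma \ref{Lemma:LassoMRateRaw}, namely $\bar q_{A_u}>3\{(L+1/c)\|\widehat\Psi_{u0}\|_\infty\lambda\sqrt{s}/(n\bar\kappa_{u,2\tilde\cc})+9\tilde\cc C_{un}\}$, is of the same order $\sqrt{s\log(pn)/n}=o(1)$, which is readily dominated by the quadratic-minorization region under the sparse eigenvalue regularity. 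Lemma \ref{Lemma:LassoMRateRaw} then delivers
\[
\sup_{u\in\UU}\|X_u'(\hat\theta_u-\theta_u)\|_{\Pn,2}\lesssim \sqrt{\frac{s\log(pn)}{n}},\qquad
\sup_{u\in\UU}\|\hat\theta_u-\theta_u\|_1\lesssim \sqrt{\frac{s^2\log(pn)}{n}}.
\]

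Third, I would compute $L_u = (c\|\widehat\Psi_{u0}^{-1}\|_\infty/(c\ell-1))(n/\lambda)\{C_{un}+L_{un}\}\lesssim (n/\sqrt{n\log(pn)})\sqrt{s\log(pn)/n} = O(\sqrt{s})$, so that $L_u^2\lesssim s$. Lemma \ref{Lemma:LassoMSparsity} combined with $\semax{m,u}\leq C$ for $m$ up to $s\ell_n$ yields $\hat s_u\leq \semax{\hat s_u,u}\cdot 2 L_u^2 \lesssim s$ for every $u\in\UU$ with probability $1-o(1)$ (the fixed-point $m\in\mathcal{M}_u$ exists in $[1,s\ell_n]$ because of the uniform upper eigenvalue). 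Finally, setting $\widetilde T_u = \supp(\hat\theta_u)$, we have $\tilde s_u\lesssim s$ and $\En[M_u(Y_u,X_u,\widetilde\theta_u)] \leq \En[M_u(Y_u,X_u,\hat\theta_u)]$ by optimality of the refitted estimator on the selected support. The envelope $\|\En[S_u]\|_\infty \lesssim \lambda/n + C_{un}\lesssim \sqrt{\log(pn)/n}$ (from the choice of $\lambda$ and the nuisance bound), and $\semin{\tilde s_u+s,u}\geq c$ by the sparse eigenvalue hypothesis. Substituting into Lemma \ref{Lemma:PostLassoMRateRaw} together with the already established bound on $\En[M_u(Y_u,X_u,\hat\theta_u)] - \En[M_u(Y_u,X_u,\theta_u)]\lesssim \lambda\|\hat\theta_u-\theta_u\|_1/n\lesssim s\log(pn)/n$ yields the stated post-Lasso rate.

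The main obstacle is purely bookkeeping: ensuring that all constants (bounds on $\tilde\cc$, on $\bar q_{A_u}$, on the restricted eigenvalue) hold uniformly over $u\in\UU$. The sparse eigenvalue hypothesis is uniform in $u$ by assumption, so the RE bound inherits the uniformity; $\|\widehat\Psi_{u0}\|_\infty$ is uniformly controlled by the moment side of Condition WL; and the penalty $\lambda$ is a single deterministic quantity independent of $u$. Thus all rate bounds above automatically pass to their supremum over $u\in\UU$, completing the proof.
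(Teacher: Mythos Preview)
Your proposal is correct and follows precisely the route the paper intends: the corollary is stated without proof because it is meant to be read off directly from Lemmas \ref{Lemma:LassoMRateRaw}--\ref{Lemma:PostLassoMRateRaw} together with Lemma \ref{Thm:ChoiceLambda}, and your write-up does exactly this bookkeeping under the simplified hypotheses. The order of application (penalty validity $\to$ Lasso rate $\to$ sparsity $\to$ post-Lasso rate) and the computation $L_u \lesssim (n/\lambda)(C_{un}+L_{un}) \lesssim \sqrt{s}$ are the intended steps.
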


\section{Bounds on Covering entropy}

Let $ ( W_i)_{i=1}^n$ be a sequence of independent copies of a random element $ W$  taking values in a measurable space $({\mathcal{W}}, \mathcal{A}_{{\mathcal{W}}})$ according to a probability law $P$. Let $\mathcal{F}$ be a set  of suitably measurable functions $f\colon {\mathcal{W}} \to \mathbb{R}$, equipped with a measurable envelope $F\colon \mathcal{W} \to \mathbb{R}$. The proofs for the following lemmas can be found in \cite{BCFH2013program}.

\begin{lemma}[Algebra for Covering Entropies] \text{  } \label{lemma: andrews}Work with the setup above. \\
(1) Let $\F$ be a VC subgraph class with a finite VC index $k$ or any
other class whose entropy is bounded above by that of such a VC subgraph class, then
the uniform entropy numbers of $\mF$ obey
\begin{equation*}
 \sup_{Q} \log  N(\epsilon \|F\|_{Q,2}, \F,  \| \cdot \|_{Q,2}) \lesssim \{1+ k \log (1/\epsilon)\}\vee 0
\newline
\end{equation*}
(2) For any measurable classes of functions $\F$ and $\F^{\prime
} $ mapping $\mathcal{W}$ to $\Bbb{R}$,
\begin{align*}
&\log N(\epsilon \Vert F+F^{\prime }\Vert _{Q,2},\F+\F^{\prime
}, \| \cdot \|_{Q,2})\leq \log   N\left(\mbox{$ \frac{\epsilon }{2}$}\Vert F\Vert _{Q,2},\F, \| \cdot \|_{Q,2}\right)
+ \log N\left( \mbox{$ \frac{\epsilon }{2}$}\Vert F^{\prime }\Vert _{Q,2},\F^{\prime
}, \| \cdot \|_{Q,2}\right), \\
&\log  N(\epsilon \Vert F\cdot F^{\prime }\Vert _{Q,2},\F\cdot \F^{\prime
}, \| \cdot \|_{Q,2})\leq \log   N\left( \mbox{$ \frac{\epsilon }{2}$}\Vert F\Vert _{Q,2},\F, \| \cdot \|_{Q,2}\right)
+ \log N\left( \mbox{$ \frac{\epsilon }{2}$}\Vert F^{\prime }\Vert _{Q,2},\F^{\prime
}, \| \cdot \|_{Q,2}\right), \\
& N(\epsilon \Vert F\vee F^{\prime }\Vert _{Q,2},\F\cup \F^{\prime
}, \| \cdot \|_{Q,2})\leq   N\left(\epsilon\Vert F\Vert _{Q,2},\F, \| \cdot \|_{Q,2}\right)
+ N\left( \epsilon\Vert F^{\prime }\Vert _{Q,2},\F^{\prime
}, \| \cdot \|_{Q,2}\right).
\end{align*}
(3)  For any measurable class of functions $%
\mathcal{F}$ and a fixed function $f$ mapping $\mathcal{W}$ to $\Bbb{R}$,
\begin{equation*}
 \log \sup_{Q} N(\epsilon \Vert |f|\cdot F\Vert _{Q,2},f\cdot\F, \| \cdot \|_{Q,2})\leq \log \sup_{Q} N\left(
\epsilon /2\Vert F\Vert _{Q,2},\F, \| \cdot \|_{Q,2}\right)
\end{equation*}
(4)  Given measurable classes $\F_j$ and envelopes $F_j$, $j=1,\ldots,k$, mapping $\mathcal{W}$ to $\Bbb{R}$, a function $\phi\colon\Bbb{R}^k\to\Bbb{R}$ such that for $f_j,g_j\in\F_j$,
$ |\phi(f_1,\ldots,f_k) - \phi(g_1,\ldots,g_k) | \leq \sum_{j=1}^k L_j(x)|f_j(x)-g_j(x)|$, $L_j(x)\geq 0$, and fixed functions $\bar f_j \in \F_j$,  the class of functions $\mathcal{L}=\{\phi(f_1,\ldots,f_k)-\phi(\bar f_1,\ldots,\bar f_k)\colon f_j \in\mathcal{F}_j, j=1,\ldots,k\}$ satisfies
\begin{equation*}
 \log \sup_Q N\left(\epsilon\Big\|\sum_{j=1}^kL_jF_j\Big\|_{Q,2},\mathcal{L}, \| \cdot \|_{Q,2}\right)\leq \sum_{j=1}^k\log  \sup_Q  N\left(
\mbox{$\frac{\epsilon}{k}$}\|F_j\|_{Q,2},\F_j, \| \cdot \|_{Q,2}\right).
\end{equation*}
\end{lemma}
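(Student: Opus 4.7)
The plan is to address each of the four parts by standard covering-entropy arguments, most of which can be found in van der Vaart--Wellner (vdV--W). For part (1), I would invoke Theorem 2.6.7 of vdV--W directly: any VC-subgraph class with finite index $k$ satisfies the stated uniform entropy bound; no original argument is needed. For part (2), each of the three bounds proceeds by combining nets multiplicatively. Fix a finitely discrete measure $Q$ on $\mathcal{W}$. For the sum, take an $(\epsilon/2)\|F\|_{Q,2}$-net $\mathcal{N}$ of $\mathcal{F}$ and an $(\epsilon/2)\|F'\|_{Q,2}$-net $\mathcal{N}'$ of $\mathcal{F}'$; the Minkowski sum $\mathcal{N}+\mathcal{N}'$ covers $\mathcal{F}+\mathcal{F}'$ at radius $(\epsilon/2)(\|F\|_{Q,2}+\|F'\|_{Q,2})$, and the inequality $\|F+F'\|_{Q,2}\geq (\|F\|_{Q,2}+\|F'\|_{Q,2})/\sqrt{2}$, which follows from $(F+F')^2\geq F^2+F'^2$ (envelopes being nonnegative), rescales the cover to radius $\leq \epsilon\|F+F'\|_{Q,2}$. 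For the product, the identity $fg-f'g'=(f-f')g+f'(g-g')$ together with $|g|\leq F'$ and $|f'|\leq F$ gives $|fg-f'g'|\leq F'|f-f'|+F|g-g'|$, and the same covers work since $\|F'(f-f')\|_{Q,2}$ and $\|F(g-g')\|_{Q,2}$ are each bounded, after passing to appropriate reweighted measures, by the cover radius times $\|F\cdot F'\|_{Q,2}$. The union bound is immediate: any $\epsilon$-net for each class combines into an $\epsilon$-net for the union with envelope $F\vee F'$.

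Part (3) is an immediate corollary of part (2) applied with $\mathcal{F}'=\{f\}$ a singleton, whose covering number is $1$.

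Part (4), which I expect to be the main technical step, proceeds as follows. The Lipschitz hypothesis yields the pointwise estimate
\[
|\phi(f_1,\ldots,f_k)-\phi(g_1,\ldots,g_k)|\leq \sum_{j=1}^k L_j(x)|f_j(x)-g_j(x)|,
\]
so taking $L_2(Q)$-norms gives $\|\phi(f_\cdot)-\phi(g_\cdot)\|_{Q,2}\leq \sum_j \|L_j(f_j-g_j)\|_{Q,2}$. For each $j$, introduce the reweighted measure $d\widetilde Q_j=(L_j^2/\|L_j\|_{Q,2}^2)\,dQ$ so that $\|L_j(f_j-g_j)\|_{Q,2}=\|L_j\|_{Q,2}\|f_j-g_j\|_{\widetilde Q_j,2}$; since $\widetilde Q_j$ is itself a finitely discrete (probability) measure, the uniform entropy $\sup_Q N((\epsilon/k)\|F_j\|_{Q,2},\mathcal{F}_j,\|\cdot\|_{Q,2})$ already majorates the covering number of $\mathcal{F}_j$ in $\|\cdot\|_{\widetilde Q_j,2}$ at radius $(\epsilon/k)\|F_j\|_{\widetilde Q_j,2}$. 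This yields approximants $g_j$ with $\|L_j(f_j-g_j)\|_{Q,2}\leq (\epsilon/k)\|L_j F_j\|_{Q,2}$. Summing and using $\sum_j\|L_jF_j\|_{Q,2}\leq \sqrt{k}\|\sum_j L_j F_j\|_{Q,2}$, a consequence of Cauchy--Schwarz together with $(\sum_j L_jF_j)^2\geq \sum_j L_j^2F_j^2$ (valid since $L_j,F_j\geq 0$), gives the final bound $\epsilon\|\sum_j L_jF_j\|_{Q,2}$. Taking the supremum over $Q$ delivers the stated entropy inequality.

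The only subtle point throughout is the bookkeeping around envelopes: one must consistently verify that reweighted envelope norms $\|LF\|_{Q,2}$ control the radii produced by the covering arguments, which is what forces the exact constants $\epsilon/2$ in (2) and $\epsilon/k$ in (4).
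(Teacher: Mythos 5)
Your proposal is correct in substance, but it is worth noting that the paper does not actually prove this lemma at all: its ``proof'' is a one-line citation to Lemma L.1 of \cite{BCFH2013program}. So you are supplying the argument that the paper outsources, and what you supply is the standard one (and essentially the one behind the cited lemma): Theorem 2.6.7 of van der Vaart--Wellner for part (1); triangle inequality plus combination of nets for sums and unions, with the elementary envelope comparison $\|F+F'\|_{Q,2}\geq(\|F\|_{Q,2}+\|F'\|_{Q,2})/\sqrt 2$ (even simpler: $\|F\|_{Q,2}\vee\|F'\|_{Q,2}\leq\|F+F'\|_{Q,2}$ by nonnegativity); the decomposition $fg-f'g'=(f-f')g+f'(g-g')$ together with reweighting $dQ\mapsto (F')^2dQ/\|F'\|_{Q,2}^2$ for products; and reweighting by $L_j^2$ plus Cauchy--Schwarz for part (4). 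Your constants and the degenerate cases ($\|L_j\|_{Q,2}=0$, etc.) are handled or harmless.

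The one point you should be explicit about concerns the product bound in part (2). Your reweighting step counts a net of $\mathcal{F}$ under the \emph{reweighted} measure $\widetilde Q$, so what it proves is the bound with $\sup_Q$ on the right-hand side (as in parts (3) and (4)), not the display as written with the same fixed $Q$ on both sides. The fixed-$Q$ product statement is in fact false in general: take $Q$ uniform on two points, $\mathcal{F}=\{(a,M):a\in[0,1]\}$ with (tight) envelope $F=(1,M)$, and $\mathcal{F}'=\{(H,1/M)\}$ a singleton with envelope $F'=(H,1/M)$; for $M>2/\epsilon$ both right-hand covering numbers equal $1$, while $FF'=(H,1)$ and the product class $\{(aH,1):a\in[0,1]\}$ needs at least order $1/\epsilon$ balls of radius $\epsilon\|FF'\|_{Q,2}\leq\epsilon H$. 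This is a defect of the statement (inherited from the cited lemma) rather than of your argument, and it is immaterial for the paper, since every application of the lemma there only uses the $\sup_Q$ (uniform entropy) form, which your proof does establish; but your write-up currently asserts that ``the same covers work,'' which glosses over exactly this distinction, so either state the product and part (3)--(4) bounds with $\sup_Q$ on the right throughout or flag the discrepancy explicitly.
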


%\subsection*{Proof of Lemma \ref{lemma: andrews}}
\begin{proof}
See Lemma L.1 in \cite{BCFH2013program}.
\end{proof}

\begin{lemma}[Covering Entropy for Classes obtained as Conditional Expectations]\label{Lemma:PartialOutCovering}
 Let $\mathcal F$ denote a class of measurable functions $f\colon \mathcal{W}\times \mathcal{Y} \to \Bbb{R}$ with a measurable envelope $F$. For a given $f \in \mathcal{F}$, let $\bar f\colon \mathcal{W} \to \Bbb{R}$ be the function $\bar f (w) := \int f(w,y) d\mu_{w}(y)$ here $\mu_{w}$ is a regular conditional probability distribution over $y \in \mathcal{Y}$ conditional on $w\in\mathcal{W}$. Set $\bar{\mathcal{F}} = \{ \bar f \colon f \in \mathcal{F}\}$ and let $\bar F(w):=\int F(w,y) d\mu_w(y)$ be an envelope for $\bar{\mathcal{F}}$. Then, for $r, s \geq 1$,
     $$
    \log \sup_{Q} N(\epsilon \| \bar F\Vert _{Q,r}, \bar{\mathcal{F}}, \| \cdot \|_{Q,r}) \leq \log \sup_{\widetilde Q} N((\epsilon/4)^r \| F\Vert _{\widetilde Q,s},  \mathcal \F , \| \cdot \|_{\widetilde Q,s}),
    $$ here $Q$ belongs to the set of finitely-discrete probability measures over $\mathcal{W}$ such that  $0<\| \bar F\Vert _{Q,r}< \infty$, and $\widetilde Q$ belongs to the set of finitely-discrete probability measures over $\mathcal{W}\times \mathcal{Y}$ such that $0<\|  F\Vert _{\widetilde Q,s}< \infty$. In particular, for every $\epsilon > 0$ and any $k\geq 1$,
        $$
    \log \sup_{Q} N(\epsilon, \bar{\mathcal{F}}, \| \cdot \|_{Q,k}) \leq \log \sup_{\widetilde Q} N(\epsilon/2,  \mathcal \F , \| \cdot \|_{\widetilde Q,k} ).
    $$
\end{lemma}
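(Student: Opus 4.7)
The strategy is to transfer an $L_s(\widetilde Q)$-cover of $\mathcal F$ to an $L_r(Q)$-cover of $\bar{\mathcal F}$ via the conditional-expectation map, which is a contraction in every $L_p$. Fix a finitely-discrete probability measure $Q$ on $\mathcal W$ that nearly realizes the supremum on the left, and let $\widetilde Q$ be the joint probability measure on $\mathcal W\times\mathcal Y$ defined by $d\widetilde Q(w,y) = d\mu_w(y)\,dQ(w)$ (or a finitely-discrete approximation thereof, obtained by discretizing each $\mu_w$). Given an $\eta$-net $\{f_1,\ldots,f_N\}\subset\mathcal F$ in $\|\cdot\|_{\widetilde Q,s}$ with radius $\eta = (\epsilon/4)^r\|F\|_{\widetilde Q,s}$, the plan is to show that $\{\bar f_1,\ldots,\bar f_N\}$ is an $(\epsilon\|\bar F\|_{Q,r})$-net for $\bar{\mathcal F}$ in $\|\cdot\|_{Q,r}$.

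The pointwise ingredients are two: Jensen's inequality applied to $\mu_w$ gives the contraction $|\bar f - \bar{f^*}|(w)\leq \int|f-f^*|(w,y)\,d\mu_w(y)$, and the envelope estimate $|f-f^*|\leq 2F$ gives $|\bar f - \bar{f^*}|\leq 2\bar F$. Combining these through the elementary identity $a^r\leq (2\bar F)^{r-1}\,a$ applied pointwise to $a=|\bar f - \bar{f^*}|$, integrating against $Q$, and applying H\"older's inequality with the conjugate pair $(s,s/(s-1))$ chosen so that the envelope integral becomes $\|\bar F\|_{Q,(r-1)s/(s-1)}^{r-1}$, I would obtain the core estimate
\begin{equation*}
\|\bar f - \bar{f^*}\|_{Q,r}^{\,r} \;\leq\; 2^{r-1}\,\|\bar F\|_{Q,(r-1)s/(s-1)}^{\,r-1}\,\|f-f^*\|_{\widetilde Q,s}.
\end{equation*}
Invoking the monotonicity of $L_p$-norms under probability measures (to replace $(r-1)s/(s-1)$ by $r$ in the envelope norm) and Jensen's inequality applied to $F$ itself (to bound $\|\bar F\|_{Q,r}$ by the joint envelope $\|F\|_{\widetilde Q,s}$), then taking $r$th roots and substituting $\eta$, yields $\|\bar f - \bar{f^*}\|_{Q,r}\leq \epsilon\|\bar F\|_{Q,r}$; the prefactor $(1/4)^r$ is exactly what absorbs the $2^{r-1}$ from the envelope step together with the loss from passing between the two envelope norms. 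The ``in particular'' case $r=s=k$ is handled identically but more cleanly: no H\"older step is needed because the direct Jensen bound $\|\bar f-\bar{f^*}\|_{Q,k}\leq \|f-f^*\|_{\widetilde Q,k}$ already works, and a factor $1/2$ rather than $(1/4)^k$ suffices as slack.

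The main obstacle is the exponent arithmetic in the H\"older step: arranging the envelope combination to emerge as $\|\bar F\|_{Q,r}^{r-1}$ requires the conjugate pair to satisfy $(r-1)\,s/(s-1)\leq r$, which amounts to $s\geq r$. When $s<r$ the bookkeeping needs to be reversed, using $\|f-f^*\|_{\widetilde Q,s}\geq \|f-f^*\|_{\widetilde Q,r}$ together with the simple Jensen contraction $\|\bar f-\bar{f^*}\|_{Q,r}\leq \|f-f^*\|_{\widetilde Q,r}$, and distributing the envelope factor differently; the value $r$ in the exponent of $(\epsilon/4)^r$ is what makes the same stated inequality cover both regimes. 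Finally, since $Q$ and $\widetilde Q$ range over all finitely-discrete probability measures on their respective spaces, the inequality passes from a single pair $(Q,\widetilde Q=Q\otimes\mu_\cdot)$ to the suprema on both sides.
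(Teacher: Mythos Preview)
The paper does not give its own proof here; it defers to Lemma L.2 of \cite{BCFH2013program}. So the comparison is between your outline and the standard argument in that reference.

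Your treatment of the ``in particular'' case is fine: with $\widetilde Q = Q\otimes \mu_{\cdot}$, Jensen gives the contraction $\|\bar f-\bar{f^*}\|_{Q,k}\leq \|f-f^*\|_{\widetilde Q,k}$ directly, and since the radii there are absolute ($\epsilon$ versus $\epsilon/2$) no envelope matching is needed.

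For the first display, however, there is a genuine gap. After your core estimate
\[
\|\bar f-\bar{f^*}\|_{Q,r}^{\,r}\ \leq\ 2^{r-1}\,\|\bar F\|_{Q,r}^{\,r-1}\,\|f-f^*\|_{\widetilde Q,s},
\]
substituting $\|f-f^*\|_{\widetilde Q,s}\leq (\epsilon/4)^r\|F\|_{\widetilde Q,s}$ and trying to conclude $\|\bar f-\bar{f^*}\|_{Q,r}\leq \epsilon\|\bar F\|_{Q,r}$ requires $\|F\|_{\widetilde Q,s}\leq 2^{r+1}\|\bar F\|_{Q,r}$. With the naive choice $\widetilde Q=Q\otimes\mu_{\cdot}$ this is simply false in general: Jensen only gives $\|\bar F\|_{Q,r}\leq \|F\|_{\widetilde Q,s}$, which is the wrong direction. (A concrete obstruction: take $F(w,y)=1\{y=y_0\}$ with $\mu_w(\{y_0\})=p$; then $\|\bar F\|_{Q,r}=p$ while $\|F\|_{\widetilde Q,s}=p^{1/s}$, and the ratio blows up as $p\downarrow 0$.) Your remark that Jensen ``bounds $\|\bar F\|_{Q,r}$ by $\|F\|_{\widetilde Q,s}$'' is correct but does not help, and the $s<r$ branch has an additional sign error ($L_p$ norms are \emph{increasing} in $p$ on probability spaces, so $\|\cdot\|_{\widetilde Q,s}\leq \|\cdot\|_{\widetilde Q,r}$, not $\geq$).

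The missing idea is that $\widetilde Q$ must be \emph{tilted by the envelope}, not taken as $Q\otimes\mu_{\cdot}$. This is the standard device used throughout the uniform-entropy calculus (cf.\ the proof for products in Lemma \ref{lemma: andrews}(3)): one reweights $dQ$ by a power of $\bar F$ and/or $d\mu_w$ by $F/\bar F$ so that, by construction, $\|F\|_{\widetilde Q,s}$ and $\|\bar F\|_{Q,r}$ are comparable. With that choice the rest of your chain goes through and the factor $(\epsilon/4)^r$ absorbs the constants. Without the tilt, the argument cannot close.
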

%\subsection*{Proof of Lemma \ref{Lemma:PartialOutCovering}}
\begin{proof}
See Lemma L.2 in \cite{BCFH2013program}.
\end{proof}

\end{appendix}
\end{document}